\documentclass{article}
\usepackage{xcolor}
\usepackage[a4paper,top=2cm,bottom=2cm,left=2cm,right=2cm]{geometry}

\usepackage{amsmath}
\usepackage{amssymb}
\usepackage{amsthm}
\usepackage{mathrsfs}
\usepackage{tikz-cd}
\usepackage{stmaryrd}
\usepackage{hyperref}
\usepackage{authblk}
\usepackage{t-angles}
\usepackage[all]{xy}
\def\pulb{\ar@{}[dr]|(0.2){\mbox{\Large{$\lrcorner$}}}}

\usepackage{comment}

\usepackage{mathtools}

\usepackage[T1]{fontenc}
\usepackage[utf8]{inputenc}
\usepackage{lmodern}

\usepackage[toc,page]{appendix}

\usepackage[bbgreekl]{mathbbol}
\usepackage{bbm}

\newtheorem{thm}{Theorem}[section]
\newtheorem{theorem}[thm]{Theorem}

\newtheorem{corollary}[thm]{Corollary}
\newtheorem{lemma}[thm]{Lemma}
\newtheorem{proposition}[thm]{Proposition}

\theoremstyle{definition}
\newtheorem{definition}[thm]{Definition}
\newtheorem{example}[thm]{Example}

\newtheorem{observation}[thm]{Observation}

\newtheorem{remark}[thm]{Remark}

\newcommand{\cO}{{\mathcal{O}}}
\newcommand{\rSL}{{\mathrm{SL}}}
\newcommand{\C}{{\mathbb{C}}}
\newcommand{\bP}{{\mathbb{P}}}
\newcommand{\lra}{{\longrightarrow}}
\newcommand{\ra}{{\rightarrow}}
\newcommand{\de}{{\delta}}
\newcommand{\Chi}{{\mathfrak{X}}}
\newcommand{\fg}{{\mathfrak{g}}}
\newcommand{\fgp}{{\mathrm{fgp}}}
\newcommand{\cF}{{\mathcal{F}}}
\newcommand{\beq}{\begin{equation}}
	\newcommand{\eeq}{\end{equation}}

\title{{\bf Noncommutative vector field calculi}}
\author{{\large Rita Fioresi${}^{1,}$}\footnote{rita.fioresi@unibo.it}~}
\author{{\large Emanuele Latini${}^{1,}$}\footnote{emanuele.latini@unibo.it}~}
\author{{\large Thomas Weber${}^{2,}$}\footnote{thomas.weber@matfyz.cuni.cz}}
\affil{
	\centerline{\sl
		${}^1${ Alma Mater Studiorum - Università di Bologna}}
	
	\centerline{\sl  Via Zamboni 33, 40126 Bologna, Italy}
	
	\centerline{and}
	
	\centerline{\sl
		{Istituto Nazionale di Fisica Nucleare}}
	
	\centerline{\sl Sezione di Bologna, I-40126 Bologna, Italy}
	\medskip
	\centerline{\sl
		${}^2${ Mathematical Institute of Charles University}}
	
	\centerline{\sl  Sokolovsk\'a 49/83, 186 75 Prague 8, Czech Republic}
}
\date{\today}

\begin{document}
	
	\maketitle
	
	\begin{abstract}
		We discuss noncommutative differential geometry from a vector field centric point of view. This is based on the notion of first order vector field calculus (FOVC), which has been previously introduced by Borowiec under the name Cartan pair. We define the universal FOVC and construct an adjunction between the categories of FOVC and that of first order differential calculi, showing that the vector field approach is dual, though not equivalent, to the differential form one. This correspondence is then extended to covariant vector field and differential calculi. On Hopf algebras, (bi)covariant FOVC are in bijection with (bicovariant) quantum tangent spaces. For Hopf--Galois extensions, quantum tangent spaces give rise to vertical vector fields and in this setup we further describe base vector fields and horizontal vector fields and show that they are related via a noncommutative Atiyah sequence. Multiple examples, based on braided derivations, finite groups and the quantum Hopf fibration, are given. The vector field approach is further enriched by a sheaf-theoretic treatment, which recovers the former as a local, or affine, picture.
	\end{abstract}
	
	\tableofcontents
	
	\section{Introduction}
	
	Quantum differential geometry is an active area of research, initiated a few years
	after the quantum groups discovery \cite{frt}, with the pioneering works \cite{BrzMaj, SchDC, Woronowicz1989}
	(see also \cite{bm} and references therein for an extensive account on this subject).
	The naturality of K\"ahler differentials in algebraic and differential geometry \cite{ha} makes
	the approach via differential forms more feasible in the noncommutative setting, however more distant to certain
	geometrical constructions, which come intrinsically via {\sl vector fields} and
	differential operators.
	In fact, the majority of the literature on noncommutative differential geometry utilizes {\sl first order differential calculi} (FODC).
	This language proved very fruitful, in particular, the striking feature of non-uniqueness of FODC on quantum groups led to the classification of such calculi \cite{Woronowicz1989}. 
	The case of homogeneous spaces, quotients of classical groups, was first examined in the quantum setting in \cite{herm} and then in the Heckenberger--Kolb 
	construction \cite{HeckKolb, HeckKolb2} of coinvariant differential calculi on irreducible quantum flag manifolds.
	Indeed, the coinvariance condition, most natural when dealing with groups and their homogeneous spaces, allows to reduce the plethora of FODC, focusing on the ones that are most interesting for their invariant properties. The next development concerned the successful generalization of differential structures on principal bundles to the quantum setting, leading to vertical, horizontal and base forms, related via a noncommutative version of the Atiyah sequence \cite{BrzMaj},
	whose importance also become paramount in the quantum setting, when discussing associated bundles (see \cite{baum}).
	In fact, there was considerable work in the past decades, examining quantum symmetric spaces and their FODC in relation to the Atiyah sequence, see e.g. \cite{bm,ROB2} and references therein.
	
	\medskip
	
	On the other hand, noncommutative vector fields and their Lie derivatives have been introduced in \cite{Bor96, Bor97} under the name Cartan Pairs, in duality to the FODC approach. This notion was then utilized and further developed e.g. in \cite{BMvector,bm,Bor23,fmw} but is still not universally known in the community. Interestingly, noncommutative vector fields appear rather frequently in form of elements of quantum tangent spaces \cite{paolo, AschieriSchupp,HeckKolb,HeckKolb2,Woronowicz1989}. This usually happens when dual Hopf algebras, like quantum group coordinate algebras $\mathcal{O}_q(G)$ and Drinfel'd--Jimbo quantum groups $U_q(\mathfrak{g})$, are present. In these works, quantum tangent spaces are usually seen as a tool to construct differential calculi, while in \cite{paolo, AschieriSchupp} the quantum tangent space approach is taken as a serious alternative, though equivalent, point of view in duality to (bi)covariant first order differential calculi.
	
	\medskip
	In the first part of the present paper we follow the vector field centric point of view advocated in \cite{paolo, AschieriSchupp} and propose an approach to noncommutative differential geometry through the notion of noncommutative vector fields and Cartan pairs \cite{Bor96, Bor97}. Our main goal is to reveal the utility of vector fields as a dual, though not equivalent, path to noncommutative differential geometry. We clarify the relation of noncommutative vector fields and differential forms in terms of a categorical adjunction. Considerable focus is placed on examples, particularly those arising from quantum tangent spaces. We further introduce vertical, horizontal and base vector fields and relate them in terms of a noncommutative Atiyah sequence. 
	
	\medskip
	Most of the research in noncommutative differential geometry concerns the affine case, where both the quantum principal bundle and its base are affine geometrical objects, hence fully captured by their algebra of function,
	which is the object that becomes quantized. However, a true geometric approach should also
	comprehend the projective, scheme-theoretic setting, to fully account the situation where
	we do not have an affine variety, but perhaps an open inside it or a projective scheme \cite{artin, vov, ro}.
	The sheaf-theoretic approach to quantum bundles expressed in \cite{pflaum}, together with
	the thorough scheme-theoretic treatment of noncommutative geometry, was fully implemented
	first in \cite{afl,FLP} for the construction of quantum principal bundles, and their reduction, over non affine bases and then in \cite{aflw} to produce a natural FODC for the example of quotients of classical quantum groups.
	
	\medskip
	In the final part of the present paper we proceed implementing a sheaf-theoretic approach 
	to noncommutative vector fields calculi,
	where, in the spirit of \cite{afl}, modules and algebras only represent the local models. This is done in duality to the sheaf-theoretic FODC approach developed in \cite{aflw}. Several notions of the first part of the present paper, like vertical vector fields, base vector fields and the Atiyah sequence, are then generalized to the realm of sheaves.
	
	\medskip
	The paper is organized as follows.
	
	\medskip
	In Section \ref{sec:FOVC} we give the basic definition of first order vector field calculus (FOVC),
	proving the existence of a universal FOVC and examining 
	some interesting examples, including braided derivations on Hopf--Galois extension with respect to the \DJ ur\dj evi\'c braiding.
	We then discuss a natural duality between FOVC and first order differential calculus (FODC), leading to an adjunction between the categories of FOVC and FODC.
	We further introduce covariant FOVC on comodule algebras, where the Lie derivative is assumed to have an additional compatibility with the underlying Hopf algebra coactions. The resulting first order vector field calculi admit adjoint functors from and to the category of covariant FODC.

	\medskip
	In Section \ref{sec-ex} we recall the notion of (bicovariant) quantum tangent space and prove that they correspond to (bi)covariant FOVC on Hopf algebras. This correspondence is then generalized to Hopf--Galois extensions, leading to the notion of vertical vector fields, which turns out to be in duality with a construction given by \DJ ur\dj evi\'c in \cite{DurI, DurII}.
	We then introduce base vector fields and the Atiyah sequence, as defined originally in \cite{atiyah} for vector fields, though
	adapted to the noncommutative geometric setting, also leading to the notion of horizontal	vector fields. We further discuss key examples, such as quantum principal bundles on the noncommutative Hopf fibration.
	
	\medskip
	In Section \ref{sec:sheaf} we take the sheaf-theoretic point of view on the theory defined in the previous sections and introduce FOVC for quantum ringed spaces. A particular emphasis is set on quantum principal bundles, vertical vector fields and the Atiyah sequence, giving a generalization of the previous theory, which was limited to the affine setting. We provide explicit examples based on the ringed space $\mathbb{P}^1(\mathbb{C})$ and the quantum ringed spaces obtained by localizations of $\mathcal{O}_q(\mathrm{SL}_2(\mathbb{C}))$ and $\mathcal{O}_q(\mathrm{GL}_2(\mathbb{C}))$, respectively.
	
	\bigskip
	{\bf Acknowledgements.} We thank Paolo Aschieri and Alessandro Ardizzoni for helpful discussions.
	This research was supported by Gnsaga-Indam, by
	COST Action CaLISTA CA21109, HORIZON-MSCA-2022-SE-01-01 CaLIGOLA, MSCA-DN CaLiForNIA - 101119552,
	PNRR MNESYS, PNRR National Center for HPC, Big Data and Quantum Computing, PNRR SimQuSec, INFN Sezione Bologna,
	Gast Initiative.  
	T.W. is supported by the GA\v{C}R PIF 24-11324I.

	\subsubsection*{Notation}
	
	Throughout this paper we fix a field $\Bbbk$. Vector spaces, algebras, etc. are understood over $\Bbbk$ and we denote the tensor product of $\Bbbk$-vector spaces by $\otimes$. Given a Hopf algebra $H$ we denote its comultiplication and counit by $\Delta\colon H\to H\otimes H$ and $\varepsilon\colon H\to\Bbbk$, respectively, while the antipode is denoted by $S\colon H\to H$. We frequently employ Sweedler's notation for the coproduct $\Delta(h)=:h_1\otimes h_2$ of an element $h\in H$, where we omit summation indices.
	
	\section{First order vector field calculi}\label{sec:FOVC}
	
	In the first section we recall the notion of {\sl first order vector field calculus} on arbitrary associative unital algebras, which are possibly noncommutative. We discuss some simple, but important, examples, also introducing the universal first order vector field calculus in Section \ref{sec:univ}. We compare the notion of first order vector field calculus with {\sl first order differential calculi}, in terms of an adjunction of categories, see Section \ref{sec:FODC}. Then, in Section \ref{sec:covFOVC}, we continue by introducing \emph{covariant} first order vector field calculi as vector field calculi on comodule algebras which are compatible with the Hopf algebra coactions. We discuss examples of the latter and the corresponding duality with covariant first order differential calculi. In a last section, we construct an example of first order vector field calculus on arbitrary Hopf--Galois extensions, based on braided derivations with respect to the \DJ ur\dj evi\'c braiding.
	
	\subsection{Basic definition}
	
	The notion of {\sl first order vector field calculus} was first introduced in \cite{Bor96,Bor97} under the terminology of {\sl Cartan pair}. We refer to the latter as first order vector field calculi in order to emphasize their duality with first order differential calculi. This duality will be made precise in terms of a categorical adjunction later on in the article.
	
	Let $A$ be an associative unital algebra. We denote the $\Bbbk$-module of $\Bbbk$-linear endomorphisms $A\to A$ by $\mathrm{End}_\Bbbk(A)$. Note that $\mathrm{End}_\Bbbk(A)$ becomes an $A$-bimodule via
	\begin{equation*}
		(a\cdot\phi\cdot b)(c):=a\phi(bc)
	\end{equation*}
	for all $\phi\in\mathrm{End}_\Bbbk(A)$ and $a,b,c\in A$.
	
	\begin{definition}\label{def:FOVC} 
		A {pair} $(\mathfrak{X},\mathscr{L})$ is called a \textit{first order vector field calculus} (FOVC) on $A$ if
		\begin{enumerate}
			\item[i.)] $\mathfrak{X}$ is an $A$-bimodule.
			
			\item[ii.)] $\mathscr{L}\colon\mathfrak{X}\to\mathrm{End}_\Bbbk(A)$ is a $\Bbbk$-linear map such that for all $X\in\mathfrak{X}$ the \textit{Leibniz rule}
			\begin{equation}
				\mathscr{L}_X(ab)=\mathscr{L}_X(a)b+\mathscr{L}_{X\cdot a}(b)
			\end{equation}
			holds for all $a,b\in A$.
			
			\item[iii.)] $\mathscr{L}$ is injective and left $A$-linear, i.e. $\mathscr{L}_{a\cdot X}(b)=a\mathscr{L}_X(b)$ for all $X\in\mathfrak{X}$ and $a,b\in A$.
		\end{enumerate}
		For a FOVC $(\mathfrak{X},\mathscr{L})$ we call $\mathscr{L}\colon\mathfrak{X}\to\mathrm{End}_\Bbbk(A)$ the \textit{Lie derivative}.
	\end{definition}
	
	A \textit{morphism} of first order vector calculi (FOVCi) $(\mathfrak{X},\mathscr{L})$, $(\mathfrak{X'},\mathscr{L'})$ 
	on the same algebra $A$ is
	an $A$-bimodule map $\Phi\colon\mathfrak{X} \longrightarrow \mathfrak{X'}$, such
	that
	\begin{equation}\label{diagr-vc}
		\xymatrix{ \mathfrak{X}\ar[rr]^{\Phi}
			\ar[drr]_{\mathscr{L}} & & \mathfrak{X'}\ar[d]^{\mathscr{L'}} \\
			& & \mathrm{End}_\Bbbk(A)
		}
	\end{equation}
	commutes. We sometimes write $(\mathfrak{X},\mathscr{L})\to(\mathfrak{X'},\mathscr{L'})$ for a morphism of FOVCi. The category of FOVCi over a fixed algebra $A$ is denoted by $\underline{\mathrm{FOVC}}_A$. By the very definition, a morphism of FOVCi is necessarily injective.
	
	\begin{example}\label{ex:FOVC}
		We discuss basic examples of FOVCi.
		\begin{enumerate}
			\item[i.)] If $A$ is a 
			{commutative} algebra, the derivations of $A$
			$$
			\mathfrak{X}:=\mathrm{Der}(A):=\{X\in\mathrm{End}_\Bbbk(A)~|~X(ab)=X(a)b+aX(b)\text{ for all }a,b\in A\}
			$$
			form a FOVC on $A$ with Lie derivative given by the canonical injection $\mathscr{L}\colon\mathrm{Der}(A)\hookrightarrow\mathrm{End}_\Bbbk(A)$. The $A$-bimodule structure on $\mathfrak{X}$ is determined by 
			$$
			(a\cdot X)(b):=aX(b)=:(X\cdot a)(b)
			$$
			for all $X\in\mathfrak{X}$ and $a,b\in A$. The Leibniz rule is satisfied by definition.
			
			This is the canonical vector field calculus arising from (classical) differential geometry: given a smooth manifold $M$, the algebra of smooth functions $A=\mathscr{C}^\infty(M)$ endowed with the pointwise product is commutative and $\mathfrak{X}=\mathrm{Der}(A)$ is isomorphic to smooth sections $\Gamma^\infty(TM)$ of the tangent bundle $TM$.
			
			\item[ii.)] 
			Let $(H,\mathcal{R})$ be a \textit{quasitriangular Hopf algebra} \cite{Drin}. We recall that the latter is a Hopf algebra $H$ endowed with a \textit{universal $\mathcal{R}$-matrix}, i.e., an invertible element $\mathcal{R}\in H\otimes H$, which makes $H$ quasi-cocommutative, namely $\Delta^\mathrm{op}(\cdot)=\mathcal{R}\Delta(\cdot)\mathcal{R}^{-1}$, and which satisfies the hexagon equations
			\begin{equation}
				(\mathrm{id}\otimes\Delta)(\mathcal{R})=\mathcal{R}_{13}\mathcal{R}_{12},\qquad
				(\Delta\otimes\mathrm{id})(\mathcal{R})=\mathcal{R}_{13}\mathcal{R}_{23},
			\end{equation}
			where we used the common \textit{leg notation} $\mathcal{R}_{12}:=\mathcal{R}\otimes 1$, $\mathcal{R}_{23}:=1\otimes\mathcal{R}$, $\mathcal{R}_{13}:=(\mathrm{flip}_{H,H}\otimes\mathrm{id})(\mathcal{R}_{23})$ for the above elements in $H\otimes H\otimes H$. Then, a \textit{braided-commutative left $H$-module algebra} is an associative unital algebra $(A,m_A,1_A)$, which is endowed with a left $H$-module action $\rhd\colon H\otimes A\to A$, such that for all $h\in H$ and $a,b\in A$ we have 
			\begin{equation}
				h\rhd(ab)=(h_1\rhd a)(h_2\rhd b),\qquad
				h\rhd 1_A=\varepsilon(h)1_A,\qquad
				ba=(\mathcal{R}_i\rhd a)(\mathcal{R}^i\rhd b),
			\end{equation}
			where $\mathcal{R}=\mathcal{R}^i\otimes\mathcal{R}_i\in H\otimes H$ (finite sum over repeated indices understood). The above conditions are equivalent to say that $A$ is a commutative algebra in the braided monoidal category of left $H$-modules, see \cite[Theorem 9.2.4]{MajFound} for more details.
			Given a quasitriangular Hopf algebra $(H,\mathcal{R})$ and a braided-commutative left $H$-module algebra $A$ the \textit{braided derivations}
			$$
			\mathfrak{X}:=\mathrm{Der}_\mathcal{R}(A):=\{X\in\mathrm{End}_\Bbbk(A)~|~X(ab)=X(a)b+(\mathcal{R}_i\rhd a)(\mathcal{R}^i\rhd X)(b)\text{ for all }a,b\in A\}
			$$
			become an $A$-bimodule with respect to $(a\cdot X)(b):=aX(b)$ and
			$$
			(X\cdot a)(b):=(\mathcal{R}_i\rhd a)(\mathcal{R}^i\rhd X)(b)
			$$
			for all $X\in\mathrm{Der}_\mathcal{R}(A)$ and $a,b\in A$. Then $(\mathfrak{X},\mathscr{L})$ is a FOVC on $A$, again with $\mathscr{L}\colon\mathrm{Der}_\mathcal{R}(A)\hookrightarrow\mathrm{End}_\Bbbk(A)$ the canonical injection.
			This includes the examples of braided commutative algebras for \textit{triangular} Hopf algebras, in particular twisted commutative algebras and Drinfel'd twist star product algebras, as discussed in \cite{WeBC,WeThesis}.
			Braided derivations in the context of triangular Hopf algebras are further appearing in \cite{AschieriLandiPagani}.
			For a cocommutative Hopf algebra with $\mathcal{R}=1\otimes 1$ and a commutative left $H$-module algebra $A$ this recovers the previous example i.).
			Further note that any \textit{co}quasitriangular Hopf algebra $(H,\mathcal{R})$ can be seen as a braided-commutative left module algebra with respect to the quasitriangular Hopf algebra $H^\circ_\mathrm{op}\otimes H^\circ$, where $H^\circ$ denotes the Hopf dual and $H^\circ_\mathrm{op}$ is the finite dual with opposite multiplication. This is spelled out in \cite[Example 2.13]{paolo} for the cotriangular case and easily generalizes to the coquasitriangular setting.

			\item[iii.)] Let $(\mathfrak{X},\mathscr{L})$ be a FOVC on an algebra $A$ and $(\mathfrak{X}',\mathscr{L}')$ be a FOVC on another algebra $A'$. If $A$ and $A'$ are domains (i.e. there are no zero-divisors) then $(\mathfrak{X}_\otimes,\mathscr{L}^\otimes)$ with 
			$$
			\mathfrak{X}_\otimes:=\mathfrak{X}\otimes A'\oplus A\otimes\mathfrak{X}'
			$$
			the $A\otimes A'$-bimodule structure given by 
			$$
			(a\otimes a')\cdot(X\otimes b'+b\otimes X')\cdot(c\otimes c')
			:=a\cdot X\cdot c\otimes a'b'c'+abc\otimes a'\cdot X'\cdot c'
			$$
			and
			$$
			\mathscr{L}^\otimes_{X\otimes a'+a\otimes X'}(b\otimes b')
			:=\mathscr{L}_X(b)\otimes a'b'+ab\otimes\mathscr{L}'_{X'}(b')
			$$
			is a FOVC on the tensor product algebra $A\otimes A'$. To avoid the assumption of the algebras being domains one can quotient $\mathfrak{X}_\otimes$ with the $A\otimes A'$-sub-bimodule generated by the kernel of $\mathscr{L}^\otimes$.
		\end{enumerate}
	\end{example}

	\subsection{Universal 
		and inner FOVC}\label{sec:univ}
	On every associative unital algebra $A$ there is the \textit{universal FOVC} $(\mathfrak{X}_u,\mathscr{L}^u)$ defined as the endomorphisms which vanish at the unit
	\begin{equation}
		\mathfrak{X}_u:=\{X\in\mathrm{End}_\Bbbk(A)~|~X(1)=0\},
	\end{equation}
	together with the canonical injection $\mathscr{L}^u\colon\mathfrak{X}_u\hookrightarrow\mathrm{End}_\Bbbk(A)$. The $A$-bimodule structure on $\mathfrak{X}_u$ is defined by $(a\cdot X)(b):=aX(b)$ and
	\begin{equation}\label{UnivRightAction}
		(X\cdot a)(b):=X(ab)-X(a)b
	\end{equation}
	for all $X\in\mathfrak{X}_u$ and $a,b\in A$. Note that the operation \eqref{UnivRightAction} closes in $\mathfrak{X}_u$ since $(X\cdot a)(1)=X(a1)-X(a)1=0$. Let us verify that \eqref{UnivRightAction} is in fact a right $A$-action:
	\begin{align*}
		((X\cdot a)\cdot b)(c)
		&=(X\cdot a)(bc)-(X\cdot a)(b)c\\
		&=X(abc)-X(a)bc-X(ab)c+X(a)bc\\
		&=X(abc)-X(ab)c\\
		&=(X\cdot(ab))(c)
	\end{align*}
	holds for all $X\in\mathfrak{X}_u$ and $a,b,c\in A$. Moreover, the Leibniz rule
	$$
	\mathscr{L}^u_X(a)b+\mathscr{L}^u_{X\cdot a}(b)
	=X(a)b+(X\cdot a)(b)
	=X(ab)
	=\mathscr{L}^u_X(ab)
	$$
	holds by definition of the right $A$-action.
	
	We show that for every FOCV $(\mathfrak{X},\mathscr{L})$ on an associative unital algebra $A$ there is a morphism $(\mathfrak{X},\mathscr{L})\to(\mathfrak{X}_u,\mathscr{L}^u)$ of FOVCi to the universal FOVC.
	Since such a morphism is necessarily injective this shows that every FOVC can be understood as a sub-FOVC of the universal one, thus justifying the name a posteriori. This observation is implicit in \cite[Section 4]{Bor23}.
	
	\begin{proposition}\label{prop:univ}
		Let $A$ be an associative unital algebra.
		Then, the universal FOVC $(\mathfrak{X}_u,\mathscr{L}^u)$ is a FOVC on $A$ and it possesses the following universal property:
		for every FOVC $(\mathfrak{X},\mathscr{L})$ on $A$ there is a morphism 
		$$
		(\mathfrak{X},\mathscr{L})\to(\mathfrak{X}_u,\mathscr{L}^u)
		$$
		of FOVCi. Explicitly, there exists an injective morphism $\Phi\colon\mathfrak{X}\to\mathfrak{X}_u$ of $A$-bimodules such that $\mathscr{L}=\mathscr{L}^u\circ\Phi$.
	\end{proposition}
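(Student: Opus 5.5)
The plan is to take $\Phi:=\mathscr{L}$ itself, regarded as a map $\mathfrak{X}\to\mathfrak{X}_u$. Then $\mathscr{L}=\mathscr{L}^u\circ\Phi$ holds trivially, because $\mathscr{L}^u$ is the canonical inclusion, and $\Phi$ is injective by axiom iii.) of Definition~\ref{def:FOVC}. The fact that $(\mathfrak{X}_u,\mathscr{L}^u)$ is a FOVC was essentially verified in the paragraph preceding the statement:
\begin{itemize}
\item right associativity of \eqref{UnivRightAction} and closure in $\mathfrak{X}_u$ were checked there;
\item the left action is clearly an action;
\item the bimodule compatibility $((a\cdot X)\cdot b)(c)=aX(bc)-aX(b)c=(a\cdot(X\cdot b))(c)$ and unitality $(X\cdot 1)(b)=X(b)-X(1)b=X(b)$ are immediate;
\item the Leibniz rule was shown above;
\item $\mathscr{L}^u$ is injective and left $A$-linear by construction.
\end{itemize}

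The steps I would carry out, in order, are as follows.

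First, show that $\mathscr{L}_X$ lies in $\mathfrak{X}_u$, i.e.\ $\mathscr{L}_X(1)=0$. Apply the Leibniz rule to $a=b=1$:
$$\mathscr{L}_X(1)=\mathscr{L}_X(1)+\mathscr{L}_{X\cdot 1}(1),$$
so $\mathscr{L}_{X\cdot 1}(1)=0$. Since $X\cdot 1=X$ in the unital bimodule $\mathfrak{X}$, this gives $\mathscr{L}_X(1)=0$.

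Second, left $A$-linearity of $\Phi$ is exactly axiom iii.):
$$\Phi(a\cdot X)(b)=a\mathscr{L}_X(b)=(a\cdot\Phi(X))(b).$$

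Third, for right linearity, rearrange the Leibniz rule as
$$\mathscr{L}_{X\cdot a}(b)=\mathscr{L}_X(ab)-\mathscr{L}_X(a)b=(\Phi(X)\cdot a)(b),$$
using the definition \eqref{UnivRightAction}. Hence $\Phi(X\cdot a)=\Phi(X)\cdot a$.

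No step here is a genuine obstacle; the argument is a direct unpacking of the definitions. The only point requiring care is the first step, checking that $\mathscr{L}_X$ vanishes on $1$, since that is what places the image of $\Phi$ inside $\mathfrak{X}_u$. This relies on unitality of the right module structure on $\mathfrak{X}$, which is implicit in "$A$-bimodule".
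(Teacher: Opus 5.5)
Your proposal is correct and follows the paper's proof essentially verbatim: you derive $\mathscr{L}_X(1)=0$ from the Leibniz rule at $a=b=1$ (using $X\cdot 1=X$) and take $\Phi$ to be the corestriction of $\mathscr{L}$. You then get left linearity from axiom iii.) and right linearity by rearranging the Leibniz rule against \eqref{UnivRightAction}, exactly as the paper does, and your extra checks of bimodule compatibility and unitality for $\mathfrak{X}_u$ are correct details that the paper leaves implicit.
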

	\begin{proof}
		We already argued that $(\mathfrak{X}_u,\mathscr{L}^u)$ is a FOVC on $A$. To prove its universal property,
		let $(\mathfrak{X},\mathscr{L})$ be an arbitrary FOVC on $A$. By the Leibniz rule 
		$$
		\mathscr{L}_X(1)
		=\mathscr{L}_X(1\cdot 1)
		=\mathscr{L}_X(1)1+\mathscr{L}_{X\cdot 1}(1)
		=2\mathscr{L}_X(1)
		$$
		for all $X\in\mathfrak{X}$, which implies $\mathscr{L}_X(1)=0$ and thus $\mathscr{L}(\mathfrak{X})\subseteq\mathfrak{X}_u$. This means we can define $\Phi\colon\mathfrak{X}\to\mathfrak{X}_u$ as the corestriction of $\mathscr{L}\colon\mathfrak{X}\to\mathrm{End}_\Bbbk(A)$. By assumption, $\Phi$ is injective and left $A$-linear. It is also right $A$-linear, since
		$$
		(\Phi(X\cdot a))(b)
		=\mathscr{L}_{X\cdot a}(b)
		=\mathscr{L}_X(ab)-\mathscr{L}_X(a)b
		=\Phi(X)(ab)-(\Phi(X)(a))b
		=(\Phi(X)\cdot a)(b)
		$$
		by the Leibniz rule, where in the last equation we employed the right $A$-module action on $\mathfrak{X}_u$. Since the above holds for all $b\in A$ this implies $\Phi(X\cdot a)=\Phi(X)\cdot a$ for all $X\in\mathfrak{X}$ and $a\in A$. Thus, $\Phi$ is an injective morphism of $A$-bimodules. Since $\mathscr{L}^u\colon\mathfrak{X}^u\hookrightarrow\mathrm{End}_\Bbbk(A)$ is the canonical injection $\mathscr{L}^u(\Phi(X))=\mathscr{L}^u(\mathscr{L}_X)
		=\mathscr{L}_X$ follows. This completes the proof of the proposition.
	\end{proof}
	In categorical terms the above proposition states that $(\mathfrak{X}_u,\mathscr{L}^u)$ is the terminal object in the category of FOVCi over a fixed algebra.
	
	\medskip
	
	We now introduce the class of inner FOVCi. As an important instance of these, we elaborate on FOVCi on finite sets.
	
	\begin{definition}
		A FOVC $(\mathfrak{X},\mathscr{L})$ on an associative unital algebra $A$ is called \textit{inner} if there is a left $A$-linear map $\theta\colon\mathfrak{X}\to A$ such that
		\begin{equation}
			\mathscr{L}_X(a)=\theta(X)a-\theta(X\cdot a)
		\end{equation}
		for all $X\in\mathfrak{X}$ and $a\in A$.
	\end{definition}
	In other words, inner FOVCi are FOVCi $(\mathfrak{X},\mathscr{L})$ endowed with a special left $A$-linear map from $\mathfrak{X}$ to the algebra, such that the failure of this map to be right $A$-linear is measured by the Lie derivative. Next, we discuss FOVCi on finite sets. They turn out to be automatically inner and correspond to directed graphs on the given finite set.
	
	\begin{example}
		\label{graph-calculus}
		Let $X$ be a finite set, $A=\C[X]$ the algebra of functions on $X$. It follows that $A=\mathrm{span}_{\mathbb{C}}\{\de_x:X \lra \C\,|\, \de_x(y):=\de_{x,y}\}$ is a finite-dimensional vector space, where $\de_{x,y}$ denotes the Kronecker delta. 
		We show that FOVCi on $A$ correspond to directed graphs on the finite set $X$, where no self-loops and no multiple arrows in the same direction are allowed. Given such a graph on $X$ we
		define $\Chi:=\mathrm{span}_\C \{\chi_{x\ra y}\}$ as the
		vector space generated by the set of symbols $\{\chi_{x\ra y}\}$, 
		one corresponding to each edge $x\to y$ of the graph, with the following $A$-bimodule structure
		\begin{equation}\label{graph:bimod}
			f \cdot \chi_{x\ra y} \cdot g:=f(x)g(y)\chi_{x \ra y},
		\end{equation}
		for all $f,g\in A$, whenever $x\to y$ is an edge of the graph.
		We further define the Lie derivative
		\begin{equation}\label{graph:LieDer}
			\mathscr{L}_{\chi_{x\ra y}}(f):=(f(x)-f(y))\de_x,
		\end{equation}
		for all $f\in A$.
		It is straightforward to show that $(\Chi, \mathscr{L})$ is a FOVC. Moreover, this FOVC is {\sl inner}, with 
		\begin{equation}
			\theta\colon \Chi \to A, \qquad \theta(\chi_{x\ra y})=\de_x.
		\end{equation}
		Notice that by the very definition of the $A$-bimodule structure of $\Chi$, we have that $\theta(f \cdot \chi_{x\ra y} \cdot g)=f(x)g(y)\de_x$ for all $f,g\in A$ and all edges $x\to y$ of the graph.
		
		On the other hand, for an arbitrary FOVC $(\mathfrak{X},\mathscr{L})$ on $A=\mathbb{C}[X]$ by Proposition \ref{prop:univ} there is a morphism $(\mathfrak{X},\mathscr{L})\to(\mathfrak{X}_u,\mathscr{L}^u)$ to the universal FOVC on $A$. Since $A=\mathrm{span}_\mathbb{C}\{\delta_x~|~x\in X\}$ is a finite-dimensional vector space, $\mathfrak{X}_u=\{X\in\mathrm{End}_\mathbb{C}(A)~|~X(1)=0\}$ can be identified as a vector subspace of all $n\times n$-matrices with values in $\mathbb{C}$ ($n=|X|$ being the size of $X$) with matrix-vector multiplication corresponding to the evaluation of the corresponding endomorphism. Since these endomorphisms have to vanish at $1$ the diagonal of the corresponding matrix has to be zero. A vector space basis of these matrices is given by the matrices $\{E_{i,j}\}_{1\leq i\neq j\leq n}$, where $E_{i,j}$ is the matrix with $1$ at the position $(i,j)$ and zero elsewhere. To $E_{i,j}$ we assign an edge $x_i\to x_j$ (for a certain choice of numbering the elements of $X$) and thus the graph of the universal FOVC has an edge for each pair $(x,y)$ with $x,y\in X$ and $x\neq y$. By Proposition \ref{prop:univ} $\mathfrak{X}$ sits injectively inside $\mathfrak{X}_u$ and consequently we can assign a subgraph of the former one to the FOVC $(\mathfrak{X},\mathscr{L})$. By construction this correspondence is bijective.
		
		We shall come back to this example in a later section.
	\end{example}
	Noncommutative geometry on finite sets is relevant in the study of gauge theories \cite{DimMul,MajSim}, quantum gravity \cite{Cas} and quantum Riemannian geometry \cite[Chapter 8.2.2]{bm}.
	
	\subsection{First order differential calculi and FOVC}\label{sec:FODC}

	In this section we recall the well-established notion of first order differential calculus (FODC). We prove that the categories of first order vector field calculi (FOVCi) and of first order differential calculi (FODCi) over a fixed algebra are adjoint. Thus, first order vector field and differential calculi can be understood as ``dual to each other''.
	In the finitely generated projective setting the categories are equivalent.
	
	Recall that a  \textit{first order differential calculus} (FODC) \cite{Woronowicz1989} on an associative unital algebra $A$ is a tuple $(\Gamma,\mathrm{d})$, where
	\begin{enumerate}
		\item[i.)] $\Gamma$ is an $A$-bimodule.
		
		\item[ii.)] $\mathrm{d}\colon A\to\Gamma$ is a $\Bbbk$-linear map satisfying the Leibniz rule
		$$
		\mathrm{d}(ab)=\mathrm{d}(a)b+a\mathrm{d}(b)
		$$
		for all $a,b\in A$.
		
		\item[iii.)] $A\otimes A\to\Gamma$, $a^i\otimes b^i\mapsto a^i\mathrm{d}(b^i)$ is a (left $A$-linear and) surjective map.
	\end{enumerate}
	A \textit{morphism} of FODCi $(\Gamma,\mathrm{d})$ and $(\Gamma',\mathrm{d}')$ over the same algebra $A$ is an $A$-bimodule morphism $\Psi\colon\Gamma\to\Gamma'$ such that $\mathrm{d}'=\Psi\circ\mathrm{d}$. Note that $\Psi$ is necessarily surjective. On every algebra there exists the \textit{universal FODC} $(\Gamma_u,\mathrm{d}_u)$, where $\Gamma_u\subseteq A\otimes A$ is the kernel of the multiplication $A\otimes A\to A$ endowed with the obvious $A$-bimodule structure and $\mathrm{d}_u(a):=1\otimes a-a\otimes 1$ for all $a\in A$. For every FODC $(\Gamma,\mathrm{d})$ on $A$ there is a (necessarily surjective) morphism $(\Gamma_u,\mathrm{d}_u)\to(\Gamma,\mathrm{d})$ of FODCi, i.e., we can understand $(\Gamma,\mathrm{d})$ as a quotient of $(\Gamma_u,\mathrm{d}_u)$ (see e.g. \cite[Appendix A]{DurI}). There is a huge variety of examples and applications of FODCi, which is beyond the scope of this article. We refer the interested reader to the monograph \cite{bm} and \cite{DurII,SchDC}.
	
	Before entering the correspondence of vector field and differential calculi we recall that, given an $A$-bimodule $\Gamma$, there is a natural $A$-bimodule structure on the right $A$-linear maps $\mathfrak{X}:=\mathrm{Hom}_A(\Gamma,A)$ defined by
	\begin{equation}\label{A-bimod}
		\langle a\cdot X\cdot b,\omega\rangle:=a\langle X,b\cdot\omega\rangle
	\end{equation}
	for all $X\in\mathfrak{X}$, $a,b\in A$ and $\omega\in\Gamma$, where we denoted the evaluation $\mathrm{Hom}_A(\Gamma,A)\otimes\Gamma\to A$ by $\langle\cdot,\cdot\rangle$. With this $A$-bimodule structure the evaluation becomes an $A$-bilinear map and descends to an evaluation
	$$
	\langle\cdot,\cdot\rangle\colon\mathfrak{X}\otimes_A\Gamma\to A.
	$$
	Similarly, given an $A$-bimodule $\mathfrak{X}$, there is a natural $A$-bimodule structure on the left $A$-linear maps $\Gamma:={}_A\mathrm{Hom}(\mathfrak{X},A)$ induced by the evaluation $\langle X,a\cdot\omega\cdot b\rangle:=\langle X\cdot a,\omega\rangle b$ and the induced $A$-bilinear pairing reads as before (note however, that they correspond to different evaluations).
	
	\begin{proposition}[{\cite[Theorem 2.5]{Bor96}}]\label{prop:duality}
		Let $A$ be an associative unital algebra.
		\begin{enumerate}
			\item[i.)] For any FODC $(\Gamma,\mathrm{d})$ on $A$ there is a FOVC $(\mathfrak{X},\mathscr{L})$ on $A$ defined by $\mathfrak{X}:=\mathrm{Hom}_A(\Gamma,A)$ with Lie derivative $\mathscr{L}\colon\mathfrak{X}\to\mathrm{End}_\Bbbk(A)$ determined by
			\beq\label{fodc-fovc}
			\mathscr{L}_X(a):=\langle X,\mathrm{d}(a)\rangle
			\eeq
			for all $X\in\mathfrak{X}$ and $a\in A$.
			
			\item[ii.)] For any FOVC $(\mathfrak{X},\mathscr{L})$ there is a FODC $(\Gamma,\mathrm{d})$ on $A$ defined by $\Gamma:={}_A\mathrm{Hom}(\mathfrak{X},A)$ with differential $\mathrm{d}\colon A\to\Gamma$ determined by
			$$
			\langle X,\mathrm{d}(a)\rangle:=\mathscr{L}_X(a)
			$$
			for all $a\in A$ and $X\in\mathfrak{X}$.
		\end{enumerate}
	\end{proposition}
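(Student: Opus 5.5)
The plan is to verify the axioms of Definition \ref{def:FOVC} and of a FODC directly, using only the bimodule structures \eqref{A-bimod} and its left-linear analogue, together with the Leibniz rules. Throughout, the evaluation pairing $\langle\cdot,\cdot\rangle$ is $A$-bilinear for the induced bimodule structures, as recalled before the statement. I expect the only non-formal points to be injectivity in i.) and surjectivity in ii.).

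For i.), I would first note that $\mathscr{L}_X(a)=\langle X,\mathrm{d}(a)\rangle$ is $\Bbbk$-linear in both $X$ and $a$. Left $A$-linearity is immediate from \eqref{A-bimod}:
$$
\mathscr{L}_{a\cdot X}(b)=\langle a\cdot X,\mathrm{d}b\rangle=a\langle X,\mathrm{d}b\rangle=a\,\mathscr{L}_X(b).
$$
For the Leibniz rule, I would combine the Leibniz rule of $\mathrm{d}$ with right $A$-linearity of $X$ and with \eqref{A-bimod}:
$$
\mathscr{L}_X(ab)=\langle X,\mathrm{d}(a)b\rangle+\langle X,a\,\mathrm{d}b\rangle=\mathscr{L}_X(a)b+\langle X\cdot a,\mathrm{d}b\rangle=\mathscr{L}_X(a)b+\mathscr{L}_{X\cdot a}(b).
$$

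Injectivity is the main point of i.). Suppose $\langle X,\mathrm{d}b\rangle=0$ for all $b\in A$. By the surjectivity axiom, $\Gamma$ is spanned by elements $a\,\mathrm{d}b$. Since $X$ is only right $A$-linear, I will rewrite these as
$$
a\,\mathrm{d}b=\mathrm{d}(ab)-\mathrm{d}(a)b,
$$
so that $\Gamma$ is also spanned by elements $\mathrm{d}(a)b$. On these, $X(\mathrm{d}(a)b)=\langle X,\mathrm{d}a\rangle b=0$, hence $X=0$.

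For ii.), I would first check that $\Gamma={}_A\mathrm{Hom}(\mathfrak{X},A)$ is a bimodule via $\langle X,a\omega b\rangle=\langle X\cdot a,\omega\rangle b$. Next, $\mathrm{d}(a)$ is left $A$-linear in $X$ precisely because $\mathscr{L}$ is left $A$-linear (axiom iii.)), so $\mathrm{d}$ lands in $\Gamma$. The Leibniz rule follows by reading the FOVC Leibniz rule backwards:
$$
\langle X,\mathrm{d}(ab)\rangle=\mathscr{L}_X(a)b+\mathscr{L}_{X\cdot a}(b)=\langle X,\mathrm{d}(a)b\rangle+\langle X,a\,\mathrm{d}b\rangle.
$$

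The expected obstacle in ii.) is the surjectivity axiom iii.) of a FODC: there is no reason for every left-linear functional on $\mathfrak{X}$ to be of the form $\sum_i a^i\mathrm{d}b^i$. I would handle this as in \cite{Bor96} by replacing $\Gamma$ with the subspace $A\,\mathrm{d}(A)$ spanned by the elements $a\,\mathrm{d}b$. This subspace is a sub-bimodule, since
$$
(a\,\mathrm{d}b)c=a\,\mathrm{d}(bc)-ab\,\mathrm{d}c.
$$
With this replacement, $(A\,\mathrm{d}(A),\mathrm{d})$ is a FODC, and I would read the statement with this (sub-bimodule) convention. The injectivity of $\mathscr{L}$ is not needed for this direction.
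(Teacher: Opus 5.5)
Your part i.) is essentially the paper's argument: the Leibniz rule comes from that of $\mathrm{d}$ together with \eqref{A-bimod}, and injectivity comes from rewriting $a\,\mathrm{d}b=\mathrm{d}(ab)-\mathrm{d}(a)b$, so that only right $A$-linearity of $X$ is used.

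In part ii.) you depart from the paper, and you are right to. The paper keeps $\Gamma={}_A\mathrm{Hom}(\mathfrak{X},A)$ and argues surjectivity by separation. If $\omega\notin\mathrm{d}(A)A$, it asserts that some $X\in\mathfrak{X}$ satisfies $\langle X,\omega\rangle\neq 0$ and $\langle X,\mathrm{d}(A)A\rangle=0$, and then injectivity of $\mathscr{L}$ gives a contradiction. That separation step is unjustified. $\Gamma$ is the dual of $\mathfrak{X}$, not conversely, and a proper sub-bimodule of ${}_A\mathrm{Hom}(\mathfrak{X},A)$ can have zero annihilator in $\mathfrak{X}$. In fact the literal claim is false. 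Take $A=\Bbbk[x]$ and the sub-FOVC $\mathfrak{X}:=A\,x\partial_x$ of the FOVC $\mathrm{Der}(A)$ from Example \ref{ex:FOVC} i.).
\begin{itemize}
\item $\mathfrak{X}$ is free of rank one, so $\omega\mapsto\langle x\partial_x,\omega\rangle$ identifies ${}_A\mathrm{Hom}(\mathfrak{X},A)\cong A$ as bimodules.
\item Under this identification, $a\,\mathrm{d}b\mapsto axb'$, hence $A\,\mathrm{d}A\cong xA\subsetneq A$.
\item The functional $ax\partial_x\mapsto a$ is therefore not of the form $\sum_i a^i\mathrm{d}b^i$, although no nonzero $X$ annihilates $\mathrm{d}(A)A$.
\end{itemize}

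So replacing $\Gamma$ by the sub-bimodule $A\,\mathrm{d}A$ is a necessary correction, not a reading convention. Your verification of the corrected statement is complete: closure under right multiplication via $(a\,\mathrm{d}b)c=a\,\mathrm{d}(bc)-ab\,\mathrm{d}c$, the Leibniz rule inherited from $\Gamma$, and left linearity of $\mathrm{d}(a)$ from axiom iii.) of a FOVC. You should present it as a correction and support it with a counterexample like the one above, rather than just saying there is ``no reason'' to expect surjectivity.

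Your remark that injectivity of $\mathscr{L}$ is then unused is correct. In your version its only role is to make $\mathfrak{X}\to\mathrm{Hom}_A(A\,\mathrm{d}A,A)$ injective.

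The correction also propagates. The functor ${}^*(\cdot)$ in Proposition \ref{prop:Adjoint} and Theorem \ref{dual-cp} would have to become $\mathfrak{X}\mapsto A\,\mathrm{d}A$. The same finitely generated free example shows that the round trip then returns $A\,\partial_x\supsetneq A\,x\partial_x$. So the claimed equivalence on the finitely generated projective part needs revision as well.
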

	\begin{proof}
		\begin{enumerate}
			\item[i.)] Let $(\Gamma,\mathrm{d})$ be a FODC on $A$. Then $\mathfrak{X}:=\mathrm{Hom}_A(\Gamma,A)$ is an $A$-bimodule w.r.t. the actions \eqref{A-bimod} as we already observed. Moreover, for all $a,b\in A$ and $X\in\mathfrak{X}$ the Leibniz rule
			\begin{align*}
				\mathscr{L}_X(ab)
				=\langle X,\mathrm{d}(ab)\rangle
				=\langle X,\mathrm{d}(a)b+a\mathrm{d}(b)\rangle
				=\langle X,\mathrm{d}(a)\rangle b+\langle X\cdot a,\mathrm{d}(b)\rangle
				=\mathscr{L}_X(a)b+\mathscr{L}_{X\cdot a}(b)
			\end{align*}
			holds by the Leibniz rule of $\mathrm{d}$ and the linearity of the pairing. It remains to prove that $\mathscr{L}\colon\mathfrak{X}\to\mathrm{End}_\Bbbk(A)$ is injective. Assume that $\mathscr{L}_X$ equals the zero endomorphism for an $X\in\mathfrak{X}$, i.e., $\mathscr{L}_X(a)=0$ for all $a\in A$. By the surjectivity of $(\Gamma,\mathrm{d})$ we have that for all $\omega\in\Gamma$ there exist finitely many $a^i,b^i\in A$ such that $\omega=a^i\mathrm{d}(b^i)$ with summation over $i$ understood. Thus
			\begin{align*}
				\langle X,\omega\rangle
				=\langle X,a^i\mathrm{d}(b^i)\rangle
				=\langle X,\mathrm{d}(a^ib^i)-\mathrm{d}(a^i)b^i\rangle
				=\mathscr{L}_X(a^ib^i)-\mathscr{L}_X(a^i)b^i
				=0
			\end{align*}
			by the Leibniz rule of $\mathrm{d}$, the linearity of the pairing and by the previous assumption $\mathscr{L}_X(a)=0$ for all $a\in A$. The pairing is non-degenerate, which implies that $X=0$ and proves injectivity.
			\item[ii.)] Assume that $(\mathfrak{X},\mathscr{L})$ is a FOVC on $A$. We already observed that $\Gamma:={}_A\mathrm{Hom}(\mathfrak{X},A)$ is an $A$-bimodules with actions defined symmetrically to \eqref{A-bimod}. Then we obtain for all $a,b\in A$ and $X\in\mathfrak{X}$
			\begin{align*}
				\langle X,\mathrm{d}(ab)\rangle
				=\mathscr{L}_X(ab)
				=\mathscr{L}_X(a)b+\mathscr{L}_{X\cdot a}(b)
				=\langle X,\mathrm{d}(a)\rangle b+\langle X\cdot a,\mathrm{d}(b)\rangle
				=\langle X,\mathrm{d}(a)b+a\mathrm{d}(b)\rangle
			\end{align*}
			by the Leibniz rule of $\mathscr{L}$ and the linearity of $\langle\cdot,\cdot\rangle$. Since the pairing is non-degenerate and the above holds for all $X\in\mathfrak{X}$ this implies the Leibniz rule for $\mathrm{d}$. It remains to prove surjectivity of $(\Gamma,\mathrm{d})$. Note that $\mathrm{d}(A)A:=\{\eta\in\Gamma~|~\exists a^i,b^i\in A \text{ s.t. }\omega=\mathrm{d}(a^i)b^i\}\subseteq\Gamma$ is a vector subspace. Assume there is an $\omega\in\Gamma$ such that $\omega\notin\mathrm{d}(A)A$. In particular $\omega\neq 0$. Since $\Gamma={}_A\mathrm{Hom}(\mathfrak{X},A)$ and $\omega\neq 0$ there exists an $X\in\mathfrak{X}$ such that $\langle X,\omega\rangle\neq 0$ and $\langle X,\mathrm{d}(A)A\rangle=0$. The latter implies that $\mathscr{L}_X(a)=\langle X,\mathrm{d}(a)\rangle=0$ for all $a\in A$, which gives $X=0$ by the injectivity of $\mathscr{L}$. This is a contradiction to $\langle X,\omega\rangle\neq 0$. Consequently $\omega$ does not exist and $\Gamma=\mathrm{d}(A)A$. An easy exercise (using the Leibniz rule) shows that $\mathrm{d}(A)A=A\mathrm{d}A$ as $A$-bimodules, completing the proof of the proposition.
		\end{enumerate}
	\end{proof}
	We would like to stress that the above correspondence is not $1$:$1$ in general. However, this is the case if we restrict ourself to the study of finitely generated projective modules, as we comment in the following.
	\begin{observation}\label{obs:fd}
		Let $\Gamma$ and $\mathfrak{X}$ be finitely generated projective 
		left, resp. right, $A$-modules.
		If $\Gamma={}_A\mathrm{Hom}(\mathfrak{X},A)$, then
		$\mathfrak{X}\cong\mathrm{Hom}_A(\Gamma,A)$, see e.g. \cite[Section 2.2]{paolo}. When $(\mathfrak{X},\mathscr{L})$ is a FOVC
		one can immediately check that the above isomorphism is a FOVC isomorphism (see also Proposition \ref{prop:duality}).
		Similarly, if $\mathfrak{X}=\mathrm{Hom}_A(\Gamma,A)$, then
		$\Gamma \cong {}_A\mathrm{Hom}(\mathfrak{X},A)$ as FODCi. Hence, in the case of 
		finitely generated projective $A$-modules, 
		Proposition \ref{prop:duality} gives a 1:1 correspondence
		between FOVCi and FODCi on $A$. 
	\end{observation}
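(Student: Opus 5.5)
The statement has two halves, the reflexivity of the module duality and its compatibility with the calculus data; the plan is to treat them in that order. The first half is standard algebra. Let $\mathfrak{X}$ be finitely generated projective as a right $A$-module and put $\Gamma={}_A\mathrm{Hom}(\mathfrak{X},A)$. Choose a dual basis $\{X_i,\xi^i\}_{i=1}^n$, with $X_i\in\mathfrak{X}$ and $\xi^i\in\mathrm{Hom}_A(\mathfrak{X}_A,A)$, so that $X=\sum_i X_i\,\xi^i(X)$. From this we obtain a dual basis for $\Gamma$ and conclude that $\Gamma$ is again finitely generated projective. We then define the evaluation map
$$\mathrm{ev}\colon\mathfrak{X}\to\mathrm{Hom}_A(\Gamma,A),\qquad \langle \mathrm{ev}(X),\omega\rangle:=\langle X,\omega\rangle .$$
One checks from the pairing conventions \eqref{A-bimod} that $\mathrm{ev}(X)$ is right $A$-linear in $\omega$ and that $\mathrm{ev}$ is $A$-bilinear. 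Both checks use only the defining identities of the two bimodule structures, $\langle X,a\omega b\rangle=\langle X\cdot a,\omega\rangle b$ and $\langle aXb,\omega\rangle=a\langle X,b\omega\rangle$. Bijectivity of $\mathrm{ev}$ then follows by the usual dual basis argument, or equivalently by reduction to finitely generated free modules and passage to direct summands, as in \cite[Section 2.2]{paolo}.

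The second half is the compatibility with the calculi. Start from a FOVC $(\mathfrak{X},\mathscr{L})$ whose underlying module is finitely generated projective, and let $(\Gamma,\mathrm{d})$ be the FODC of Proposition \ref{prop:duality} ii.). Applying part i.) of that proposition to $(\Gamma,\mathrm{d})$ yields a FOVC on $\mathrm{Hom}_A(\Gamma,A)$ with Lie derivative $\mathscr{L}'_Y(a)=\langle Y,\mathrm{d}a\rangle$. For the bimodule isomorphism $\mathrm{ev}$ to be a morphism of FOVCi we need $\mathscr{L}'\circ\mathrm{ev}=\mathscr{L}$. This holds because
$$\mathscr{L}'_{\mathrm{ev}(X)}(a)=\langle X,\mathrm{d}a\rangle=\mathscr{L}_X(a)$$
by the very definition of $\mathrm{d}$. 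Hence $\mathrm{ev}$ is an isomorphism of FOVCi. The symmetric statement starts from a FODC $(\Gamma,\mathrm{d})$, sets $\mathfrak{X}=\mathrm{Hom}_A(\Gamma,A)$, and uses the evaluation $\Gamma\to{}_A\mathrm{Hom}(\mathfrak{X},A)$. It is proved the same way and yields $\mathrm{d}''=\mathrm{ev}\circ\mathrm{d}$.

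Together, the two isomorphisms show that the two constructions of Proposition \ref{prop:duality} are mutually inverse up to isomorphism on the finitely generated projective objects, which gives the claimed 1:1 correspondence.

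The main obstacle is the first half, in particular the bookkeeping of left versus right duals. One must confirm that the bimodule structure on ${}_A\mathrm{Hom}(\mathfrak{X},A)$ indeed makes the double dual map $A$-bilinear, and that a finitely generated projective right $A$-module has a finitely generated projective left dual. I would handle this by first treating free modules $A^n$ explicitly, where both dualities are computed with row and column vectors, and then passing to direct summands via idempotents. The calculus-compatibility step is essentially tautological once $\mathrm{ev}$ is in place.
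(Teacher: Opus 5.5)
Your compatibility step ($\mathscr{L}'\circ\mathrm{ev}=\mathscr{L}$ and $\mathrm{d}''=\mathrm{ev}\circ\mathrm{d}$) is exactly the ``immediate check'' the paper has in mind, and for the module isomorphism the paper simply cites reflexivity of finitely generated projective modules. Your attempt to prove that reflexivity explicitly, however, takes the dual basis on the wrong side, and this is a genuine gap.

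Elements of $\Gamma={}_A\mathrm{Hom}(\mathfrak{X},A)$ are \emph{left} $A$-linear, and the right action on $\Gamma$ is $(\omega\cdot b)(X)=\omega(X)b$. So $\mathrm{ev}\colon\mathfrak{X}\to\mathrm{Hom}_A(\Gamma,A)$ is the canonical map of the left module ${}_A\mathfrak{X}$ into its double dual, and whether it is bijective depends only on ${}_A\mathfrak{X}$, not on $\mathfrak{X}_A$. The functionals $\xi^i$ of a right dual basis $X=\sum_iX_i\,\xi^i(X)$ are right $A$-linear, so in general they are not elements of $\Gamma$ and say nothing about left linear functionals. Likewise, a splitting $\mathfrak{X}_A\oplus Q\cong A^n$ of right modules does not control ${}_A\mathfrak{X}$. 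The dual of $\mathfrak{X}_A$ that you plan to show is finitely generated projective is $\mathrm{Hom}_A(\mathfrak{X},A)$, not $\Gamma$.

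Under the hypothesis you adopted the claim is actually false. On $A=\Bbbk[t]$ let $\mathfrak{X}=A$ with $m\cdot b=mb$ and $a\cdot m=a(0)m$. Then $\mathfrak{X}_A$ is free of rank one, but $t\,\omega(m)=\omega(t\cdot m)=0$ forces $\Gamma=0$ (trivially finitely generated projective), so $\mathrm{Hom}_A(\Gamma,A)=0\not\cong\mathfrak{X}$. The mirror image breaks your symmetric half. Take $\Gamma=tA\subseteq\Bbbk[t]$ with left multiplication, right action $\gamma\cdot b=\gamma\,b(0)$ and $\mathrm{d}a=a-a(0)$. This is an honest FODC, free of rank one as a left module, yet $\mathfrak{X}=\mathrm{Hom}_A(\Gamma,A)=0$, hence ${}_A\mathrm{Hom}(\mathfrak{X},A)=0\not\cong\Gamma$.

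What the argument needs is a left dual basis $X=\sum_i\omega^i(X)X_i$ with $\omega^i\in\Gamma$, i.e.\ ${}_A\mathfrak{X}$ finitely generated projective. For the other half you need $\Gamma$ finitely generated projective as a \emph{right} module. These are the opposite sides from the ones you (following the statement) assumed. With them the proof goes through:
\begin{itemize}
\item left linearity gives $\omega=\sum_i\omega^i\cdot\omega(X_i)$ for all $\omega\in\Gamma$, so $\{\omega^i,\mathrm{ev}(X_i)\}$ is a right dual basis of $\Gamma$;
\item every $\Phi\in\mathrm{Hom}_A(\Gamma,A)$ equals $\mathrm{ev}\bigl(\sum_i\Phi(\omega^i)X_i\bigr)$;
\item $\mathrm{ev}(X)=0$ forces $X=\sum_i\omega^i(X)X_i=0$.
\end{itemize}
Your compatibility check then applies verbatim.

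Be aware also that your last step presupposes Proposition \ref{prop:duality} ii.) always produces a FODC, and its surjectivity argument is not valid in general. The sub-FOVC $\mathfrak{X}=A\,t\partial_t$ of $\mathrm{Der}(\Bbbk[t])$ (Example \ref{ex:FOVC} i.)) is free of rank one on both sides. Via $\omega\mapsto\omega(t\partial_t)$ one gets ${}_A\mathrm{Hom}(\mathfrak{X},A)\cong A$ with $\mathrm{d}a\mapsto t\,\partial_t a$, so $A\,\mathrm{d}A$ corresponds to the proper submodule $tA$. A bijective correspondence therefore also requires restricting to FOVCi whose dual is generated by $\mathrm{d}A$.
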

	Moreover, the dual of an inner FOVC is again inner, while the converse holds in the finitely generated projective case.
	\begin{lemma}\label{lem:inner}
		The dual of an inner FOVC is an inner FODC. The dual of a finitely generated projective inner FODC is an inner FOVC.
	\end{lemma}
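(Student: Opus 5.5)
The plan is to use the standard notion of inner FODC: $(\Gamma,\mathrm{d})$ is \emph{inner} if there is $\theta\in\Gamma$ with $\mathrm{d}(a)=\theta a-a\theta$ for all $a\in A$. In each direction we build $\theta$ from the given inner datum through the pairing $\langle\cdot,\cdot\rangle$, and then check the defining identity by pairing against arbitrary elements, using non-degeneracy.

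For the first claim, let $(\mathfrak{X},\mathscr{L})$ be inner with left $A$-linear $\theta\colon\mathfrak{X}\to A$. Its dual is $\Gamma={}_A\mathrm{Hom}(\mathfrak{X},A)$ from Proposition \ref{prop:duality} ii.). Since $\theta$ is left $A$-linear, it is itself an element of $\Gamma$; call it $\theta_\Gamma$, so that $\langle X,\theta_\Gamma\rangle=\theta(X)$. The bimodule structure on $\Gamma$ is given by $\langle X,a\cdot\omega\cdot b\rangle=\langle X\cdot a,\omega\rangle b$. Hence for any $X\in\mathfrak{X}$,
\[
\langle X,\theta_\Gamma a-a\theta_\Gamma\rangle=\theta(X)a-\theta(X\cdot a)=\mathscr{L}_X(a)=\langle X,\mathrm{d}(a)\rangle .
\]
Since elements of $\Gamma$ are functions on $\mathfrak{X}$, this gives $\mathrm{d}(a)=\theta_\Gamma a-a\theta_\Gamma$. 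So the dual FODC is inner.

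For the second claim, let $(\Gamma,\mathrm{d})$ be an inner FODC with $\mathrm{d}(a)=\theta a-a\theta$, and assume $\Gamma$ is finitely generated projective. Its dual is $\mathfrak{X}=\mathrm{Hom}_A(\Gamma,A)$ with $\mathscr{L}_X(a)=\langle X,\mathrm{d}(a)\rangle$. Define $\hat\theta\colon\mathfrak{X}\to A$ by $\hat\theta(X):=\langle X,\theta\rangle$. This map is left $A$-linear because $\langle a\cdot X,\omega\rangle=a\langle X,\omega\rangle$. Right $A$-linearity of $X$ and the bimodule structure \eqref{A-bimod} then give
\[
\hat\theta(X)a-\hat\theta(X\cdot a)=\langle X,\theta\rangle a-\langle X,a\theta\rangle=\langle X,\theta a-a\theta\rangle=\mathscr{L}_X(a),
\]
so the dual FOVC is inner.

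This computation does not actually use finite generation or projectivity. The hypothesis matters only if one wants the "dual" of a FOVC to be a genuine FOVC with the identification $\Gamma\cong{}_A\mathrm{Hom}(\mathfrak{X},A)$ compatible with duality, as in Observation \ref{obs:fd}. So the main point to check is that, under that isomorphism, $\theta$ corresponds to $\hat\theta$ and the inner structure is transported. That follows because the isomorphism of Observation \ref{obs:fd} is given by evaluation against the pairing. If the authors intend "inner" for FODC to mean the stronger condition involving a left-linear map $\Gamma\to A$, I would instead transport along the finitely generated projective duality $\mathfrak{X}\cong\mathrm{Hom}_A(\Gamma,A)$. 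Making that transport work is the step I expect to be the real obstacle.
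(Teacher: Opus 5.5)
Your proof is correct, and the first half is exactly the paper's computation. In both directions the paper does the same calculation, read forwards and then backwards. The difference is in how the second half obtains the functional $\mathfrak{X}\to A$ from $\theta\in\Gamma$.

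\begin{itemize}
\item The paper identifies $\theta$ with an element of ${}_A\mathrm{Hom}(\mathrm{Hom}_A(\Gamma,A),A)$ through the isomorphism ${}_A\mathrm{Hom}(\mathrm{Hom}_A(\Gamma,A),A)\cong\Gamma$. This is the only place where it invokes finite generation and projectivity.
\item You instead take the evaluation $\hat\theta=\langle\cdot,\theta\rangle$. This is just the image of $\theta$ under the unit $\eta_\Gamma\colon\Gamma\to{}^*(\Gamma^*)$, which exists for every bimodule, whether or not it is invertible.
\end{itemize}

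Your argument therefore proves the second claim without the finitely generated projective hypothesis, and your remark that the hypothesis is unused is right.

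The hypothesis is genuinely needed for the reverse implication: a FODC whose dual FOVC is inner is itself inner. There you must lift a given $\hat\theta\in{}^*(\Gamma^*)$ to some $\theta\in\Gamma$. You then deduce $\mathrm{d}a=\theta a-a\theta$ from the fact that both sides agree after pairing with every $X$. Both steps need $\eta_\Gamma$ to be bijective, as in Observation \ref{obs:fd}.

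You can drop your final paragraph. The paper's notion of inner FODC is exactly $\mathrm{d}a=\theta\cdot a-a\cdot\theta$ with $\theta\in\Gamma$; this is what its proof and the finite-set example use. So there is no stronger notion to transport along the duality. Also, the dual of any FODC is already a FOVC by Proposition \ref{prop:duality} i.), so the hypothesis is not needed for that step either.
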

	\begin{proof}
		For the first statement let $(\mathfrak{X},\mathscr{L})$ be an inner FOVC with corresponding left $A$-linear $\theta\colon\mathfrak{X}\to A$. Since the dual FODC is defined by $\Gamma={}_A\mathrm{Hom}(\mathfrak{X},A)$ {we have} $\theta\in\Gamma$ and
		$$
		\langle X,\mathrm{d}a\rangle
		=\mathscr{L}_X(a)
		=\theta(X)a-\theta(X\cdot a)
		=\langle X,\theta\rangle a-\langle X\cdot a,\theta\rangle
		=\langle X,\theta\cdot a\rangle-\langle X,a\cdot\theta\rangle,
		$$
		which implies $\mathrm{d}a=\theta\cdot a-a\cdot\theta$.
		
		The other implication is proven by reading the above formula the other way round and using $\mathfrak{X}=\mathrm{Hom}_A(\Gamma,A)$. To identify $\theta\in{}_A\mathrm{Hom}(\mathfrak{X},A)={}_A\mathrm{Hom}(\mathrm{Hom}_A(\Gamma,A),A)\cong\Gamma$ we need that $\Gamma$ is finitely generated projective as an $A$-module, see Observation \ref{obs:fd}.
	\end{proof}
	
	As an instance of the above discussion we examine the FOVC of Example \ref{graph-calculus} and its dual FODC.
	
	\begin{example}
		Let $X$ be a finite set, $A=\C[X]$ the algebra of functions on $X$. Let $(\Chi, \mathscr{L})$ be the {FOVC corresponding to a directed graph on $X$}, as discussed in Example \ref{graph-calculus}. We define a FODC $(\Gamma,\mathrm{d})$ 
		on $A$ via {$\Gamma:={}_A\mathrm{Hom}(\mathfrak{X},A)$ and (\ref{fodc-fovc}). Since $\mathfrak{X}=\mathrm{span}_\mathbb{C}\{\chi_{x\to y}\}$ is the vector space generated by $\chi_{x\to y}$ for all edges $x\to y$ of the graph, it follows that $\Gamma$ is vector space spanned by the dual $\omega_{x\to y}$, where 
			$$
			\langle\chi_{x\to y},\omega_{y'\to x'}\rangle=\delta_{x,x'}\delta_{y,y'}\delta_x.
			$$
			The differential is determined by}
		\begin{equation}
			\langle \chi_{x\ra y}, df \rangle := \mathscr{L}_{\chi_{x\ra y}}(f)=(f(x)-f(y))\de_x
		\end{equation}
		for all $f\in A$ and edges $x\to y$ of the graph.
		Since $(\Chi, \mathscr{L})$ is inner, also $(\Gamma, \mathrm{d})$ is inner by Lemma \ref{lem:inner} with
		$$
		df=[\Theta,f], \qquad \hbox{where} \quad \Theta=\sum_{y\ra x} (f(x)-f(y))\omega_{x\ra y}.
		$$
		This is the FODC appearing in \cite[Proposition 1.24]{bm}, {see also \cite{Cas,DimMul}}.
	\end{example}

	We can rephrase Proposition \ref{prop:duality} in categorical terms. 
	For this, we introduce the dualizing functors
	\begin{equation}\label{dualfunctors}
		(\cdot)^*\colon{}_A\mathcal{M}_A\to({}_A\mathcal{M}_A)^\mathrm{op},\qquad
		{}^*(\cdot)\colon({}_A\mathcal{M}_A)^\mathrm{op}\to{}_A\mathcal{M}_A
	\end{equation}
	on objects $M\in{}_A\mathcal{M}_A$ by $M^*:=\mathrm{Hom}_A(M,A)$, ${}^*M:={}_A\mathrm{Hom}(M,A)$ and on morphisms $\phi\colon M\to M'$ in ${}_A\mathcal{M}_A$ by $\phi^*\colon\mathrm{Hom}_A(M',A)\to\mathrm{Hom}_A(M,A)$ and ${}^*\phi\colon{}_A\mathrm{Hom}(M',A)\to{}_A\mathrm{Hom}(M,A)$, where
	\begin{equation}\label{eq:DualFuncMorph}
		\phi^*(f):=f\circ\phi\qquad\text{and}\qquad{}^*\phi(g):=g\circ\phi
	\end{equation}
	for all $f\in\mathrm{Hom}_A(M',A)$ and $g\in{}_A\mathrm{Hom}(M',A)$. Above, $({}_A\mathcal{M}_A)^\mathrm{op}$ denotes the opposite category of ${}_A\mathcal{M}_A$, which has the same objects, but is endowed with morphisms of reversed arrows. Thus, in other words, \eqref{dualfunctors} correspond to \textit{contra}variant endofunctors. It is straightforward to show that \eqref{dualfunctors} are in fact functorial.

	As shown in \cite[Proposition 9]{PoinsotPorst}, $(\cdot)^*$ is the left adjoint functor of ${}^*(\cdot)$. In more detail, for every object $M$ in ${}_A\mathcal{M}_A$ the unit of the adjunction is the morphism $\eta_M\colon M\to{}^*(M^*)$ in ${}_A\mathcal{M}_A$ defined for all $M$ by
	\begin{equation}\label{eq:unit}
		\eta_M(m)\colon M^*\to A,\qquad\phi\mapsto\phi(m)
	\end{equation}
	and one easily checks that \eqref{eq:unit} is well-defined, left $A$-linear and that $\eta_M$ is $A$-bilinear. Moreover, the counit of the adjunction is the morphism $\epsilon_M\colon({}^*M)^*\to M$ in $({}_A\mathcal{M}_A)^\mathrm{op}$ (recall that the arrows in the opposite categories are reversed) corresponding to the morphism $\epsilon'_M\colon M\to({}^*M)^*$ in ${}_A\mathcal{M}_A$, given for all $m\in M$ by
	\begin{equation}\label{eq:counit}
		\begin{split}
			\epsilon'_M(m)\colon{}^*M\to A,\qquad\psi\mapsto\psi(m)
		\end{split}
	\end{equation}
	and one verifies that \eqref{eq:counit} is well-defined, right $A$-linear and that $\epsilon'_M$ is $A$-bilinear.
	
	We show that the previous adjunction induces an adjunction between the category of first order differential calculi and the opposite category of first order vector field calculi.
	\begin{proposition}\label{prop:Adjoint}
		For any associative unital algebra $A$ the dualizing functors $(\cdot)^*$, ${}^*(\cdot)$
		induce an adjunction between the category $\underline{\mathrm{FODC}}_A$ of FODCi on $A$ and $(\underline{\mathrm{FOVC}}_A)^\mathrm{op}$ the category of {FOVCi} on $A$ with opposite morphisms:
		\begin{equation*}
			\begin{tikzcd}
				\underline{\mathrm{FODC}}_A \arrow[rr, bend left, "(\cdot)^*"] \arrow{ddd}[swap]{\mathrm{forget}_\mathrm{d}} & \perp & (\underline{\mathrm{FOVC}}_A)^\mathrm{op} \arrow[ll, bend left, "{}^*(\cdot)"] \arrow{ddd}{(\mathrm{forget}_\mathscr{L})^\mathrm{op}}\\
				& & \\
				& & \\
				{}_A\mathcal{M}_A \arrow[rr, bend left, "(\cdot)^*"] & \perp & ({}_A\mathcal{M}_A)^\mathrm{op} \arrow[ll, bend left, "{}^*(\cdot)"]
			\end{tikzcd}
		\end{equation*}
		where
		$\mathrm{forget}_\mathrm{d}(\Gamma,\mathrm{d})=\Gamma$ and $\mathrm{forget}_\mathscr{L}(\mathfrak{X},\mathscr{L})=\mathfrak{X}$
		are the forgetful functors.
	\end{proposition}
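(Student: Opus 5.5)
The plan is to lift the bimodule adjunction of \cite[Proposition 9]{PoinsotPorst} along the forgetful functors. I will show that the dualizing functors, together with the unit \eqref{eq:unit} and counit \eqref{eq:counit}, restrict to the subcategories of calculi; the triangle identities then hold automatically, because they already hold in ${}_A\mathcal{M}_A$ and all structure maps are inherited. Commutativity of the diagram with the forgetful functors is built into the construction.

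\emph{Step 1: functors on objects.} Proposition \ref{prop:duality} already gives the object assignments $(\Gamma,\mathrm{d})\mapsto(\Gamma^*,\mathscr{L})$ with $\mathscr{L}_X(a)=\langle X,\mathrm{d}a\rangle$, and $(\mathfrak{X},\mathscr{L})\mapsto({}^*\mathfrak{X},\mathrm{d})$ with $\langle X,\mathrm{d}a\rangle=\mathscr{L}_X(a)$.

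\emph{Step 2: functors on morphisms.} Let $\Psi\colon(\Gamma,\mathrm{d})\to(\Gamma',\mathrm{d}')$ be a morphism of FODCi. Then $\Psi^*(f)=f\circ\Psi$ is an $A$-bimodule map, and it satisfies $\mathscr{L}_{\Psi^*f}(a)=f(\Psi\mathrm{d}a)=f(\mathrm{d}'a)=\mathscr{L}'_f(a)$. Hence $\Psi^*\colon(\Gamma'^*,\mathscr{L}')\to(\Gamma^*,\mathscr{L})$ is a morphism of FOVCi, i.e.\ an arrow in the opposite category. Dually, let $\Phi\colon(\mathfrak{X},\mathscr{L})\to(\mathfrak{X}',\mathscr{L}')$ be a morphism of FOVCi, so $\mathscr{L}'\circ\Phi=\mathscr{L}$. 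For ${}^*\Phi\colon{}^*\mathfrak{X}'\to{}^*\mathfrak{X}$ we compute $\langle X,{}^*\Phi(\mathrm{d}'a)\rangle=\langle\Phi X,\mathrm{d}'a\rangle=\mathscr{L}'_{\Phi X}(a)=\mathscr{L}_X(a)=\langle X,\mathrm{d}a\rangle$, which gives ${}^*\Phi\circ\mathrm{d}'=\mathrm{d}$. Functoriality is inherited from ${}_A\mathcal{M}_A$.

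\emph{Step 3: unit and counit.} This is the main point. We must show that $\eta_\Gamma\colon\Gamma\to{}^*(\Gamma^*)$ is a morphism of FODCi, where the target carries the differential $\mathrm{d}''$ determined by $\langle X,\mathrm{d}''a\rangle=\mathscr{L}_X(a)=X(\mathrm{d}a)$. Indeed $\eta_\Gamma(\mathrm{d}a)(X)=X(\mathrm{d}a)$, so $\eta_\Gamma\circ\mathrm{d}=\mathrm{d}''$. Surjectivity of $\eta_\Gamma$ is not needed as a separate requirement: the codomain is a FODC by Proposition \ref{prop:duality}, and any bimodule map intertwining the differentials is a morphism (and is then automatically surjective). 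Similarly, $\epsilon'_\mathfrak{X}\colon\mathfrak{X}\to({}^*\mathfrak{X})^*$ satisfies $\mathscr{L}''_{\epsilon'(X)}(a)=\epsilon'(X)(\mathrm{d}a)=\langle X,\mathrm{d}a\rangle=\mathscr{L}_X(a)$, so it is a morphism of FOVCi, which corresponds to the counit arrow in the opposite category. Naturality of $\eta$ and $\epsilon$ and the triangle identities are equalities of underlying bimodule maps, so they follow from the bimodule adjunction.

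The subtle step is checking the direction conventions in the opposite category for the counit, together with the left/right evaluation conventions in $\langle\cdot,\cdot\rangle$. I would handle both by writing each pairing explicitly as evaluation of a homomorphism.
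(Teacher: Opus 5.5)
Your Steps 2 and 3 follow the paper's proof almost verbatim. The paper also takes the object assignments from Proposition~\ref{prop:duality}, checks $\mathscr{L}\circ\Psi^*=\mathscr{L}'$ and ${}^*\Phi\circ\mathrm{d}'=\mathrm{d}$ on morphisms, and verifies $\eta_\Gamma\circ\mathrm{d}=\mathrm{d}''$ and $\mathscr{L}''\circ\epsilon'_{\mathfrak{X}}=\mathscr{L}$. Naturality and the triangle identities are in fact automatic, since morphisms in $\underline{\mathrm{FODC}}_A$ and $\underline{\mathrm{FOVC}}_A$ are unique when they exist.

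\textbf{The gap.} You inherit a genuine gap from the paper in Step 1 for ${}^*(\cdot)$. Part ii.) of Proposition~\ref{prop:duality} is false in general, so ${}^*(\mathfrak{X},\mathscr{L})$ need not be a FODC. Your remark that the codomain of $\eta_\Gamma$ is a FODC rests on the same point. The proof of that part uses a separation property: for $\omega\notin\mathrm{d}(A)A$ it asserts some $X$ with $\langle X,\omega\rangle\neq 0$ but $\langle X,\mathrm{d}(A)A\rangle=0$. This holds over a field, but not for duals of modules.

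\textbf{Counterexample.} Let $A=\Bbbk[x]$ and $\mathfrak{X}=A\cdot x\partial_x\subseteq\mathrm{Der}(A)$. This is a sub-bimodule of Example~\ref{ex:FOVC} i.), hence a FOVC. It is free on $Y=x\partial_x$, so ${}^*\mathfrak{X}=\theta\cdot A$ with $\theta(Y)=1$. From $(b\,\mathrm{d}a)(pY)=(\mathrm{d}a)(pbY)=pbx\,\partial_x a$ one gets $b\,\mathrm{d}a=\theta\cdot(bx\,\partial_x a)$. Hence $A\,\mathrm{d}A=\theta\cdot xA\subsetneq{}^*\mathfrak{X}$; the form $\theta$, morally ``$\mathrm{d}x/x$'', is missing. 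So the surjectivity axiom fails, even though $\mathfrak{X}$ is free of rank one. No other differential on ${}^*\mathfrak{X}$ can rescue this, because lifting the counit $\epsilon'_{\mathfrak{X}}$ forces $(\mathrm{d}a)(X)=\mathscr{L}_X(a)$. Thus the statement in its literal form, with ${}^*(\cdot)$ lifting over the forgetful functors, is false.

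\textbf{What your argument does prove.} With Steps 2 and 3 essentially unchanged, you get the corrected version in which ${}^*(\mathfrak{X},\mathscr{L})$ is replaced by the sub-bimodule $A\,\mathrm{d}A\subseteq{}^*\mathfrak{X}$:
\begin{itemize}
\item ${}^*\Phi$ maps $A\,\mathrm{d}'A$ into $A\,\mathrm{d}A$, because ${}^*\Phi\circ\mathrm{d}'=\mathrm{d}$;
\item $\eta_\Gamma$ lands in $A\,\mathrm{d}''A$, because $\eta_\Gamma(a\,\mathrm{d}b)=a\,\mathrm{d}''b$, and is then automatically surjective;
\item $\epsilon'_{\mathfrak{X}}$ composed with restriction to $A\,\mathrm{d}A$ is still a morphism of FOVCi.
\end{itemize}
The price is that the right-hand square of the diagram commutes only up to the inclusion $A\,\mathrm{d}A\hookrightarrow{}^*\mathfrak{X}$.
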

	
	\begin{proof}
		The lower part of the diagram states the previously mentioned adjunction between ${}_A\mathcal{M}_A$ and $({}_A\mathcal{M}_A)^\mathrm{op}$, as proven in \cite[Proposition 9]{PoinsotPorst}. We further showed in Proposition \ref{prop:duality} that the functors \eqref{dualfunctors} restrict and corestrict to the categories in the top of the diagram, at least on objects. We prove that they also restrict and corestrict on morphisms and that the unit and counit of the adjunction are morphisms in the correct categories.
		
		Let $(\Gamma,\mathrm{d})\to(\Gamma',\mathrm{d}')$ be a morphism of FODCi on $A$ with $\Psi\colon\Gamma\to\Gamma'$ denoting the corresponding $A$-bilinear morphism. Then $\Psi^*\colon(\Gamma')^*\to\Gamma^*$, $f\mapsto f\circ\Psi$ is the induced $A$-bimodule morphism between $\mathfrak{X}':=(\Gamma')^*=\mathrm{Hom}_A(\Gamma',A)$ and $\mathfrak{X}:=\Gamma^*=\mathrm{Hom}_A(\Gamma,A)$. By Proposition \ref{prop:duality} the $A$-bimodules $\mathfrak{X}'$ and $\mathfrak{X}$ are first order vector field calculi with Lie derivatives given respectively by $\mathscr{L}'_{X'}(a)=\langle X',\mathrm{d}'(a)\rangle$ and $\mathscr{L}_X(a)=\langle X,\mathrm{d}(a)\rangle$ for all $X'\in\mathfrak{X}'$, $X\in\mathfrak{X}$ and $a\in A$. Then, the $A$-bilinear map $\Psi^*\colon\mathfrak{X}'\to\mathfrak{X}$ is a morphism of FOVCi, i.e., $\mathscr{L}\circ\Psi^*=\mathscr{L}'$ for all $X'\in\mathfrak{X}'$ since
		\begin{align*}
			\mathscr{L}(\Psi^*(X'))(a)
			=\langle\Psi^*(X'),\mathrm{d}a\rangle
			=\langle X',\Psi(\mathrm{d}a)\rangle
			=\langle X',\mathrm{d}'a\rangle
			=\mathscr{L}_{X'}(a)
		\end{align*}
		for all $a\in A$, using that $\Psi$ is a morphism of FODCi. Equivalently, we obtained a morphism $(\mathfrak{X},\mathscr{L})\to(\mathfrak{X}',\mathscr{L}')$ in $(\underline{\mathrm{FOVC}}_A)^\mathrm{op}$.

		On the other hand, and entirely similarly, given a morphism $(\mathfrak{X},\mathscr{L})\to(\mathfrak{X}',\mathscr{L}')$ of first order vector field calculi with corresponding $A$-bimodule morphism $\Phi\colon\mathfrak{X}\to\mathfrak{X}'$, the induced $A$-bimodule morphism ${}^*\Phi\colon{}^*(\mathfrak{X}')\to{}^*\mathfrak{X}$ gives a morphism $({}^*(\mathfrak{X}'),\mathrm{d}')\to({}^*\mathfrak{X},\mathrm{d})$ of FODCi, where $\mathrm{d}'\colon A\to{}^*(\mathfrak{X}')$ and $\mathrm{d}\colon A\to{}^*\mathfrak{X}$ are determined by
		$$
		\langle X',\mathrm{d}'a\rangle:=\mathscr{L}'_{X'}(a)\qquad\text{and}\qquad
		\langle X,\mathrm{d}a\rangle:=\mathscr{L}_{X}(a)
		$$
		for all $X'\in\mathfrak{X}'$, $X\in\mathfrak{X}$ and $a\in A$ via Proposition \ref{prop:duality}. In fact,
		$$
		\langle X,{}^*\Phi(\mathrm{d}'a)\rangle
		=\langle\Phi(X),\mathrm{d}'a\rangle
		=\mathscr{L}'_{\Phi(X)}(a)
		=\mathscr{L}_X(a)
		=\langle X,\mathrm{d}a\rangle
		$$
		for all $X\in\mathfrak{X}$ and $a\in A$ implies ${}^*\Phi\circ\mathrm{d}'=\mathrm{d}$. Thus, reversing the arrow of the morphism $\Phi$ in order to understand it as a morphism in $(\underline{\mathrm{FOVC}}_A)^\mathrm{op}$, we obtain a morphism in the correct category. 
		
		It remains to prove that, given a FODC $(\Gamma,\mathrm{d})$ and a FOVC $(\mathfrak{X},\mathscr{L})$, the unit of the adjunction $\eta_\Gamma\colon\Gamma\to{}^*(\Gamma^*)$ is a morphism in $\underline{\mathrm{FODC}}_A$ and the counit of the adjunction $\epsilon_\mathfrak{X}\colon({}^*\mathfrak{X})^*\to\mathfrak{X}$ is a morphism in $(\underline{\mathrm{FOVC}}_A)^\mathrm{op}$. We already argued before the proposition that this is the case on the level of $A$-bimodules and thus it remains to show compatibility with the differential and Lie derivative, respectively. Recalling equation \eqref{eq:unit}, denoting the Lie derivative on $\Gamma^*$ by $\mathscr{L}$ and denoting the differential on ${}^*(\Gamma^*)$ by $\tilde{\mathrm{d}}$, we obtain the equality $\eta_\Gamma(\mathrm{d}a)=\tilde{\mathrm{d}a}$ for all $a\in A$ since
		\begin{align*}
			\langle X,\eta_\Gamma(\mathrm{d}a)\rangle
			=X(\mathrm{d}a)
			=\mathscr{L}_X(a)
			=\langle X,\tilde{d}a\rangle
		\end{align*}
		for all $X\in\Gamma^*$. Thus, the unit $\eta_M$ is a morphism in $\underline{\mathrm{FODC}}_A$. In complete analogy one shows that $\epsilon_\mathfrak{X}\colon\mathfrak{X}\to({}^*\mathfrak{X})^*$ is a morphism in $\underline{\mathrm{FOVC}}_A$ and thus it follows that the counit of the adjunction is a morphism in $(\underline{\mathrm{FOVC}}_A)^\mathrm{op}$.
	\end{proof}

	Denote the full subcategory of finitely generated projective 
	left $A$-modules in $\underline{\mathrm{FODC}}_A$ by $\underline{\mathrm{FODC}}^{\fgp}_A$. Similarly, $\underline{\mathrm{FOVC}}^{\fgp}_A$ denotes the full subcategory of finitely generated projective
	right $A$-modules in $\underline{\mathrm{FOVC}}_A$.
	
	\begin{theorem}\label{dual-cp} 
		The dualizing functors $(\cdot)^*$, ${}^*(\cdot)$ induce an equivalence of categories
		$$
		\underline{\mathrm{FODC}}^{\fgp}_A\cong(\underline{\mathrm{FOVC}}^{\fgp}_A)^\mathrm{op}.
		$$
	\end{theorem}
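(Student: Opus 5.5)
We will deduce the equivalence from the adjunction of Proposition \ref{prop:Adjoint}: it suffices to show that the functors $(\cdot)^*$ and ${}^*(\cdot)$ restrict to the finitely generated projective subcategories, and that on these subcategories the unit $\eta_\Gamma\colon\Gamma\to{}^*(\Gamma^*)$ and the map $\epsilon'_\mathfrak{X}\colon\mathfrak{X}\to({}^*\mathfrak{X})^*$ underlying the counit are isomorphisms. An adjunction whose unit and counit are natural isomorphisms is an adjoint equivalence. Proposition \ref{prop:Adjoint} already shows that $\eta_\Gamma$ and $\epsilon'_\mathfrak{X}$ are morphisms of FODCi and FOVCi, respectively. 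So once they are bijective, their inverses are automatically compatible with $\mathrm{d}$ and $\mathscr{L}$, and hence are morphisms in $\underline{\mathrm{FODC}}_A$ and $\underline{\mathrm{FOVC}}_A$.

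First, we check that the functors preserve finite generation and projectivity. If $\Gamma$ is finitely generated projective as a left $A$-module, choose a dual basis $\{\omega_i\in\Gamma,\ \theta_i\in{}_A\mathrm{Hom}(\Gamma,A)\}_{i=1}^n$. Then $\Gamma$ is a direct summand of $A^n$ as a left module, and dualizing gives that $\mathrm{Hom}_A(\Gamma,A)$ is a direct summand of $A^n$ as a right module. Care is needed here with which side the Hom is taken over. The paper's convention in Observation \ref{obs:fd} pairs ``$\Gamma$ f.g.\ projective left'' with ``$\mathfrak{X}$ f.g.\ projective right'', and I will follow that convention, using the standard fact (e.g.\ \cite[Section 2.2]{paolo}) that the one-sided dual of a f.g.\ projective module is f.g.\ projective on the opposite side. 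The same argument applies to ${}^*\mathfrak{X}$. Together with Proposition \ref{prop:duality}, this shows that the functors send objects of $\underline{\mathrm{FODC}}^{\fgp}_A$ to $\underline{\mathrm{FOVC}}^{\fgp}_A$ and conversely. Since both are full subcategories, morphisms need no further checking.

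Second, we show that $\eta_\Gamma$ and $\epsilon'_\mathfrak{X}$ are bijective. We reduce to the free case $A^n$, where evaluation into the double dual is visibly an isomorphism, being given by the standard basis and its dual basis. We then pass to direct summands by additivity and naturality of $\eta$ and $\epsilon'$: if $\Gamma\oplus\Gamma'\cong A^n$, then $\eta_{A^n}=\eta_\Gamma\oplus\eta_{\Gamma'}$, so each summand map is an isomorphism. Explicitly, the inverse of $\eta_\Gamma$ sends $F\in{}^*(\Gamma^*)$ to $\sum_i F(\theta_i)\,\omega_i$ (up to the appropriate side conventions). This is exactly the content recorded in Observation \ref{obs:fd}, which we may invoke directly.

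The main obstacle is bookkeeping rather than substance. Each time one dualizes, the module sidedness switches: $\Gamma$ is projective on the left, $\Gamma^*=\mathrm{Hom}_A(\Gamma,A)$ uses the right structure in its definition, and so on. One must therefore make sure that the finitely generated projective hypothesis is imposed on the side entering each Hom, so that the double-dual maps really are isomorphisms. I will fix the conventions of Observation \ref{obs:fd} at the outset and verify that the dual basis formula gives two-sided inverses of $\eta_\Gamma$ and $\epsilon'_\mathfrak{X}$ compatible with the bimodule structures defined in \eqref{A-bimod}.
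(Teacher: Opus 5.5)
Your route is the paper's own: Proposition~\ref{prop:Adjoint} plus Observation~\ref{obs:fd}, i.e.\ show that unit and counit are invertible on finitely generated projective objects. But the two steps you treat as routine are exactly where the argument breaks, and the paper's one-line proof has both gaps too.

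\textbf{First gap: the sides are wrong.} This is not bookkeeping. $\Gamma^*=\mathrm{Hom}_A(\Gamma,A)$, and hence $\eta_{\Gamma}$, depends only on the \emph{right} $A$-module structure of $\Gamma$. Likewise ${}^*\mathfrak{X}$ and $\epsilon'_{\mathfrak{X}}$ depend only on the \emph{left} structure of $\mathfrak{X}$. The standard fact you quote is about ${}_A\mathrm{Hom}(\Gamma,A)$, not $\Gamma^*$. Your dual basis elements $\theta_i\in{}_A\mathrm{Hom}(\Gamma,A)$ are not elements of $\Gamma^*$, so $F(\theta_i)$ is undefined. For the same reason, a left-module splitting $\Gamma\oplus\Gamma'\cong A^n$ says nothing about $\eta_{\Gamma}$. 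With the convention you adopt, the claim is in fact false. On $A=\Bbbk[x]$, let $\Gamma=xA$ with left multiplication, right action $\omega\cdot a:=a(0)\omega$, and $\mathrm{d}a:=a-a(0)$. This is a FODC (Leibniz holds and $A\,\mathrm{d}x=\Gamma$), and it is free of rank one as a left module. But any right $A$-linear $X\colon\Gamma\to A$ satisfies $X(\omega)x=X(\omega\cdot x)=0$. So $\Gamma^*=0$ and $\eta_{\Gamma}$ is not injective. The correct hypotheses are: $\Gamma$ finitely generated projective as a \emph{right} module, and $\mathfrak{X}$ as a \emph{left} module. With these, your free-case-plus-direct-summand argument works, run in the one-sided module categories where $\eta$ and $\epsilon'$ are natural. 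It then shows that $\eta_{\Gamma}$ and $\epsilon'_{\mathfrak{X}}$ are bijective.

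\textbf{Second gap: Proposition~\ref{prop:duality}~ii.) is false.} Switching sides does not repair this. Your object-level step ``together with Proposition~\ref{prop:duality}, ${}^*(\cdot)$ sends finitely generated projective FOVCi to FODCi'' fails. The surjectivity argument in Proposition~\ref{prop:duality}~ii.) claims there is an $X$ with $\langle X,\omega\rangle\neq0$ and $\langle X,\mathrm{d}(A)A\rangle=0$. That is a Hahn--Banach separation step, and it has no analogue for modules. Take $A=\Bbbk[x]$ and $\mathfrak{X}:=A\cdot x\partial_x\subseteq\mathrm{Der}(A)$, with the symmetric bimodule structure and $\mathscr{L}$ the inclusion. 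This is a FOVC, free of rank one on both sides. Yet ${}^*\mathfrak{X}=A\xi$ with $\xi(x\partial_x)=1$, and $\mathrm{d}a=xa'\xi$, so $A\,\mathrm{d}A=xA\,\xi\neq{}^*\mathfrak{X}$. Thus ${}^*\mathfrak{X}$ is not a FODC. Moreover, $\mathfrak{X}$ is not isomorphic to $\Gamma^*$ for any FODC $\Gamma$ that is finitely generated projective on the right. Otherwise ${}^*\mathfrak{X}\cong{}^*(\Gamma^*)\cong\Gamma$ compatibly with the differentials, which would force ${}^*\mathfrak{X}=A\,\mathrm{d}A$.

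So the statement as written cannot be proved, and Proposition~\ref{prop:Adjoint}, which you take as given, rests on the same false step. Once the sides are corrected, your argument does establish a weaker equivalence. On one side are the FODCi that are finitely generated projective as right modules. On the other is the opposite of the full subcategory of FOVCi $\mathfrak{X}$ that are finitely generated projective as left modules and satisfy ${}^*\mathfrak{X}=A\,\mathrm{d}A$.
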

	
	\begin{proof}
		This follows from Proposition \ref{prop:Adjoint} and Observation \ref{obs:fd}, 
		also noting that, in the notation of the proof of Proposition \ref{prop:Adjoint}, the differential $\mathrm{d}$ can be identified with the differential $\tilde{\mathrm{d}}$ on ${}^*(\Gamma^*)$ under the $A$-bimodule isomorphism $\Gamma\xrightarrow{\cong}{}^*(\Gamma^*)$ and similarly for the Lie derivatives on $\mathfrak{X}$ and $({}^*\mathfrak{X})^*$.
	\end{proof}
	
	\begin{remark}
		{We have an equivalence of categories between FODCi on $\Bbbk[V]$, $V$ a finite set,
			and the category of directed graphs with vertices $V$, no self-loops and no multiple arrows of the same direction \cite[Chapter 1.4]{bm}. Because of Theorem \ref{dual-cp} 
			we immediately obtain an equivalence categories between the opposite category of FOVCi on $\Bbbk[V]$
			and the category of directed graphs with vertices $V$, as above.}
	\end{remark}

	\subsection{Covariant first order vector calculi}\label{sec:covFOVC}
	
	Let $H$ be a Hopf algebra and $A$ a right $H$-comodule algebra with coaction $\Delta_A\colon A\to A\otimes H$. Recall that this means that $(A,\Delta_A)$ is a right $H$-comodule and that $A$ is an algebra such that $\Delta_A$ is an algebra morphism. We employ the following common short notation for the coaction and write
	$$
	\Delta_A(a)=:a_0\otimes a_1,\qquad(\Delta_A\otimes\mathrm{id})(\Delta_A(a))=(\mathrm{id}\otimes\Delta)\Delta_A(a)=:a_0\otimes a_1\otimes a_2
	$$
	etc., omitting summation symbols. For more details on comodules and comodule algebras we refer to the textbook \cite{mont}.

	In the following we introduce covariant FOVCi as FOVCi $(\mathfrak{X},\mathscr{L})$ on comodule algebras $A$, where $\mathfrak{X}$ is endowed with a compatible coaction such that the Lie derivative $\mathscr{L}\colon\mathfrak{X}\to\mathrm{End}_\Bbbk(A)$, understood as a linear map
	\begin{equation}\label{eq:LTilde}
		\widetilde{\mathscr{L}}\colon\mathfrak{X}\otimes A\to A,\qquad\widetilde{\mathscr{L}}(X\otimes a):=\mathscr{L}_X(a),
	\end{equation}
	is colinear.
	\begin{definition}\label{def:covFOVC}
		A FOVC $(\mathfrak{X},\mathscr{L})$ on a right $H$-comodule algebra $A$ is called \textit{right $H$-covariant} if
		\begin{enumerate}
			\item[i.)] {$\mathfrak{X}$ is a right $H$-covariant $A$-bimodule, i.e., if there is a right $H$-coaction $\Delta_{\mathfrak{X}}\colon\mathfrak{X}\to\mathfrak{X}\otimes H$ such that
				$$
				\Delta_{\mathfrak{X}}(a\cdot X\cdot a')=\Delta_A(a)\Delta_{\mathfrak{X}}(X)\Delta_A(a')
				$$
				for all $a,a'\in A$ and $X\in\mathfrak{X}$.}
			
			\item[ii.)] the map $\widetilde{\mathscr{L}}\colon\mathfrak{X}\otimes A\to A$ defined in \eqref{eq:LTilde} is right $H$-colinear, i.e., the diagram
			\begin{equation}\label{diagLcov}
				\begin{tikzcd}
					\mathfrak{X}\otimes A \arrow{rr}{\widetilde{\mathscr{L}}} \arrow{d}[swap]{\Delta_\otimes}
					& & A \arrow{d}{\Delta_A}\\
					\mathfrak{X}\otimes A\otimes H \arrow{rr}{\widetilde{\mathscr{L}}\otimes\mathrm{id}_H}
					& & A\otimes H
				\end{tikzcd}
			\end{equation}
			commutes, where we endow $\mathfrak{X}\otimes A$ with the diagonal right $H$-coaction $\Delta_\otimes\colon\mathfrak{X}\otimes A\to\mathfrak{X}\otimes A\otimes H$, $X\otimes a\mapsto X_0\otimes a_0\otimes X_1a_1$.
		\end{enumerate}
		A morphism of right $H$-covariant FOVCi is a morphism of the underlying FOVCi such that the corresponding $A$-bimodule morphism is right $H$-colinear, in addition.
		The category of right $H$-covariant FOVCi on a fixed right $H$-comodule algebra $A$ is denoted by $\underline{\mathrm{FOVC}}_A^H$.

	\end{definition}
	Similarly, a \textit{left $H$-covariant FOVC} on a left $H$-comodule algebra $A$ is a FOVC $(\mathfrak{X},\mathscr{L})$ on $A$ such that $\mathfrak{X}$ is a left $H$-covariant $A$-bimodule and \eqref{eq:LTilde} is left $H$-colinear, while an \textit{$H$-bicovariant FOVC} on an $H$-bicomodule algebra $A$ is a FOVC $(\mathfrak{X},\mathscr{L})$ on $A$ such that $\mathfrak{X}$ is an $H$-bicovariant $A$-bimodule such that $(\mathfrak{X},\mathscr{L})$ is left and right $H$-covariant. The concept of covariant FOVCi is implicit in \cite{BMvector}.
	
	As a first example of a covariant first order vector field calculus, we discuss the well-known noncommutative $2$-torus \cite{ConnesRieffel}.
	\begin{example}
		Fix a real parameter $\theta\in\mathbb{R}$ and consider the algebra $A:=\mathcal{O}_\theta(\mathbb{T}^2)$ generated by invertible elements $u,v$, modulo the relation $vu=e^{\mathrm{i}\theta}uv$, known as the algebraic noncommutative $2$-torus. We further consider the commutative Hopf algebra $H=\mathcal{O}(U(1))$ generated by one invertible group-like element $t$, i.e. $\Delta(t^{\pm 1})=t^{\pm 1}\otimes t^{\pm 1}$, $\varepsilon(t^{\pm 1})=1$ and $S(t^{\pm 1})=t^{\mp 1}$. Then $A$ becomes a right $H$-comodule algebra if we endow it with the right $H$-coaction $\Delta_A\colon A\to A\otimes H$, determined on generators by
		$$
		\Delta_A\begin{pmatrix}
			u \\
			v
		\end{pmatrix}:=\begin{pmatrix}
			u \\
			v
		\end{pmatrix}\otimes\begin{pmatrix}
			t \\
			t^{-1}
		\end{pmatrix}
		$$
		and extended as an algebra morphism.
		
		There is a right $H$-covariant FOVC $(\mathfrak{X},\mathscr{L})$ on $A$, defined as the left $A$-span $\mathfrak{X}:=\mathrm{span}_A\{\partial_u,\partial_v\}$ of two elements $\partial_u$ and $\partial_v$. We structure $\mathfrak{X}$ as an $A$-bimodule with the obvious left action and right $A$-action determined on generators by
		\begin{equation}
			\begin{split}
				a\partial_u\cdot u
				&=au\partial_u,\\
				a\partial_u\cdot v
				&=e^{\mathrm{i}\theta}av\partial_u,
			\end{split}\qquad\qquad
			\begin{split}
				a\partial_v\cdot u
				&=e^{-\mathrm{i}\theta}au\partial_v,\\
				a\partial_v\cdot v
				&=av\partial_v,
			\end{split}
		\end{equation}
		where $a\in A$. The $A$-bimodule $\mathfrak{X}$ becomes a right $H$-covariant $A$-bimodule if endowed with the right $H$-coaction $\Delta_\mathfrak{X}\colon\mathfrak{X}\to\mathfrak{X}\otimes H$
		\begin{equation}\label{coactT2}
			\Delta_\mathfrak{X}(a\partial_u+b\partial_v)
			:=a_0\partial_u\otimes a_1t^{-1}+b_0\partial_v\otimes b_1t,
		\end{equation}
		where $a,b\in A$. In fact, one verifies on generators that the right $A$-action on $\mathfrak{X}$ is compatible with the coaction \eqref{coactT2}, namely
		\begin{align*}
			\Delta_\mathfrak{X}(\partial_u)\cdot\Delta_A\begin{pmatrix}
				u \\
				v
			\end{pmatrix}
			&=(\partial_u\otimes t^{-1})\cdot\bigg(\begin{pmatrix}
				u \\
				v
			\end{pmatrix}\otimes\begin{pmatrix}
				t \\
				t^{-1}
			\end{pmatrix}\bigg)
			=\partial_u\cdot\begin{pmatrix}
				u \\
				v
			\end{pmatrix}\otimes\begin{pmatrix}
				1 \\
				t^{-2}
			\end{pmatrix}
			=\begin{pmatrix}
				u \\
				e^{\mathrm{i}\theta}v
			\end{pmatrix}\partial_u\otimes\begin{pmatrix}
				1 \\
				t^{-2}
			\end{pmatrix}\\
			&=\Delta_\mathfrak{X}\bigg(\begin{pmatrix}
				u \\
				e^{\mathrm{i}\theta}v
			\end{pmatrix}\partial_u\bigg)
			=\Delta_\mathfrak{X}\bigg(\partial_u\cdot\begin{pmatrix}
				u \\
				v
			\end{pmatrix}\bigg)
		\end{align*}
		and similarly for $\partial_v$.
		We further define the Lie derivative for all $a,b\in A$ on generators by
		\begin{equation}
			\mathscr{L}_{a\partial_u+b\partial_v}\begin{pmatrix}
				u \\
				v
			\end{pmatrix}=\begin{pmatrix}
				a \\
				b
			\end{pmatrix}
		\end{equation}
		and extend the above via the Leibniz rule. Explicitly, 
		$$
		\mathscr{L}_{a\partial_u+b\partial_v}(u^mv^n)
		=mau^{m-1}v^n+ne^{-m\mathrm{i}\theta}bu^mv^{n-1}
		$$ 
		for all $a,b\in A$ and $m,n\in\mathbb{Z}$. From the above formula it immediately follows that $\mathscr{L}_{a\partial_u+b\partial_v}=0$ is the zero endomorphism on $A$ if and only if $a=b=0$. Thus, $\mathscr{L}$ is injective and $(\mathfrak{X},\mathscr{L})$ a FOVC. For right $H$-covariance of the FOVC we note that for all $a\in A$ and $m,n\in\mathbb{Z}$
		\begin{equation*}
			\begin{tikzcd}
				a\partial_u\otimes u^mv^n \arrow{rrr}{\widetilde{\mathscr{L}}} \arrow{d}[swap]{\Delta_\otimes}
				& & & mau^{m-1}v^n \arrow{d}{\Delta_A}\\
				a_0\partial_u\otimes u^mv^n\otimes a_1t^{-1}t^{m-n} \arrow{rrr}{\widetilde{\mathscr{L}}\otimes\mathrm{id}}
				& & & ma_0u^{m-1}v^n\otimes a_1t^{m-n-1}
			\end{tikzcd}
		\end{equation*}
		commutes and similarly for $\partial_v$. This shows that $(\mathfrak{X},\mathscr{L})$ is a right $H$-covariant FOVC. Since $H$ is commutative and cocommutative it is easy to infer that $(\mathfrak{X},\mathscr{L})$ is also left $H$-covariant and even $H$-bicovariant.
	\end{example}
	
	We now specialize the adjunction of the categories of first order vector field calculi and first order differential calculi (see Proposition \ref{prop:Adjoint}) to their covariant analogues. We have seen that the functors \eqref{dualfunctors} defined on objects by $(\cdot)^*\colon M\mapsto M^*:=\mathrm{Hom}_A(M,A)$ and ${}^*(\cdot)\colon M\mapsto{}^*M:={}_A\mathrm{Hom}(M,A)$, respectively, map FODCi to FOVCi and vice versa. In the case of covariant FODCi and covariant FOVCi we consider instead the functors $\underline{(\cdot)^*}\colon{}_A\mathcal{M}_A^H\to({}_A\mathcal{M}_A^H)^\mathrm{op}$ and $\underline{{}^*(\cdot)}\colon({}_A\mathcal{M}_A^H)^\mathrm{op}\to{}_A\mathcal{M}_A^H$, defined on objects $M\in{}_A\mathcal{M}_A^H$ by
	\begin{equation}\label{rationalmorh}
		\begin{split}
			\underline{(\cdot)^*}\colon M\mapsto\underline{M^*}&:=\underline{\mathrm{Hom}}_A(M,A)
			:=\{\phi\in\mathrm{Hom}_A(M,A)~|~\Delta_\mathrm{ad}(\phi)\in\mathrm{Hom}_A(M,A)\otimes H\},\\
			\underline{{}^*(\cdot)}\colon M\mapsto\underline{{}^*M}&:={}_A\underline{\mathrm{Hom}}(M,A):=\{\psi\in{}_A\mathrm{Hom}(M,A)~|~\Delta'_\mathrm{ad}(\psi)\in{}_A\mathrm{Hom}(M,A)\otimes H\},  
		\end{split}
	\end{equation}
	where $\Delta_\mathrm{ad}\colon\mathrm{Hom}_A(M,A)\to\mathrm{Hom}_\Bbbk(M,A\otimes H)$ and $\Delta'_\mathrm{ad}\colon{}_A\mathrm{Hom}(M,A)\to\mathrm{Hom}_\Bbbk(M,A\otimes H)$ are defined by
	\begin{equation}
		\begin{split}
			\Delta_\mathrm{ad}(\phi)(m)&:=\phi(m_0)_0\otimes\phi(m_0)_1S(m_1),\\
			\Delta'_\mathrm{ad}(\psi)(m)&:=\phi(m_0)_0\otimes S^{-1}(m_1)\phi(m_0)_1
		\end{split}
	\end{equation}
	for all $m\in M$. Note that in general $\underline{\mathrm{Hom}}_A(M,A)\subsetneq\mathrm{Hom}_A(M,A)$ and ${}_A\underline{\mathrm{Hom}}(M,A)\subsetneq{}_A\mathrm{Hom}(M,A)$, since in general $\mathrm{Hom}_A(M,A)\otimes H\subsetneq\mathrm{Hom}_A(M,A\otimes H)$ and ${}_A\mathrm{Hom}(M,A)\otimes H\subsetneq{}_A\mathrm{Hom}(M,A\otimes H)$. 
	Elements of $\underline{\mathrm{Hom}}_A(M,A)$ and ${}_A\underline{\mathrm{Hom}}(M,A)$ are called \textit{rational morphisms}. They haven been introduced in \cite{Ulbrich,Caenepeel}. In complete analogy to \cite[Section 2.2]{PaoloThomas} one shows that $\underline{\mathrm{Hom}}_A(M,A),{}_A\underline{\mathrm{Hom}}(M,A)$ are right $H$-covariant $A$-bimodules with coactions induced by $\Delta_\mathrm{ad}$ and $\Delta'_\mathrm{ad}$, respectively. In other words, $\underline{\mathrm{Hom}}_A(M,A)$ and ${}_A\underline{\mathrm{Hom}}(M,A)$ are the subspaces of $\mathrm{Hom}_A(M,A)$ and ${}_A\mathrm{Hom}(M,A)$ which admit right adjoint coactions. 
	For example, one verifies that $\Delta_\mathrm{ad}(\phi\cdot a)=\Delta_\mathrm{ad}(\phi)\Delta_A(a)\in\underline{\mathrm{Hom}}_A(M,A)\otimes H$ holds for $\phi\in\underline{\mathrm{Hom}}_A(M,A)$ and $a\in A$ by computing
	\begin{align*}
		(\phi_0\cdot a_0)(m)\otimes \phi_1a_1
		&=\phi_0(a_0\cdot m)\otimes\phi_1a_1\\
		&=\phi((a_0\cdot m)_0)_0\otimes\phi((a_0\cdot m)_0)_1S((a_0\cdot m)_1)a_1\\
		&=\phi(a_0\cdot m_0)_0\otimes\phi(a_0\cdot m_0)_1S(a_1m_1)a_2\\
		&=\phi(a\cdot m_0)_0\otimes\phi(a\cdot m_0)_1S(m_1)\\
		&=(\phi\cdot a)(m_0)_0\otimes(\phi\cdot a)(m_0)_1S(m_1)\\
		&=(\phi\cdot a)_0(m)\otimes(\phi\cdot a)_1
	\end{align*}
	for all $m\in M$, where we also abbreviated $\Delta_\mathrm{ad}(\phi)=:\phi_0\otimes\phi_1$.
	
	We show that \eqref{rationalmorh} are adjoint functors, which respect covariant first order differential and vector field calculi.
	Namely, we define the functors 
	\begin{equation}\label{RationalFunctors}
		\underline{(\cdot)^*}\colon{}_A\mathcal{M}_A^H\to({}_A\mathcal{M}_A^H)^\mathrm{op},\qquad
		\underline{{}^*(\cdot)}\colon({}_A\mathcal{M}_A^H)^\mathrm{op}\to{}_A\mathcal{M}_A^H
	\end{equation}
	on objects as in \eqref{rationalmorh} and on morphisms as in \eqref{eq:DualFuncMorph}.
	Recall that a FODC $(\Gamma,\mathrm{d})$ on a right $H$-comodule algebra $A$ is called \textit{right $H$-covariant}, if $\Gamma\in{}_A\mathcal{M}_A^H$ is a right $H$-covariant $A$-bimodule and $\mathrm{d}\colon A\to\Gamma$ is right $H$-colinear. 
	
	\begin{proposition}\label{prop:duality'}
		For any right $H$-comodule algebra $A$ the dualizing functors $\underline{{}^*(\cdot)}$, $\underline{(\cdot)^*}$
		define an adjunction between the categories $\underline{\mathrm{FODC}}_A^H$ of right $H$-covariant FODCi on $A$ and the category $(\underline{\mathrm{FOVC}}_A^H)^\mathrm{op}$ of right $H$-covariant {FOVCi} on $A$ with opposite morphisms:
		\begin{equation*}
			\begin{tikzcd}
				\underline{\mathrm{FODC}}_A^H \arrow[rr, bend left, "\underline{(\cdot)^*}"] \arrow{ddd}[swap]{\underline{\mathrm{forget}}_\mathrm{d}} & \perp & (\underline{\mathrm{FOVC}}_A^H)^\mathrm{op} \arrow[ll, bend left, "\underline{{}^*(\cdot)}"] \arrow{ddd}{\big(\underline{\mathrm{forget}}_\mathscr{L}\big)^\mathrm{op}}\\
				& & \\
				& & \\
				{}_A\mathcal{M}_A^H \arrow[rr, bend left, "\underline{(\cdot)^*}"] & \perp & ({}_A\mathcal{M}_A^H)^\mathrm{op} \arrow[ll, bend left, "\underline{{}^*(\cdot)}"]
			\end{tikzcd}
		\end{equation*}
		where
		$\underline{\mathrm{forget}}_\mathrm{d}(\Gamma,\mathrm{d})=\Gamma$ and $\underline{\mathrm{forget}}_\mathscr{L}(\mathfrak{X},\mathscr{L})=\mathfrak{X}$
		are the forgetful functors.
	\end{proposition}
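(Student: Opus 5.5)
The plan is to follow the same pattern as the proof of Proposition \ref{prop:Adjoint}, with covariance carried along at every step. The argument splits into four parts.

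First, the lower adjunction. On ${}_A\mathcal{M}_A^H$, the rational dual functors $\underline{(\cdot)^*}$ and $\underline{{}^*(\cdot)}$ should be adjoint with the same unit $\eta_M$ and counit $\epsilon'_M$ as in \eqref{eq:unit} and \eqref{eq:counit}, now corestricted to rational morphisms. For this I would check three things:
\begin{enumerate}
\item[(a)] Morphisms $\phi$ in ${}_A\mathcal{M}_A^H$ are sent to colinear maps, i.e.\ $\Delta_\mathrm{ad}(f\circ\phi)=(\Delta_\mathrm{ad}(f))\circ\phi$. This is immediate from the colinearity of $\phi$.
\item[(b)] $\eta_M(m)$ lies in ${}_A\underline{\mathrm{Hom}}(\underline{M^*},A)$ and $\eta_M$ is colinear. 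The natural candidate is $\Delta'_\mathrm{ad}(\eta_M(m))=\eta_M(m_0)\otimes m_1$. To verify it, pair with $\phi\in\underline{M^*}$ and use the defining identity $\phi(m)_0\otimes\phi(m)_1=\phi_0(m_0)\otimes\phi_1 m_1$, which is just $\Delta_\mathrm{ad}$ rewritten using the antipode.
\item[(c)] The analogous statement holds for $\epsilon'_M$.
\end{enumerate}
The triangle identities are then inherited from the non-covariant adjunction of \cite[Proposition 9]{PoinsotPorst}, since all the maps involved are restrictions of the earlier ones.

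Second, objects. Let $(\Gamma,\mathrm{d})$ be right $H$-covariant, and put the Lie derivative $\mathscr{L}_X(a)=\langle X,\mathrm{d}a\rangle$ on $\underline{\Gamma^*}$. The Leibniz rule and left linearity are exactly as in Proposition \ref{prop:duality}. Injectivity also goes through, since $\underline{\Gamma^*}\subseteq\Gamma^*$ and the argument only uses surjectivity of $A\otimes A\to\Gamma$. For covariance we need the diagram \eqref{diagLcov} to commute. Using colinearity of $\mathrm{d}$ together with the identity in (b),
\[
\Delta_A(\langle X,\mathrm{d}a\rangle)=X_0((\mathrm{d}a)_0)\otimes X_1(\mathrm{d}a)_1=X_0(\mathrm{d}a_0)\otimes X_1a_1,
\]
and the right-hand side is precisely $(\widetilde{\mathscr{L}}\otimes\mathrm{id})\Delta_\otimes(X\otimes a)$.

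Conversely, let $(\mathfrak{X},\mathscr{L})$ be a covariant FOVC. We must show:
\begin{enumerate}
\item[(i)] $\mathrm{d}a$ lies in ${}_A\underline{\mathrm{Hom}}(\mathfrak{X},A)$, with $\Delta'_\mathrm{ad}(\mathrm{d}a)=\mathrm{d}a_0\otimes a_1$, so that $\mathrm{d}$ is colinear;
\item[(ii)] $\underline{{}^*\mathfrak{X}}$ is generated by $\mathrm{d}A$.
\end{enumerate}
For (i), evaluate $\Delta'_\mathrm{ad}(\mathrm{d}a)$ on $X$. This gives $\mathscr{L}_{X_0}(a)_0\otimes S^{-1}(X_1)\mathscr{L}_{X_0}(a)_1$. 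Covariance of $\widetilde{\mathscr{L}}$ rewrites this as $\mathscr{L}_{X_0}(a_0)\otimes S^{-1}(X_2)X_1a_1=\mathscr{L}_X(a_0)\otimes a_1$, as desired.

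Third, morphisms. The compatibility with $\mathrm{d}$ and $\mathscr{L}$ is verbatim the computation in the proof of Proposition \ref{prop:Adjoint}. Colinearity of the dual morphisms is already provided by (a).

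Fourth, the unit and counit. These commute with $\mathrm{d}$ and $\mathscr{L}$ by the same pairing computation as before, and they are colinear by (b) and (c).

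I expect the main obstacle to be the surjectivity step (ii), for the rational dual $\underline{{}^*\mathfrak{X}}$. The argument of Proposition \ref{prop:duality} ii.) separates a hypothetical $\omega\notin\mathrm{d}(A)A$ from $\mathrm{d}(A)A$ by an element of $\mathfrak{X}$, and this used non-degeneracy of the pairing of $\mathfrak{X}$ with the full dual. I would first try to rerun exactly that argument. It only needs that, for $0\neq\omega\in\underline{{}^*\mathfrak{X}}$, some $X$ has $\langle X,\omega\rangle\neq0$ while $X$ pairs to zero on $\mathrm{d}(A)A$, and this is the same linear-algebra separation used before, now inside the subspace $\underline{{}^*\mathfrak{X}}\subseteq{}^*\mathfrak{X}$.

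If that separation cannot be justified, I would instead redefine the calculus as the sub-bimodule $A\,\mathrm{d}A\subseteq\underline{{}^*\mathfrak{X}}$. By (i) this sub-bimodule is covariant, so it gives a covariant FODC. I would then check that the unit still factors through it, using that $\eta_\Gamma(\Gamma)=\eta_\Gamma(A\,\mathrm{d}A)$.
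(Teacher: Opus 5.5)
Your overall structure matches the paper's proof, and the covariance computations are correct. That structure is: the rational-dual adjunction on ${}_A\mathcal{M}_A^H$; covariance of $\mathscr{L}_X=\langle X,\mathrm{d}\,\cdot\rangle$ via colinearity of the evaluation; covariance of $\mathrm{d}$ via $\Delta'_{\mathrm{ad}}(\mathrm{d}a)=\mathrm{d}a_0\otimes a_1$; then morphisms, unit and counit. The genuine gap is step (ii).

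Your primary plan for (ii) is to rerun the separation argument of Proposition~\ref{prop:duality} ii.). This is also exactly what the paper's proof does in its step iii.). It fails, and not because of rationality: it already fails for the full dual ${}^*\mathfrak{X}$.

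Since $\mathscr{L}$ is injective, $\mathrm{d}(A)A$ always has zero annihilator in $\mathfrak{X}$. So the separation, if valid, would prove $\mathrm{d}(A)A={}^*\mathfrak{X}$ for every FOVC. But over a ring, a submodule of a dual with zero annihilator need not be the whole dual. Here is a concrete counterexample.

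Take trivial coactions, so all morphisms are rational. Let $A=\Bbbk[x]$ and $\mathfrak{X}:=A\cdot x\partial_x\subseteq\mathrm{Der}(A)$, with the symmetric bimodule structure and $\mathscr{L}$ the inclusion. This is a sub-FOVC of Example~\ref{ex:FOVC} i.).

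Then ${}^*\mathfrak{X}$ is free on $\omega_0$, where $\langle x\partial_x,\omega_0\rangle=1$. From $\langle f x\partial_x,\mathrm{d}a\rangle=f x\,\partial_x(a)$ you get $\mathrm{d}a=x\,\partial_x(a)\,\omega_0$. Hence $A\,\mathrm{d}A=xA\,\omega_0\neq A\,\omega_0$.

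A separating $X=fx\partial_x$ would need $fx=\langle X,\mathrm{d}x\rangle=0$ and $f=\langle X,\omega_0\rangle\neq 0$, which is impossible. So $\underline{{}^*(\cdot)}$ does not send $\underline{\mathrm{FOVC}}^H_A$ into $\underline{\mathrm{FODC}}^H_A$. The proposition, with the right adjoint literally $\underline{{}^*(\cdot)}$ lying over the forgetful functors, cannot be proved as stated. The example is even free of rank one, so restricting to finitely generated projective modules does not rescue it.

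Your fallback is the correct repair, but it proves a modified statement. Set $G(\mathfrak{X},\mathscr{L}):=(A\,\mathrm{d}A,\mathrm{d})$ inside $\underline{{}^*\mathfrak{X}}$. The following all hold:
\begin{enumerate}
\item[(1)] It is a sub-bimodule, since $A\,\mathrm{d}A=\mathrm{d}(A)A$ by the Leibniz rule.
\item[(2)] It is a subcomodule, by your (i).
\item[(3)] ${}^*\Phi$ restricts to it, because ${}^*\Phi(\mathrm{d}'a)=\mathrm{d}a$.
\item[(4)] The counit is $X\mapsto\langle X,\cdot\rangle|_{A\,\mathrm{d}A}$. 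It is rational because the evaluation is colinear, and compatible with the Lie derivatives by definition.
\item[(5)] The unit factors through $A\,\tilde{\mathrm{d}}A$, because $\eta_\Gamma(\mathrm{d}a)=\tilde{\mathrm{d}}a$ and $\Gamma=A\,\mathrm{d}A$.
\end{enumerate}

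Both categories are thin: morphisms are unique, by injectivity of $\mathscr{L}$ and by surjectivity of $A\otimes A\to\Gamma$ respectively. So these facts already give the adjunction, and naturality and the triangle identities are automatic.

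The price is that the right adjoint on the top row lies over $\underline{{}^*(\cdot)}$ only up to the inclusion $A\,\mathrm{d}A\hookrightarrow\underline{{}^*\mathfrak{X}}$. The square in the statement therefore commutes only in that weakened sense, and you should state the result in this corrected form.
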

	\begin{proof}
		This follows from Proposition \ref{prop:duality} combined with the following observations.
		\begin{enumerate}
			\item[i.)] Given an object $M\in{}_A\mathcal{M}_A^H$ the $A$-bimodules $\underline{\mathrm{Hom}}_A(M,A)$ and ${}_A\underline{\mathrm{Hom}}(M,A)$ are objects in ${}_A\mathcal{M}_A^H$. This follows from the above discussion, further referring to \cite[Section 2.2]{PaoloThomas}. In particular, the functors \eqref{RationalFunctors} are well-defined and, together with \cite[Proposition 9]{PoinsotPorst}, we infer that $\underline{(\cdot)^*}$ in \eqref{eq:DualFuncMorph} is the left adjoint of $\underline{{}^*(\cdot)}$ in \eqref{eq:DualFuncMorph}.
			\item[ii.)] Given a right $H$-covariant FODC $(\Gamma,\mathrm{d})$ on $A$ it follows that the induced right $H$-covariant $A$-bimodule $\mathfrak{X}:=\underline{\mathrm{Hom}}_A(\Gamma,A)$ is a right $H$-covariant FOVC on $A$ with Lie derivative $\mathscr{L}\colon\mathfrak{X}\to\mathrm{End}_\Bbbk(A)$ defined by $\mathscr{L}_X(a):=\langle X,\mathrm{d}a\rangle$ for all $X\in\mathfrak{X}$ and $a\in A$. By following the same lines of Proposition \ref{prop:duality} $i.)$ one shows that $(\mathfrak{X},\mathscr{L})$ is a FOVC on $A$. The latter is right $H$-covariant, since
			\begin{align*}
				\Delta_A(\widetilde{\mathscr{L}}(X\otimes a))
				&=\Delta_A(\mathscr{L}_X(a))\\
				&=\Delta_A(\langle X,\mathrm{d}a\rangle)\\
				&\overset{(*)}{=}\langle X_0,\mathrm{d}(a_0)\rangle\otimes X_1a_1\\
				&=(\widetilde{\mathscr{L}}\otimes\mathrm{id})(\Delta_\otimes(X\otimes a))
			\end{align*}
			for all $X\in\mathfrak{X}$ and $a\in A$. Above we used the right $H$-colinearity of $\mathrm{d}$ and, in $(*)$, we used that the evaluation $\langle\cdot,\cdot\rangle\colon\mathfrak{X}\otimes\Gamma\to A$ is right $H$-colinear, which is the case since
			\begin{align*}
				\langle X_0,\omega_0\rangle\otimes X_1\omega_1
				=X(\omega_0)_0\otimes X(\omega_0)_1S(\omega_1)\omega_2
				=X(\omega)_0\otimes X(\omega)_1
				=\Delta_A(\langle X,\omega\rangle)
			\end{align*}
			for all $X\in\mathfrak{X}$ and $\omega\in\Gamma$, using the definition \eqref{rationalmorh}.
			\item[iii.)] Given a right $H$-covariant FOVC $(\mathfrak{X},\mathscr{L})$ on $A$ it follows that the induced right $H$-covariant $A$-bimodule $\Gamma:={}_A\underline{\mathrm{Hom}}(\mathfrak{X},A)$ is a right $H$-covariant FODC on $A$ with differential $\mathrm{d}\colon A\to\Gamma$ defined for all $X\in\mathfrak{X}$ by $\langle X,\mathrm{d}a\rangle:=\mathscr{L}_X(a)$ for all $a\in A$. By following the same lines of Proposition \ref{prop:duality} $ii.)$ one shows that $(\Gamma,\mathrm{d})$ is a FODC on $A$. The latter is right $H$-covariant since, for all $X\in\mathfrak{X}$ and $a\in A$,
			\begin{align*}
				\langle X,\mathrm{d}(a)_0\rangle\otimes\mathrm{d}(a)_1
				&=\langle X_0,\mathrm{d}a\rangle_0\otimes S^{-1}(X_1)\langle X_0,\mathrm{d}a\rangle_1\\
				&=\mathscr{L}_{X_0}(a)_0\otimes S^{-1}(X_1)\mathscr{L}_{X_0}(a)_1\\
				&=\mathscr{L}_{X_0}(a_0)\otimes S^{-1}(X_2)X_1a_1\\
				&=\mathscr{L}_X(a_0)\otimes a_1\\
				&=\langle X,\mathrm{d}(a_0)\rangle\otimes a_1
			\end{align*}
			holds. This implies the right $H$-covariance of $(\Gamma,\mathrm{d})$.
		\end{enumerate}
		The fact that the restriction and corestriction of $\underline{(\cdot)^*}$ and $\underline{{}^*(\cdot)}$ to right $H$-covariant FODCi and FOVCi form an adjunction follows in complete analogy to Proposition \ref{prop:duality} from the above observations. 
	\end{proof}
	In the case of finitely generated projective modules, the rational morphisms coincide with space of linear maps, i.e., if $M\in{}_A\mathcal{M}_A^H$ is finitely generated projective as a right $A$-module (resp. left $A$-module), then $\underline{\mathrm{Hom}}_A(M,A)=\mathrm{Hom}_A(MA)$ (resp. ${}_A\underline{\mathrm{Hom}}(M,A)={}_A\mathrm{Hom}(M,A)$), which can be shown in complete analogy to \cite[Proposition 2.11]{PaoloThomas}. Thus, Proposition \ref{prop:duality'} and Theorem \ref{dual-cp} imply the following result.
	\begin{corollary}\label{cor:covFGPequ}
		For any right $H$-comodule algebra $A$, the category ${}_{\fgp}\underline{\mathrm{FODC}}^H_A$ of right $H$-covariant FODCi on $A$ which are finitely generated projective as 
		left $A$-modules is equivalent to the category ${}_{\fgp}\underline{\mathrm{FOVC}}^H_A$ of right $H$-covariant FOVCi on $A$ which are finitely generated projective as 
		right $A$-modules, with opposite morphisms:
		$$
		{}_{\fgp}\underline{\mathrm{FODC}}^H_A\cong({}_{\fgp}\underline{\mathrm{FOVC}}^H_A)^\mathrm{op}.
		$$
	\end{corollary}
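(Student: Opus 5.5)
The plan is to deduce the equivalence from the adjunction of Proposition \ref{prop:duality'}. Concretely, I will show that on the finitely generated projective subcategories the unit and the counit of that adjunction are isomorphisms. The first step is to check that the functors $\underline{(\cdot)^*}$ and $\underline{{}^*(\cdot)}$ restrict to these subcategories. Let $(\Gamma,\mathrm{d})$ be right $H$-covariant with $\Gamma$ finitely generated projective as a left $A$-module. By the remark preceding the corollary, in the fgp case the rational morphisms coincide with all module maps, so $\underline{\Gamma^*}$ agrees with the plain dual $\Gamma^*$ as an $A$-bimodule. The dual of a finitely generated projective module is again finitely generated projective on the opposite side, since a dual basis $\{e_i,e^i\}$ of $\Gamma$ produces a dual basis of $\Gamma^*$. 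Hence $\underline{\Gamma^*}$ lies in ${}_{\fgp}\underline{\mathrm{FOVC}}^H_A$. The symmetric argument shows that $\underline{{}^*\mathfrak{X}}={}^*\mathfrak{X}$ is fgp as a left $A$-module whenever $\mathfrak{X}$ is fgp as a right $A$-module. Both subcategories are full, so the restricted functors remain adjoint.

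The second step concerns the unit and counit. The unit $\eta_\Gamma\colon\Gamma\to\underline{{}^*(\underline{\Gamma^*})}$ is, after the identifications of the first step, the unit $\Gamma\to{}^*(\Gamma^*)$ of Proposition \ref{prop:Adjoint}. By Observation \ref{obs:fd} and Theorem \ref{dual-cp} it is an isomorphism of FODCi, and likewise the counit $\mathfrak{X}\to(\underline{{}^*\mathfrak{X}})^*$ is an isomorphism of FOVCi. What remains is to check that these maps are right $H$-colinear, where the double duals carry their iterated adjoint coactions. An $H$-colinear bijection automatically has an $H$-colinear inverse, so it would then follow that $\eta_\Gamma$ and the counit are isomorphisms in the covariant categories. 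An adjunction whose unit and counit are isomorphisms is an equivalence, which gives ${}_{\fgp}\underline{\mathrm{FODC}}^H_A\cong({}_{\fgp}\underline{\mathrm{FOVC}}^H_A)^\mathrm{op}$.

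I expect the colinearity check to be the main technical point, although it is only a Sweedler computation. For the unit, write $\eta(\omega)(X)=X(\omega)$. The coaction on ${}^*(\Gamma^*)$ is $\Delta'_\mathrm{ad}$ built on top of $\Delta_\mathrm{ad}$, so one has to show
$$\eta(\omega)(X_0)_0\otimes S^{-1}(X_1)\,\eta(\omega)(X_0)_1=\eta(\omega_0)(X)\otimes\omega_1 .$$
Substituting the definition of $\Delta_\mathrm{ad}(X)$, the left-hand side becomes $X(\omega_0)_0\otimes S^{-1}(X(\omega_0)_1S(\omega_1))\,X(\omega_0)_2$. This collapses to the right-hand side by $S^{-1}(hS(k))=kS^{-1}(h)$ and the antipode axioms. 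The computation is essentially the colinearity of the evaluation pairing already verified in the proof of Proposition \ref{prop:duality'}. The counit is handled the same way, with the roles of $S$ and $S^{-1}$ swapped.
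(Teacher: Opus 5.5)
Your strategy is the paper's own. The paper combines the adjunction of Proposition~\ref{prop:duality'} with Theorem~\ref{dual-cp} and the fact that rational morphisms are all module maps in the fgp case, and your colinearity check of unit and counit supplies a detail it leaves implicit. But your first step has a genuine left/right gap. It is shared with the statement as printed, Observation~\ref{obs:fd} and Theorem~\ref{dual-cp}, so citing those does not close it.

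The dual $\Gamma^*=\mathrm{Hom}_A(\Gamma,A)$ consists of \emph{right} $A$-linear maps. This has three consequences:
\begin{itemize}
\item The rational-morphism remark gives $\underline{\mathrm{Hom}}_A(\Gamma,A)=\mathrm{Hom}_A(\Gamma,A)$ only when $\Gamma$ is fgp as a \emph{right} module.
\item A dual basis $\{e_i,e^i\}$ of $\Gamma$ as a left module has $e^i\in{}_A\mathrm{Hom}(\Gamma,A)$, so it produces nothing in $\Gamma^*$.
\item The unit $\Gamma\to{}^*(\Gamma^*)$ is bijective for $\Gamma$ fgp on the right, not on the left.
\end{itemize}
Symmetrically, ${}^*\mathfrak{X}$ consists of left $A$-linear maps, so the counit needs $\mathfrak{X}$ fgp as a left module.

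With the sides as stated, the dualizing functors are not an equivalence. Take $H=\Bbbk$, $A=\Bbbk[x]$ and $\Gamma=x\Bbbk[x]$, with left multiplication, right action $\omega\cdot a:=a(0)\omega$ and $\mathrm{d}a:=a-a(0)$. This is a FODC, free of rank one as a left module on $\mathrm{d}x=x$. Yet every right $A$-linear $X\colon\Gamma\to A$ satisfies $X(\omega)x=X(\omega\cdot x)=0$, hence $X=0$ because $\Bbbk[x]$ is a domain. So $\Gamma^*=0$, the unit $\Gamma\to{}^*(\Gamma^*)=0$ is not injective, and $\Gamma$ and the zero calculus have the same dual.

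The repair is to exchange the hypotheses: FODCi fgp as right $A$-modules and FOVCi fgp as left $A$-modules. The two dualizing functors interchange these classes: a right dual basis $\{e_i,e^i\}$ of $\Gamma$ gives $X=\sum_iX(e_i)\cdot e^i$ for every $X\in\Gamma^*$, and symmetrically on the other side. The rational-morphism remark then applies on both sides, and unit and counit are bijective by the standard dual-basis argument. After this change your proof goes through as written.

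Your colinearity computations are correct, up to Sweedler bookkeeping in the unit case. The left-hand side is $X(\omega_0)_0\otimes S^{-1}\bigl(X(\omega_0)_2S(\omega_1)\bigr)X(\omega_0)_1$, which collapses by $S^{-1}(h_2)h_1=\varepsilon(h)$. With your indices you would need $S^{-1}(h_1)h_2=\varepsilon(h)$, which fails in general.
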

	There are variants of Proposition \ref{prop:duality'} and Corollary \ref{cor:covFGPequ} for left $H$-covariant and $H$-bicovariant first order differential and vector field calculi. Since the adaptions are straightforward we refrain from spelling them out in detail. We discuss more explicit examples of covariant FOVCi in Section~
	\ref{sec-ex} after we introduce the tool of quantum tangent space.

	\subsection{Derivations via the \DJ ur\dj evi\'c braiding}\label{sec:durd}
	
	In this section we construct a FOVC on any Hopf--Galois extension as the braided derivations with respect to the canonical \DJ ur\dj evi\'c braiding \cite{DurdBraid}.
	
	Since Hopf--Galois extension are appearing throughout this article, we recall them in some detail.
	A \textit{Hopf--Galois extension} \cite{KrTa} is a right $H$-comodule algebra $A$ with right $H$-coaction $\Delta_A\colon A\to A\otimes H$ and coinvariant subalgebra $B:=A^{\mathrm{co}H}:=\{a\in A~|~\Delta_A(a)=a\otimes 1\}\subseteq A$ such that the canonical map
	$$
	\chi\colon A\otimes_BA\to A\otimes H,\qquad a\otimes_Ba'\mapsto aa'_0\otimes a'_1
	$$
	is invertible. For the \textit{translation map} $\tau:=\chi^{-1}\circ(1_A\otimes\cdot)\colon H\to A\otimes_BA$ we employ the common short notation 
	$$
	\tau(h)=:h^{\langle 1\rangle}\otimes_Bh^{\langle 2\rangle}
	$$ 
	for all $h\in H$. It is well-known (see e.g. \cite{Brz}) that the following equations hold for all $h,g\in H$ and $a\in A$:
	\begin{align}
		h^{\langle 1\rangle}h^{\langle 2\rangle}&=\varepsilon(h)1 \label{transl1}\\
		\tau(hg)&=g^{\langle 1\rangle}h^{\langle 1\rangle}\otimes_Bh^{\langle 2\rangle}g^{\langle 2\rangle}\label{transl2}\\
		h^{\langle 1\rangle}\otimes_B(h^{\langle 2\rangle})_0\otimes(h^{\langle 2\rangle})_1&=(h_1)^{\langle 1\rangle}\otimes_B(h_1)^{\langle 2\rangle}\otimes h_2\\
		(h^{\langle 1\rangle})_0\otimes_Bh^{\langle 2\rangle}\otimes(h^{\langle 1\rangle})_1&=(h_2)^{\langle 1\rangle}\otimes_B(h_2)^{\langle 2\rangle}\otimes S(h_1)\label{transl4}\\
		a_0(a_1)^{\langle 1\rangle}\otimes_B(a_1)^{\langle 2\rangle}&=1_A\otimes_Ba\label{transl5}\\
		h^{\langle 1\rangle}(h^{\langle 2\rangle})_0\otimes(h^{\langle 1\rangle})_1
		&=1_A\otimes h
	\end{align}
	Moreover, $b\tau(h)=\tau(h)b$ for all $h\in H$ and $b\in B$. 
	
	Special cases of Hopf--Galois extensions are \textit{cleft extensions} $B\subseteq A$, where $A$ is a right $H$-comodule algebra, $B:=A^{\mathrm{co}H}$ and there exists a right $H$-colinear convolution invertible map $j\colon H\to A$, the \textit{cleaving map}. In fact, in this case $\tau(h):=j^{-1}(h_1)\otimes_Bj(h_2)$ induces an inverse of the Hopf--Galois map, where $j^{-1}\colon H\to A$ denotes the convolution inverse of $j$, i.e. $j(h_1)j^{-1}(h_2)=\varepsilon(h)1=j^{-1}(h_1)j(h_2)$ for all $h\in H$. See \cite[Section 7.2]{mont} for more information on cleft extensions.
	
	From now on until the end of the section we let $B:=A^{\mathrm{co}H}\subseteq A$ be an arbitrary Hopf--Galois extension.
	In \cite{DurdBraid} \DJ ur\dj evi\'c defines an isomorphism $\sigma\colon A\otimes_BA\to A\otimes_BA$ of $B$-bimodules
	\begin{equation}
		\sigma(a\otimes_Ba'):=a_0a'(a_1)^{\langle 1\rangle}\otimes_B(a_1)^{\langle 2\rangle},
	\end{equation}
	which satisfies the braid relations
	$$
	(\sigma\otimes_B\mathrm{id}_A)\circ(\mathrm{id}_A\otimes_B\sigma)\circ(\sigma\otimes_B\mathrm{id}_A)=(\mathrm{id}_A\otimes_B\sigma)\circ(\sigma\otimes_B\mathrm{id}_A)\circ(\mathrm{id}_A\otimes_B\sigma),
	$$
	see also \cite[Section 5.1]{AntEmaTho} for a more recent account.
	The algebra $A$ is braided-commutative with respect to $\sigma$, i.e., $m\circ\sigma=m$, where $m\colon A\otimes_BA\to A$ is the multiplication. Using the short notation
	\begin{equation}
		\sigma(a\otimes_Ba')=:{}_\alpha a'\otimes_B~{}^\alpha a
	\end{equation}
	the braided-commutativity reads ${}_\alpha a'\cdot{}^\alpha a=a\cdot a'$ and one further shows that the equations
	\begin{equation}\label{eq:alphanotation}
		{}_\alpha e\otimes_B{}^\alpha(ac)
		={}_{\alpha\beta}e\otimes_B{}^\alpha a~{}^\beta c,\qquad
		{}_\alpha(ce)\otimes_B{}^\alpha a
		={}_\alpha c~{}_\beta e\otimes_B{}^{\beta\alpha}a
	\end{equation}
	are satisfied for all $a,c,e\in A$.
	
	Using the above ``braiding'' we define a right $A$-action on all right $B$-linear endomorphisms 
	$$
	\mathrm{End}_B(A):=\{X\in\mathrm{End}_\Bbbk(A)~|~X(ab)=X(a)b\text{ for all }a\in A,~b\in B\}
	$$
	of $A$, namely $X\cdot a\in\mathrm{End}_B(A)$, where
	\begin{equation}\label{Durd:RightA}
		(X\cdot a)(a'):=X({}_\alpha a'){}^\alpha a
		=X(a_0a'(a_1)^{\langle 1\rangle})(a_1)^{\langle 2\rangle}
	\end{equation}
	for all $X\in\mathrm{End}_B(A)$ and $a,a'\in A$. Note that this is well-defined precisely because $X\in\mathrm{End}_B(A)$ is right $B$-linear.
	\begin{lemma}\label{lem:rightact}
		The operation \eqref{Durd:RightA} is a right $A$-action on $\mathrm{End}_B(A)$.
	\end{lemma}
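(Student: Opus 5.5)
We need three things: that $X\cdot a$ lies in $\mathrm{End}_B(A)$, that $X\cdot 1=X$, and that $(X\cdot a)\cdot c=X\cdot(ac)$ for all $a,c\in A$. Throughout, the compact braiding notation $\sigma(a\otimes_B a')={}_\alpha a'\otimes_B{}^\alpha a$ together with the two identities \eqref{eq:alphanotation} does almost all the work.

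\emph{Well-definedness.} The expression $X({}_\alpha a'){}^\alpha a$ is obtained by applying the map $A\otimes_BA\to A$, $e\otimes_B f\mapsto X(e)f$, to $\sigma(a\otimes_Ba')$. This map is well defined on $A\otimes_BA$ because $X$ is right $B$-linear: $X(eb)f=X(e)bf$.

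\emph{$X\cdot a$ is right $B$-linear.} Since $\sigma$ is a morphism of $B$-bimodules, we have $\sigma(a\otimes_B a'b)=\sigma(a\otimes_Ba')b$, that is, ${}_\alpha(a'b)\otimes_B{}^\alpha a={}_\alpha a'\otimes_B{}^\alpha a\,b$. Hence $(X\cdot a)(a'b)=(X\cdot a)(a')b$. Alternatively, this follows from the explicit formula and $\Delta_A(a'b)=\Delta_A(a')\cdot(b\otimes 1)$; but in the explicit formula $a'$ appears in the left tensor leg, so the $B$-bimodule property of $\sigma$ is the cleaner route.

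\emph{Unitality.} From $\Delta_A(1)=1\otimes1$ and $\tau(1)=1\otimes_B1$ we get $\sigma(1\otimes_Ba')=a'\otimes_B1$, so $X\cdot1=X$.

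\emph{Associativity.} Fix $e\in A$. Then
\begin{align*}
((X\cdot a)\cdot c)(e)&=(X\cdot a)({}_\beta e)\,{}^\beta c=X({}_{\alpha\beta}e)\,{}^\alpha a\,{}^\beta c,\\
(X\cdot(ac))(e)&=X({}_\alpha e)\,{}^\alpha(ac).
\end{align*}
In the first line, the braiding with respect to $a$ is applied to ${}_\beta e$, so the indices nest as ${}_{\alpha\beta}e$, with $\beta$ the inner index. The first identity in \eqref{eq:alphanotation}, namely ${}_\alpha e\otimes_B{}^\alpha(ac)={}_{\alpha\beta}e\otimes_B{}^\alpha a\,{}^\beta c$, is exactly the statement that these agree after applying the well-defined map $e'\otimes_Bf\mapsto X(e')f$. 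This identity is the braiding hexagon $\sigma\circ(m\otimes_B\mathrm{id})=(\mathrm{id}\otimes_Bm)\circ(\sigma\otimes_B\mathrm{id})\circ(\mathrm{id}\otimes_B\sigma)$.

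\emph{Main obstacle.} The main obstacle is bookkeeping: making sure that the nesting of indices produced by composing the two actions matches the order in \eqref{eq:alphanotation}, and that each intermediate expression is taken over $A\otimes_BA\otimes_BA$, where $X$ can still be applied legitimately. If this is in doubt, I would verify the identity directly from the explicit formula. Using that $\Delta_A$ is multiplicative, we have $\Delta_A(ac)=a_0c_0\otimes a_1c_1$, and \eqref{transl2} gives $\tau(a_1c_1)=c_1^{\langle1\rangle}a_1^{\langle1\rangle}\otimes_B a_1^{\langle2\rangle}c_1^{\langle2\rangle}$. Substituting, $X(a_0c_0e\,c_1^{\langle1\rangle}a_1^{\langle1\rangle})a_1^{\langle2\rangle}c_1^{\langle2\rangle}$ coincides with the nested expression computed from $(X\cdot a)\cdot c$.
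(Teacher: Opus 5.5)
Your proof is correct and follows the paper's argument. Well-definedness and right $B$-linearity of $X\cdot a$ come from $b\tau(h)=\tau(h)b$ (equivalently, $B$-bilinearity of $\sigma$), unitality comes from $\tau(1)=1\otimes_B1$, and associativity comes from multiplicativity of $\Delta_A$ together with \eqref{transl2}; the latter is exactly the first identity in \eqref{eq:alphanotation} that you invoke. Your aside about $\Delta_A(a'b)$ is off, since only $\Delta_A(a)$ enters the formula. The explicit reason is $b\tau(h)=\tau(h)b$, which your main argument via $B$-bilinearity of $\sigma$ already covers.
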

	\begin{proof}
		Let $X\in\mathrm{End}_B(A)$ and $a,a'\in A$.
		We have already noted that $(X\cdot a)(a')=X(a_0a'(a_1)^{\langle 1\rangle})(a_1)^{\langle 2\rangle}$ is well-defined by the right $B$-linearity of $X$. Moreover, $X\cdot a\in\mathrm{End}_B(A)$, since
		$$
		(X\cdot a)(a'b)
		=X(a_0a'b(a_1)^{\langle 1\rangle})(a_1)^{\langle 2\rangle}
		=X(a_0a'b(a_1)^{\langle 1\rangle})(a_1)^{\langle 2\rangle}b
		=(X\cdot a)(a')b
		$$
		for all $b\in B$, since $b\tau(h)=\tau(h)b$ for all $h\in H$ and $b\in B$. Clearly $X\cdot 1=X$, since $\Delta_A(1)=1\otimes 1_H$ and $\tau(1_H)=1\otimes_B1$. Finally, using that $\Delta_A$ is an algebra morphism and that $\tau(hg)=g^{\langle 1\rangle}h^{\langle 1\rangle}\otimes_Bh^{\langle 2\rangle}g^{\langle 2\rangle}$ for all $h,g\in H$, we obtain
		\begin{align*}
			((X\cdot a)\cdot a')(a'')
			&=(X\cdot a)(a'_0a''(a'_1)^{\langle 1\rangle})(a'_1)^{\langle 2\rangle}\\
			&=X(a_0a'_0a''(a'_1)^{\langle 1\rangle}(a_1)^{\langle 1\rangle})(a_1)^{\langle 2\rangle}(a'_1)^{\langle 2\rangle}\\
			&\overset{\eqref{transl2}}{=}X((aa')_0a''((aa')_1)^{\langle 1\rangle})((aa')_1)^{\langle 2\rangle}\\
			&=(X\cdot(aa'))(a'')
		\end{align*}
		for all $a''\in A$. Alternatively, one uses ${}_\alpha(a'a'')\otimes_B{}^\alpha a
		={}_\alpha a'~{}_\beta a''\otimes_B{}^{\beta\alpha}a$, as in \eqref{eq:alphanotation}.
		This concludes the proof.
	\end{proof}
	From Lemma \ref{lem:rightact} it follows that $\mathrm{End}_B(A)$ is an $A$-bimodule with respect to the right $A$-action \eqref{Durd:RightA} and the left $A$-action given by left multiplication. We continue by proving that the $A$-subbimodule of $\mathrm{End}_B(A)$ given by the $\sigma$-braided derivations is a first order vector field calculus.
	\begin{theorem}\label{ThmSigmaVC}
		Let $B=A^{\mathrm{co}H}\subseteq A$ be an arbitrary Hopf--Galois extension. There is a FOVC $(\mathfrak{X}_\sigma(A),\mathscr{L})$ on $A$, where
		$$
		\mathfrak{X}_\sigma(A):=\{X\in\mathrm{End}_B(A)~|~X(aa')=X(a)a'+\underbrace{X(a_0a'(a_1)^{\langle 1\rangle})(a_1)^{\langle 2\rangle}}_{=(X\cdot a)(a')}\text{ for all }a,a'\in A\}
		$$
		and $\mathscr{L}\colon\mathfrak{X}_\sigma(A)\to\mathrm{End}_\Bbbk(A)$ is the canonical injection $X\mapsto X$. The $A$-bimodule structure of $\mathfrak{X}_\sigma(A)$ is given by
		$(a\cdot X)(a'):=aX(a')$ and $(X\cdot a)(a')=X(a_0a'(a_1)^{\langle 1\rangle})(a_1)^{\langle 2\rangle}$ for all $a,a'\in A$ and $X\in\mathfrak{X}_\sigma(A)$.
	\end{theorem}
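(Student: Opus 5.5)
We check the three conditions of Definition~\ref{def:FOVC} for $(\mathfrak{X}_\sigma(A),\mathscr{L})$, using Lemma~\ref{lem:rightact} to know that $\mathrm{End}_B(A)$ is an $A$-bimodule under left multiplication and the action \eqref{Durd:RightA}. Conditions ii.) and iii.) are essentially automatic. The Leibniz rule $\mathscr{L}_X(aa')=\mathscr{L}_X(a)a'+\mathscr{L}_{X\cdot a}(a')$ is literally the defining condition of $\mathfrak{X}_\sigma(A)$, because $\mathscr{L}$ is the inclusion and $(X\cdot a)(a')$ is the underbraced term. The Lie derivative is injective and left $A$-linear, since $(a\cdot X)(a')=aX(a')$. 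So the real content is condition i.): $\mathfrak{X}_\sigma(A)$ must be an $A$-sub-bimodule of $\mathrm{End}_B(A)$.

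\emph{Closure under the left action.} For $X\in\mathfrak{X}_\sigma(A)$ and $c\in A$, the map $c\cdot X$ is still right $B$-linear. Moreover
\[
(c\cdot X)(aa')=cX(a)a'+cX({}_\alpha a'){}^\alpha a=(c\cdot X)(a)a'+((c\cdot X)\cdot a)(a'),
\]
since $((c\cdot X)\cdot a)(a')=cX({}_\alpha a'){}^\alpha a$ directly from \eqref{Durd:RightA}.

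\emph{Closure under the right action.} This is the main obstacle. Fix $X\in\mathfrak{X}_\sigma(A)$ and $c\in A$; we must show that $Y:=X\cdot c$ satisfies $Y(aa')=Y(a)a'+(Y\cdot a)(a')$. Lemma~\ref{lem:rightact} gives $(Y\cdot a)(a')=(X\cdot(ca))(a')$, so the target identity is
\[
X({}_\alpha(aa'))\,{}^\alpha c \overset{?}{=} X({}_\alpha a)\,{}^\alpha c\, a' + X({}_\alpha a')\,{}^\alpha(ca).
\]
The plan has three steps.
\begin{enumerate}
\item[1.] Expand the left-hand side with the second identity of \eqref{eq:alphanotation}: ${}_\alpha(aa')\otimes_B{}^\alpha c={}_\alpha a\,{}_\beta a'\otimes_B{}^{\beta\alpha}c$. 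Then apply the Leibniz property of $X$ to the product ${}_\alpha a\cdot{}_\beta a'$. This splits the left-hand side into two terms:
\[
X({}_\alpha a)\,{}_\beta a'\,{}^{\beta\alpha}c \;+\; X\big({}_\gamma({}_\beta a')\big)\,{}^\gamma({}_\alpha a)\,{}^{\beta\alpha}c.
\]
\item[2.] For the first term, use braided commutativity $m\circ\sigma=m$ in the form ${}_\beta a'\,{}^\beta e=e\,a'$, applied with $e={}^\alpha c$. This turns the first term into $X({}_\alpha a)\,{}^\alpha c\,a'$, as required. The point to check is that $X({}_\alpha a)$ depends $B$-linearly on the first leg, so the braiding may be applied inside the balanced tensor product over $B$.
\item[3.] For the second term, we need
\[
X\big({}_\gamma({}_\beta a')\big)\,{}^\gamma({}_\alpha a)\,{}^{\beta\alpha}c = X({}_\alpha a')\,{}^\alpha(ca).
\]
Rewrite the right-hand side with the first identity of \eqref{eq:alphanotation} as $X({}_{\alpha\beta}a')\,{}^\alpha c\,{}^\beta a$. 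Matching the two sides then amounts to a braid (Yang--Baxter) relation for $\sigma$, combined with braided commutativity to reorder ${}^\gamma({}_\alpha a)\,{}^{\beta\alpha}c$.
\end{enumerate}

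The bookkeeping in step 3 is delicate, so it may be safer to work with the explicit formula $\sigma(a\otimes_Ba')=a_0a'(a_1)^{\langle1\rangle}\otimes_B(a_1)^{\langle2\rangle}$ and the translation-map identities \eqref{transl1}--\eqref{transl5}. In that form, the two terms become expressions in $c_0,a_0,a'$ and $\tau$ evaluated on the coaction legs $c_1,a_1$. The key tools there are \eqref{transl2}, which gives $\tau$ of the product $c_1a_1$, and \eqref{transl4}, \eqref{transl5} to collapse the terms where $\tau$ meets a coaction. I expect this verification to be the hardest part; the remaining conditions are immediate.
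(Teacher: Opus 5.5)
Your proposal is correct, but for closure of $\mathfrak{X}_\sigma(A)$ under the right action it takes a much heavier route than the paper.

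The paper never uses the braid relation or braided commutativity. For $X\in\mathfrak{X}_\sigma(A)$, the defining identity can be read as $(X\cdot c)(e)=X(ce)-X(c)e$ for all $c,e\in A$. In other words, on $\mathfrak{X}_\sigma(A)$ the \DJ ur\dj evi\'c action coincides with the universal action \eqref{UnivRightAction}. Combine this with $(X\cdot c)\cdot a=X\cdot(ca)$ from Lemma~\ref{lem:rightact}, as you already do. The three terms of your target identity then become $X(caa')-X(c)aa'$, $X(ca)a'-X(c)aa'$ and $X(caa')-X(ca)a'$, and the identity is immediate.

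Your steps 1--3 do go through as described, so you do not need to fall back on the translation-map identities. Write $\sigma_{12}=\sigma\otimes_B\mathrm{id}_A$, $\sigma_{23}=\mathrm{id}_A\otimes_B\sigma$, and $X\otimes m\colon x\otimes_B y\otimes_B z\mapsto X(x)yz$; the last is well defined by right $B$-linearity of $X$. The second term in your step 1 is then $(X\otimes m)\,\sigma_{12}\sigma_{23}\sigma_{12}(c\otimes_B a\otimes_B a')$. The braid relation turns this into $(X\otimes m)\,\sigma_{23}\sigma_{12}\sigma_{23}(c\otimes_B a\otimes_B a')$. Since $(X\otimes m)\circ\sigma_{23}=X\otimes m$ by $m\circ\sigma=m$, this equals $(X\otimes m)\,\sigma_{12}\sigma_{23}(c\otimes_B a\otimes_B a')=X({}_{\alpha\beta}a')\,{}^\alpha c\,{}^\beta a$, which is what you needed.

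The trade-off is this. Your argument relies on the braid relation and braided commutativity of $\sigma$, which the paper only quotes from \DJ ur\dj evi\'c. The paper's three-line computation needs nothing beyond the fact that \eqref{Durd:RightA} is a right action, and it would work for any right action used to define ``derivations'' in this way.
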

	\begin{proof}
		By the previous lemma $(X\cdot a)(a')=X(a_0a'(a_1)^{\langle 1\rangle})(a_1)^{\langle 2\rangle}$ is a right $A$-action on $\mathrm{End}_B(A)$. We prove that this restricts to a right $A$-action on $\mathfrak{X}_\sigma(A)$. Let $X\in\mathfrak{X}_\sigma(A)$ and $a\in A$. Then, for all $b,c\in A$ we have
		\begin{align*}
			(X\cdot a)(b)c+((X\cdot a)\cdot b)(c)
			&=X(ab)c-X(a)bc+(X\cdot(ab))(c)\\
			&=X(ab)c-X(a)bc+X(abc)-X(ab)c\\
			&=(X\cdot a)(bc),
		\end{align*}
		implying $X\cdot a\in\mathfrak{X}_\sigma(A)$. The left $A$-action obviously closes in $\mathfrak{X}_\sigma(A)$. Thus $\mathfrak{X}_\sigma(A)$ is an $A$-bimodule. The Leibniz rule of $(\mathfrak{X}_\sigma(A),\mathscr{L})$ holds by construction, which finishes the proof of the theorem.
	\end{proof}
	In other words, on every Hopf--Galois extension the right $B$-linear endomorphisms which are braided derivations with respect to the \DJ ur\dj evi\'c braiding form a first order vector field calculus on $A$. Let us make some observations about this in the following.
	\begin{remark}
		\begin{enumerate}
			\item[i.)] Consider a Hopf--Galois extension $B=A^{\mathrm{co}H}\subseteq A$. Any $X\in\mathfrak{X}_\sigma(A)$ vanishes on $B$, i.e.
			$X(B)=0$, since $X(b)=X(1\cdot b)=X(1)b=0$ for all $b\in B$. 
			
			\item[ii.)] Consider a Hopf--Galois extension $B=A^{\mathrm{co}H}\subseteq A$ with $A$ \textit{commutative}. Then, the FOVC of Theorem~\ref{ThmSigmaVC} consists of the right $B$-linear derivations of $A$, i.e. 
			$$
			\mathfrak{X}_\sigma(A)=\mathrm{Der}_B(A):=\{X\in\mathrm{End}_B(A)~|~X(aa')=X(a)a'+aX(a')\text{ for all }a,a'\in A\}.
			$$
			This is the case since for all $X\in\mathfrak{X}_\sigma(A)$ and $a,a'\in A$
			$$
			X(a_0a'(a_1)^{\langle 1\rangle})(a_1)^{\langle 2\rangle}=X(a_0(a_1)^{\langle 1\rangle}a')(a_1)^{\langle 2\rangle}=X(a')a=aX(a')
			$$
			holds by \eqref{transl5} and the commutativity of $A$.
			
			In i.) we have seen that any $X\in\mathfrak{X}_\sigma(A)$ vanishes on $B$. We show that the converse holds in the commutative setting. Let $X\in\mathrm{Der}(A)$ be a derivation on the commutative algebra $A$ and assume that $X(B)=0$. Then, for all $a\in A$ and $b\in B$ we have
			$$
			X(ab)=X(a)b+aX(b)=X(a)b,
			$$
			which proves that $X\in\mathfrak{X}_\sigma(A)$. Thus, in the commutative setting $\mathfrak{X}_\sigma(A)=\{X\in\mathrm{Der}(A)~|~X(B)=0\}$. Note that in the general case ($A$ not necessarily commutative) we cannot make this comparison, since $X\cdot a$ is only defined for $X$ right $B$-linear and thus we cannot speak of a $\Bbbk$-linear $\sigma$-derivation in this case.
			
			\item[iii.)] For any Hopf algebra $H$ we obtain a Hopf--Galois extension $\Bbbk\cong H^{\mathrm{co}H}\subseteq H$ by the existence of the antipode, where $H$ coacts on itself via the coproduct. In this case
			$$
			\mathfrak{X}_\sigma(H)=\{X\in\mathrm{End}_\Bbbk(H)~|~X(hg)=X(h)g+X(h_1gS(h_2))h_3\text{ for all }h,g\in H\}
			$$
			since the translation map reads $\tau(h)=S(h_1)\otimes h_2$ for all $h\in H$ in this case. More in general, for every cleft extension $B\subseteq A$ with cleaving map $j\colon H\to A$ we obtain
			$$
			\mathfrak{X}_\sigma(A)=\{X\in\mathrm{End}_B(A)~|~X(aa')=X(a)a'+X(a_0a'j^{-1}(a_1))j(a_2)\},
			$$
			where $j^{-1}\colon H\to A$ denotes the convolution inverse of $j$.
		\end{enumerate}
	\end{remark}
	
	\section{FOVC on quantum principal bundles}\label{sec-ex}
	
	In the first section we recall the quantum tangent space approach \cite{Woronowicz1989} of Woronowicz in the light of vector field calculi, specializing to bicovariant quantum tangent spaces in Section \ref{sec:bicov}. In particular, we prove a correspondence of (bicovariant) quantum tangent spaces with (bi)covariant first order vector field calculi on Hopf algebras. Then, in Section \ref{fovc-ca-sec}, we extend the previous framework from Hopf algebras to 
	arbitrary Hopf--Galois extensions and obtain certain ``vertical vector field calculi''. Here, verticality can be understood in the sense of (or in fact dual to) \DJ ur\dj evi\'c \cite{DurII}, as explained in Section \ref{sec:duality}. We continue to discuss ``base vector fields'' in Section \ref{base-vf-sec}, leading up to an ``Atiyah sequence'' of vector field calculi on Hopf--Galois extensions in Section \ref{sec:Atiyah}. A class of examples of vector field calculi obeying such conditions is given by cleft extensions, or, equivalently, crossed product algebras. This is discussed in Section \ref{sec:crossed}.
	
	\subsection{FOVC on a Hopf algebra}
	\label{fovc-h}
	
	Let $H$ be a Hopf algebra and  $H^\circ$ its restricted dual 
	(also called finite dual), which naturally inherits a Hopf algebra structure \cite[Theorem 9.1.3]{mont}. 
	In the following we denote the comultiplication, counit and antipode of $H^\circ$ by $\Delta$, $\varepsilon$ and $S$, i.e., using the same symbols as in the case of $H$.
	We have a non-degenerate pairing
	$\langle \cdot , \cdot \rangle_H:H^\circ \times H \to \Bbbk$, $ \langle u , h \rangle_H=u(h)$,
	satisfying
	\begin{equation*}
		\begin{split}
			\langle uv , h \rangle_H&=\langle u , h_1 \rangle_H\langle v , h_2 \rangle_H,\\
			\langle u , hg \rangle_H&=\langle u_1 , h \rangle_H\langle u_2 , g \rangle_H,
		\end{split}\qquad\qquad
		\begin{split}
			\langle u , 1 \rangle_H&=\varepsilon(u),\\
			\langle 1 , h \rangle_H&=\varepsilon(h),
		\end{split}\qquad\qquad
		\begin{split}
			\langle S(u),h\rangle_H=\langle u,S(h)\rangle_H.
		\end{split}
	\end{equation*}
	For a finite-dimensional vector space $\fg \subset (H^\circ)^+=\ker(\varepsilon\colon H^\circ\to\Bbbk)$, we define:
	\begin{equation}\label{def:XH}
		\Chi_H:=H \otimes \fg, \qquad \mathscr{L}\colon\Chi_H \to  \mathrm{End}_\Bbbk(H),
		\quad \mathscr{L}_{h \otimes X}(g):=hg_1X(g_2), \quad h,g \in H, \, X \in \fg. 
	\end{equation}
	The question arises which properties of $\mathfrak{g}$ are required in order to obtain a FOVC structure on $(\mathfrak{X}_H,\mathscr{L})$. This leads to the definition of a quantum tangent space.
	
	\begin{definition}\label{def:QTS}
		Let $\fg \subset  (H^\circ)^+$. We say that $\fg$ is a \textit{quantum tangent space} if
		\begin{enumerate}
			\item[i.)] $\fg$ is finite-dimensional.
			
			\item[ii.)] $\Delta(\fg) \subset \fg \otimes\varepsilon+H^\circ \otimes \fg$.
		\end{enumerate}
	\end{definition}
	
	\begin{remark}
		In the original work \cite[Section 5]{Woronowicz1989}, as well as in \cite[Section 2.3.1]{AschieriThesis}, a quantum tangent space is supposed to satisfy 
		\begin{equation}\label{eq:Liebracket}
			[X,Y]:=X_1YS(X_2)\in\mathfrak{g}\qquad\qquad\text{ for all }X\in H^\circ,Y\in\mathfrak{g}
		\end{equation}
		in addition. In their framework, condition \eqref{eq:Liebracket} ensures that the corresponding differential calculus is bicovariant. We are going to introduce \textit{bicovariant} quantum tangent spaces, with an adjoint \textit{co}action condition replacing \eqref{eq:Liebracket}, in the next section.
		Note that in \cite{AschieriThesis,AschieriSchupp} it is further assumed that $\mathfrak{g}$ generates $H^\circ$ as an algebra, which is in order to mimic the fact that a classical Lie algebra generates its universal enveloping algebra. Definition \ref{def:QTS} is more in the spirit of \cite{HeckKolb} and \cite[Section 14.1.2]{ks}, where covariant calculi are considered instead of bicovariant ones and where we include the possibility that $H^\circ$ exceeds the algebra generated by $\mathfrak{g}$. Further note that for most results the finite dimensionality assumption in Definition \ref{def:QTS} $i.)$ is not essential. Nevertheless, we include it in the definition because all examples we encounter meet this requirement and, as seen in a moment, it turns out to be very convenient to work with a basis of $\mathfrak{g}$.
	\end{remark}
	
	Let $\{\chi_i\}$ be a 
	$\Bbbk$-basis for $\fg$.
	The condition $\Delta(\fg) \subset \fg \otimes\varepsilon+H^\circ \otimes \fg$ implies
	that
	$$
	\Delta(\chi_i)=c^j_i \chi_j \otimes \varepsilon+ f^j_i \otimes \chi_j, \qquad c^j_i \in \Bbbk, \, f^j_i \in H^\circ,
	$$
	and a short calculation, 
	using the coassociativity and counitality of $\Delta\colon H^\circ\to H^\circ\otimes H^\circ$, shows that:
	\begin{equation}\label{eq:fij}
		c^j_i =\de_{ij}, \quad f_i^j(g)f_j^k(k)=f_i^k(gk), \quad f_i^j(1)=\varepsilon(f_i^j)=\de_{ij},
	\end{equation}
	for all $g,k\in H$. Hence
	\beq\label{qu-plane}
	\Delta(\chi_i)=\chi_i \otimes \varepsilon+ f^j_i \otimes \chi_j.
	\eeq

	\medskip
	We have the following result, partly contained 
	in Woronowicz \cite[Section 5]{Woronowicz1989}, though expressed in the language of first order differential calculi.

	\begin{theorem}\label{dual-thm}
		Let $\mathfrak{g}\subseteq(H^\circ)^+$ be finite-dimensional and define $(\mathfrak{X}_H,\mathscr{L})$ as in \eqref{def:XH}.
		Then, the following are equivalent:
		
		\begin{enumerate}
			\item[i.)] $(\mathfrak{X}_H,\mathscr{L})$ is a FOVC.
			
			\item[ii.)] $\mathfrak{g}$ is a quantum tangent space.
		\end{enumerate}
		In this case $(\mathfrak{X}_H,\mathscr{L})$ is left $H$-covariant via the left $H$-coaction $\mathfrak{X}_H\to H\otimes\mathfrak{X}_H$, $h\otimes\chi_i\mapsto h_1\otimes h_2\otimes\chi_i$.
	\end{theorem}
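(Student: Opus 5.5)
The plan is to make the $A$-bimodule structure on $\mathfrak{X}_H=H\otimes\mathfrak{g}$ explicit and reduce both implications to a single injectivity statement. The left action is $h'\cdot(h\otimes X):=h'h\otimes X$, which makes $\mathscr{L}$ left $H$-linear automatically. The right action must be forced by the Leibniz rule. Indeed, for $X\in\mathfrak{g}$ and $a,b\in H$ we have
\begin{equation*}
\mathscr{L}_{1\otimes X}(ab)-\mathscr{L}_{1\otimes X}(a)b=a_1b_1\big(\langle X_1,a_2\rangle_H\langle X_2,b_2\rangle_H-\langle X,a_2\rangle_H\varepsilon(b_2)\big),
\end{equation*}
and the Leibniz rule demands that this equal $\mathscr{L}_{(1\otimes X)\cdot a}(b)$.

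The key auxiliary fact I would establish first is the following. The map
\begin{equation*}
\Psi\colon H\otimes H^\circ\to\mathrm{End}_\Bbbk(H),\qquad h\otimes u\mapsto\big(g\mapsto hg_1u(g_2)\big),
\end{equation*}
is injective. I would argue this by composing $g\mapsto g_1\otimes g_2$ with the invertible Galois map of $H$, i.e. $g\otimes k\mapsto gk_1\otimes k_2$ with inverse $gS(k_1)\otimes k_2$. This turns the condition $\Psi(\sum h^\alpha\otimes u^\alpha)=0$ into $\sum h^\alpha g\,u^\alpha(k)=0$ for all $g,k$. Taking $g=1$ and using non-degeneracy of $\langle\cdot,\cdot\rangle_H$ then gives $\sum h^\alpha\otimes u^\alpha=0$. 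Note that $\mathscr{L}=\Psi|_{H\otimes\mathfrak{g}}$.

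For ii.)$\Rightarrow$i.), I would use \eqref{qu-plane} and define
\begin{equation*}
(h\otimes\chi_i)\cdot a:=ha_1f^j_i(a_2)\otimes\chi_j .
\end{equation*}
The display above then reads exactly $\mathscr{L}_{(1\otimes\chi_i)\cdot a}(b)$, using $\langle\chi_i,a_2b_2\rangle_H=\langle\chi_i,a_2\rangle_H\varepsilon(b_2)+f^j_i(a_2)\langle\chi_j,b_2\rangle_H$. So the Leibniz rule holds, and left linearity extends it to all of $\mathfrak{X}_H$. The right action axioms follow from \eqref{eq:fij}: $f^j_i(1)=\delta_{ij}$ gives unitality and $f^j_i(g)f^k_j(k)=f^k_i(gk)$ gives associativity. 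Compatibility with the left action is obvious. Injectivity of $\mathscr{L}$ is the auxiliary fact.

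For i.)$\Rightarrow$ii.), suppose some bimodule structure with this left action makes $(\mathfrak{X}_H,\mathscr{L})$ a FOVC. Then for every $a$ the displayed endomorphism lies in $\Psi(H\otimes\mathfrak{g})$. I would rewrite it as $\Psi\big(a_1\otimes(\langle X_1,a_2\rangle_H X_2-\langle X,a_2\rangle_H\varepsilon)\big)$ and choose a complement $H^\circ=\mathfrak{g}\oplus W$ with projection $p\colon H^\circ\to W$. Injectivity of $\Psi$ then forces $a_1\otimes p(\langle X_1,a_2\rangle_H X_2)=0$ for all $a$, hence $\langle X_1,a\rangle_H\,p(X_2)=0$ for all $a\in H$, after applying $\varepsilon\otimes\mathrm{id}$ (or the antipode trick) to the first leg. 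Here $\varepsilon(X)=0$ kills the $p(\varepsilon)$ contribution only after care, and I would handle it by writing $\varepsilon\in W$ or noting $\langle X,a\rangle_H\,p(\varepsilon)$ appears in both terms. Since $\Delta(X)$ is a finite sum, non-degeneracy gives $(\mathrm{id}\otimes p)\Delta(X)\in\Bbbk\varepsilon$-corrected form, i.e. $\Delta(X)\in H^\circ\otimes\mathfrak{g}+X\otimes\varepsilon$. This bookkeeping with the $\varepsilon$-term and the finite sum is the step I expect to be the main obstacle, and I would redo it with an explicit basis $\{\chi_i\}$ extended to a basis of a finite-dimensional subspace containing the legs of $\Delta(X)$.

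Finally, for covariance I would check directly that $h\otimes\chi_i\mapsto h_1\otimes h_2\otimes\chi_i$ is compatible with both actions. For the right action this is just coassociativity, since $f^j_i$ only touches a tensor leg. Then $\mathscr{L}_{h\otimes\chi_i}(g)=hg_1\chi_i(g_2)$ has coproduct $h_1g_1\otimes h_2g_2\chi_i(g_3)$, which is exactly the diagonal-coaction image, establishing left $H$-colinearity of $\widetilde{\mathscr{L}}$.
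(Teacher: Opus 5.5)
Your treatment of ii.)$\Rightarrow$i.) and of left covariance matches the paper. The gap is in your proof of the auxiliary injectivity of $\Psi$, which both of your directions rely on; the argument does not work as written.

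For $T=\sum_\alpha h^\alpha\otimes u^\alpha$ put $\mu_T(g\otimes k):=\sum_\alpha h^\alpha g\,u^\alpha(k)$. Then $\Psi(T)=\mu_T\circ\Delta$, so $\Psi(T)=0$ only says that $\mu_T$ vanishes on $\Delta(H)=\beta(1\otimes H)$, where $\beta(g\otimes k)=gk_1\otimes k_2$ is the Galois map you chose. Bijectivity of $\beta$ would give $\mu_T=0$ only if $\mu_T\circ\beta(g\otimes k)=\sum_\alpha h^\alpha gk_1u^\alpha(k_2)$ were determined by its values at $g=1$, i.e. were left $H$-linear in $g$. Since $g$ sits between $h^\alpha$ and $k_1$, this fails unless $H$ is commutative. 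The linearity $\mu_T$ actually has is $\mu_T(gy\otimes k)=\mu_T(g\otimes k)y$. So you need the other Galois map $y\otimes k\mapsto k_1y\otimes k_2$, whose inverse $g\otimes k\mapsto S^{-1}(k_1)g\otimes k_2$ uses a bijective antipode (the paper tacitly uses this too). It yields $\mu_T(g\otimes k)=\Psi(T)(k_2)S^{-1}(k_1)g$. Equivalently, $\Psi(T)(S(k_1))k_2=\sum_\alpha h^\alpha u^\alpha(S(k))$, followed by the substitution $k=S^{-1}(x)$. This is exactly the paper's computation, written there as $\mathscr{L}_{u\cdot S(k_1)}(k_2)$, which by the Leibniz rule is $-\mathscr{L}_u(S(k_1))k_2$. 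With this repair your lemma holds on all of $H\otimes H^\circ$ and your ii.)$\Rightarrow$i.) is complete.

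In i.)$\Rightarrow$ii.) your route differs from the paper's, and the bookkeeping is off. From $p\big(\langle X_1,a\rangle_HX_2-\langle X,a\rangle_H\varepsilon\big)=0$ you cannot drop the second term, and $\varepsilon(X)=0$ has nothing to do with it. What saves the argument is that $\varepsilon\notin\mathfrak{g}$, because $\varepsilon(1)=1$ while $\mathfrak{g}\subseteq(H^\circ)^+$. So you may choose the complement with $\varepsilon\in W$. The condition then reads $(\mathrm{id}\otimes p)\Delta(X)=X\otimes\varepsilon$, i.e. $\Delta(X)-X\otimes\varepsilon\in H^\circ\otimes\mathfrak{g}$.

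The paper needs none of this. Since the right action lands in $H\otimes\mathfrak{g}$ and commutes with the left action, one can write $(h\otimes\chi_i)\cdot g=h\phi_i^j(g)\otimes\chi_j$. Applying $\varepsilon$ to the Leibniz rule at $h=1$ then gives directly $\chi_i(gk)=\chi_i(g)\varepsilon(k)+f_i^j(g)\chi_j(k)$ with $f_i^j:=\varepsilon\circ\phi_i^j$. These lie in $H^\circ$ because $f_i^j(g)f_j^l(k)=f_i^l(gk)$. So in this direction no injectivity (of $\mathscr{L}$ or of $\Psi$ on $H\otimes H^\circ$) is needed at all. Your approach gives an explicit formula for the right action, but at the cost of the lemma above.
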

	\begin{proof}
		Let us prove the stated equivalence.
		\begin{enumerate}
			\item[i.) $\Rightarrow$ ii.)] By assumption $(\mathfrak{X},\mathscr{L})$ is a FOVC, which means that $\mathfrak{X}_H=H\otimes\mathfrak{g}$ is not only a left $H$-module, but also a right $H$-module such that
			$$
			\mathscr{L}_{h\otimes\chi_i}(gk)
			=\mathscr{L}_{h\otimes\chi_i}(g)k
			+\mathscr{L}_{(h\otimes\chi_i)\cdot g}(k)
			$$
			for all $h,g,k\in H$. In particular, $(h\otimes\chi_i)\cdot g\in H\otimes\mathfrak{g}$, which implies that there is a $\Bbbk$-linear map $\phi_i^j\colon H\to H$ such that $(h\otimes\chi_i)\cdot g=h\phi_i^j(g)\otimes\chi_j$. Here, $\phi_i^j(g)$ has to be on the right of $h$, since we want this right action to commute with the left action. Moreover, $\phi_i^j(g)\phi_j^k(k)=\phi_i^k(gk)$ and $\phi_i^j(1)=\delta_i^j1_H$ follow from the axioms of a right action. Let us define the linear functionals $f_i^j\in H\to\Bbbk$ by $f_i^j:=\varepsilon\circ\phi_i^j$. Since $\varepsilon$ is an algebra homomorphism we have $f_i^j(g)f_j^k(k)=f_i^k(gk)$ and $f_i^j(1)=\delta_i^j$. This implies that $f_i^j\in H^\circ$.
			
			Then, by the Leibniz rule of $\mathscr{L}$,
			\begin{align*}
				hg_1k_1\chi_i(g_2k_2)
				&=\mathscr{L}_{h\otimes\chi_i}(gk)\\
				&=\mathscr{L}_{h\otimes\chi_i}(g)k
				+\mathscr{L}_{(h\otimes\chi_i)\cdot g}(k)\\
				&=\mathscr{L}_{h\otimes\chi_i}(g)k
				+\mathscr{L}_{h\phi_i^j(g)\otimes\chi_j}(k)\\
				&=hg_1\chi_i(g_2)k+h\phi_i^j(g)k_1\chi_j(k_2)
			\end{align*}
			holds for all $h,g,k\in H$. Setting $h=1$ and applying $\varepsilon$ to the above equality gives
			$$
			\chi_i(gk)=\chi_i(g)\varepsilon(k)+\underbrace{\varepsilon(\phi_i^j(g))}_{=f_i^j(g)}\chi_j(k)
			$$
			or, equivalently,
			$
			\Delta(\chi_i)=\chi_i\otimes\varepsilon+f_i^j\otimes\chi_j,
			$
			which implies that $\mathfrak{g}$ is a quantum tangent space.
			
			\item[ii.) $\Rightarrow$ i.)] If $\Delta(\mathfrak{g})\subseteq \mathfrak{g}\otimes\varepsilon+
			H^\circ\otimes\mathfrak{g}$ there are linear functionals $f_i^j\colon H\to\Bbbk$ with $f_i^j\in H^\circ$ such that
			$$
			\Delta(\chi_i)=\chi_i\otimes\varepsilon+f_i^j\otimes\chi_j.
			$$
			where  $\{\chi_i\}$ is a basis for $\fg$.
			By the coassociativity of $\Delta$ this implies $\Delta(f_i^j)=f_i^k\otimes f_k^j$ and by its counitality we obtain $\varepsilon(f_i^j)=f_i^j(1)=\delta_i^j$. 
			Using these properties we easily check that
			$$
			(h\otimes\chi_i)\cdot g:=hg_1f_i^j(g_2)\otimes\chi_j
			$$
			defines a right $H$-action on $\mathfrak{X}_H=H\otimes\mathfrak{g}$, which commutes with the obvious left $H$-action.
			Moreover,
			\begin{align*}
				\mathscr{L}_{h\otimes\chi_i}(gk)
				&=hg_1k_1\chi_i(g_2k_2)\\
				&=hg_1k_1\big(\chi_i(g_2)\varepsilon(k_2)+f_i^j(g_2)\chi_j(k_2)\big)\\
				&=hg_1\chi_i(g_2)k+hg_1f_i^j(g_2)k_1\chi_j(k_2)\\
				&=\mathscr{L}_{h\otimes\chi_i}(g)k
				+\mathscr{L}_{(h\otimes\chi_i)\cdot g}(k),
			\end{align*}
			which is the Leibniz identity. To finish the proof that
			$(\mathfrak{X}_H,\mathscr{L})$ is a FOVC, we need to show injectivity of $\mathscr{L}$. 
			Fix  an element $u=a^i \otimes \chi_i$ with $a^i\in H$ and
			assume that $\mathscr{L}_u(h)=0$ for all $h \in H$.
			We need to show $u=0$, which is equivalent to $a^i=0$ for all indices $i$. Observe that for all $g,h\in H$ we have $\mathscr{L}_{u\cdot g}(h)=\mathscr{L}_u(gh)-\mathscr{L}_u(g)h=0$ by the Leibniz rule. In particular, for any $k\in H$ it follows that
			\begin{align*}
				0
				&=\mathscr{L}_{u\cdot S(k_1)} k_2
				=\mathscr{L}_{u}(S(k_1)k_2)
				-\mathscr{L}_{u}(S(k_1))k_2
				=0-a^i S(k_1)_1\chi_i(S(k_1)_2) k_2\\
				&=-a^iS(k_2)\chi_i(S(k_1))k_3=-a^i\varepsilon(k_2)\chi_i(S(k_1))
				=-a^i\chi_i(S(k))
			\end{align*}
			Let us now choose $k=S^{-1}(x^j)$, where $\{x^j\}$ is a dual basis for $\{\chi_i\}$, that
			is $\chi_i(x^j)=\delta_i^j$. Then
			$$
			0=a^i\chi_i(S(k))=a^i\chi_i(S(S^{-1}(x^j))=a^j
			$$
			for all indices $j$, thus obtaining injectivity of $\mathscr{L}$.
		\end{enumerate}
		Let us assume that $i.)$ or, equivalently, $ii.)$ holds. It remains to prove that the FOVC $(\mathfrak{X}_H,\mathscr{L})$ is left $H$-covariant with respect to the left $H$-coaction 
		$$
		{}_{\mathfrak{X}_H}\Delta\colon\mathfrak{X}_H\to H\otimes \mathfrak{X}_H,\qquad h\otimes\chi_i\mapsto h_1\otimes h_2\otimes\chi_i.
		$$
		First of all, ${}_{\mathfrak{X}_H}\Delta$ is clearly a left $H$-coaction on $\mathfrak{X}_H$ by the coassociativity and counitality of $\Delta$. Moreover, the $H$-bimodule structure of $\mathfrak{X}_H$ is compatible with ${}_{\mathfrak{X}_H}\Delta$, and thus $\mathfrak{X}_H$ a left $H$-covariant $H$-bimodule, since
		\begin{align*}
			\Delta(h){}_{\mathfrak{X}_H}\Delta(g\otimes\chi^i)\Delta(k)
			&=h_1g_1k_1\otimes h_2\cdot(g_2\otimes\chi^i)\cdot k_2\\
			&=h_1g_1k_1\otimes h_2g_2k_2f^i_j(k_3)\otimes\chi^j\\
			&={}_{\mathfrak{X}_H}\Delta(hgk_1f^i_j(k_2)\otimes\chi^j)\\
			&={}_{\mathfrak{X}_H}\Delta(h\cdot(g\otimes\chi^i)\cdot k)
		\end{align*}
		for all $h,k\in H$ and $g\otimes\chi^i\in\mathfrak{X}_H$.
		Furthermore, denoting the diagonal left $H$-coaction on $\mathfrak{X}_H\otimes H$ by ${}_\otimes\Delta$ and setting $\widetilde{\mathscr{L}}\colon\mathfrak{X}_H\otimes H\to H$, $X\otimes h\mapsto\mathscr{L}_X(h)$ as usual, we obtain
		\begin{align*}
			(\mathrm{id}\otimes\widetilde{\mathscr{L}})({}_\otimes\Delta((h\otimes\chi^i)\otimes g))
			&=h_1g_1\otimes\widetilde{\mathscr{L}}((h_2\otimes\chi^i)\otimes g_2)\\
			&=h_1g_1\otimes\mathscr{L}_{h_2\otimes\chi^i}(g_2)\\
			&=h_1g_1\otimes h_2g_2\chi^i(g_3)\\
			&=\Delta(hg_1\chi^i(g_2))\\
			&=\Delta(\widetilde{\mathscr{L}}((h\otimes\chi^i))\otimes g)
		\end{align*}
		for all $h\otimes\chi^i\in\mathfrak{X}_H$ and $g\in H$. Thus, $(\mathfrak{X}_H,\mathscr{L})$ is left $H$-covariant.
	\end{proof}
	
	To exemplify the notion of left covariant FOVC arising from a quantum tangent space we discuss the example of a three-dimensional vector field calculus on the quantum group $\mathrm{SL}_q(2)$. It turns out to be the dual of the well-known differential calculus of Woronowicz \cite{Woronowicz1989}.
	
	\begin{example}[3D FOVC on $\mathrm{SL}_q(2)$]\label{ex:SL2:FOVC}
		Let $q\in\mathbb{C}$ not be zero and not a root of unity.
		We consider the quantum group $A:=\mathcal{O}_q(\mathrm{SL}_2(\mathbb{C}))$ of $q$-deformed special linear $2\times 2$-matrices. As an algebra $A=\mathbb{C}\langle\alpha,\beta,\gamma,\delta\rangle/I_M$ is generated by generators $\alpha,\beta,\gamma,\delta$ modulo the ideal $I_M$ generated by the Manin relations
		\begin{equation}\label{manin-r}
			\beta\alpha=q\alpha\beta,\quad
			\gamma\alpha=q\alpha\gamma,\quad
			\delta\beta=q\beta\delta,\quad
			\delta\gamma=q\gamma\delta,\quad
			\delta\alpha-\alpha\delta=(q-q^{-1})\beta\gamma,\quad
			\gamma\beta=\beta\gamma
		\end{equation}
		and the quantum determinant relation
		$$
		\alpha\delta-q^{-1}\beta\gamma=1.
		$$
		It is well-known \cite{frt} that $A$ is a Hopf algebra with comultiplication and counit defined on generators by
		$$
		\Delta\begin{pmatrix}
			\alpha & \beta\\
			\gamma & \delta
		\end{pmatrix}=\begin{pmatrix}
			\alpha & \beta\\
			\gamma & \delta
		\end{pmatrix}\otimes\begin{pmatrix}
			\alpha & \beta\\
			\gamma & \delta
		\end{pmatrix}\qquad
		\varepsilon\begin{pmatrix}
			\alpha & \beta\\
			\gamma & \delta
		\end{pmatrix}=\begin{pmatrix}
			1 & 0\\
			0 & 1
		\end{pmatrix}
		$$
		and extended as algebra morphisms and with antipode defined on generators by
		$$
		S\begin{pmatrix}
			\alpha & \beta\\
			\gamma & \delta
		\end{pmatrix}=\begin{pmatrix}
			\delta & -q\beta\\
			-q^{-1}\gamma & \alpha
		\end{pmatrix}
		$$
		and extended as an anti-algebra morphism. There is a quantum tangent space $\mathfrak{g}_A=\mathrm{span}_\mathbb{C}\{\chi_0,\chi_\pm\}$ on $A$, spanned by the three functionals $\chi_0$, $\chi_\pm\in A^\circ$, which are determined on generators by
		$$
		\chi_0(\alpha)=q^{-2}\,,\,\,\,\chi_0(\delta)=-1\,,\,\,\,\chi_+(\gamma)=q^{2}\,,\,\,\,\chi_-(\beta)=q^{-1},
		$$
		where trivial cases are omitted. The values of these functionals on arbitrary elements of $A$ is then determined via the comultiplication
		\begin{equation}
			\Delta(\chi_0)
			=\chi_0\otimes\varepsilon
			+\mathrm{deg}_{q^{-2}}\otimes\chi_0,\qquad
			\Delta(\chi_\pm)
			=\chi_\pm\otimes\varepsilon
			+\mathrm{deg}_{q^{-1}}\otimes\chi_\pm,
		\end{equation}
		where $\mathrm{deg}_{q^n}\colon A\to\mathbb{C}$, $\mathrm{deg}_{q^n}(a):=q^{n|a|}\varepsilon(a)$ for all $n\in\mathbb{Z}$ and where $|a|$ denotes the degree of $a$, which is determined by $|\alpha|=|\gamma|=1$ and $|\beta|=|\delta|=-1$. From Theorem \ref{dual-thm} we thus obtain a left covariant FOVC $(\mathfrak{X}(A),\mathscr{L}^A)$ on $A$, where $\mathfrak{X}(A):=A\otimes\mathfrak{g}_A$.
		The left $A$-module structure is the multiplication on $A$, while the right $A$-module structure is determined by
		\begin{equation}
			\begin{split}
				(a\otimes\chi_0)\cdot a'&=q^{-2|a'|}aa'\otimes \chi_0,\\
				(a\otimes\chi_\pm)\cdot a'&=q^{-|a'|}aa'\otimes\chi_\pm
			\end{split}
		\end{equation}
		for all $a,a'\in A$. Moreover, the Lie derivative is defined by
		\begin{equation}\label{eq:chi0}
			\begin{split}
				\,\,\mathscr{L}^A_{\chi_0}(\alpha)&=q^{-2}\alpha,\\
				\mathscr{L}^A_{\chi_0}(\gamma)&=q^{-2}\gamma,
			\end{split}\qquad
			\begin{split}
				\,\,\,   \mathscr{L}^A_{\chi_0}(\beta)&=-\beta,\\
				\mathscr{L}^A_{\chi_0}(\delta)&=-\delta,
			\end{split}
		\end{equation}
		\begin{equation}
			\begin{split}
				\mathscr{L}^A_{\chi_+}(\alpha)&=q^{2}\beta,\\
				\mathscr{L}^A_{\chi_+}(\gamma)&=q^{2}\delta,
			\end{split}\qquad
			\begin{split}
				\mathscr{L}^A_{\chi_+}(\beta)&=0,\\
				\mathscr{L}^A_{\chi_+}(\delta)&=0,
			\end{split}\qquad\qquad
			\begin{split}
				\mathscr{L}^A_{\chi_-}(\alpha)&=0,\\
				\mathscr{L}^A_{\chi_-}(\gamma)&=0,
			\end{split}\qquad
			\begin{split}
				\mathscr{L}^A_{\chi_-}(\beta)&=q^{-1}\alpha,\\
				\mathscr{L}^A_{\chi_-}(\delta)&=q^{-1}\gamma.
			\end{split}
		\end{equation}
		where, to ease the notation, we write $\mathscr{L}^A_{\chi}$ in place of
		$\mathscr{L}^A_{1\otimes \chi}$, for $\chi \in \fg_A$. One verifies that the dual differential calculus of $(\mathfrak{X}(A),\mathscr{L}^A)$, in the sense of Proposition~\ref{prop:duality'}, is the 3D calculus of Woronowicz \cite{Woronowicz1989}, see also \cite[Example 2.32]{bm}.
	\end{example}
	
	{We have seen that, given a quantum tangent space $\mathfrak{g}$, there is a left covariant FOVC $(\mathfrak{X}_H,\mathscr{L}^H)$ on $H$. It turns out that all left covariant FOVCi $(\mathfrak{X},\mathscr{L})$ on $H$ with finite-dimensional $\Bbbk$-vector subspace of coinvariants ${}^{\mathrm{co}H}\mathfrak{X}:=\{X\in\mathfrak{X}~|~{}_\mathfrak{X}\Delta(X)=1\otimes X\}$ are associated to a quantum tangent space, as in the correspondence of Theorem \ref{dual-thm}. This is the analogue of \cite[Theorem 1.5]{Woronowicz1989} for left covariant vector field calculi on Hopf algebras and it also greatly relies on the Fundamental Theorem of Hopf modules \cite[Theorem~1.9.4]{mont}.}
	\begin{theorem}\label{thm:leftcov=qts}
		Let $H$ be a Hopf algebra. Then there is a bijective correspondence
		\begin{equation}
			\begin{Bmatrix}
				\text{left covariant FOVC}\\
				(\mathfrak{X},\mathscr{L})\text{ on $H$ with }\\
				\dim{}^{\mathrm{co}H}\mathfrak{X}<\infty
			\end{Bmatrix}\overset{1:1}{\longleftrightarrow}\begin{Bmatrix}
				\text{quantum tangent}\\
				\text{spaces in }H^\circ
			\end{Bmatrix}
		\end{equation}
	\end{theorem}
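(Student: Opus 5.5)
The plan is to build the two maps explicitly and check that they are mutually inverse. The direction from quantum tangent spaces to FOVCi is already Theorem \ref{dual-thm}: a quantum tangent space $\mathfrak{g}$ yields the left covariant FOVC $(H\otimes\mathfrak{g},\mathscr{L})$, whose coinvariants ${}^{\mathrm{co}H}(H\otimes\mathfrak{g})=1\otimes\mathfrak{g}$ are finite-dimensional. For the converse, start from a left covariant FOVC $(\mathfrak{X},\mathscr{L})$ on $H$ with $V:={}^{\mathrm{co}H}\mathfrak{X}$ finite-dimensional. By left covariance, $\mathfrak{X}$ is a left $H$-Hopf module: it is a left $H$-module via the left action and a left comodule, and the two are compatible. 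The Fundamental Theorem of Hopf modules \cite[Theorem~1.9.4]{mont} therefore gives an isomorphism of left Hopf modules $H\otimes V\to\mathfrak{X}$, $h\otimes v\mapsto h\cdot v$.

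Next, I would attach to each $v\in V$ the functional $\chi_v:=\varepsilon\circ\mathscr{L}_v\colon H\to\Bbbk$ and set $\mathfrak{g}:=\{\chi_v\mid v\in V\}$. The key identity is $\mathscr{L}_v(g)=g_1\chi_v(g_2)$ for coinvariant $v$. It follows from the colinearity diagram \eqref{diagLcov} in its left version: for $v$ coinvariant, $\Delta(\mathscr{L}_v(g))=g_1\otimes\mathscr{L}_v(g_2)$, and applying $\mathrm{id}\otimes\varepsilon$ gives the claim. Combined with left $H$-linearity of $\mathscr{L}$ and the Hopf module isomorphism, this yields $\mathscr{L}_{h\cdot v}(g)=hg_1\chi_v(g_2)$, which is exactly the Lie derivative \eqref{def:XH}. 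Injectivity of $\mathscr{L}$ makes $v\mapsto\chi_v$ injective, since $\chi_v=0$ forces $\mathscr{L}_v=0$. Hence $\mathfrak{g}\cong V$ is finite-dimensional, and $\mathfrak{X}\cong H\otimes\mathfrak{g}$ with the same Lie derivative. The Leibniz rule gives $\chi_v(1)=0$.

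It then remains to show $\mathfrak{g}\subseteq(H^\circ)^+$ and $\Delta(\mathfrak{g})\subseteq\mathfrak{g}\otimes\varepsilon+H^\circ\otimes\mathfrak{g}$. For this I would repeat the argument of i.)$\Rightarrow$ ii.) in Theorem \ref{dual-thm}. The right action must preserve $\mathfrak{X}\cong H\otimes V$, so choosing a basis $v_i$ of $V$ we can write $v_i\cdot g=\phi_i^j(g)\cdot v_j$ for unique linear maps $\phi_i^j$, unique by the Hopf module isomorphism. Then $f_i^j:=\varepsilon\circ\phi_i^j$ is a matrix coefficient representation, so $f_i^j\in H^\circ$. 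Evaluating the Leibniz rule through $\varepsilon$ gives $\chi_i(gk)=\chi_i(g)\varepsilon(k)+f_i^j(g)\chi_j(k)$, and this shows both that $\chi_i\in H^\circ$ and the coproduct condition.

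Finally, I would check that the two constructions are mutually inverse. Starting from $\mathfrak{g}$, the coinvariants $1\otimes\chi$ return $\varepsilon\circ\mathscr{L}_{1\otimes\chi}=\chi$, so we recover $\mathfrak{g}$. Starting from $\mathfrak{X}$, the map $H\otimes\mathfrak{g}\to\mathfrak{X}$ is a bimodule isomorphism commuting with $\mathscr{L}$; right linearity is automatic because both right actions are determined by $\mathscr{L}$ via injectivity, as in Proposition \ref{prop:univ}. The bijection is thus between isomorphism classes of FOVCi, or equivalently sub-FOVCi of the universal one. I expect the main obstacle to be the step that produces the identity $\mathscr{L}_v(g)=g_1\chi_v(g_2)$ together with the well-definedness of $\phi_i^j$ inside $H\otimes V$, since this is where left colinearity and the Fundamental Theorem must be combined carefully.
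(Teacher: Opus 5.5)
Your proposal is correct and follows essentially the paper's route: the Fundamental Theorem of Hopf modules, $\chi=\varepsilon\circ\mathscr{L}_X$ on coinvariants, the Leibniz rule pushed through $\varepsilon$ to obtain the quantum tangent space condition, Theorem \ref{dual-thm} for the converse, and the Hopf-module isomorphism intertwining the Lie derivatives. The differences are minor. You get the $f_i^j$ by expanding $v_i\cdot g$ in the free basis, as in Theorem \ref{dual-thm}, rather than from the adjoint action $X\lhd h=S(h_1)Xh_2$, and you deduce right linearity of the isomorphism from injectivity of $\mathscr{L}$ rather than checking it directly; you also usefully make explicit that $v\mapsto\chi_v$ is injective and that the bijection holds up to isomorphism.
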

	\begin{proof}
		Consider a left covariant FOVC $(\mathfrak{X},\mathscr{L})$ on $H$ and assume $\dim{}^{\mathrm{co}H}\mathfrak{X}<\infty$. Define $\mathfrak{g}:={}^{\mathrm{co}H}\mathfrak{X}$. Since $\mathscr{L}\colon\mathfrak{X}\to\mathrm{End}_\Bbbk(H)$ is injective we can identify $\mathfrak{g}$ as a subspace of $\mathrm{End}_\Bbbk(H)$. Postcomposing with the counit gives a subspace in $H^*$, i.e., we identify $X\in\mathfrak{g}$ with the $\Bbbk$-linear map $\chi:=\varepsilon\circ\mathscr{L}_X\colon H\to\Bbbk$. Under the assumption that $\mathfrak{g}$ is finite-dimensional, we choose a basis $\{X_i\}$ of $\mathfrak{g}$ with corresponding $\{\chi_i\}$ in $H^*$. On $\mathfrak{g}={}^{\mathrm{co}H}\mathfrak{X}$ there is the adjoint right $H$-action 
		$\lhd\colon\mathfrak{g}\otimes H\to\mathfrak{g}$, $X\lhd h:=S(h_1)Xh_2$, as one easily verifies. By \cite[Lemma 1.6.4]{mont} this induces a left $H^\circ$-coaction $\mathfrak{g}\to H^\circ\otimes\mathfrak{g}$ (here the finite-dimensionality of $\mathfrak{g}$ is crucial). On the basis $X_i$ this coaction reads
		\begin{equation}
			X_i\mapsto f_i^j\otimes X_j
		\end{equation}
		for some $\{f_i^j\}\subseteq H^\circ$, which satisfy \eqref{eq:fij} by the left coaction axioms. Then, for all $h,g\in H$ we have
		\begin{align*}
			\chi_i(hg)
			=\varepsilon(\mathscr{L}_{X_i}(hg))
			=\varepsilon(\mathscr{L}_{X_i}(h)g+\underbrace{\mathscr{L}_{X_i\cdot h}(g)}_{=h_1\mathscr{L}_{S(h_2)\cdot X_i\cdot h_3}(g)})
			=\chi_i(h)\varepsilon(g)+f_i^j(h)\chi_j(g),
		\end{align*}
		which shows $\mathfrak{g}\subseteq H^\circ$ and, more specifically, $\Delta(\mathfrak{g})\subseteq\mathfrak{g}\otimes\varepsilon+H^\circ\otimes\mathfrak{g}$, i.e., that $\mathfrak{g}$ is a quantum tangent space.
		
		The converse implication follows from Theorem \ref{dual-thm}.
		
		It is left to prove that this correspondence is bijective. Let $(\mathfrak{X},\mathscr{L})$ be a left covariant FOVC on $H$ with $\dim{}^{\mathrm{co}H}\mathfrak{X}<\infty$ and consider the corresponding quantum tangent space $\mathfrak{g}={}^{\mathrm{co}H}\mathfrak{X}$. Then $H\otimes\mathfrak{g}$ is a left covariant FOVC on $H$ by Theorem \ref{dual-thm} and we show that $\mathfrak{X}\cong H\otimes\mathfrak{g}$ are isomorphic as left covariant FOVCi. The corresponding isomorphism (given via the Fundamental Theorem of Hopf modules) is 
		$$
		\theta\colon\mathfrak{X}\to H\otimes\mathfrak{g},\qquad\qquad
		X\mapsto X_{-2}\otimes S(X_{-1})\cdot X_0
		$$
		with inverse $H\otimes\mathfrak{g}\to\mathfrak{X}$, $h\otimes X_i\mapsto h\cdot X_i$, where $\{X_i\}$ is a basis of $\mathfrak{g}$. Using that $\mathfrak{X}$ is a left covariant $H$-bimodule it is straightforward to check that $\theta$ is a bijection, $H$-bilinear and left $H$-colinear. The compatibility with the Lie derivatives follows from
		\begin{align*}
			\mathscr{L}^H_{\theta(X)}(h)
			&=\mathscr{L}^H_{X_{-2}\otimes S(X_{-1})\cdot X_0}(h)\\
			&=X_{-2}h_1\varepsilon(\mathscr{L}_{S(X_{-1})\cdot X_0}(h_2))\\
			&=X_{-2}h_1\varepsilon(S(X_{-1})\mathscr{L}_{X_0}(h_2))\\
			&=X_{-1}h_1\varepsilon(\mathscr{L}_{X_0}(h_2))\\
			&\overset{(*)}{=}(\mathscr{L}_X(h))_1\varepsilon((\mathscr{L}_X(h))_2)\\
			&=\mathscr{L}_X(h)
		\end{align*}
		for all $X\in\mathfrak{X}$ and $h\in H$, where in the second equation we used the definition \eqref{def:XH}, as well as the identification of $\mathfrak{g}$ as a subspace of $H^*$, as explained before, and in $(*)$ we used the left $H$-covariance of $(\mathfrak{X},\mathscr{L})$. Conversely, if $\mathfrak{g}\subseteq H^\circ$ is a quantum tangent space and we construct the left covariant FOVC $\mathfrak{X}:=H\otimes\mathfrak{g}$ according to Theorem \ref{dual-thm}, it is immediate to see that ${}^{\mathrm{co}H}\mathfrak{X}=\mathfrak{g}$. This concludes the proof of the theorem.
	\end{proof}
	
	\subsection{Bicovariant quantum tangent spaces}\label{sec:bicov}
	
	Consider a quantum tangent space $\mathfrak{g}$ with basis $\{\chi_i\}$ and the corresponding functionals $\{f_i^j\}$ such that $\Delta(\chi_i)=\chi_i\otimes\varepsilon+f_i^j\otimes\chi_j$. Then $\mathfrak{g}$ becomes a right $H$-module via
	\begin{equation}\label{eq:leftharpoon}
		\leftharpoonup\colon\mathfrak{g}\otimes H\to\mathfrak{g},\qquad\qquad \chi_i\leftharpoonup h:=f_i^j(h)\chi_j,
	\end{equation}
	using \eqref{eq:fij}. Let us further assume that $\mathfrak{g}$ is a right $H$-comodule with coaction $\Delta_\mathfrak{g}\colon\mathfrak{g}\to\mathfrak{g}\otimes H$. The latter is determined on the basis $\{\chi_i\}$ by 
	\begin{equation}\label{eq:gcoact}
		\Delta_\mathfrak{g}(\chi_i)=\chi_j\otimes N_i^j
	\end{equation} 
	for some $N_i^j\in H$ and the coaction axioms of $\Delta_\mathfrak{g}$ are equivalent to the equations
	\begin{align}
		\Delta(N^j_i)
		&=N^j_k\otimes N^k_i,\label{eq:N1}\\
		\varepsilon(N^j_i)
		&=\delta^j_i,\label{eq:N2}
	\end{align}
	for $\{N_i^j\}$. Similarly to the account in \cite[Section 4]{SchYD} we give
	an important characterization of Yetter-Drinfel'd module structures on $\mathfrak{g}$.
	
	\begin{proposition}
		The right $H$-module and $H$-comodule $(\mathfrak{g},\leftharpoonup,\Delta_\mathfrak{g})$ is a Yetter-Drinfel'd module if and only if
		\begin{align}
			h_1f^i_j(h_2)N^k_i
			&=N^i_jf_i^k(h_1)h_2\label{eq:N3}
		\end{align}
		holds for all $h\in H$.
	\end{proposition}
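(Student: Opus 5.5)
The plan is to write out the right–right Yetter--Drinfel'd compatibility condition explicitly on the basis $\{\chi_i\}$ and compare coefficients. For a right $H$-module and right $H$-comodule $V$, the Yetter--Drinfel'd condition reads
\begin{equation*}
(v\leftharpoonup h_2)_0\otimes S(h_1)(v\leftharpoonup h_2)_1h_3=v_0\leftharpoonup h\otimes v_1 .
\end{equation*}
Equivalently, in the form without antipode,
\begin{equation*}
v_0\leftharpoonup h_1\otimes v_1h_2=(v\leftharpoonup h_2)_0\otimes h_1(v\leftharpoonup h_2)_1 .
\end{equation*}
I will use the antipode-free version. The equivalence of the two versions is standard: multiply by $S(h_1)$ on the left of the second tensor factor and reindex, and conversely multiply by $h_1$. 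This step is routine Hopf algebra manipulation.

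Next I evaluate both sides on $v=\chi_j$, using \eqref{eq:leftharpoon} and \eqref{eq:gcoact}. The left-hand side becomes
\begin{equation*}
(\chi_i\leftharpoonup h_1)\otimes N^i_jh_2=\chi_k\otimes N^i_jf_i^k(h_1)h_2 .
\end{equation*}
The right-hand side becomes
\begin{equation*}
f_j^i(h_2)\,(\chi_i)_0\otimes h_1(\chi_i)_1=\chi_k\otimes h_1f^i_j(h_2)N^k_i .
\end{equation*}
Since $\{\chi_k\}$ is a basis, equality of the two sides for all $j$ is equivalent to equality of the coefficients of each $\chi_k$. That coefficient condition is exactly \eqref{eq:N3}.

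Because both the action and the coaction are linear, checking the condition on a basis suffices. This gives both directions simultaneously.

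The only point needing care is bookkeeping. I must use the right convention for Yetter--Drinfel'd modules in the right–right setting, and keep the index placement of $f_i^j$ versus $N_i^j$ consistent with \eqref{eq:leftharpoon} and \eqref{eq:gcoact}. I would also verify that \eqref{eq:fij}, \eqref{eq:N1} and \eqref{eq:N2} already make $\leftharpoonup$ a right action and $\Delta_\mathfrak{g}$ a coaction, so that \eqref{eq:N3} is the sole additional requirement.
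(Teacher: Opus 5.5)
Your proof is correct and follows the paper's argument: you use the antipode-free Yetter--Drinfel'd condition, which is exactly the paper's \eqref{YD}, evaluate both sides on the basis $\{\chi_j\}$ via \eqref{eq:leftharpoon} and \eqref{eq:gcoact}, and compare the coefficients of $\chi_k$ (your index bookkeeping is cleaner than the paper's). One harmless slip: the ``antipode version'' you quote is misstated, and it already fails for the group algebra of a nonabelian group; the correct equivalent form is $(v\leftharpoonup h)_0\otimes(v\leftharpoonup h)_1=v_0\leftharpoonup h_2\otimes S(h_1)v_1h_3$, but since you never use it, the argument is unaffected.
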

	
	\begin{proof}
		We need to show that $(\mathfrak{g},\leftharpoonup,\Delta_\mathfrak{g})$ is a {Yetter-Drinfel'd module}, i.e.,
		\begin{equation}\label{YD}
			(X\leftharpoonup h_2)_0\otimes h_1(X\leftharpoonup h_2)_1
			=X_0\leftharpoonup h_1\otimes X_1h_2
		\end{equation}
		holds for all $X\in\mathfrak{g}$ and $h\in H$ if and only if $h_1f^i_j(h_2)N^k_i=N^i_jf_i^k(h_1)h_2$ holds for all $h\in H$. It is sufficient to show \eqref{YD} on the basis $\{\chi_i\}$ and elements $h\in H$. There, the left hand side of \eqref{YD} reads $f_i^j(h_2)\chi_k\otimes h_1N_k^j$, while the right hand side reads $f_j^k(h_1)\chi_k\otimes N_i^jh_2$. Thus, the equivalence with \eqref{eq:N3} is shown.
	\end{proof}
	
	We now wish to endow the quantum tangent space $\mathfrak{g}$ with a right $H$-coaction compatible with the right $H$-action \eqref{eq:leftharpoon}, to view it as a Yetter-Drinfel'd module, as in the previous proposition.
	There is a candidate for a suitable right $H$-coaction on a quantum tangent space $\mathfrak{g}$ and it is given as follows.
	
	Define first
	\begin{equation}\label{eq:Adcoact}
		\mathrm{ad}_\mathfrak{g}\colon\mathfrak{g}\to\mathrm{End}_\Bbbk(H),\qquad\qquad\mathrm{ad}_\mathfrak{g}(\chi_i):=(\mathrm{id}\otimes\chi_i)\circ\mathrm{ad},
	\end{equation}
	where $\mathrm{ad}\colon H\to H\otimes H$, $\mathrm{ad}(h):=h_1S(h_3)\otimes h_2$ is the adjoint left $H$-coaction. Note that we have the following inclusions of vector spaces
	$$
	\mathfrak{g}\otimes H\subseteq H^\circ\otimes H\subseteq H^*\otimes H\subseteq\mathrm{End}_\Bbbk(H).
	$$
	Then, the following lemma shows that \eqref{eq:Adcoact} is in fact a natural candidate for a right $H$-coaction if 
	it maps into the appropriate vector subspace.

	\begin{lemma}\label{lem:YD}
		Let $\mathfrak{g}$ be a quantum tangent space and assume that
		\begin{equation}\label{eq:adrestricts}
			\mathrm{ad}_\mathfrak{g}(\mathfrak{g})\subseteq\mathfrak{g}\otimes H.
		\end{equation}
		Then, the corestriction $\mathrm{ad}_\mathfrak{g}\colon\mathfrak{g}\to\mathfrak{g}\otimes H$ is a right $H$-coaction such that $(\mathfrak{g},\leftharpoonup,\mathrm{ad}_\mathfrak{g})$ is a Yetter-Drinfel'd module, where $\leftharpoonup\colon\mathfrak{g}\otimes H\to\mathfrak{g}$ is defined in \eqref{eq:leftharpoon}.
	\end{lemma}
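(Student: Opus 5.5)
We work throughout in the basis $\{\chi_i\}$. By \eqref{eq:adrestricts} there are elements $N_i^j\in H$ with $\mathrm{ad}_\mathfrak{g}(\chi_i)=\chi_j\otimes N_i^j$, understood as an equality in $\mathrm{End}_\Bbbk(H)$. Concretely, for all $h\in H$,
\begin{equation*}
h_1S(h_3)\chi_i(h_2)=\chi_j(h)N_i^j .
\end{equation*}
The plan has three parts. First show that the $N_i^j$ satisfy \eqref{eq:N1} and \eqref{eq:N2}, so that $\mathrm{ad}_\mathfrak{g}$ is a right $H$-coaction. Then verify the Yetter--Drinfel'd condition \eqref{eq:N3} and invoke the preceding proposition. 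All of this should be checked by evaluating against elements of $H$. Since the $\chi_j$ are linearly independent functionals, we can pick a dual family $\{x^k\}\subseteq H$ with $\chi_j(x^k)=\delta_j^k$, and then extract identities among the $N_i^j$.

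\textbf{Counit.} Apply $\varepsilon$ to the defining identity. The left side becomes $\varepsilon(h_1)\varepsilon(h_3)\chi_i(h_2)=\chi_i(h)$, and the right side becomes $\chi_j(h)\varepsilon(N_i^j)$. Evaluating at $h=x^k$ gives $\varepsilon(N_i^k)=\delta_i^k$.

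\textbf{Coassociativity.} Apply $\Delta$ to the left side:
\begin{equation*}
h_1S(h_5)\otimes h_2S(h_4)\chi_i(h_3).
\end{equation*}
Use the defining identity on the inner factor $h_2S(h_4)\chi_i(h_3)$, which equals $\chi_j(h_2\text{-part})N_i^j$ with the remaining outer legs. Then use it once more on the outer factor to get $\chi_k(h)N_j^k\otimes N_i^j$. Comparing with $\chi_k(h)\Delta(N_i^k)$ and evaluating at $x^k$ yields $\Delta(N_i^k)=N_j^k\otimes N_i^j$, which is \eqref{eq:N1}. This is the adjoint coaction being coassociative, transported through the pairing.

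\textbf{Yetter--Drinfel'd identity \eqref{eq:N3}.} This is where the quantum tangent space property $\Delta(\chi_i)=\chi_i\otimes\varepsilon+f_i^j\otimes\chi_j$ enters, through $\chi_i(ab)=\chi_i(a)\varepsilon(b)+f_i^j(a)\chi_j(b)$. I would compute the left side
\begin{equation*}
\mathrm{ad}_\mathfrak{g}(\chi_i)(gh)=g_1h_1S(h_3)S(g_3)\chi_i(g_2h_2)
\end{equation*}
in two ways.

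Splitting $\chi_i(g_2h_2)$ by the quantum tangent space property gives two terms:
\begin{itemize}
\item The term with $\chi_i(g_2)\varepsilon(h_2)$ collapses the $h$-legs to $\varepsilon(h)$, leaving $\varepsilon(h)\chi_j(g)N_i^j$.
\item The term with $f_i^k(g_2)\chi_k(h_2)$ gives $g_1\chi_l(h)N_k^lS(g_3)f_i^k(g_2)$.
\end{itemize}

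On the other hand, $\chi_j(gh)N_i^j$ expands as $\chi_j(g)\varepsilon(h)N_i^j+f_j^l(g)\chi_l(h)N_i^j$. Comparing the coefficients of $\chi_l(h)$ (that is, setting $h=x^l$) gives
\begin{equation*}
g_1f_i^k(g_2)N_k^lS(g_3)=f_j^l(g)N_i^j .
\end{equation*}
Multiplying on the right by $g_4$ and reindexing produces exactly \eqref{eq:N3}, modulo the naming of indices. In that step one uses $S(g_3)g_4=\varepsilon(g_3)$, and the right side picks up $f_j^l(g_1)N_i^jg_2$.

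\textbf{Main obstacle.} I expect the hardest part to be keeping the index conventions and Sweedler legs consistent so that the result matches \eqref{eq:N3} literally. I also need to justify the existence of the dual family $\{x^k\}$; this follows from finite dimensionality and linear independence of the $\chi_i$ on $H$.
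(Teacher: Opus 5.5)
Your proposal is correct and follows essentially the paper's route. The coaction axioms come from the coassociativity and counitality of the adjoint coaction $\mathrm{ad}$, transported through $\mathfrak{g}\otimes H\subseteq\mathrm{End}_\Bbbk(H)$, and the Yetter--Drinfel'd condition comes from evaluating on elements of $H$ using $\Delta(\chi_i)=\chi_i\otimes\varepsilon+f_i^j\otimes\chi_j$ together with $h_1S(h_3)\chi_i(h_2)=\chi_j(h)N_i^j$. The only difference is bookkeeping: you extract coefficients via a dual family $\{x^k\}$ and pass through \eqref{eq:N3} and the preceding proposition, whereas the paper checks \eqref{YD} on $\chi_i$ directly by evaluating both sides at $g\in H$.
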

	\begin{proof}
		Under the assumption \ref{eq:adrestricts}, we show that $\mathrm{ad}_\mathfrak{g}\colon\mathfrak{g}\to\mathfrak{g}\otimes H$ is a right $H$-coaction. In fact, using that $\mathrm{ad}\colon H\to H\otimes H$, $h\mapsto h_1S(h_3)\otimes h_2$ is a left $H$-coaction, i.e., $(\mathrm{id}\otimes\mathrm{ad})\circ\mathrm{ad}=(\Delta\otimes\mathrm{id})\circ\mathrm{ad}$ and $(\varepsilon\otimes\mathrm{id})\circ\mathrm{ad}=\mathrm{id}$, we obtain
		\begin{align*}
			(\mathrm{ad}_\mathfrak{g}\otimes\mathrm{id})(\mathrm{ad}_\mathfrak{g}(\chi_i))
			&=(\mathrm{ad}_\mathfrak{g}\otimes\mathrm{id})\circ(\mathrm{id}\otimes\chi_i)\circ\mathrm{ad}\\
			&=(\mathrm{id}\otimes((\mathrm{id}\otimes\chi_i)\circ\mathrm{ad}))\circ\mathrm{ad}\\
			&=(\mathrm{id}\otimes\mathrm{id}\otimes\chi_i)\circ(\mathrm{id}\otimes\mathrm{ad})\circ\mathrm{ad}\\
			&=(\mathrm{id}\otimes\mathrm{id}\otimes\chi_i)\circ(\Delta\otimes\mathrm{id})\circ\mathrm{ad}\\
			&=\Delta\circ(\mathrm{id}\otimes\chi_i)\circ\mathrm{ad}\\
			&=(\mathrm{id}\otimes\Delta)(\mathrm{ad}_\mathfrak{g}(\chi_i))
		\end{align*}
		and 
		$(\mathrm{id}\otimes\varepsilon)(\mathrm{ad}_\mathfrak{g}(\chi_i))
		=\varepsilon\circ(\mathrm{id}\otimes\chi_i)\circ\mathrm{ad}
		=\chi_i\circ(\varepsilon\otimes\mathrm{id})\circ\mathrm{ad}
		=\chi_i$ 
		for all $\chi_i\in\mathfrak{g}$.
		
		It remains to prove that the right $H$-action \eqref{eq:leftharpoon} is compatible with the right $H$-coaction $\mathrm{ad}_\mathfrak{g}$ in the sense that \eqref{YD} is satisfied. Let $h\in H$ and consider $\{N_i^j\}\subseteq H$ corresponding to $\Delta_\mathfrak{g}=\mathrm{ad}_\mathfrak{g}$ as in \eqref{eq:gcoact}. To show that
		\begin{align*}
			(\chi_i\leftharpoonup h_2)_0\otimes h_1(\chi_i\leftharpoonup h_2)_1
			=f_i^j(h_2)\chi_j\otimes h_1N_i^j
		\end{align*}
		coincides with
		\begin{align*}
			\chi_j\leftharpoonup h_1\otimes N_i^jh_2
			=f_j^k(h_1)\chi_k\otimes N_i^jh_2
		\end{align*}
		we evaluate these two elements of $\mathfrak{g}\otimes H\subseteq H^\circ\otimes H$ at $g\in H$, leading to
		\begin{align*}
			f_i^j(h_2)\chi_j(g)h_1N_i^j
			=h_1f_i^j(h_2)g_1S(g_3)\chi_i(g_2)
		\end{align*}
		and
		\begin{align*}
			f_j^k(h_1)\chi_k(g)N_i^jh_2
			&\overset{\eqref{qu-plane}}{=}(\chi_j(h_1g)-\chi_j(h_1)\varepsilon(g))N_i^jh_2\\
			&=(h_1g_1S(h_3g_3)\chi_i(h_2g_2)-h_1S(h_3)\chi_i(h_2)\varepsilon(g))h_4\\
			&=h_1g_1S(g_3)\chi_i(h_2g_2)-h_1\chi_i(h_2)\varepsilon(g)\\
			&\overset{\eqref{qu-plane}}{=}h_1g_1S(g_2)\chi_i(h_2)
			+h_1g_1S(g_3)f_i^j(h_2)\chi_j(g_2)
			-h_1\chi_i(h_2)\varepsilon(g)\\
			&=h_1f_i^j(h_2)g_1S(g_3)\chi_j(g_2).
		\end{align*}
		Since they coincide we conclude the proof of the lemma.
	\end{proof}
	
	The previous discussion motivates the following definition.
	\begin{definition}\label{def:bicovQTS}
		Let $\mathfrak{g}\subseteq H^\circ$ be a quantum tangent space with induced right $H$-action \eqref{eq:leftharpoon} and consider the $\Bbbk$-linear map $\mathrm{ad}_\mathfrak{g}\colon\mathfrak{g}\to\mathrm{End}_\Bbbk(H)$ defined in \eqref{eq:Adcoact}. We call $\mathfrak{g}$ a \textit{bicovariant} quantum tangent space if 
		$$
		\mathrm{ad}_\mathfrak{g}(\mathfrak{g})\subseteq\mathfrak{g}\otimes H
		$$
		holds. In this case we denote the corestriction by $\mathrm{ad}_\mathfrak{g}\colon\mathfrak{g}\to\mathfrak{g}\otimes H$ by abuse of notation.
	\end{definition}
	\begin{corollary}
		For bicovariant quantum tangent space $\mathfrak{g}$ we have
		\begin{equation}\label{eq:N4}
			h_1\chi_i(h_2)=N_i^j\chi_j(h_1)h_2
		\end{equation}
		for all $h\in H$, where $\{N_i^j\}$ correspond to $\Delta_\mathfrak{g}:=\mathrm{ad}_\mathfrak{g}$ via \eqref{eq:gcoact}.
	\end{corollary}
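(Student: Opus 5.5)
The identity \eqref{eq:N4} is the coaction identity $\mathrm{ad}_\mathfrak{g}(\chi_i)=\chi_j\otimes N_i^j$ rewritten with the roles of the adjoint coaction and the coproduct exchanged, and I will derive it from that equality alone. By Definition \ref{def:bicovQTS} and \eqref{eq:gcoact}, both sides of $\mathrm{ad}_\mathfrak{g}(\chi_i)=\chi_j\otimes N_i^j$ are elements of $\mathfrak{g}\otimes H\subseteq\mathrm{End}_\Bbbk(H)$. Evaluating them at an arbitrary $g\in H$, exactly as in the proof of Lemma \ref{lem:YD}, gives the pointwise identity
\begin{equation*}
g_1S(g_3)\chi_i(g_2)=\chi_j(g)N_i^j\qquad\text{for all }g\in H.
\end{equation*}
This is the only input I will use.

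Next I apply this identity with $g=h_1$ and multiply on the right by $h_2$. The right-hand side becomes $N_i^j\chi_j(h_1)h_2$, since $\chi_j(h_1)$ is a scalar, and this is exactly the right-hand side of \eqref{eq:N4}. The left-hand side becomes $h_1S(h_3)\chi_i(h_2)h_4$. Using $S(h_3)h_4=\varepsilon(h_3)1$ and counitality, this collapses to $h_1\chi_i(h_2)$, which is the left-hand side of \eqref{eq:N4}. This proves the corollary.

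The only point needing care is the Sweedler bookkeeping when $g=h_1$ is substituted and the indices are renumbered by coassociativity. Once $(h_1)_1\otimes(h_1)_2\otimes(h_1)_3\otimes h_2$ is written as $h_1\otimes h_2\otimes h_3\otimes h_4$, the cancellation $S(h_3)h_4=\varepsilon(h_3)$ is immediate, so I do not expect a real obstacle. Notably, neither the Yetter--Drinfel'd property nor the functionals $f_i^j$ are needed.
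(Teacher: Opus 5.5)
Your proposal is correct and is essentially the paper's proof: the paper also evaluates $\mathrm{ad}_\mathfrak{g}(\chi_i)=\chi_j\otimes N_i^j$ to get $N_i^j\chi_j(h_1)h_2=h_1S(h_3)\chi_i(h_2)h_4$ and then cancels $S(h_3)h_4=\varepsilon(h_3)1$ to obtain $h_1\chi_i(h_2)$. As you note, only the coaction identity is used; neither the Yetter--Drinfel'd property nor the $f_i^j$ enter.
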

	
	\begin{proof}
		This follows directly from the computation
		\begin{align*}
			N_i^j\chi_j(h_1)h_2
			=h_1S(h_3)\chi_i(h_2)h_4
			=h_1\chi_i(h_2)
		\end{align*}
		for all $h\in H$.
	\end{proof}
	
	Given a bicovariant quantum tangent space $\mathfrak{g}$ it follows from Lemma \ref{lem:YD} that $(\mathfrak{g},\leftharpoonup,\mathrm{ad}_\mathfrak{g})$ is a Yetter-Drinfel'd module, with right $H$-action $\leftharpoonup\colon\mathfrak{g}\otimes H\to\mathfrak{g}$ defined in \eqref{eq:leftharpoon} and right $H$-coaction $\mathrm{ad}_\mathfrak{g}\colon\mathfrak{g}\to\mathfrak{g}\otimes H$ induced by \eqref{eq:Adcoact}. Let $\{N_i^j\}\subseteq H$ be the elements determining the coaction $\Delta_\mathfrak{g}=\mathrm{ad}_\mathfrak{g}$ according to \eqref{eq:gcoact}. Since $\mathfrak{g}$ is in particular a quantum tangent space, we infer from Theorem \ref{dual-thm} that $\mathfrak{X}_H:=H\otimes\mathfrak{g}$ is a left covariant FOVC with Lie derivative defined in \eqref{def:XH}. We define
	\begin{equation}\label{XHright}                 
		\Delta_{\mathfrak{X}_H}\colon\mathfrak{X}_H\to\mathfrak{X}_H\otimes H,\qquad
		h\otimes\chi_i\mapsto h_1\otimes\chi_j\otimes h_2N^j_i.
	\end{equation}
	As a diagonal coaction, \eqref{XHright} is a right $H$-comodule structure on $\mathfrak{X}_H$. We further prove that $(\mathfrak{X}_H,\mathscr{L})$ is a bicovariant FOVC if endowed with this coaction and that the bicovariance is in fact equivalent to $\mathfrak{g}$ being a bicovariant quantum tangent space.
	
	\begin{theorem}\label{thm:bicovVF}
		For a quantum tangent space $\mathfrak{g}\subseteq H^\circ$ the following are equivalent.
		\begin{enumerate}
			\item[i.)] $\mathfrak{g}$ is a bicovariant quantum tangent space.
			\item[ii.)] $(\mathfrak{X}_H,\mathscr{L})$ is a bicovariant FOVC.
			\item[iii.)] $(\mathfrak{g},\leftharpoonup,\mathrm{ad}_\mathfrak{g})$ is a Yetter-Drinfel'd module.
		\end{enumerate}
		If one, and thus all, of the above conditions hold, \eqref{XHright} is the right $H$-coaction on $\mathfrak{X}_H$.
	\end{theorem}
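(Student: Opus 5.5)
The plan is to prove i.) $\Rightarrow$ iii.) $\Rightarrow$ ii.) $\Rightarrow$ i.). The first implication is exactly Lemma \ref{lem:YD}: if $\mathrm{ad}_\mathfrak{g}(\mathfrak{g})\subseteq\mathfrak{g}\otimes H$, then the corestriction is a right $H$-coaction making $(\mathfrak{g},\leftharpoonup,\mathrm{ad}_\mathfrak{g})$ a Yetter-Drinfel'd module. Conversely, statement iii.) presupposes that $\mathrm{ad}_\mathfrak{g}$ corestricts to $\mathfrak{g}\otimes H$, so iii.) already contains i.). Hence the real content is the equivalence with ii.).

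For iii.) $\Rightarrow$ ii.), I would fix the basis $\{\chi_i\}$ and the elements $N_i^j$ with $\mathrm{ad}_\mathfrak{g}(\chi_i)=\chi_j\otimes N_i^j$. These satisfy \eqref{eq:N1}, \eqref{eq:N2}, the Yetter-Drinfel'd relation \eqref{eq:N3} and the identity \eqref{eq:N4}. Equip $\mathfrak{X}_H$ with the coaction \eqref{XHright}, which is a coaction by \eqref{eq:N1} and \eqref{eq:N2}. Theorem \ref{dual-thm} already gives left covariance, so three checks remain.
\begin{enumerate}
\item[(a)] Right covariance of the bimodule. Compatibility with the left action is immediate because $\Delta$ is multiplicative. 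For the right action $(h\otimes\chi_i)\cdot g=hg_1f_i^j(g_2)\otimes\chi_j$, I need
$$
h_1g_1f_i^j(g_3)\otimes\chi_k\otimes h_2g_2N_j^k=h_1g_1\otimes\chi_j\otimes h_2N_i^j\,g_2 f_j^k(g_3)\ \text{(suitably reindexed)},
$$
which reduces, after cancelling the common $h$-factors, to \eqref{eq:N3} applied to $g$.
\item[(b)] Commutation of the left and right coactions, which is clear from coassociativity since the left coaction acts only on the $H$-leg.
\item[(c)] Right colinearity of $\widetilde{\mathscr{L}}$. Here I need
$$
\Delta(hg_1\chi_i(g_2))=h_1g_1\chi_j(g_2)\otimes h_2g_3N_i^j\ \text{(in suitable order)},
$$
and this follows from \eqref{eq:N4}, written as $g_1\chi_i(g_2)=N_i^j\chi_j(g_1)g_2$, together with multiplicativity of $\Delta$. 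A cleaner route is to expand $N_i^j\chi_j(k)=k_1S(k_3)\chi_i(k_2)$ directly.
\end{enumerate}

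For ii.) $\Rightarrow$ i.), suppose $\mathfrak{X}_H$ carries some right coaction $\Delta_{\mathfrak{X}_H}$ making it bicovariant. Commutation with the left coaction preserves the left coinvariants ${}^{\mathrm{co}H}\mathfrak{X}_H=1\otimes\mathfrak{g}$. This gives $\Delta_{\mathfrak{X}_H}(1\otimes\chi_i)=1\otimes\chi_j\otimes N_i^j$ for some $N_i^j\in H$. Evaluating colinearity of $\widetilde{\mathscr{L}}$ on $(1\otimes\chi_i)\otimes g$ yields
$$
g_1\chi_i(g_2)\otimes g_3=g_1\chi_j(g_2)\otimes N_i^j g_3.
$$
Applying $\mathrm{id}\otimes S$ to the last leg and multiplying suitably, or equivalently applying $\varepsilon$ in the first leg after a rearrangement, should give $g_1S(g_3)\chi_i(g_2)=\chi_j(g)N_i^j$. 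That is, $\mathrm{ad}_\mathfrak{g}(\chi_i)=\chi_j\otimes N_i^j\in\mathfrak{g}\otimes H$, which is i.). The same computation identifies the coaction with \eqref{XHright}, since it is determined by its values on $1\otimes\chi_i$ and by right covariance with respect to the left $H$-action. This proves the final claim.

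I expect the main obstacle to be the index bookkeeping in (a), namely matching the right-action/coaction compatibility precisely with \eqref{eq:N3}. The Sweedler manipulation extracting $\mathrm{ad}_\mathfrak{g}$ from colinearity in the last step also needs care.
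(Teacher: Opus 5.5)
Your decomposition matches the paper on i.)$\Leftrightarrow$iii.), using Lemma \ref{lem:YD} in one direction and the presupposition built into iii.) in the other. It also matches on the implication towards ii.): the coaction axioms come from \eqref{eq:N1}, \eqref{eq:N2}, bimodule compatibility from \eqref{eq:N3}, and colinearity of $\widetilde{\mathscr{L}}$ from \eqref{eq:N4}.

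Where you genuinely differ is ii.)$\Rightarrow$i.). The paper assumes from the start that the right coaction on $\mathfrak{X}_H$ is the diagonal one \eqref{XHright}, built from some coaction $\Delta_\mathfrak{g}$ on $\mathfrak{g}$, and only identifies $\Delta_\mathfrak{g}=\mathrm{ad}_\mathfrak{g}$. You instead start from an arbitrary right coaction. Commutation with the left coaction shows that it preserves ${}^{\mathrm{co}H}\mathfrak{X}_H=1\otimes\mathfrak{g}$, and covariance with respect to the left action shows that it is determined by its values there. This is a small but real strengthening: it proves that \eqref{XHright} is the \emph{only} right coaction making $(\mathfrak{X}_H,\mathscr{L})$ bicovariant, which is what the last sentence of the theorem actually claims.

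However, the colinearity equation you write in that step is wrong, and as written it leads to the wrong conclusion. Since $\mathscr{L}_{1\otimes\chi_i}(g)=g_1\chi_i(g_2)$, its image under $\Delta$ is $g_1\otimes g_2\chi_i(g_3)$, not $g_1\chi_i(g_2)\otimes g_3$. The correct condition is
\[
g_1\chi_j(g_2)\otimes N_i^jg_3=g_1\otimes g_2\chi_i(g_3).
\]
With your version, applying $\varepsilon$ to the first leg and then the convolution trick gives $\chi_j(g)N_i^j=\chi_i(g)1$. That says the coaction is trivial, not $\mathrm{ad}_\mathfrak{g}$. With the correct version, $\varepsilon\otimes\mathrm{id}$ yields $\chi_j(g_1)N_i^jg_2=g_1\chi_i(g_2)$. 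Replacing $g$ by $g_1$ and multiplying on the right by $S(g_2)$ then gives $\chi_j(g)N_i^j=g_1S(g_3)\chi_i(g_2)=\mathrm{ad}_\mathfrak{g}(\chi_i)(g)$, which is the paper's computation.

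Your displays (a) and (c) need the same care.
\begin{itemize}
\item[(a)] The right-hand side should read $h_1g_1f_j^k(g_2)\otimes\chi_k\otimes h_2N_i^jg_3$. The identity you need is $g_1\otimes g_2f_i^j(g_3)N_j^k=g_1f_j^k(g_2)\otimes N_i^jg_3$, which is \eqref{eq:N3} applied to the second leg of $\Delta(g)$.
\item[(c)] The factor $N_i^j$ must sit between $h_2$ and $g_3$, i.e.\ $h_2N_i^jg_3$. This placement matters because $H$ is noncommutative. Once it is fixed, the required identity is \eqref{eq:N4} applied to $g_2$.
\end{itemize}
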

	\begin{proof}
		We start with the equivalence of $i.)$ and $ii.)$. Building on Theorem \ref{dual-thm} it is left to prove that, given a quantum tangent space $\mathfrak{g}$, \eqref{XHright} is a right $H$-coaction such that $\mathfrak{X}_H$ is a bicovariant $H$-bimodule and $\mathscr{L}$ is compatible with $\Delta_{\mathfrak{X}_H}$ if and only if the quantum tangent space $\mathfrak{g}$ is bicovariant, in addition.
		
		Assume that $\mathfrak{g}$ is a bicovariant quantum tangent space. Then \eqref{XHright} is a right $H$-coaction, since it is a diagonal coaction and $\mathrm{ad}_\mathfrak{g}$ is right $H$-coaction according to Lemma \ref{lem:YD}.
		Furthermore, the right $H$-coaction $\Delta_{\mathfrak{X}_H}$ is compatible with the $H$-bimodule structure of $\mathfrak{X}_H$, namely
		\begin{align*}
			\Delta_{\mathfrak{X}_H}(h\cdot(g\otimes\chi_i)\cdot h')
			&=\Delta_{\mathfrak{X}_H}(hgh'_1f_i^j(h'_2)\otimes\chi_j)\\
			&=h_1g_1h'_1\otimes\chi_k\otimes h_2g_2h'_2f_i^j(h'_3)N^k_j\\
			&\overset{\eqref{eq:N3}}{=}h_1g_1h'_1f^k_j(h'_2)\otimes\chi_k\otimes h_2g_2N^j_ih'_3\\
			&=h_1\cdot(g_1\otimes\chi_j)\cdot h'_1\otimes h_2g_2N^i_jh'_2\\
			&=\Delta(h)\Delta_{\mathfrak{X}_H}(g\otimes\chi_i)\Delta(h')
		\end{align*}
		for all $h,h'\in H$ and $g\otimes\chi^i\in\mathfrak{X}_H$, where we can employ \eqref{eq:N3} since $(\mathfrak{g},\leftharpoonup,\mathrm{ad}_\mathfrak{g})$ is a Yetter-Drinfel'd module, again by Lemma \ref{lem:YD}. The right $H$-coaction $\Delta_{\mathfrak{X}_H}$ clearly commutes with the left $H$-coaction on $\mathfrak{X}_H$ defined in Theorem \ref{dual-thm}, implying that $\mathfrak{X}_H$ is a bicovariant $H$-bimodule. We already showed in Theorem \ref{dual-thm} that $\mathscr{L}$ is injective, satisfies the Leibniz rule and is compatible with the left $H$-coaction. For the remaining compatibility with the right $H$-coaction we calculate
		\begin{align*}
			(\widetilde{\mathscr{L}}\otimes\mathrm{id})(\Delta_\otimes((h\otimes\chi_i)\otimes g))
			&=\widetilde{\mathscr{L}}((h_1\otimes\chi_j)\otimes g_1)\otimes h_2N^j_ig_2\\
			&=\mathscr{L}_{h_1\otimes\chi_j}(g_1)\otimes h_2N^j_ig_2\\
			&=h_1g_1\chi_j(g_2)\otimes h_2N^j_ig_3\\
			&\overset{\eqref{eq:N4}}{=}h_1g_1\otimes h_2g_2\chi_i(g_3)\\
			&=\Delta(\mathscr{L}_{h\otimes\chi_i}(g))\\
			&=\Delta(\widetilde{\mathscr{L}}((h\otimes\chi_i)\otimes g))
		\end{align*}
		for all $h,g\in H$.
		
		Conversely, assume that $\mathfrak{g}$ is a quantum tangent space endowed with a right $H$-coaction $\Delta_\mathfrak{g}\colon\mathfrak{g}\to\mathfrak{g}\otimes H$ (with $\{N_i^j\}\subseteq H$ the associated elements) such that $(\mathfrak{X}_H,\mathscr{L})$ is not only a left covariant, but even a bicovariant FOVC on $H$ with respect to the diagonal right coaction \eqref{XHright}. We show that this implies $\Delta_\mathfrak{g}=\mathrm{ad}_\mathfrak{g}$, which means that $\mathfrak{g}$ is a bicovariant quantum tangent space. By assumption, $\mathscr{L}$ is compatible with $\Delta_{\mathfrak{X}_H}$, i.e.
		$$
		h_1\chi_j(h_2)\otimes N_i^jh_3
		=\mathscr{L}_{\chi_j}(h_1)\otimes N_i^jh_2
		=\Delta(\mathscr{L}_{\chi_i}(h))
		=h_1\otimes h_2\chi_i(h_3)
		$$
		for all $h\in H$. Applying $\varepsilon\otimes\mathrm{id}$ to the above equation gives
		$
		N_i^j\chi_j(h_2)h_3=h_1\chi_i(h_2)
		$
		for all $h\in H$. This equation is equivalent to
		$$
		N_i^j\chi_j(h)=h_1S(h_3)\chi_i(h_2)
		$$
		for all $h\in H$ via a standard convolution inverse argument (consider $h_1\otimes S(h_2)$, apply the left and right hand side of the former to the first tensor, respectively, and multiply afterwards to obtain the latter formula). On the other hand, evaluating $\mathrm{ad}_\mathfrak{g}\colon\mathfrak{g}\to\mathrm{End}_\Bbbk(H)$ at $\chi_i\in\mathfrak{g}$ the outcome at $h\in H$ gives
		$$
		\mathrm{ad}_\mathfrak{g}(\chi_i)(h)
		=h_1S(h_3)\chi_i(h_2)
		$$
		by the very definition \eqref{eq:Adcoact}. Thus $\mathrm{ad}_\mathfrak{g}(\chi_i)(h)=\Delta_\mathfrak{g}(\chi_i)(h)$ for all $h\in H$, which means that $\mathrm{ad}_\mathfrak{g}(\chi_i)$ and $\Delta_\mathfrak{g}(\chi_i)$ are equal as elements in $\mathrm{End}_\Bbbk(H)$. Since $\Delta_\mathfrak{g}(\chi)\in\mathfrak{g}\otimes H\subseteq\mathrm{End}_\Bbbk(H)$ by assumption it follows that $\mathrm{ad}_\mathfrak{g}(\chi_i)\in\mathfrak{g}\otimes H\subseteq\mathrm{End}_\Bbbk(H)$, i.e. $\mathrm{ad}_\mathfrak{g}(\mathfrak{g})\subseteq\mathfrak{g}\otimes H$ and $\mathfrak{g}$ is a bicovariant quantum tangent space.
		
		For the remaining equivalence of $i.)$ and $iii.)$, we have already shown in Lemma \ref{lem:YD} that $i.)$ implies $iii.)$. If, conversely, $(\mathfrak{g},\leftharpoonup,\mathrm{ad}_\mathfrak{g})$ is a Yetter-Drinfel'd module, the adjoint coaction $\mathrm{ad}_\mathfrak{g}\colon\mathfrak{g}\to\mathfrak{g}\otimes H$ is well-defined by assumption and thus $\mathfrak{g}$ is a bicovariant quantum tangent space.
	\end{proof}
	
	\begin{remark}\label{rem:bracket}
		Note that the bicovariance condition $\mathrm{ad}_\mathfrak{g}\subseteq\mathfrak{g}\otimes H$ of a quantum tangent space $\mathfrak{g}\subseteq H^\circ$ implies 
		\begin{equation}\label{eq:Liebracket'}
			[X,Y]:=X_1YS(X_2)\in\mathfrak{g}\qquad\text{ for all }X\in H^\circ, Y\in\mathfrak{g}.
		\end{equation}
		In fact, every right $H$-coaction $M\to M\otimes H$, $m\mapsto m_0\otimes m_1$ induces a left $H^*$-action $H^*\otimes M\to M$, $\alpha\otimes m\mapsto m_0\alpha(m_1)$, see for example \cite[Lemma 1.6.4]{mont} and, in particular, a left $H^\circ$-action. In our case the induced left $H^\circ$-action $[\cdot,\cdot]\colon H^\circ\otimes\mathfrak{g}\to\mathfrak{g}$ reads $[X,\chi_i]=\chi_jX(N_i^j)$, which evaluates to
		\begin{equation}
			[X,\chi_i](h)
			=\chi_j(h)X(N_i^j)
			=X(h_1S(h_3))\chi_i(h_2)
			=(X_1\chi_iS(X_2))(h)
		\end{equation}
		for all $X\in H^\circ$ and $h\in H$. Thus proving that \eqref{eq:adrestricts} implies \eqref{eq:Liebracket'}. Even though $\mathfrak{g}$ is finite-dimensional, the converse is not true in general: given a quantum tangent space $\mathfrak{g}$, condition \eqref{eq:Liebracket'} does not imply \eqref{eq:adrestricts} in general. Namely, again by \cite[Lemma 1.6.4]{mont}, the left $H^\circ$-action $[\cdot,\cdot]\colon H^\circ\otimes\mathfrak{g}\to\mathfrak{g}$ induces a right $(H^\circ)^\circ$-coaction $\mathfrak{g}\to\mathfrak{g}\otimes(H^\circ)^\circ$ since $\mathfrak{g}$ is finite-dimensional. However, in general $(H^\circ)^\circ\nsubseteq H$ and thus it is not clear if the latter corresponds to a right $H$-coaction. If the inclusion (or equality) holds, the conditions \eqref{eq:adrestricts} and \eqref{eq:Liebracket'} are equivalent.
		
		In the literature, condition \eqref{eq:Liebracket'} usually appears to describe bicovariant quantum tangent spaces, see e.g. \cite[Section 5]{Woronowicz1989}, \cite[Section 14.2.3]{ks} and \cite[Section 2.3.1]{AschieriThesis}. There, vector fields are defined by the functionals vanishing on the ideal characterizing the corresponding bicovariant first order differential calculus. And in fact, condition \eqref{eq:Liebracket'} characterizes the bicovariance of the differential calculus, see \cite[Corollary 14.9]{ks}. In this work we facilitate the vector field centric view, where condition \eqref{eq:adrestricts} is equivalent to bicovariance of the vector field calculus, as shown in Theorem \ref{thm:bicovVF}. Although, as we argued before, both concepts are closely related. This gives an alternative, differential form independent approach to \cite[Section 4]{AschieriSchupp}, where we further provide compatibility of the Lie derivative of arbitrary vector fields, rather than for the one restricted to invariant vector fields.
	\end{remark}
	
	As in the previous section, we show that bicovariant quantum tangent spaces are the structures characterizing bicovariant vector field calculi (with finite-dimensional coinvariant part).
	\begin{theorem}
		There is a bijective correspondence
		\begin{equation}
			\begin{Bmatrix}
				\text{bicovariant FOVC}\\
				(\mathfrak{X},\mathscr{L})\text{ on $H$ with }\\
				\dim{}^{\mathrm{co}H}\mathfrak{X}<\infty
			\end{Bmatrix}\overset{1:1}{\longleftrightarrow}\begin{Bmatrix}
				\text{bicovariant quantum}\\
				\text{tangent spaces in }H^\circ
			\end{Bmatrix}
		\end{equation}
	\end{theorem}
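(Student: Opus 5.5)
The plan is to build on Theorem \ref{thm:leftcov=qts}, which already gives a bijection between left covariant FOVCi with finite-dimensional coinvariants and quantum tangent spaces. It then suffices to show two things: this bijection restricts to the bicovariant objects on both sides, and the extra right coaction is recovered uniquely.

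From bicovariant quantum tangent spaces to FOVCi, Theorem \ref{thm:bicovVF} (i.)$\Rightarrow$ii.)) says that $\mathfrak{X}_H=H\otimes\mathfrak{g}$, equipped with \eqref{XHright}, is a bicovariant FOVC. By Theorem \ref{thm:leftcov=qts} its coinvariants are ${}^{\mathrm{co}H}\mathfrak{X}_H=\mathfrak{g}$, which is finite-dimensional.

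For the converse, take a bicovariant FOVC $(\mathfrak{X},\mathscr{L})$ on $H$ with $\mathfrak{g}:={}^{\mathrm{co}H}\mathfrak{X}$ finite-dimensional. I would proceed in four steps.

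\begin{enumerate}
\item[1.)] Regard $\mathfrak{g}\subseteq H^\circ$ as a quantum tangent space via $X\mapsto\varepsilon\circ\mathscr{L}_X$, exactly as in Theorem \ref{thm:leftcov=qts}.
\item[2.)] Show that $\mathfrak{g}$ is a right subcomodule, $\Delta_\mathfrak{X}(\mathfrak{g})\subseteq\mathfrak{g}\otimes H$. This holds because the left and right coactions of a bicovariant bimodule commute, so the right coaction preserves left coinvariants. Write $\Delta_\mathfrak{X}(X_i)=X_j\otimes N_i^j$.
\item[3.)] Transport $\Delta_\mathfrak{X}$ along the Hopf-module isomorphism $\theta\colon\mathfrak{X}\to H\otimes\mathfrak{g}$ from the proof of Theorem \ref{thm:leftcov=qts}. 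Right colinearity of $h\cdot X_i$ gives $\Delta_\mathfrak{X}(h\cdot X_i)=h_1\cdot X_j\otimes h_2N_i^j$, which is precisely the diagonal coaction \eqref{XHright}. So $\theta$ is an isomorphism of bicovariant FOVCi, with $\mathfrak{g}$ carrying the coaction $\Delta_\mathfrak{g}$ given by the $N_i^j$.
\item[4.)] Apply the converse half of the proof of Theorem \ref{thm:bicovVF}. Right colinearity of $\widetilde{\mathscr{L}}$, evaluated on $X_i\otimes h$ and followed by $\varepsilon\otimes\mathrm{id}$, yields $N_i^j\chi_j(h_2)h_3=h_1\chi_i(h_2)$. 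The convolution-inverse trick then turns this into $N_i^j\chi_j(h)=h_1S(h_3)\chi_i(h_2)$, i.e. $\Delta_\mathfrak{g}=\mathrm{ad}_\mathfrak{g}$. Hence $\mathrm{ad}_\mathfrak{g}(\mathfrak{g})\subseteq\mathfrak{g}\otimes H$, and $\mathfrak{g}$ is bicovariant.
\end{enumerate}

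Step 4 also shows that the right coaction on a bicovariant FOVC is completely determined by $\mathfrak{g}$: it must be $\mathrm{ad}_\mathfrak{g}$, extended diagonally. So the two assignments are mutually inverse on isomorphism classes, and bijectivity reduces to the bijectivity already proven in Theorem \ref{thm:leftcov=qts}.

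The main obstacle is step 3: verifying that the Fundamental Theorem isomorphism $\theta$ intertwines an arbitrary right coaction on $\mathfrak{X}$ with the diagonal form \eqref{XHright}. I would check this directly on elements $h\cdot X_i$, using $\Delta_\mathfrak{X}(h\cdot X)=\Delta(h)\Delta_\mathfrak{X}(X)$.
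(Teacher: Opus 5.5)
Your proposal is correct and follows essentially the paper's route. You reduce to Theorem \ref{thm:leftcov=qts}, get the direction from bicovariant quantum tangent spaces from Theorem \ref{thm:bicovVF}, and for the other direction use right colinearity of $\widetilde{\mathscr{L}}$ on $\mathfrak{g}={}^{\mathrm{co}H}\mathfrak{X}$, together with $\mathscr{L}_X(h)=h_1\chi(h_2)$, to get $\mathrm{ad}_\mathfrak{g}(\chi)=\chi_0\otimes\chi_1\in\mathfrak{g}\otimes H$. The differences are only organizational. You transport the coaction along $\theta$ so you can reuse the converse half of Theorem \ref{thm:bicovVF} rather than redo the Sweedler computation. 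You also make explicit two points the paper leaves implicit: that $\mathfrak{g}$ is a right subcomodule (your step 2), and that the right coaction of a bicovariant FOVC is forced to be the diagonal extension of $\mathrm{ad}_\mathfrak{g}$, which is what makes the correspondence bijective including the coaction data.
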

	\begin{proof}
		From Theorem \ref{thm:leftcov=qts} we know that left covariant FOVCi on $H$ correspond bijectively to quantum tangent spaces. It remains to prove that the quantum tangent space of a bicovariant FOVC is bicovariant and vice versa.
		Let $(\mathfrak{X},\mathscr{L})$ be a bicovariant FOVC on $H$ with $\dim{}^{\mathrm{co}H}\mathfrak{X}<\infty$ and consider the corresponding quantum tangent space $\mathfrak{g}:={}^{\mathrm{co}H}\mathfrak{X}$ according to Theorem \ref{thm:leftcov=qts}. Consider $h\in H$ and an element $X\in\mathfrak{g}$ with corresponding $\chi:=\varepsilon\circ\mathscr{L}_X\in H^\circ$. By the bicovariance of $(\mathfrak{X},\mathscr{L})$ we have
		$$
		X_{-1}h_1\otimes\mathscr{L}_{X_0}(h_2)\otimes S(X_1h_3)
		=(\Delta\otimes S)\Delta(\mathscr{L}_X(h)),
		$$
		which, recalling that $X_{-1}\otimes X_0=1\otimes X$, is equivalent to
		$$
		\underbrace{h_1\otimes\mathscr{L}_{X_0}(h_2)\otimes S(X_1h_3)X_2}_{=h_1\otimes\mathscr{L}_X(h_2)\otimes S(h_3)}
		=(\mathscr{L}_{X_0}(h))_1\otimes(\mathscr{L}_{X_0}(h))_2\otimes S((\mathscr{L}_{X_0}(h))_3)X_1
		$$
		via a standard convolution inverse argument. Multiplying the first and third tensor factor implies
		$$
		h_1S(h_3)\otimes\mathscr{L}_X(h_2)
		=(\mathscr{L}_{X_0}(h))_1S((\mathscr{L}_{X_0}(h))_3)X_1\otimes(\mathscr{L}_{X_0}(h))_2.
		$$
		Employing this equality in the third equation of
		$$
		\mathrm{ad}_\mathfrak{g}(\chi)(h)
		=h_1S(h_3)\chi(h_2)
		=h_1S(h_3)\varepsilon(\mathscr{L}_X(h_2))
		=\varepsilon(\mathscr{L}_{X_0}(h))X_1
		=\chi_0(h)\chi_1
		$$
		implies that $\mathrm{ad}_\mathfrak{g}(\chi)=\chi_0\otimes \chi_1\in\mathfrak{g}\otimes H$ and thus that $\mathfrak{g}$ is a bicovariant quantum tangent space.
		The converse has been shown in Theorem \ref{thm:bicovVF}.
	\end{proof}
	
	The following example specializes Example \ref{graph-calculus} to the group case. It is the vector field calculus analogue of \cite[Proposition 1.52]{bm}.
	
	\begin{example}[Bicovariant calculi on finite groups]
		Let $G$ be a finite group with neutral element $e\in G$ and consider a \textit{Cayley graph on $G$}. The latter is a directed graph with vertices given by the elements of $G$, without self-loops, with at most one edge $x\to y$ from $x$ to $y$ for all vertices $x,y\in G$ and with a subset $\mathcal{C}\subseteq G$ such that $e\notin\mathcal{C}$ and such that $x\to y$ is an edge of the graph if and only if there is an element $g\in\mathcal{C}$ such that $y=xg$. Following Example \ref{graph-calculus} there is a FOVC $(\mathfrak{X},\mathscr{L})$ on the algebra $H=\Bbbk(G)$ of $\Bbbk$-valued functions on $G$ given by $\mathfrak{X}:=\mathrm{span}_\Bbbk\{\chi_{x\to y}\}$, where we assign a basis vector field to each edge $x\to y$ of the graph. The $H$-bimodule structure and Lie derivative are defined in \eqref{graph:bimod} and \eqref{graph:LieDer}, respectively. Note that $H$ is a Hopf algebra with comultiplication, counit and antipode defined on the basis $\{\delta_g\}_{g\in G}$ by
		$$
		\Delta(\delta_g)=\sum_{h\in G}\delta_h\otimes\delta_{h^{-1}g},\qquad
		\varepsilon(\delta_g)=\delta_{g,e},\qquad
		S(\delta_g)=\delta_{g^{-1}}.
		$$
		Define $\chi_g:=\sum_{x\in G}\chi_{x\to xg}$ for all $g\in\mathcal{C}$ and let $\mathfrak{g}:=\mathrm{span}_\Bbbk\{\chi_g~|~g\in\mathcal{C}\}$. Then
		$$
		H\otimes\mathfrak{g}\xrightarrow{\cong}\mathfrak{X},\qquad \delta_x\otimes\chi_g\mapsto\delta_x\chi_g=\chi_{x\to xg}
		$$
		is an isomorphism with inverse $\chi_{x\to xg}\mapsto\delta_x\otimes\chi_g$. The above becomes an isomorphism of $H$-bimodules if we endow $H\otimes\mathfrak{g}$ with the obvious left $H$-action and right $H$-action
		$$
		(\delta_x\otimes\chi_g)\cdot \delta_h:=\delta_{h,xg}\delta_x\otimes\chi_{xg}.
		$$
		We verify that $\mathfrak{g}$ is a quantum tangent space and thus $(\mathfrak{X},\mathscr{L})$ is a left covariant FOVC on $H$ according to Theorem~\ref{dual-thm}. First of all, we identify $\mathfrak{g}$ as a subspace of $H^*$ via 
		$$
		\chi_g(f):=f(e)-f(g)
		$$
		for all $g\in\mathcal{C}$ and $f\in H$. This is consistent with the Lie derivative \eqref{graph:LieDer} as shown by 
		\begin{align*}
			(\mathrm{id}\otimes\chi_g)(\Delta(\delta_y))
			&=\sum_{z\in G}\delta_z\chi_g(\delta_{z^{-1}y})
			=\sum_{z\in G}(\delta_{z^{-1}y}(e)-\delta_{z^{-1}y}(g))\delta_z
			=\delta_y-\delta_{g^{-1}y}
			=\mathscr{L}_{\sum_{x\in G}\chi_{x\to xg}}(\delta_y)
		\end{align*}
		for all $g,y\in G$.
		Since $H$ is finite-dimensional its finite dual coincides with the linear dual. In particular, $H^*=H^\circ$ is a Hopf algebra with counit given by the evaluation at the unit function $1_H\colon H\to\Bbbk$, $1_H(f):=1_\Bbbk$ and coproduct $\Delta\colon H^*\to H^*\otimes H^*$ determined for $\alpha\in H^*$ by $\Delta(\alpha)(f\otimes f')=\alpha(ff')$ for all $f,f'\in H$. Elements of $\mathfrak{g}$ clearly vanish when evaluated at the unit function and their coproduct reads
		\begin{equation}
			\Delta(\chi_g)=\chi_g\otimes\varepsilon+\mathrm{ev}_g\otimes\chi_g\in\mathfrak{g}\otimes\varepsilon+ H^\circ\otimes\mathfrak{g}
		\end{equation}
		for all $g\in\mathcal{C}$ and where $\mathrm{ev}_g\colon H\to\Bbbk$ is defined by $\mathrm{ev}_g(f):=f(g)$. In fact,
		\begin{align*}
			\chi_g(f)\varepsilon(f')+\mathrm{ev}_g(f)\chi_g(f')
			&=(f(e)-f(g))f'(e)+f(g)(f'(e)-f'(g))\\
			&=f(e)f'(e)-f(g)f'(g)\\
			&=\chi_g(ff')
		\end{align*}
		for all $f,f'\in H$. Thus, $\mathfrak{g}$ is a quantum tangent space.
		
		Assume further that $\mathcal{C}$ is closed under adjunction, i.e., that for all $x\in G$ one has $xgx^{-1}\in\mathcal{C}$ if $g\in\mathcal{C}$. We show that in this case $\mathrm{ad}_\mathfrak{g}(\mathfrak{g})\subseteq\mathfrak{g}\otimes H$ and thus $(\mathfrak{X},\mathscr{L})$ is a bicovariant FOVC on $H$ according to Theorem \ref{thm:bicovVF}.
		The claim is that $\sum_{x\in G}\chi_{xgx^{-1}}\otimes\delta_x\in\mathfrak{g}\otimes H$ can be identified with $\mathrm{ad}_\mathfrak{g}(\chi_g)=(\mathrm{id}\otimes\chi_g)\circ\mathrm{ad}\in\mathrm{End}_\Bbbk(H)$.
		To this end, let $h\in G$ and calculate
		\begin{align*}
			\sum_{x\in G}\chi_{xgx^{-1}}(\delta_h)\delta_x
			=\sum_{x\in G}(\delta_h(e)-\delta_h(xgx^{-1}))\delta_x,
		\end{align*}
		which equals
		\begin{align*}
			\mathrm{ad}_\mathfrak{g}(\chi_g)(\delta_h)
			=\sum_{x,y\in G}\delta_x\delta_{h^{-1}y}\chi_g(\delta_{x^{-1}y})
			=\sum_{x\in G}\delta_x\chi_g(\delta_{x^{-1}hx})
			=\sum_{x\in G}(\delta_{x^{-1}hx}(e)-\delta_{x^{-1}hx}(g))\delta_x
		\end{align*}
		since $\delta_{x^{-1}hx}(e)$ is non-zero precisely if $h=e$ and $\delta_{x^{-1}hx}(g)$ is non-zero precisely if $h=xgx^{-1}$.
		Thus, the claim follows and $(\mathfrak{X},\mathscr{L})$ is a bicovariant FOVC on $H$.
		The previous consideration makes it clear that the condition of $\mathcal{C}$ being closed under adjunction is also necessary for bicovariance.
	\end{example}
	Recall that in Example \ref{ex:SL2:FOVC} we built a left covariant FOVC on the quantum group $\cO_q(\rSL_2)$. In the following example, we continue to build a bicovariant FOVC on $\cO_q(\rSL_2)$ instead.
	\begin{example}\label{ex-pair1}
		For a complex number $q$ with $q\neq 0$ and not a root of unity, let $A=\cO_q(\rSL_2)$ with its finite dual $A^\circ=U_q\mathfrak{sl}(2)$. The latter is the Hopf algebra generated by $\{
		E,F,K,K^{-1}\}$ modulo the relations
		$$
		\begin{array}{ccc}
			KE=q^{-1}EK,      & KK^{-1}=1=K^{-1}K,  & KF=q FK,\\[5mm]
			& EF-FE=\frac{K^2-K^{-2}}{\lambda}, &
		\end{array}
		$$
		where $\lambda=q^{-1}-q$. The comultiplication and antipode of $A^\circ$ are determined on generators by
		\begin{equation*}
			\begin{split}
				\Delta(K^{\pm 1})
				&=K^{\pm 1}\otimes K^{\pm 1},\\
				S(K^{\pm 1})
				&=K^{\mp 1},
			\end{split}\qquad
			\begin{split}
				\Delta(E)
				&=E\otimes K+K^{-1}E,\\
				S(E)
				&=-q^{-1}E,
			\end{split}\qquad
			\begin{split}
				\Delta(F)
				&=F\otimes K^{-1}+K\otimes F,\\
				S(F)
				&=-qF,
			\end{split}
		\end{equation*}
		see \cite[Section 3.1.2]{ks} with convention $q\mapsto q^{-1}$. The dual pairing of Hopf algebras is determined on generators by
		$
		K(\alpha)=q^{\frac 1 2}\,,\,\, K(\delta)=q^{-\frac 1 2}\,,\, E(\gamma)=1=F(\beta),
		$
		where the trivial relations are omitted. We now define
		\begin{equation}\label{tg-space-sl}
			\fg=\mathrm{span}\{\chi_1,\chi_2,\chi_3,\chi_4\}\subseteq A^\circ,
		\end{equation}
		where
		\begin{equation}\label{chis}
			\begin{array}{rcl}
				\chi_1&:=&\frac{1}{\lambda}(K^{-2}+q\lambda^2 EF-\epsilon) \\[3mm]
				\chi_2&:=&-q^{\frac 1 2}EK \\[3mm]
				\chi_3&:=&-q^{\frac 1 2}KF\\[3mm]
				\chi_4&:=&\frac{1}{\lambda}(K^2-\epsilon). 
			\end{array}
		\end{equation}
		The coproducts of these elements read
		\begin{equation}\label{coaction-sl}
			\begin{array}{rcl}
				\Delta\chi_1&=&\chi_1\otimes \epsilon+K^{-2}\otimes \chi_1- q^{\frac 1 2}\lambda EK^{-1}\otimes \chi_3- q^{\frac 1 2}\lambda K^{-1}F\otimes \chi_2+q\lambda^2 EF   \otimes\chi_4 \\[3mm]
				\Delta \chi_2&=& \chi_2\otimes \epsilon+\epsilon\otimes \chi_2+\lambda\chi_2\otimes \chi_4 \\[3mm]
				\Delta \chi_3&=& \chi_3\otimes \epsilon+\epsilon\otimes \chi_3+\lambda\chi_3\otimes \lambda\chi_4 \\[3mm]
				\Delta \chi_4&=&\chi_4 \otimes  \epsilon +K^2\otimes \chi_4
			\end{array}
		\end{equation}
		and hence $\fg$, as in (\ref{tg-space-sl}), is a quantum tangent space (compare with \cite[Section 14.2.4]{ks}, where the right coideal counterpart of this example is discussed). Thus, according to Theorem \ref{dual-thm},
		$(\cO_q(\rSL_2)\otimes \mathfrak{g}, \mathscr{L}^{\mathfrak{g}})$
		with $\mathscr{L}^{\mathfrak{g}}_{h\otimes X}(f)= hf_{1}X(f_{2})$ is a left covariant FOVC on $\cO_q(\rSL_2)$.
		
		One verifies that the quantum Lie bracket, given by the restricted adjoint action $[\cdot,\cdot]\colon\mathfrak{g}\otimes\mathfrak{g}\to U_q\mathfrak{sl}(2)$, $[\xi,\zeta]:=\xi_1\zeta S(\xi_2)$, closes in $\mathfrak{g}$. In fact,
		\begin{equation}\label{eq:qbracket}
			\begin{split}
				[\chi_1,\chi_1]
				&=q^2\lambda(\chi_1-\chi_4),\\
				[\chi_1,\chi_2]
				&=(\lambda-q^3)\chi_2,\\
				[\chi_1,\chi_3]
				&=q^{-1}\chi_3,\\
				[\chi_1,\chi_4]
				&=\lambda(\chi_1-\chi_4),    
			\end{split}\qquad
			\begin{split}
				[\chi_2,\chi_1]
				&=q\chi_2,\\
				[\chi_2,\chi_3]
				&=-[\chi_3,\chi_2]=q(\chi_1-\chi_4),\\
				[\chi_2,\chi_4]
				&=-[\chi_4,\chi_2]=-q^{-1}\chi_2,
			\end{split}\qquad
			\begin{split}
				[\chi_3,\chi_1]
				&=-q^{3}\chi_3,\\
				[\chi_3,\chi_4]
				&=-[\chi_4,\chi_3]=q\chi_3,
			\end{split}
		\end{equation}
		where we only displayed the non-trivial brackets. Using Remark \ref{rem:bracket} this implies that $\mathfrak{g}$ is a bicovariant quantum tangent space since $\mathcal{O}_q(\mathrm{SL}_2)$ and $U_q\mathfrak{sl}(2)$ are dually paired Hopf algebras according to \cite[Section 4.4.1]{ks}. Thus $(\cO_q(\rSL_2)\otimes \mathfrak{g}, \mathscr{L}^{\mathfrak{g}})$ is a bicovariant FOVC by Theorem \ref{thm:bicovVF}. This bicovariant FOVC is dual to the so-called $4D_+$ bicovariant FODC on $\mathcal{O}_q(\mathrm{SL}_2)$, see \cite{Woronowicz1989}.
	\end{example}
	We finish this section with a result that allows us to induce a (bicovariant) quantum tangent space from a Hopf algebra to any Hopf algebra quotient of it.
	\begin{proposition}\label{prop:quotientFOVC}
		Let $\pi\colon A\to H\cong A/I$ be a Hopf algebra surjection with corresponding Hopf ideal $I\subset A$. Given a quantum tangent space $\mathfrak{g}$ on $A$, we obtain a quantum tangent space
		\begin{equation}
			\mathfrak{g}_H:=\{\chi\in\mathfrak{g}~|~\chi(I)=0\}
		\end{equation}
		on $H$, where $\chi\in\mathfrak{g}_H$ is viewed as a functional on $H$ via $\underline{\chi}\colon H\to\Bbbk$, $\underline{\chi}(\pi(a)):=\chi(a)$ for all $a\in A$. If $\mathfrak{g}$ is a bicovariant quantum tangent space on $A$ then $\mathfrak{g}_H$ is a bicovariant quantum tangent space on $H$.
\end{proposition}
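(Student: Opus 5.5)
The plan is to work with a basis of $\mathfrak{g}$ adapted to the subspace $\mathfrak{g}_H$. I will then read off from the defining identity \eqref{qu-plane} that the structure functionals $f_i^j$ of $\mathfrak{g}_H$ close among themselves and descend to $H$. Fix a basis $\chi_1,\dots,\chi_m$ of $\mathfrak{g}_H$ and extend it to a basis $\chi_1,\dots,\chi_n$ of $\mathfrak{g}$. Write $\Delta(\chi_i)=\chi_i\otimes\varepsilon+f_i^j\otimes\chi_j$ with $f_i^j\in A^\circ$, as in \eqref{qu-plane}. Two linear-algebra facts will be used repeatedly.
\begin{enumerate}
\item[(a)] The restrictions $\chi_{m+1}|_I,\dots,\chi_n|_I$ are linearly independent in $I^*$, since the kernel of $\mathfrak{g}\to I^*$ is exactly $\mathfrak{g}_H$. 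Hence there are $x_{m+1},\dots,x_n\in I$ with $\chi_j(x_k)=\delta_{jk}$.
\item[(b)] Since $\chi_1,\dots,\chi_m$ are linearly independent on $A$, there are $a_1,\dots,a_m\in A$ with $\chi_j(a_k)=\delta_{jk}$.
\end{enumerate}
Also, $I\subseteq\ker\varepsilon$ because $I$ is a Hopf ideal.

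First I show that $\mathfrak{g}_H$ satisfies the coproduct condition inside $A^\circ$. For $i\le m$, $a\in A$ and $x\in I$ we have $ax\in I$, so
\[
0=\chi_i(ax)=\chi_i(a)\varepsilon(x)+\sum_j f_i^j(a)\chi_j(x)=\sum_{j>m}f_i^j(a)\chi_j(x).
\]
Inserting $x=x_k$ from (a) gives $f_i^j=0$ for all $i\le m<j$. Thus $\Delta(\chi_i)=\chi_i\otimes\varepsilon+\sum_{j\le m}f_i^j\otimes\chi_j$ for $i\le m$.

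Next I check that the relevant $f_i^j$ vanish on $I$. For $i,j\le m$, $x\in I$ and $a\in A$ we have $xa\in I$, so
\[
0=\chi_i(xa)=\chi_i(x)\varepsilon(a)+\sum_{j\le m}f_i^j(x)\chi_j(a)=\sum_{j\le m}f_i^j(x)\chi_j(a).
\]
Inserting $a=a_k$ from (b) gives $f_i^j(I)=0$. Hence $\underline{f_i^j}(\pi(a)):=f_i^j(a)$ is well defined on $H$. It satisfies $\underline{f_i^j}(gk)=\underline{f_i^k}$-type multiplicativity relations, namely $\underline{f_i^l}(gk)=\underline{f_i^j}(g)\,\underline{f_j^l}(k)$, and $\underline{f_i^j}(1)=\delta_{ij}$, by \eqref{eq:fij}. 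These are matrix coefficients of a finite-dimensional representation, so they lie in $H^\circ$. Since $\pi$ is surjective and an algebra map, pushing the identity $\chi_i(ab)=\chi_i(a)\varepsilon(b)+f_i^j(a)\chi_j(b)$ down to $H$ yields $\Delta(\underline{\chi_i})\in\underline{\mathfrak{g}_H}\otimes\varepsilon+H^\circ\otimes\underline{\mathfrak{g}_H}$. Moreover $\underline{\chi_i}$ lies in $(H^\circ)^+$ and $\mathfrak{g}_H$ is finite-dimensional, so $\mathfrak{g}_H$ is a quantum tangent space on $H$.

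For bicovariance, write $\mathrm{ad}_\mathfrak{g}(\chi_i)=\chi_j\otimes N_i^j$ with $N_i^j\in A$. The candidate coaction on $\mathfrak{g}_H$ is $\underline{\chi_i}\mapsto\sum_{j\le m}\underline{\chi_j}\otimes\pi(N_i^j)$. The step I expect to be the main obstacle is showing that the components $\pi(N_i^j)$ with $j>m$ vanish.

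The argument goes as follows. Since $I$ is a Hopf ideal, $(\Delta\otimes\mathrm{id})\Delta(I)\subseteq I\otimes A\otimes A+A\otimes I\otimes A+A\otimes A\otimes I$ and $S(I)\subseteq I$. Hence, for $x\in I$ and $i\le m$,
\[
0=\pi(x_1S(x_3))\,\chi_i(x_2)=\sum_j\chi_j(x)\,\pi(N_i^j)=\sum_{j>m}\chi_j(x)\,\pi(N_i^j).
\]
Here the first equality holds because $\pi$ kills the first and third legs when they lie in $I$, and $\chi_i$ kills the middle leg. The middle equality is the evaluation of $(\pi\otimes\mathrm{id})\mathrm{ad}_\mathfrak{g}(\chi_i)$ at $x$. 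The last equality uses $\chi_j(I)=0$ for $j\le m$. Inserting $x=x_k$ from (a) gives $\pi(N_i^j)=0$ for $j>m$.

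It remains to compare with the adjoint map on $H$. Because $\pi$ is a Hopf map, evaluating at $\pi(a)$ gives
\[
\mathrm{ad}_{\mathfrak{g}_H}(\underline{\chi_i})(\pi(a))=\pi(a_1S(a_3))\,\chi_i(a_2)=\sum_{j\le m}\underline{\chi_j}(\pi(a))\,\pi(N_i^j).
\]
Therefore $\mathrm{ad}_{\mathfrak{g}_H}(\mathfrak{g}_H)\subseteq\mathfrak{g}_H\otimes H$, and $\mathfrak{g}_H$ is a bicovariant quantum tangent space on $H$.
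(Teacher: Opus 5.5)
Your proof is correct and is essentially the paper's argument written in coordinates. Your two adapted-basis steps ($f_i^j=0$ for $i\le m<j$, and $f_i^j(I)=0$ for $i,j\le m$) are exactly the paper's observation that $(\chi\leftharpoonup a)(a')=\chi(aa')-\chi(a)\varepsilon(a')$ vanishes whenever $a\in I$ or $a'\in I$, so that $\leftharpoonup$ restricts to $\mathfrak{g}_H$ and descends to $H$. Your bicovariance step is the paper's remark that $\mathrm{ad}_\mathfrak{g}(\chi)(I)\subseteq I$, and your dual elements $x_k\in I$ make explicit why this forces $(\mathrm{id}\otimes\pi)\mathrm{ad}_\mathfrak{g}(\chi)\in\mathfrak{g}_H\otimes H$. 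The only real difference is that you check the coproduct condition on $H$ directly, including that the $\underline{f_i^j}$ lie in $H^\circ$. The paper instead builds the left covariant FOVC $H\otimes\mathfrak{g}_H$ and appeals to Theorems~\ref{dual-thm} and~\ref{thm:leftcov=qts}, which is a detour around the same identity.
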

\begin{proof}
	Since $\mathfrak{g}$ is a quantum tangent space on $A$, there is a right $A$-action $\mathfrak{g}\otimes A\to A$, $\chi\otimes a\mapsto\chi\leftharpoonup a$, see \eqref{eq:leftharpoon}. By \eqref{qu-plane}, this action reads $(\chi\leftharpoonup a)(a')=\chi(aa')-\chi(a)\varepsilon(a')$ on elements $a,a'\in A$. Thus, for $\chi\in\mathfrak{g}_H$ it follows that $(\chi\leftharpoonup a)(a')=0$ if either $a\in I$ or $a'\in I$ (recalling that $I$ is a Hopf ideal). In particular, the action restricts to a map $\mathfrak{g}_H\otimes A\to\mathfrak{g}_H$ and this restriction descends to the quotient $\leftharpoonup_H\colon\mathfrak{g}_H\otimes H\to\mathfrak{g}_H$. From the properties of $\leftharpoonup$ it follows that $\leftharpoonup_H$ is a right $H$-action on $\mathfrak{g}_H$ and thus $H\otimes\mathfrak{g}_H$ becomes an $H$-bimodule via
	$$
	h\cdot(g\otimes\underline{\chi})\cdot k:=hgk_1\otimes(\underline{\chi}\leftharpoonup_Hk_2)
	$$
	for all $h,g,k\in H$ and $\chi\in\mathfrak{g}_H$. In complete analogy to Theorem \ref{dual-thm} one proves that $H\otimes\mathfrak{g}_H$, together with 
	$$
	\mathscr{L}^{\mathfrak{g}_H}\colon H\otimes\mathfrak{g}_H\to\mathrm{End}_\Bbbk(H),\qquad\mathscr{L}^{\mathfrak{g}_H}_{h\otimes\underline{\chi}}(g):=hg_1\underline{\chi}(g_2),
	$$
	where $h,g\in H$ and $\chi\in\mathfrak{g}_H$, forms a left covariant FOVC on $H$. From the same theorem it then follows that $\mathfrak{g}_H\cong{}^{\mathrm{co}H}(H\otimes\mathfrak{g}_H)$ is a quantum tangent space on $H$.
	
	Assume now that $\mathfrak{g}$ is a bicovariant quantum tangent space on $A$, i.e., the adjoint coaction takes the form $\mathrm{ad}_\mathfrak{g}\colon\mathfrak{g}\to\mathfrak{g}\otimes A$, $\mathrm{ad}_\mathfrak{g}(\chi)(a)=S(a_1)a_3\chi(a_2)$ for all $\chi\in\mathfrak{g}$ and $a\in A$. If $\chi\in\mathfrak{g}_H$, it follows that $\mathrm{ad}_\mathfrak{g}(\chi)(I)\subseteq I$ since $I$ is a Hopf ideal and $\chi(I)=0$. Thus, $(\mathrm{id}\otimes\pi)\circ\mathrm{ad}_\mathfrak{g}\colon\mathfrak{g}_H\to\mathfrak{g}_H\otimes H$ is well-defined and reads 
	$$
	(\mathrm{id}\otimes\pi)(\mathrm{ad}_\mathfrak{g}(\chi))(\pi(a))
	=(\mathrm{id}\otimes\pi)(\mathrm{ad}_\mathfrak{g}(\chi))(a)
	=\pi(S(a_1)a_3)\chi(a_2)
	=S(\pi(a)_1)\pi(a)_3\underline{\chi}(\pi(a)_2)
	$$ 
	on elements $\pi(a)\in H$, where $a\in A$ and $\chi\in\mathfrak{g}_H$. This shows that $(\mathrm{id}\otimes\pi)\circ\mathrm{ad}_\mathfrak{g}\colon\mathfrak{g}_H\to\mathfrak{g}_H\otimes H$ coincides with the adjoint $H$-coaction on $\mathfrak{g}_H$ and we conclude that the latter is a bicovariant quantum tangent space.
\end{proof}

\subsection{Vertical FOVC on comodule algebras}\label{fovc-ca-sec}

We extend the consideration of first order vector field calculi on $H$ of the form $\mathfrak{X}_H=H\otimes\mathfrak{g}$ with a quantum tangent space $\mathfrak{g}$ on $H$ to first order vector field calculi on right $H$-comodule algebras $A$ of the form $\mathfrak{X}=A\otimes\mathfrak{g}$, where $\mathfrak{g}$ is still a quantum tangent space on $H$. It turns out that $B:=A^{\mathrm{co}H}\subseteq A$ has to be a Hopf--Galois extension (see Section \ref{sec:durd}) in order for the axioms of a FOVC to hold.

Let $H$ be a Hopf algebra, $H^\circ$ its restricted
dual, as before, and let $A$ be a right $H$-comodule algebra with right $H$-coaction $\Delta_A\colon A\to A\otimes H$. For a finite dimensional vector space 
$\fg \subseteq (H^\circ)^+$, we define:
\begin{equation}
	\Chi:=A \otimes \fg, \qquad \mathscr{L}\colon\Chi \to  \mathrm{End}_\Bbbk(A),
	\quad \mathscr{L}_{a \otimes X}(a'):=aa'_0X(a'_1), \quad a,a' \in A, \, X \in \fg. 
\end{equation}
We show that the above comes very close to being a FOVC on $A$. In case of a Hopf--Galois extension all axioms of a FOVC are satisfied 
and if $\mathfrak{g}$ is bicovariant, then $(\mathfrak{X},\mathscr{L})$ is right $H$-covariant.

\begin{proposition}\label{prop:hopf}
	In the notation as above, let $\fg$ be a quantum tangent space. Then $\Chi$ admits the $A$-bimodule structure
	\begin{equation}\label{eq:bimod}
		a\cdot(a'\otimes\chi_i)\cdot a''
		:=aa'a''_0f_i^j(a''_1)\otimes\chi_j
	\end{equation}
	and $\mathscr{L}_{X}$ satisfies the Leibniz rule 
	\begin{equation}
		\mathscr{L}_{X}(aa')
		=\mathscr{L}_X(a)a'
		+\mathscr{L}_{X\cdot a}(a')
	\end{equation}
	for all $a,a',a''\in A$ and $X \in \Chi$.
	
	If, in addition, $B:=A^{\mathrm{co}H}\subseteq A$ is a Hopf--Galois extension, then $(\mathfrak{X},\mathscr{L})$ is a FOVC.
	
	If, yet in addition, $\mathfrak{g}$ is a bicovariant quantum tangent space, then $(\mathfrak{X},\mathscr{L})$ is a right $H$-covariant FOVC with respect to the right $H$-coaction
	\begin{equation}
		\Delta_\mathfrak{X}\colon\mathfrak{X}\to\mathfrak{X}\otimes H,\qquad\qquad a\otimes\chi_i\mapsto a_0\otimes\chi_j\otimes a_1N_i^j,
	\end{equation}
	where $N_i^j\in H$ are determined via \eqref{eq:gcoact} with $\Delta_\mathfrak{g}=\mathrm{ad}_\mathfrak{g}$.
\end{proposition}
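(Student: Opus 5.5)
The proof has three parts, mirroring the proofs of Theorem \ref{dual-thm} and Theorem \ref{thm:bicovVF}: the bimodule structure and Leibniz rule, injectivity of $\mathscr{L}$ under the Hopf--Galois assumption, and right covariance.

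\textbf{Bimodule structure and Leibniz rule.} Left multiplication on the first tensor factor is clearly a left action. The right action \eqref{eq:bimod} is associative and unital because of \eqref{eq:fij}, i.e. $\Delta(f_i^j)=f_i^k\otimes f_k^j$ and $f_i^j(1)=\delta_i^j$, together with the fact that $\Delta_A$ is an algebra map. Both actions commute since one acts on the left and the other on the right of the first factor.

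For the Leibniz rule we expand $\mathscr{L}_{a\otimes\chi_i}(a'a'')=aa'_0a''_0\chi_i(a'_1a''_1)$. We then use \eqref{qu-plane} in the form $\chi_i(hg)=\chi_i(h)\varepsilon(g)+f_i^j(h)\chi_j(g)$. The first summand gives $\mathscr{L}_{a\otimes\chi_i}(a')a''$. The second gives $aa'_0f_i^j(a'_1)a''_0\chi_j(a''_1)=\mathscr{L}_{(a\otimes\chi_i)\cdot a'}(a'')$. This is exactly the computation of Theorem \ref{dual-thm}, with $\Delta$ replaced by $\Delta_A$.

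\textbf{Injectivity (the main obstacle).} Here the Hopf--Galois property replaces the antipode trick of Theorem \ref{dual-thm}. Suppose $u=a^i\otimes\chi_i$ satisfies $\mathscr{L}_u=0$. The Leibniz rule then gives $\mathscr{L}_{u\cdot a}(a')=\mathscr{L}_u(aa')-\mathscr{L}_u(a)a'=0$ for all $a,a'\in A$. For $h\in H$, write $\tau(h)=h^{\langle 1\rangle}\otimes_Bh^{\langle 2\rangle}$. Consider
\[
0=\mathscr{L}_u\bigl(h^{\langle 1\rangle}h^{\langle 2\rangle}\bigr)-\mathscr{L}_u\bigl(h^{\langle 1\rangle}\bigr)h^{\langle 2\rangle}=\mathscr{L}_{u\cdot h^{\langle 1\rangle}}\bigl(h^{\langle 2\rangle}\bigr).
\]
The first term vanishes by \eqref{transl1} and $\mathscr{L}_u(1)=0$. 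We must first check that this expression is well defined over $\otimes_B$. This holds because $u\cdot b=\sum a^ib\otimes\chi_i$ for $b\in B$, since $b$ is coinvariant and $f_i^j(1)=\delta_i^j$.

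Expanding, we get $a^i\,\bigl(h^{\langle 1\rangle}\bigr)_0\,f_i^j\bigl((h^{\langle 1\rangle})_1\bigr)\,\bigl(h^{\langle 2\rangle}\bigr)_0\,\chi_j\bigl((h^{\langle 2\rangle})_1\bigr)$. Using the translation map identities and $\chi_i(gk)=\chi_i(g)\varepsilon(k)+f_i^j(g)\chi_j(k)$, this should collapse, via $h^{\langle 1\rangle}(h^{\langle 2\rangle})_0\otimes(h^{\langle 2\rangle})_1=1\otimes h$ and $\chi_i(1)=0$, to $a^i\chi_i(h)$ up to sign. More simply, one can compute $\mathscr{L}_u(h^{\langle 1\rangle})h^{\langle 2\rangle}$ directly. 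By \eqref{transl4} and the identity $h^{\langle 1\rangle}(h^{\langle 2\rangle})_0\otimes(h^{\langle 1\rangle})_1$-type identities, it gives $a^i\chi_i(S(h))$ up to sign, exactly as in the Hopf case. Choosing $h$ so that $S(h)$ runs over a dual basis of $\{\chi_i\}$ forces $a^i=0$. If $S$ is not bijective, I would instead aim for the version yielding $a^i\chi_i(h)$.

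I expect the bookkeeping with $\tau$ and the well-definedness over $B$ to be the delicate step.

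\textbf{Right covariance.} Assume $\mathfrak{g}$ is bicovariant. Then $\Delta_\mathfrak{X}$ is a diagonal coaction of $\Delta_A$ and $\mathrm{ad}_\mathfrak{g}$, hence a coaction by Lemma \ref{lem:YD}. Compatibility with the bimodule structure follows from the Yetter--Drinfel'd identity \eqref{eq:N3}, verbatim as in the proof of Theorem \ref{thm:bicovVF}, with $\Delta$ replaced by $\Delta_A$ on the left factor. Colinearity of $\widetilde{\mathscr{L}}$ follows from \eqref{eq:N4}: the computation $a_0a'_0\chi_j(a'_1)\otimes a_1N_i^ja'_2=a_0a'_0\otimes a_1a'_1\chi_i(a'_2)=\Delta_A(\mathscr{L}_{a\otimes\chi_i}(a'))$ is again the same as there.
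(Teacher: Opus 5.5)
Your proposal is correct and follows the paper's proof step for step. It uses the same bimodule and Leibniz computation, and the same injectivity argument: evaluate the $B$-balanced map $e\otimes_B c\mapsto\mathscr{L}_{u\cdot e}(c)$ on $\tau(h)$, use \eqref{transl4} and \eqref{transl1} to get $-a^i\chi_i(S(h))=0$, then take $h=S^{-1}(x^j)$. The covariance check is likewise the one from Theorem~\ref{thm:bicovVF} with $\Delta$ replaced by $\Delta_A$.

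One small correction: your first expansion also collapses to $-a^i\chi_i(S(h))$, not to $a^i\chi_i(h)$. So your suggested fallback for non-bijective $S$ is not supported by this computation, although the paper likewise tacitly assumes $S$ is invertible at this step.
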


\begin{proof}
	
	If $\Delta(\mathfrak{g})\subseteq \mathfrak{g}\otimes\varepsilon+H^\circ\otimes\mathfrak{g}$,
	by (\ref{qu-plane})
	this means that there are linear functionals $f_i^j\colon H\to\Bbbk$ with $f_i^j\in H^\circ$
	such that
	$\Delta(\chi_i)=\chi_i\otimes\varepsilon+f_i^j\otimes\chi_j$, $\Delta(f_i^j)=f_i^k\otimes f_k^j$ and $\varepsilon(f_i^j)=f_i^j(1)=\delta_i^j$. 
	Using these properties it is straightforward to check that \eqref{eq:bimod} defines a an $A$-bimodule structure on $\mathfrak{X}=A\otimes\mathfrak{g}$.
	Moreover, for all $a,a',a''\in A$ and $\chi_i\in\mathfrak{g}$ we have
	\begin{align*}
		\mathscr{L}_{a\otimes\chi_i}(a'a'')
		&=aa'_0a''_0\chi_i(a'_1a''_1)\\
		&=aa'_0a''_0\big(\chi_i(a'_1)\varepsilon(a''_1)+f_i^j(a'_1)\chi_j(a''_1)\big)\\
		&=aa'_0\chi_i(a'_1)a''+aa'_0f_i^j(a'_1)a''_0\chi_j(a''_1)\\
		&=\mathscr{L}_{a\otimes\chi_i}(a')a''
		+\mathscr{L}_{(a\otimes\chi_i)\cdot a'}(a'')
	\end{align*}
	and hence $\mathscr{L}_X$ satisfies the Leibniz rule for all $X \in \Chi$.
	
	Assume now that $B:=A^{\mathrm{co}H}\subseteq A$ is a Hopf--Galois extension, in addition.
	The only property left to check is the injectivity of $\mathscr{L}$.
	Let $\{\chi_i\}$ be a basis of $\fg$. We need to show that if $\mathscr{L}_{a^i \otimes \chi_i}(c)=0$
	for all $c \in A$, then $a^i\otimes \chi_i=0$ that is $a^i=0$ for all $i$.
	Let us define the following map:
	$$
	\phi:A \otimes A \lra A, \qquad \phi(e \otimes c):=\mathscr{L}_{(a^i \otimes \chi_i)\cdot e}(c)
	$$
	As one can readily check, this map descends to $A\otimes_B A$, $B:=A^{\mathrm{co}H}$. 
	In fact
	$$
	\phi(eb\otimes c)
	=\mathscr{L}_{(a^i \otimes \chi_i)\cdot eb}(c)
	=\mathscr{L}_{(a^i\otimes\chi_i)\cdot e}(bc)-\mathscr{L}_{(a^i\otimes\chi_i)\cdot e}(b)c
	=\phi(e\otimes bc)-0
	$$
	since $b_0\otimes b_1=b\otimes 1$ for $b \in B=A^{\mathrm{co}H}$.
	Let $\phi_B:A \otimes_B A \lra A$ be the map obtained by $\phi$, that is 
	$$
	\phi_B(e \otimes_B c):=\mathscr{L}_{(a^i \otimes \chi_i)\cdot e}(c).
	$$
	Since $A$ is a Hopf-Galois extension, we have the translation map
	$\tau:H \to A\otimes_B A$, $\tau(h)=h^{\langle1\rangle} \otimes_B h^{\langle2\rangle}$, see Section \ref{sec:durd}. Then
	$$
	\phi_B(\tau(h))=\mathscr{L}_{(a^i \otimes \chi_i)\cdot h^{\langle1\rangle}}(h^{\langle2\rangle})=
	\mathscr{L}_{a^i \otimes \chi_i} (h^{\langle1\rangle}h^{\langle2\rangle})
	-\mathscr{L}_{a^i\otimes\chi_i}(h^{\langle 1\rangle})h^{\langle 2\rangle}
	$$
	and since $h^{\langle1\rangle}h^{\langle2\rangle}=\varepsilon(h)1$ by (\ref{transl1}), 
	thus $\mathscr{L}_{a^i \otimes \chi_i}( h^{\langle1\rangle}h^{\langle2\rangle})=0$, we have
	$$
	-\phi_B(\tau(h))=a^i {h^{\langle1\rangle}}_0\chi_i( {h^{\langle1\rangle}}_1) h^{\langle2\rangle}=a^i {(h_2)^{\langle1\rangle}}\chi_i(S(h_1)) (h_2)^{\langle2\rangle}=
	a^i\varepsilon(h_2)\chi_i(S(h_1))=a^i\chi_i(S(h))
	$$
	where we have used first property (\ref{transl4}) and then property (\ref{transl1}) of the translation map. 
	By assumption $\mathscr{L}_{a^i\otimes\chi_i}(c)=0$ for all $c\in A$ and thus
	$$
	0=\mathscr{L}_{a^i\otimes\chi_i}(h^{\langle 1\rangle}h^{\langle 2\rangle})-\mathscr{L}_{a^i\otimes\chi_i}(h^{\langle 1\rangle})h^{\langle 2\rangle}
	=\mathscr{L}_{(a^i\otimes\chi_i)\cdot h^{\langle 1\rangle}}(h^{\langle 2\rangle})
	=\phi_B(\tau(h))=-a^i\chi_i(S(h))
	$$
	for all $h\in H$.
	Define, similarly to before, 
	$h:=S^{-1}(x^j)$, such that $\chi_i(x^j)=\delta_i^j$. Then we have
	$$
	0=\phi_B(\tau(h))=-a^i\chi_i(S(S^{-1}(x^j))=-a^j,
	$$
	thus obtaining that $(\mathfrak{X},\mathscr{L})$ is a FOVC.
	
	Under the additional assumption that $\mathfrak{g}$ is a bicovariant quantum tangent space one proves that $(\mathfrak{X},\mathscr{L})$ is right $H$-covariant in complete analogy to Theorem \ref{thm:bicovVF}, where one replaces the coproduct by the right $H$-coaction $\Delta_A\colon A\to A\otimes H$.
\end{proof}

\begin{definition}
	Let $B:=A^{\mathrm{co}H}\subseteq A$ be a Hopf--Galois extension and consider a bicovariant quantum tangent space $\mathfrak{g}\subseteq H^\circ$. We call elements of the right $H$-covariant FOVC $(A\otimes\mathfrak{g},\mathscr{L})$ of Proposition \ref{prop:hopf} \emph{vertical vector fields}.
\end{definition}

The term \emph{vertical} in the previous definition will be motivated a posteriori in the following sections. Before, we discuss a class of vertical vector fields for quantizations of algebraic groups and parabolic subgroups.

\begin{example}\label{ex-loc-p}
	Let $G$ be a complex semisimple algebraic group and $P$ be a parabolic subgroup with Hopf algebras $A=\cO_q(G)$ and $H=\cO_q(P)=\cO_q(G)/I_P$ quantizing the coordinate rings $\mathcal{O}(G)$ and $\mathcal{O}(P)$, where $I_P$ is a suitable Hopf algebra ideal (see \cite[Section 3]{afl} for more details). The algebra $A$ carries a natural right $H$-coaction given by $\Delta_A:=(\mathrm{id} \otimes \pi)\circ\Delta\colon A\to A\otimes H$, where $\Delta$ is the comultiplication of $A$ and $\pi\colon A\to H$ the quotient map. Assume that there is an Ore element $a\in A$ such that the Ore localization $A[a^{-1}]$ is a well-defined right $H$-comodule algebra (see \cite[Section I.1.7]{Kassel} and \cite[Lemma 4.12 (i.)]{aflw} for more details). We further assume that $B:=A[a^{-1}]^{\mathrm{co}H}\subseteq A$ is a Hopf--Galois extension. Then, given a bicovariant quantum tangent space $\mathfrak{g}$ on $A$ with induced bicovariant quantum tangent space $\mathfrak{p}:=\mathfrak{g}_H$ on $H$ (see Proposition \ref{prop:quotientFOVC}), we obtain a right $H$-covariant FOVC $(A \otimes \mathfrak{p}, \mathscr{L})$ on $A$ by Proposition~\ref{prop:hopf}.  
	
	To give an explicit instance of such a situation, consider $A=\cO_q(\rSL_2)$, as in Example \ref{ex:SL2:FOVC}. The ideal $(\gamma)\subset A$ generated by $\gamma$ is a Hopf ideal and thus the quotient map 
	$$
	\pi\colon A\to H:=A/(\gamma),\qquad\begin{pmatrix}
		\alpha & \beta\\
		\gamma & \delta
	\end{pmatrix}\mapsto\begin{pmatrix}
		t & p\\
		0 & t^{-1}
	\end{pmatrix}
	$$ 
	is a Hopf algebra surjection. Moreover, $\alpha\in A$ is an Ore element such that the Ore localization $A[\alpha^{-1}]$ becomes a right $H$-comodule algebra with respect to the right $H$-coaction
	\begin{align*}
		\begin{pmatrix}
			\alpha & \beta\\
			\gamma & \delta
		\end{pmatrix}&\mapsto\begin{pmatrix}
			\alpha & \beta\\
			\gamma & \delta
		\end{pmatrix}\otimes\begin{pmatrix}
			t & p\\
			0 & t^{-1}
		\end{pmatrix}\\
		\alpha^{-1}&\mapsto\alpha^{-1}\otimes t^{-1}
	\end{align*}
	with coinvariant subalgebra $B:=A[\alpha^{-1}]^{\mathrm{co}H}=\mathbb{C}[\gamma\alpha^{-1}]$ generated by the element $\gamma\alpha^{-1}$. It was shown in \cite{afl} that $B\subseteq A$ is Hopf--Galois, even a trivial extension with respect to the cleaving map $j\colon H\to A[\alpha^{-1}]$ determined on generators by
	$$
	j\colon\begin{pmatrix}
		t & p\\
		0 & t^{-1}
	\end{pmatrix}\mapsto\begin{pmatrix}
		\alpha & \gamma\\
		0 & \alpha^{-1}
	\end{pmatrix}.
	$$
	Considering the bicovariant quantum tangent space $\mathfrak{g}$ from Example \ref{ex-pair1} we obtain
	\begin{equation}\label{tg-space-p}
		\mathfrak{p}:=\mathfrak{g}_H=\mathrm{span}\{\underline{\chi_1},\underline{\chi_3},\underline{\chi_4}\},
	\end{equation}
	where $\chi_i$ were defined in (\ref{chis}) and $\underline{\chi_i}$ denotes the induced element on the quotient, as in Proposition \ref{prop:quotientFOVC}. Note that  $\chi_2(I_P)\neq 0$, hence it is not well defined on the quotient. From the previous argument and Proposition \ref{prop:quotientFOVC} we infer that $(\mathcal{O}_q(\mathrm{SL}_2)[\alpha^{-1}]\otimes\mathfrak{p},\mathscr{L})$ is a right $H$-covariant FOVC on $\mathcal{O}_q(\mathrm{SL}_2)[\alpha^{-1}]$.
	
	This example can be generalized to $\mathcal{O}_q(\mathrm{SL}_n)$, utilizing the observations in \cite[Section 2]{afl}.
\end{example}

\subsection{Duality of vertical FOVC and FODC}\label{sec:duality}

In this section we relate our construction of vertical FOVCi to the vertical FODCi of \DJ ur\dj evi\'c \cite{DurII}, also motivating the term \textit{vertical}.

Fix a Hopf algebra $H$ and a bicovariant FODC $(\Gamma,\mathrm{d}_H)$ on $H$. By the Fundamental Theorem of Hopf modules, we have the following isomorphism
\begin{equation}\label{FundThm}
	\Gamma \cong H \otimes \Lambda
\end{equation}
of bicovariant $H$-bimodules, where $\Lambda:=\{\omega\in\Gamma~|~\lambda_\Gamma(\omega)=1\otimes\omega\}$ is the vector subspace of coinvariant $1$-forms. Explicitly, the left $H$-action and left $H$-coaction are induced by the multiplication and comultiplication of $H$, while the right $H$-action and right $H$-coaction read
$$
(h\otimes\vartheta)\cdot h'=hh'_1\otimes S(h'_2)\vartheta h'_3,\qquad\qquad
\Delta_{H\otimes\Lambda}(h\otimes\vartheta)
=h_1\otimes g^i_2\mathrm{d}(k^i_2)\otimes h_2S(g^i_1k^i_1)g^i_3k^i_3
$$
for all $h,h'\in H$ and $\vartheta=g^i\mathrm{d}k^i\in\Lambda$. Transporting the differential $\mathrm{d}_H$ via the above isomorphism, we obtain
$$
\mathrm{d}'_H\colon H\to H\otimes\Lambda,\qquad\qquad
\mathrm{d}'_H(h)=h_1\otimes S(h_2)\mathrm{d}(h_3)
$$
which structures $H\otimes\Lambda$ as a bicovariant FODC isomorphic to $(\Gamma,\mathrm{d}_H)$. This construction admits a generalization to Hopf--Galois extensions, see \cite[Lemma 3.1]{DurII} or \cite[Proposition 3.17]{AntEmaTho}.
\begin{lemma}
	Let $A$ be a right $H$-comodule algebra such that $B:=A^{\mathrm{co}H}\subseteq A$ is a Hopf--Galois extension and let $\Lambda$ be as in \eqref{FundThm}. Then $\Gamma_\mathrm{ver}:=A\otimes\Lambda$ is a right $H$-covariant FODC on $A$ with respect to the obvious left $A$-action, the right $A$-action and right $H$-coaction
	\begin{equation}\label{verRight}
		(a\otimes\vartheta)\cdot a'
		:=aa'_0\otimes S(a'_1)\vartheta a'_2,\qquad\qquad
		\Delta_\mathrm{ver}(a\otimes\vartheta)
		:=a_0\otimes g^i_2\mathrm{d}(k^i_2)\otimes a_1S(g^i_1k^i_1)g^i_3k^i_3
	\end{equation}
	for all $a,a'\in A$ and $\vartheta=g^i\mathrm{d}(k^i)\in\Lambda$. The differential is given by
	\begin{equation}
		\mathrm{d}(a):=a_0\otimes S(a_1)\mathrm{d}(a_2)
	\end{equation}
	for all $a\in A$. We will refer to $\Gamma_{\textrm{ver}}$ as \DJ ur\dj evi\'c vertical forms.
\end{lemma}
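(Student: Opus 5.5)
The plan is to verify the axioms of a right $H$-covariant FODC for $\Gamma_\mathrm{ver}=A\otimes\Lambda$ directly. For the covariance conditions the model is the Hopf algebra case $\Gamma\cong H\otimes\Lambda$ recalled just above the lemma: the formulas \eqref{verRight} are literally the same expressions with the coproduct of $H$ replaced by the coaction $\Delta_A$. Only surjectivity genuinely uses the Hopf--Galois property.

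\textbf{Bimodule structure.} The first step is to check that $A\otimes\Lambda$ is an $A$-bimodule. The key point is that $\Lambda$ carries the right adjoint $H$-action $\vartheta\lhd h:=S(h_1)\vartheta h_2$, which closes in $\Lambda$ by left covariance of $\Gamma$. The right $A$-action $(a\otimes\vartheta)\cdot a'=aa'_0\otimes\vartheta\lhd a'_1$ is then associative because $\Delta_A$ is an algebra map and $\lhd$ is an action. It is unital since $\Delta_A(1)=1\otimes 1$, and it commutes with left multiplication for the obvious reason.

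\textbf{Coaction.} Next, $\Delta_\mathrm{ver}$ is the diagonal coaction of $\Delta_A$ and the right adjoint $H$-coaction on $\Lambda$; the latter comes from the bicovariant structure of $\Gamma$ restricted to $\Lambda$. Hence $\Delta_\mathrm{ver}$ is coassociative and counital. Compatibility with the bimodule structure reduces to the Yetter--Drinfel'd compatibility between $\lhd$ and the coaction on $\Lambda$, exactly as in the Hopf case.

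\textbf{Differential.} For the Leibniz rule I would compute
\[
\mathrm{d}(aa')=a_0a'_0\otimes S(a'_1)S(a_1)\mathrm{d}(a_2a'_2),
\]
expand $\mathrm{d}(a_2a'_2)=\mathrm{d}(a_2)a'_2+a_2\mathrm{d}(a'_2)$ in $\Gamma$, and regroup:
\begin{itemize}
\item the first summand gives $\mathrm{d}(a)\cdot a'$ by the definition of the right action;
\item the second collapses, via $S(a_1)a_2=\varepsilon(a_1)$, to $a\,\mathrm{d}(a')$.
\end{itemize}
Colinearity of $\mathrm{d}$ is a Sweedler computation using the bicovariance of $\mathrm{d}_H$; it is again identical to the Hopf case with $\Delta$ replaced by $\Delta_A$.

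\textbf{Surjectivity (main obstacle).} We must show that $A\,\mathrm{d}(A)=A\otimes\Lambda$. Since $A\otimes\Lambda$ is generated as a left $A$-module by $1\otimes\Lambda$, it suffices to show $1\otimes\vartheta\in A\,\mathrm{d}A$ for each $\vartheta\in\Lambda$. Every $\vartheta\in\Lambda$ has the form $\vartheta=S(h_1)\mathrm{d}h_2$ summed over elements $h$ (the standard Woronowicz description of left-invariant forms, since $\Gamma=H\,\mathrm{d}H$). I would use the translation map and consider
\[
h^{\langle1\rangle}\mathrm{d}(h^{\langle2\rangle})
= h^{\langle1\rangle}(h^{\langle2\rangle})_0\otimes S((h^{\langle2\rangle})_1)\,\mathrm{d}((h^{\langle2\rangle})_2).
\]
By the identity $h^{\langle1\rangle}\otimes_B(h^{\langle2\rangle})_0\otimes(h^{\langle2\rangle})_1=(h_1)^{\langle1\rangle}\otimes_B(h_1)^{\langle2\rangle}\otimes h_2$ together with \eqref{transl1}, this becomes $1\otimes S(h_1)\mathrm{d}h_2$. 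One must first check that the expression is well defined on $A\otimes_BA$: for $b\in B$ we have $\mathrm{d}(b)=b\otimes S(1)\mathrm{d}(1)=0$, so $ab\,\mathrm{d}(c)=a\,\mathrm{d}(bc)$. This yields surjectivity and completes the argument.
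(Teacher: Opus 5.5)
Your proposal is correct. The paper gives no proof of its own here: it recalls the Hopf-algebra case $\Gamma\cong H\otimes\Lambda$ and defers to \DJ ur\dj evi\'c and the other cited work. Your argument is the direct verification that this framing suggests. You transport the Hopf structure with $\Delta$ replaced by $\Delta_A$, and you obtain surjectivity from $h^{\langle1\rangle}\mathrm{d}(h^{\langle2\rangle})=1\otimes S(h_1)\mathrm{d}(h_2)$, which is the translation-map argument the paper uses dually for the injectivity of $\mathscr{L}$ in Proposition~\ref{prop:hopf}.
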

Note that on a basis $\{\omega_i\}$ of $\Lambda$ the right $A$-action \eqref{verRight} reads
$$
(a \otimes \omega^i)\cdot a':=aa_0' f_j^i(a_1')\otimes \omega^j
$$
where $a,a'\in A$. The right $H$-covariant FODC $(\mathrm{ver},\mathrm{d})$ is dual to the FOVC that we have constructed in Proposition \ref{prop:hopf}.
In fact, under the assumption that $\Lambda$ is finite-dimensional, we define
$$
\fg:=\Lambda^*
$$
and let $\chi_i$ be the dual basis to $\omega^i$, that is $\langle \chi_j, \omega^i\rangle=\de_{ij}$. This extends to an $A$-bilinear dual pairing $\langle\cdot,\cdot\rangle\colon (A\otimes\mathfrak{g})\otimes(A\otimes\Lambda)\to A$
\begin{equation}
	\langle a\otimes\chi_i,a'\otimes\omega^j\rangle
	:=aa'\delta_i^j.
\end{equation}
From Proposition \ref{prop:duality'} we induce a Lie derivative 
\begin{align*}
	\mathscr{L}_{a\otimes\chi_i}(a')
	&=\langle a\otimes\chi_i, da'\rangle
	=\langle a\otimes\chi_i, a'_0\otimes S(a'_1)\mathrm{d}_H(a'_2)\rangle
	=\langle a\otimes\chi_i, a'_0\otimes S(a'_1)a'_2\omega^j\chi_j(a'_3)\rangle\\
	&=\langle a\otimes\chi_i, a'_0\otimes\omega^j\rangle\chi_j(a'_1)
	=aa'_0\chi_i(a'_1)
\end{align*}
on $A\otimes\mathfrak{g}$. This coincides with the Lie derivative considered for vertical vector fields in Proposition \ref{prop:hopf}. Thus we obtain the following result.
\begin{proposition}
	Let $B:=A^{\mathrm{co}H}\subseteq A$ be a Hopf--Galois extension. Then, given a bicovariant quantum tangent space $\mathfrak{g}$ on $H$, the vertical vector fields $\mathfrak{X}_\mathrm{ver}:=A\otimes\mathfrak{g}$ are dual (in the sense of Corollary \ref{cor:covFGPequ}) to the vertical forms $\Gamma_\mathrm{ver}:=A\otimes\Lambda$,  where $\Lambda\leftrightarrow\mathfrak{g}$ are dual vector spaces.
\end{proposition}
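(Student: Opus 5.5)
We need to show that $\mathfrak{X}_\mathrm{ver}=A\otimes\mathfrak{g}$ and $\Gamma_\mathrm{ver}=A\otimes\Lambda$ correspond to each other under the equivalence of Corollary \ref{cor:covFGPequ}. The plan is to identify $\mathfrak{X}_\mathrm{ver}$ with $\underline{\mathrm{Hom}}_A(\Gamma_\mathrm{ver},A)$ as right $H$-covariant FOVCi, and conversely $\Gamma_\mathrm{ver}$ with ${}_A\underline{\mathrm{Hom}}(\mathfrak{X}_\mathrm{ver},A)$ as FODCi. Here $\mathfrak{g}$ is finite-dimensional, and we take $\Lambda:=\mathfrak{g}^*$, equivalently $\mathfrak{g}=\Lambda^*$, with dual bases $\{\chi_i\}$ and $\{\omega^i\}$.

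First we check finite generation and projectivity. $\Gamma_\mathrm{ver}=A\otimes\Lambda$ is free of finite rank as a left $A$-module, with basis $1\otimes\omega^i$. $\mathfrak{X}_\mathrm{ver}$ is free of finite rank as a left $A$-module; as a right $A$-module it is also free, because the matrix $(f_i^j)$ is convolution invertible with inverse $f_i^j\circ S$. This makes $a\otimes\chi_i\mapsto(1\otimes\chi_j)\cdot a_0 f^j_i(S(a_1))$ a right-module coordinate map, so $\{1\otimes\chi_i\}$ is a right basis. By the remark preceding Corollary \ref{cor:covFGPequ}, rational morphisms therefore coincide with all module maps, and we only need to check the ordinary duals.

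Next we build the pairing $\langle a\otimes\chi_i,a'\otimes\omega^j\rangle:=aa'\delta_i^j$ and verify that it is $A$-bilinear and balanced, i.e.\ $\langle X\cdot a,\omega\rangle=\langle X,a\cdot\omega\rangle$. Using \eqref{eq:bimod} and the analogous formula $(a\otimes\omega^i)\cdot a'=aa'_0f^i_j(a'_1)\otimes\omega^j$, both sides reduce to matching contractions of the same matrix $f$. One must check that the index placement agrees, i.e.\ that the right action on $\Lambda$ is the transpose of the action on $\mathfrak{g}$; this is the duality $\Lambda\leftrightarrow\mathfrak{g}$. The map $\mathfrak{X}_\mathrm{ver}\to{}^*\Gamma_\mathrm{ver}$, $X\mapsto\langle X,\cdot\rangle$, is then an isomorphism, since it sends the left basis to the dual basis of a free module. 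The same argument gives $\Gamma_\mathrm{ver}\cong{}^*\mathfrak{X}_\mathrm{ver}$.

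The computation displayed before the statement shows that the Lie derivative induced via Proposition \ref{prop:duality'} is $aa'_0\chi_i(a'_1)$, which is exactly the one in Proposition \ref{prop:hopf}. So the isomorphism intertwines $\mathscr{L}$ and $\mathrm{d}$.

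It remains to check $H$-colinearity of the identification. We need $\Delta_\mathfrak{X}(a\otimes\chi_i)=a_0\otimes\chi_j\otimes a_1N_i^j$ to be the adjoint coaction induced from $\Delta_\mathrm{ver}$. This reduces to showing that the coaction on $\Lambda$ given by $g_2\mathrm{d}k_2\otimes S(g_1k_1)g_3k_3$ is dual to $\mathrm{ad}_\mathfrak{g}$, i.e.\ is given by $\omega^j\mapsto\omega^i\otimes S(N_i^j)$ or the appropriate transpose. Pairing with $\chi_i$ and using $\langle\chi_i,S(h_1)\mathrm{d}h_2\rangle=\chi_i(h)$, this becomes the defining formula $N_i^j\chi_j(h)=h_1S(h_3)\chi_i(h_2)$ from Theorem \ref{thm:bicovVF}. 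I expect this colinearity check, with its antipode and index bookkeeping, to be the main obstacle; everything else is routine once the bases are fixed. With it in hand, Corollary \ref{cor:covFGPequ} gives the claimed duality.
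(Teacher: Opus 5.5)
Your plan follows the paper's route: pair $A\otimes\mathfrak{g}$ with $A\otimes\Lambda$ via $\langle a\otimes\chi_i,a'\otimes\omega^j\rangle=aa'\delta_i^j$ and compare Lie derivatives. However, the verification you add at the key step cannot be carried out, because this pairing is not balanced.

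By \eqref{eq:bimod}, $\langle(1\otimes\chi_i)\cdot a,1\otimes\omega^j\rangle=a_0f_i^j(a_1)$. On the other hand, $\langle 1\otimes\chi_i,a\cdot(1\otimes\omega^j)\rangle=\langle 1\otimes\chi_i,a\otimes\omega^j\rangle=a\,\delta_i^j$. The left action on $\Gamma_\mathrm{ver}$ is plain multiplication, so no $f$ appears on that side. Already for $A=H=\mathcal{O}(U(1))$ with $(1\otimes\theta)\cdot t=q^{-2}t\otimes\theta$ the two sides differ by $q^{-2}$. The matching $f$-contractions you have in mind occur between $\langle X\cdot a,\omega\rangle$ and $\langle X,\omega\cdot a\rangle$, and that is not a condition the duality uses.

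Equivalently, $\omega\mapsto\langle 1\otimes\chi_i,\omega\rangle$ is left $A$-linear, which is consistent with your writing ${}^*\Gamma_\mathrm{ver}$. Proposition~\ref{prop:duality'} instead needs right $A$-linear maps, and this map is not right $A$-linear: with the right action you quoted, $\langle 1\otimes\chi_i,(1\otimes\omega^j)\cdot a\rangle=a_0f_i^j(a_1)\neq\delta_i^j a$. Hence $X\mapsto\langle X,\cdot\rangle$ is not a bimodule isomorphism $\mathfrak{X}_\mathrm{ver}\to\underline{\mathrm{Hom}}_A(\Gamma_\mathrm{ver},A)$, and the symmetric map for $\Gamma_\mathrm{ver}$ fails in the same way. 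The paper writes down the same pairing and only asserts that it is bilinear.

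What does work is to fix only the values on coinvariants, $\langle 1\otimes\chi_i,1\otimes\omega^j\rangle=\delta_i^j$, and extend by balancedness. This gives $\langle a\otimes\chi_i,a'\otimes\omega^j\rangle=aa'_0f_i^j(a'_1)$. Right $A$-linearity then forces $S(h_1)\omega^jh_2=\omega^kf_k^j(S(h))$, i.e.\ the convolution inverse of $f$ rather than $f$ itself as in the formula you quoted. This is exactly what holds when $\Lambda$ is the coinvariant part of ${}^*(H\otimes\mathfrak{g})$.

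In the same picture one finds, on $H$, $\langle 1\otimes\chi_i,S(h_1)\mathrm{d}h_2\rangle=f_i^j(S(h_1))\chi_j(h_2)$ rather than $\chi_i(h)$. Writing $S(h_1)\mathrm{d}h_2=\omega^j\varpi_j(h)$, this says $\chi_i(h)=f_i^j(h_1)\varpi_j(h_2)$. The two twists cancel in the Lie derivative, $\langle a\otimes\chi_i,\mathrm{d}a'\rangle=aa'_0f_i^j(a'_1)\varpi_j(a'_2)=aa'_0\chi_i(a'_1)$, so the statement survives. Your colinearity reduction, however, rests on $\langle\chi_i,S(h_1)\mathrm{d}h_2\rangle=\chi_i(h)$ and has to be redone with these twists. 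The cleanest way is to compute ${}_A\underline{\mathrm{Hom}}(\mathfrak{X}_\mathrm{ver},A)$ directly from the left basis $\{1\otimes\chi_i\}$, then identify it with $\Gamma_\mathrm{ver}$ together with its differential and coaction.

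A smaller slip: the right-basis expansion needs $S^{-1}$, namely $a\otimes\chi_i=(1\otimes\chi_j)\cdot a_0f_i^j(S^{-1}(a_1))$. With $S$ one gets $a_0f_i^k(S(a_2)a_1)\otimes\chi_k$ instead.
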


\subsection{Base vector fields}\label{base-vf-sec}

Let $A$ be a right $H$-comodule algebra with subalgebra $B=A^{\mathrm{co}H}\subseteq A$ of coinvariant elements.
\begin{definition}
	Given any FOVC $(\mathfrak{X}_A,\mathscr{L}^A)$ on $A$ we call the quotient
	\begin{equation}\label{X_B}
		\mathfrak{X}_B:=\{X\in\mathfrak{X}_A~|~\mathscr{L}^A_X(B)\subseteq B\}\bigg/\{X\in\mathfrak{X}_A~|~\mathscr{L}^A_X(B)=0\}
	\end{equation}
	the vector space of \textit{base vector fields}.
\end{definition}
The above base vector fields can be understood as vector fields on $A$ that restrict to the base algebra $B$ and where we identify such vector fields if their difference constantly vanishes on $B$.
\begin{proposition}\label{prop:base}
	$\mathfrak{X}_B$ is a FOVC on $B$ with corresponding Lie derivative $\mathscr{L}^B$ induced from $\mathscr{L}^A$. Explicitly, the latter reads
	\begin{equation}\label{BLieDer}
		\mathscr{L}^B_{[X]}(b):=\mathscr{L}^A_X(b)
	\end{equation}
	for all $[X]\in\mathfrak{X}_B$ and $b\in B$, where $X\in\mathfrak{X}_A$ is an arbitrary representative of $[X]$.
\end{proposition}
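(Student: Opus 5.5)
Write $\mathfrak{Y}:=\{X\in\mathfrak{X}_A~|~\mathscr{L}^A_X(B)\subseteq B\}$ and $\mathfrak{N}:=\{X\in\mathfrak{X}_A~|~\mathscr{L}^A_X(B)=0\}$, so that $\mathfrak{X}_B=\mathfrak{Y}/\mathfrak{N}$. We check the three axioms of Definition \ref{def:FOVC} for this quotient with the Lie derivative \eqref{BLieDer}.

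The first step is to show that $\mathfrak{Y}$ and $\mathfrak{N}$ are $B$-sub-bimodules of $\mathfrak{X}_A$, where $\mathfrak{X}_A$ is regarded as a $B$-bimodule by restricting the $A$-actions. For the left action we use left $A$-linearity: $\mathscr{L}^A_{b\cdot X}(b')=b\mathscr{L}^A_X(b')$. This lies in $B$ when $X\in\mathfrak{Y}$, since $B$ is a subalgebra, and it vanishes when $X\in\mathfrak{N}$. For the right action we use the Leibniz rule in the form
$$
\mathscr{L}^A_{X\cdot b}(b')=\mathscr{L}^A_X(bb')-\mathscr{L}^A_X(b)b'.
$$
For $X\in\mathfrak{Y}$ both terms lie in $B$ because $bb'\in B$. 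For $X\in\mathfrak{N}$ both terms vanish. Clearly $\mathfrak{N}\subseteq\mathfrak{Y}$, so the quotient $\mathfrak{X}_B$ is a $B$-bimodule.

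The second step is to check that $\mathscr{L}^B$ is well defined. If two representatives $X,X'$ of the same class satisfy $X-X'\in\mathfrak{N}$, then $\mathscr{L}^A_X(b)=\mathscr{L}^A_{X'}(b)$ for all $b\in B$. For $X\in\mathfrak{Y}$ the value $\mathscr{L}^A_X(b)$ lies in $B$. Hence \eqref{BLieDer} gives a $\Bbbk$-linear map $\mathscr{L}^B\colon\mathfrak{X}_B\to\mathrm{End}_\Bbbk(B)$.

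The third step checks the remaining axioms. The Leibniz rule on $B$ is the restriction of the Leibniz rule of $\mathscr{L}^A$ to elements $b,b'\in B$. This uses $[X]\cdot b=[X\cdot b]$, which is how the quotient bimodule structure is defined. Left $B$-linearity follows in the same way from left $A$-linearity of $\mathscr{L}^A$.

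Injectivity is the one point where the quotient matters. If $\mathscr{L}^B_{[X]}=0$, then $\mathscr{L}^A_X(B)=0$, so $X\in\mathfrak{N}$ and $[X]=0$. Thus the denominator $\mathfrak{N}$ is exactly what makes $\mathscr{L}^B$ injective, since $\mathscr{L}^A$ restricted to $B$ alone need not be.

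No step is a genuine obstacle; the only point needing care is the closure of $\mathfrak{Y}$ and $\mathfrak{N}$ under the right $B$-action, which relies on the Leibniz rule rather than on any linearity of $\mathscr{L}^A$ in the right argument.
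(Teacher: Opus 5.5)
Your proof is correct and follows the paper's argument essentially step by step. Both show that the numerator and the denominator are closed under the $B$-actions, using left $A$-linearity and the Leibniz rule, and both get well-definedness of $\mathscr{L}^B$ from the denominator vanishing on $B$, the Leibniz rule by restriction, and injectivity from quotienting by $\{X\in\mathfrak{X}_A~|~\mathscr{L}^A_X(B)=0\}$. The only cosmetic differences are that you treat the left and right actions separately and state left $B$-linearity explicitly, whereas the paper handles $b\cdot X\cdot b'$ in a single computation.
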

\begin{proof}
	The map \eqref{BLieDer} is well-defined, i.e., independent from the choice of representative, since the kernel of \eqref{X_B} vanishes on $B$. We show that $(\mathfrak{X}_B,\mathscr{L}^B)$ is a FOVC on $B$. First of all, $\mathfrak{X}_B$ is a $B$-bimodule: $b\cdot [X]\cdot b':=[b\cdot X\cdot b']\in\mathfrak{X}_B$ if $X\in\mathfrak{X}_B$ and $b,b'\in B$, since 
	$$
	\mathscr{L}^A_{b\cdot X\cdot b'}(b'')
	=b\underbrace{\mathscr{L}^A_X(b'b'')}_{\in B}-b\underbrace{\mathscr{L}^A_X(b')}_{\in B}b''\in B
	$$
	and the above clearly vanishes if $\mathscr{L}^A_X(B)=0$. In particular, the $B$-bimodule action is well-defined. The Leibniz rule of $\mathscr{L}^B$ is then induced from the one of $\mathscr{L}^A$. It remains to prove injectivity of $\mathscr{L}^B\colon\mathfrak{X}_B\to\mathrm{End}_\Bbbk(B)$. Assume that $\mathscr{L}^B_{[X]}(b)=0$ for an $[X]\in\mathfrak{X}_B$ and all $b\in B$. Then $\mathscr{L}^A_X(b)=0$ for a representative $X\in\mathfrak{X}_A$ and all $b\in B$, which implies that $[X]=0$.
\end{proof}

The above definition and proposition are valid for every subalgebra $B\subseteq A$, so we do not necessarily have to assume $B=A^{\mathrm{co}H}$. However, also in the following, we restrict our attention to the case $B=A^{\mathrm{co}H}$.

Recall that, given a right $H$-covariant FOVC $(\mathfrak{X},\mathscr{L})$ on a right $H$-comodule algebra $A$ we denote the vector space of right $H$-coinvariant vector fields by
\begin{equation}\label{eq:coinvX}
	\mathfrak{X}^{\mathrm{co}H}=\{X\in\mathfrak{X}~|~\Delta_{\mathfrak{X}}(X)=X\otimes 1\}\subseteq\mathfrak{X}.
\end{equation}
\begin{lemma}\label{lem:XcoH}
	Let $A$ be a right $H$-comodule algebra and denote $B:=A^{\mathrm{co}H}\subseteq A$.
	For any right $H$-covariant FOVC $(\mathfrak{X},\mathscr{L})$ we have
	\begin{equation}\label{eq:Xco2}
		\mathfrak{X}^{\mathrm{co}H}\subseteq\{X\in\mathfrak{X}~|~\mathscr{L}_X(B)\subseteq B\},
	\end{equation}
	i.e., coinvariant vector fields respect the subalgebra $B$ of coinvariants.
\end{lemma}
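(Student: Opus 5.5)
The plan is to use the right $H$-colinearity of $\widetilde{\mathscr{L}}\colon\mathfrak{X}\otimes A\to A$ from Definition~\ref{def:covFOVC}, i.e.\ the commutativity of diagram~\eqref{diagLcov}, evaluated on the elementary tensor $X\otimes b$ with $X\in\mathfrak{X}^{\mathrm{co}H}$ and $b\in B$. By the definition of $\mathfrak{X}^{\mathrm{co}H}$ in \eqref{eq:coinvX}, we have $\Delta_\mathfrak{X}(X)=X\otimes 1$. By the definition of $B=A^{\mathrm{co}H}$, we have $\Delta_A(b)=b\otimes 1$.

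The first step is to compute the diagonal coaction. This gives $\Delta_\otimes(X\otimes b)=X_0\otimes b_0\otimes X_1b_1=X\otimes b\otimes 1\cdot 1=X\otimes b\otimes 1$. Applying $\widetilde{\mathscr{L}}\otimes\mathrm{id}_H$ then yields $\mathscr{L}_X(b)\otimes 1$.

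The second step uses commutativity of \eqref{diagLcov}. It shows that this element equals $\Delta_A(\widetilde{\mathscr{L}}(X\otimes b))=\Delta_A(\mathscr{L}_X(b))$. Hence $\Delta_A(\mathscr{L}_X(b))=\mathscr{L}_X(b)\otimes 1$, which says exactly that $\mathscr{L}_X(b)\in A^{\mathrm{co}H}=B$. Since $b\in B$ was arbitrary, we get $\mathscr{L}_X(B)\subseteq B$. This proves the inclusion \eqref{eq:Xco2}.

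There is no real obstacle here. The only point to be careful about is that the coaction on $\mathfrak{X}\otimes A$ in Definition~\ref{def:covFOVC} is the diagonal one, with the $H$-legs multiplied in the order $X_1b_1$. Since both legs equal $1$, the order does not matter, and no Hopf--Galois or bicovariance assumption is needed. The argument uses only right $H$-covariance of $(\mathfrak{X},\mathscr{L})$.
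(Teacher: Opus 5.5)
Your proof is correct and is essentially the paper's own argument: it evaluates the colinearity diagram \eqref{diagLcov} on $X\otimes b$, using $\Delta_{\mathfrak{X}}(X)=X\otimes 1$ and $\Delta_A(b)=b\otimes 1$, to obtain $\Delta_A(\mathscr{L}_X(b))=\mathscr{L}_X(b)\otimes 1$. You are also right that only right $H$-covariance is used, with no Hopf--Galois or bicovariance hypothesis.
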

\begin{proof}
	Let $X\in\mathfrak{X}^{\mathrm{co}H}$ and $b\in B$. Then $\mathscr{L}_Xb\in B$, since
	\begin{align*}
		\Delta_A(\mathscr{L}_Xb)
		=\mathscr{L}_{X_0}(b_0)\otimes X_1b_1
		=\mathscr{L}_Xb\otimes 1
	\end{align*}
	by the commutativity of \eqref{diagLcov}.
\end{proof}

Note that \eqref{eq:Xco2} is not an equality in general. To give an explicit counterexample we discuss the vector field calculus on the Podle\'s sphere $B$ induced from the $3$-dimensional vector field calculus on $\mathcal{O}_q(\mathrm{SL}_2(\mathbb{C}))$, as introduced in Example \ref{ex:SL2:FOVC}.

\begin{example}\label{ex:SLq2'}
	Let $A=\mathcal{O}_q(\mathrm{SL}_2(\mathbb{C}))$ and consider
	the circle Hopf algebra $H:=\mathcal{O}(U(1))=\mathbb{C}[t,t^{-1}]$, which is defined with the usual group-like comultiplication, counit and antipode, namely $\Delta(t^\pm)=t^\pm\otimes t^\pm$, $\varepsilon(t^\pm)=1$ and $S(t^\pm)=t^\mp$. Then $A$ becomes a right $H$-comodule algebra with respect to the coaction
	$$
	\Delta_A\begin{pmatrix}
		\alpha & \beta\\
		\gamma & \delta
	\end{pmatrix}=\begin{pmatrix}
		\alpha & \beta\\
		\gamma & \delta
	\end{pmatrix}\otimes\begin{pmatrix}
		t & 0\\
		0 & t^{-1}
	\end{pmatrix}.
	$$
	The subalgebra of coinvariants $B=A^{\mathrm{co}H}=\mathbb{C}_q[\mathbb{S}^2]=\mathrm{span}_\mathbb{C}\{B_\pm,B_0\}$ is the \textit{Podle\'s sphere} with generators $x:=-q^{-1}\beta\gamma$, $z:=\gamma\delta$ and $z^*:=-q\alpha\beta$ satisfying the relations
	$$
	zx=q^2xz,\quad
	z^*x=q^{-2}xz^*,\quad
	zz^*=q^4z^*z+q^2(1-q^2)x,\quad
	z^*z=x(1-x),
	$$
	see for example \cite[Lemma 2.3.4]{bm}.
	One can show that $B\subseteq A$ is a faithfully flat Hopf--Galois extension, see e.g. \cite[Example 6.26]{BrJaMa}.
	Now consider the $3$-dimensional FOVC $\mathfrak{X}_A=\mathrm{span}_A\{\chi^0,\chi^\pm\}$ of Example \ref{ex:SL2:FOVC}. There, we remarked that $\mathfrak{X}_A$ is left covariant. One verifies that $\mathfrak{X}_A$ is further right $H$-covariant if endowed with the right $H$-coaction
	\begin{equation}\label{eq:HcoactSL2}
		\begin{split}
			\Delta_{\mathfrak{X}(A)}(a\otimes\chi_0)&=a_0\otimes\chi_0\otimes a_1,\\
			\Delta_{\mathfrak{X}(A)}(a\otimes\chi_\pm)&=a_0\otimes\chi_\pm\otimes a_1t^{\mp 2}
		\end{split}
	\end{equation}
	where $a\in A$. In fact, \eqref{eq:HcoactSL2} constitutes a right $H$-coaction and is compatible with the $A$-module actions of $\mathfrak{X}_A$ and the Lie derivative.
	One verifies that $\mathfrak{X}_B$ is left $B$-generated by $[\alpha^2\chi_+]$ and $[\beta^2\chi_-]$. 
	In fact, since $\mathfrak{X}_A$ is a free $A$-module this can be checked on $\chi_+$ and $\chi_-$, which only respect $B$ if multiplied by an element of degree $-2$ and $2$, respectively.
	Now note that the vector field $\alpha\otimes\chi_0$ preserves $B$, in fact $\alpha\otimes\chi_0$ vanishes on $B$. However, $\Delta_{\mathfrak{X}(A)}(\alpha\otimes\chi_0)=\alpha\otimes\chi_0\otimes t$ and thus $\alpha\otimes\chi_0$ is not an element of $\mathfrak{X}^{\mathrm{co}H}$. This shows that \eqref{eq:Xco2} is not an equality for the vector calculus on the Podle\'s sphere.
\end{example}

\begin{corollary}
	If $(\mathfrak{X}_A,\mathscr{L}^A)$ is a right $H$-covariant FOVC on $A$ then
	$$
	[\mathfrak{X}_A^{\mathrm{co}H}]\subseteq\mathfrak{X}_B,
	$$
	where $[\mathfrak{X}_A^{\mathrm{co}H}]$ denotes the image of $\pi_{\mathfrak{X}_B}\colon\{X\in\mathfrak{X}_A~|~\mathscr{L}^A_X(B)\subseteq B\}\to\mathfrak{X}_B$.
\end{corollary}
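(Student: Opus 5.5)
This is essentially a direct consequence of Lemma \ref{lem:XcoH} combined with the definition \eqref{X_B} of base vector fields. The plan is to check that the expression $[\mathfrak{X}_A^{\mathrm{co}H}]$ makes sense, i.e. that $\mathfrak{X}_A^{\mathrm{co}H}$ lies in the domain of the projection $\pi_{\mathfrak{X}_B}$. Its image then automatically lies in the codomain $\mathfrak{X}_B$.

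First, recall that $\pi_{\mathfrak{X}_B}$ is the canonical quotient map
$$
\{X\in\mathfrak{X}_A~|~\mathscr{L}^A_X(B)\subseteq B\}\to\mathfrak{X}_B,\qquad X\mapsto[X],
$$
whose kernel is $\{X\in\mathfrak{X}_A~|~\mathscr{L}^A_X(B)=0\}$. By Proposition \ref{prop:base}, $\mathfrak{X}_B$ is a FOVC on $B$, so the target is the correct object. Second, apply Lemma \ref{lem:XcoH} to the right $H$-covariant FOVC $(\mathfrak{X}_A,\mathscr{L}^A)$. It gives $\mathfrak{X}_A^{\mathrm{co}H}\subseteq\{X\in\mathfrak{X}_A~|~\mathscr{L}^A_X(B)\subseteq B\}$. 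The underlying reason is the colinearity diagram \eqref{diagLcov}: for coinvariant $X$ and $b\in B$ we have $\Delta_A(\mathscr{L}^A_X b)=\mathscr{L}^A_X b\otimes 1$.

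Consequently the restriction of $\pi_{\mathfrak{X}_B}$ to $\mathfrak{X}_A^{\mathrm{co}H}$ is defined, and its image $[\mathfrak{X}_A^{\mathrm{co}H}]$ is a subspace of $\mathfrak{X}_B$, as claimed. This holds as a $\Bbbk$-subspace. One could additionally note that the $B$-sub-bimodule generated by it also lies in $\mathfrak{X}_B$, since $\mathfrak{X}_B$ is a $B$-bimodule. Moreover $\mathfrak{X}_A^{\mathrm{co}H}$ is itself closed under the $B$-actions, because $\Delta_{\mathfrak{X}}(bXb')=bXb'\otimes 1$ for $b,b'\in B$.

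There is no real obstacle here. The only point needing care is to interpret the bracket notation correctly, as the image under the projection defined on the larger domain, and to invoke Lemma \ref{lem:XcoH} to ensure that domain contains the coinvariants.
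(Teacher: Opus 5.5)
Your proposal is correct and is the same argument as the paper's, which also deduces the corollary immediately from Lemma \ref{lem:XcoH}. That lemma places $\mathfrak{X}_A^{\mathrm{co}H}$ inside the domain of $\pi_{\mathfrak{X}_B}$, so its image lies in $\mathfrak{X}_B$.
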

\begin{proof}
	This follows immediately from Lemma \ref{lem:XcoH}.
\end{proof}

\subsection{The Atiyah sequence for vector field calculi}\label{sec:Atiyah}

In this section we fix a (faithfully flat) Hopf--Galois extension $B=A^{\mathrm{co}H}\subseteq A$ and denote the translation map by $\tau\colon H\to A\otimes_BA$, $\tau(h)=h^{\langle 1\rangle}\otimes_Bh^{\langle 2\rangle}$. Quantum principal vector field calculi on $B\subseteq A$ will be defined by the exactness of a noncommutative version of the Atiyah sequence. Vertical vector fields, as introduced in Section~\ref{fovc-ca-sec}, inject into vector fields on the total space algebra, which themselves project onto horizontal vector fields. In preparation of the definition we discuss the following result.
\begin{lemma}\label{lem:sequence}
	Let $(\mathfrak{X}_A,\mathscr{L}^A)$ be a FOVC on $A$.
	\begin{enumerate}
		\item[i.)] If $\mathfrak{g}$ is a quantum tangent space on $H$ such that $\mathscr{L}_{A\otimes\mathfrak{g}}\subseteq\mathscr{L}^A_{\mathfrak{X}_A}\subseteq\mathrm{End}_\Bbbk(A)$, then there exists a (unique) morphism $\phi\colon A\otimes\mathfrak{g}\to\mathfrak{X}_A$ of FOVCi, i.e. an injective $A$-bimodule morphism such that the diagram
		\begin{equation}\label{phi-fovc}
			\begin{tikzcd}
				A\otimes\mathfrak{g}\arrow{dr}[swap]{\mathscr{L}} \arrow{r}{\phi}
				& \mathfrak{X}_A \arrow{d}{\mathscr{L}^A}\\
				& \mathrm{End}_\Bbbk(A)
			\end{tikzcd}
		\end{equation}
		commutes.
		\item[ii.)] If $(\mathfrak{X}_A,\mathscr{L}^A)$ is right $H$-covariant and $\mathfrak{X}_B$ denotes the corresponding base vector fields, then $\psi\colon\mathfrak{X}_A\to\mathfrak{X}_BA$, given by
		\begin{equation}\label{chidef}
			\psi(X):=\underbrace{[X_0\cdot(X_1)^{\langle 1\rangle}]}_{\in\mathfrak{X}_B}(X_1)^{\langle 2\rangle}
		\end{equation}
		is well-defined, where $[\cdot]\colon\{X\in\mathfrak{X}_A~|~\mathscr{L}_X(B)\subseteq B\}\to\mathfrak{X}_B$ denotes the quotient map and $\mathfrak{X}_BA$ is a short notation for $\mathfrak{X}_B \otimes_B A$.
	\end{enumerate}
\end{lemma}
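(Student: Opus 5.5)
For part i.), the plan is to define $\phi$ through the injectivity of $\mathscr{L}^A$. The hypothesis $\mathscr{L}_{A\otimes\mathfrak{g}}\subseteq\mathscr{L}^A_{\mathfrak{X}_A}$ says that for each $Y\in A\otimes\mathfrak{g}$ there is some $X\in\mathfrak{X}_A$ with $\mathscr{L}^A_X=\mathscr{L}_Y$. Since $\mathscr{L}^A$ is injective, this $X$ is unique, so we set $\phi(Y):=X$; concretely $\phi=(\mathscr{L}^A)^{-1}\circ\mathscr{L}$ on the image. Commutativity of \eqref{phi-fovc} then holds by construction. Uniqueness of $\phi$ also follows from injectivity of $\mathscr{L}^A$, and $\phi$ is injective because $\mathscr{L}$ on $A\otimes\mathfrak{g}$ is injective. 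This uses Proposition \ref{prop:hopf}, which is where the Hopf--Galois assumption enters.

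Next comes bilinearity, which I would check by applying $\mathscr{L}^A$ and invoking its injectivity. Left linearity follows from left $A$-linearity of both Lie derivatives. For right linearity, the Leibniz rule on both sides gives
\[
\mathscr{L}^A_{\phi(Y)\cdot a}(c)=\mathscr{L}^A_{\phi(Y)}(ac)-\mathscr{L}^A_{\phi(Y)}(a)c=\mathscr{L}_Y(ac)-\mathscr{L}_Y(a)c=\mathscr{L}_{Y\cdot a}(c)=\mathscr{L}^A_{\phi(Y\cdot a)}(c).
\]
This is the same argument as in Proposition \ref{prop:univ}: both calculi sit inside $\mathfrak{X}_u$ with the same right action.

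For part ii.), $\psi$ involves $\tau(X_1)\in A\otimes_BA$, so well-definedness has three parts.
\begin{enumerate}
\item[(a)] $X_0\cdot(X_1)^{\langle 1\rangle}$ must preserve $B$, so that its class lies in $\mathfrak{X}_B$.
\item[(b)] The expression must be independent of the representative of the balanced tensor $\tau(X_1)$.
\item[(c)] The class must be taken in $\mathfrak{X}_B\otimes_BA$.
\end{enumerate}

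For (a), I would show that $X_0\cdot(X_1)^{\langle1\rangle}\otimes_B(X_1)^{\langle2\rangle}$ is right $H$-coinvariant in its first leg. Using the coaction on $\mathfrak{X}_A$, identity \eqref{transl4}, $\Delta_{\mathfrak{X}}(X\cdot a)=X_0\cdot a_0\otimes X_1a_1$, and the antipode, the first leg carries $X_1S(X_2)=\varepsilon$. So the element lies in $\mathfrak{X}_A^{\mathrm{co}H}\otimes_BA$, and Lemma \ref{lem:XcoH} gives $\mathscr{L}(B)\subseteq B$. To make "coinvariant in the first leg" rigorous inside $\otimes_B$, I would use faithful flatness of $A$ over $B$, so that $(\mathfrak{X}_A\otimes_BA)^{\mathrm{co}H\otimes\mathrm{id}}=\mathfrak{X}_A^{\mathrm{co}H}\otimes_BA$. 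Then $[\cdot]\otimes_B\mathrm{id}$ applies.

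For (b), the map $(X,a)\mapsto X\cdot a$ is $B$-balanced against the tensor $\otimes_B$. More precisely, $X\cdot a\otimes_B a'$ is well defined from $A\otimes_BA$ because $X\cdot(ab)\otimes a'=(X\cdot a)\cdot b\otimes a'$, and $[\cdot]$ is right $B$-linear with $[Y\cdot b]=[Y]b$ by Proposition \ref{prop:base}.

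I expect (a) to be the main obstacle: the coinvariance computation together with the flatness argument needed to descend coinvariance through $\otimes_B$.
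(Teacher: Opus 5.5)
Your proposal is correct and follows the paper's proof. For $\phi$, it uses injectivity of $\mathscr{L}^A$ together with Proposition~\ref{prop:hopf}. For $\psi$, it uses the first-leg coinvariance computation via \eqref{transl4}, then Lemma~\ref{lem:XcoH}, then right $B$-linearity of the quotient map on the balanced tensor product.

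You also supply two steps the paper leaves implicit. You check $A$-bilinearity of $\phi$ explicitly, via the Leibniz rule and injectivity of $\mathscr{L}^A$. You also invoke (faithful) flatness of $A$ as a left $B$-module to identify first-leg coinvariants in $\mathfrak{X}_A\otimes_BA$ with $\mathfrak{X}_A^{\mathrm{co}H}\otimes_BA$.
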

\begin{proof}
	\begin{enumerate}
		\item[i.)] From Proposition \ref{prop:hopf} it follows that $A\otimes\mathfrak{g}$ is a FOVC on $A$. From the injectivity of the Lie derivatives we infer that from the assumption $\mathscr{L}_{A\otimes\mathfrak{g}}\subseteq\mathscr{L}^A_{\mathfrak{X}_A}$ it follows that for every $a\otimes x\in A\otimes\mathfrak{g}$ there is a unique $X\in\mathfrak{X}_A$, such that $\mathscr{L}^A_X=\mathscr{L}_{a\otimes x}$. Thus $\phi(a\otimes x):=X$ gives the (necessarily unique) morphism $\phi\colon A\otimes\mathfrak{g}\to\mathfrak{X}_A$ of FOVCi.
		
		\item[ii.)] For well-definedness we have to argue that $[X_0\cdot(X_1)^{\langle 1\rangle}]\otimes_B(X_1)^{\langle 2\rangle}\in\mathfrak{X}_B\otimes_BA$ for all $X\in\mathfrak{X}_A$. Note that the quotient map $\pi_{\mathfrak{X}_B}\colon\{Y\in\mathfrak{X}_A~|~\mathscr{L}^A_Y(B)\subseteq B\}\to\mathfrak{X}_B$ is right $B$-linear (even $B$-bilinear) and thus for $X\in\{Y\in\mathfrak{X}_A~|~\mathscr{L}^A_Y(B)\subseteq B\}$ we have that
		$$
		X\otimes h\mapsto X\otimes h^{\langle 1\rangle}\otimes_Bh^{\langle 2\rangle}
		\mapsto[X\cdot h^{\langle 1\rangle}]\otimes_Bh^{\langle 2\rangle}
		$$
		is well-defined over the balanced tensor product $h^{\langle 1\rangle}\otimes_Bh^{\langle 2\rangle}$ for all $h\in H$ such that $X\cdot h^{\langle 1\rangle}\otimes_Bh^{\langle 2\rangle}\in\{Y\in\mathfrak{X}_A~|~\mathscr{L}^A_Y(B)\subseteq B\}\otimes_BA$.
		We show that $X_0\cdot(X_1)^{\langle 1\rangle}\otimes_B(X_1)^{\langle 2\rangle}\in\mathfrak{X}_A^{\mathrm{co}H}\otimes_BA$ for any $X\in\mathfrak{X}_A$. In fact, we have
		\begin{align*}
			\Delta_{\mathfrak{X}}(X_0\cdot(X_1)^{\langle 1\rangle})\otimes_B(X_1)^{\langle 2\rangle}
			&=X_0\cdot((X_2)^{\langle 1\rangle})_0\otimes X_1((X_2)^{\langle 1\rangle})_1\otimes_B(X_2)^{\langle 2\rangle}\\
			&\overset{\ref{transl4}}{=}X_0\cdot(X_3)^{\langle 1\rangle}\otimes X_1S(X_2)\otimes_B(X_3)^{\langle 2\rangle}\\
			&=X_0\cdot(X_1)^{\langle 1\rangle}\otimes 1\otimes_B(X_1)^{\langle 2\rangle}.
		\end{align*}
		Then, using Lemma \ref{lem:XcoH} the claim follows, which concludes the proof.
	\end{enumerate}
\end{proof}

Combining the maps $\phi$ and $\psi$ of the previous lemma into an exact sequence leads to the notion of quantum principal vector field calculus.
\begin{definition}\label{qpvc-def}
	A \textit{quantum principal vector field calculus} (QPVC)
	for $B=A^{\mathrm{co}H}\subseteq A$ is the following data:
	\begin{enumerate}
		\item[i.)] a 
		bicovariant quantum tangent space $\mathfrak{g}$ on $H$ and
		
		\item[ii.)] a right $H$-covariant FOVC $(\mathfrak{X}_A,\mathscr{L}^A)$ on $A$
	\end{enumerate}
	such that the sequence
	\begin{equation}\label{at-seq}
		0\to A\otimes\mathfrak{g}\xrightarrow{\phi}\mathfrak{X}_A\xrightarrow{\psi} \mathfrak{X}_BA\to 0
	\end{equation}
	is well-defined and exact.
	In this case we call $A\otimes\mathfrak{g}$ the \textit{vertical vector fields}, $\mathfrak{X}_BA$ the \textit{horizontal vector fields} and we colloquially refer to \eqref{at-seq} as the \textit{Atiyah sequence}.
\end{definition}

In the context of classical differential geometry, when considering a Lie group $G$ and a principal $G$-bundle $P\to M$, the classical Atiyah sequence
\begin{equation}
	0\to\Gamma\mathrm{ad}P\to\mathfrak{X}_G(P)\to\mathfrak{X}(M)\to 0,
\end{equation}
where $\Gamma\mathrm{ad}P$ denotes the adjoint bundle, $\mathfrak{X}_G(P)$ are the $G$-invariant vector fields on $P$ and $\mathfrak{X}(M)$ the base vector fields,
is recovered from \eqref{at-seq} by applying the coinvariant functor $M\mapsto M^{\mathrm{co}H}$. A generalization of this to module algebras of triangular Hopf algebras was given in \cite[Equation (3.1)]{AschieriLandiPagani}. The latter also appears as the invariant part of a special case of \eqref{at-seq}.

Let us discuss other examples of QPVCi.

\begin{example}
	We consider the 3D right $H=\mathcal{O}(U(1))$-covariant FOVC $(\mathfrak{X}_A,\mathscr{L}^A)$ on $A=\mathcal{O}_q(\mathrm{SL}_2(\mathbb{C}))$, as discussed in Examples \ref{ex:SL2:FOVC} and \ref{ex:SLq2'}. Recall that $(\mathfrak{X}_A,\mathscr{L}^A)$ is also a left $A$-covariant FOVC with quantum tangent space $\fg_A:=\mathrm{span}\{\chi_0,\chi_{\pm}\}$. The induced base vector field calculus on the Podle\'s sphere $B=A^{\mathrm{co}H}=\mathbb{C}_q[\mathbb{S}^2]$ was discussed in Example \ref{ex:SLq2'}. On $H$ we choose the bicovariant quantum tangent space $\mathfrak{g}_H:=\mathrm{span}_\mathbb{C}\{\theta\}$ corresponding to the bicovariant FOVC $(\mathfrak{X}_H,\mathscr{L}^H)$ with $\mathfrak{X}_H=H\otimes\mathfrak{g}_H$ determined by
	$$
	(1\otimes\theta)\cdot t=q^{-2}t\otimes\theta
	$$ 
	as detailed in Section \ref{sec:bicov}.
	Moreover, 
	$$
	\mathscr{L}^H_\theta(t)
	=\langle\theta,\mathrm{d}t\rangle
	=\langle\theta,tt^{-1}\mathrm{d}t\rangle
	=\langle\theta,t^{-1}\mathrm{d}t\rangle q^{-2}t
	=q^{-2}t
	$$
	and 
	$$
	\mathscr{L}^H_\theta(t^{-1})
	=\langle\theta,\mathrm{d}(t^{-1})\rangle
	=-\langle\theta,t^{-1}\mathrm{d}(t)t^{-1}\rangle
	=-t^{-1},
	$$
	where, to ease the notation, we write $\mathscr{L}^H_{\theta}$ in place of
	$\mathscr{L}^H_{1\otimes \theta}$.
	
	In order to check that the above form a QPVC we have to study the maps $\phi$ and $\psi$ introduced in Definition~\ref{qpvc-def}. We claim that 
	$$
	\phi:A\otimes\mathfrak{g}_H\lra\mathfrak{X}_A, \qquad \phi(a \otimes \theta)=a \otimes \chi_0,
	$$
	which is then clearly injective. In fact,
	$
	\mathscr{L}^A\circ\phi=\mathscr{L}^\mathrm{ver},
	$
	where $\mathscr{L}^\mathrm{ver}$ is pertaining to the vertical calculus on $A$ as
	defined in Section \ref{fovc-ca-sec} since, comparing with \eqref{eq:chi0},
	\begin{align*}
		\mathscr{L}^\mathrm{ver}_{1\otimes\theta}\begin{pmatrix}
			\alpha & \beta\\
			\gamma & \delta
		\end{pmatrix}
		=\begin{pmatrix}
			\alpha\theta(t) & \beta\theta(t^{-1})\\
			\gamma\theta(t) & \delta\theta(t^{-1})
		\end{pmatrix}
		=\begin{pmatrix}
			q^{-2}\alpha & -\beta\\
			q^{-2}\gamma & -\delta
		\end{pmatrix}
		=\mathscr{L}^A_{\chi_0}\begin{pmatrix}
			\alpha & \beta\\
			\gamma & \delta
		\end{pmatrix}
	\end{align*}
	holds. By the properties of the Lie derivative this is sufficient to show commutativity of (\ref{phi-fovc}). Thus, the map $\psi\colon\mathfrak{X}_A\to\mathfrak{X}_BA$ as in Lemma \ref{lem:sequence} $ii.)$ is well-defined.
	We continue to show the exactness of the sequence (\ref{at-seq}).
	For this, recall from Example \ref{ex:SLq2'} that $\mathfrak{X}_B$ is the left $B$-module generated by $[\alpha^2\chi_+]$ and $[\beta^2\chi_-]$. Then, using \eqref{transl5} and that $\alpha^2\chi_+$ and $\beta^2\chi_-$ are right $H$-coinvariant vector fields, it follows that $\psi$ is surjective, since
	$$
	\psi((\alpha^2\chi_+)\cdot a+(\beta^2\chi_-)\cdot a')
	=[(\alpha^2\chi_+)\cdot a_0(a_1)^{\langle 1\rangle}](a_1)^{\langle 2\rangle}
	+[(\beta^2\chi_-)\cdot a'_0(a'_1)^{\langle 1\rangle}](a'_1)^{\langle 2\rangle}
	=[(\alpha^2\chi_+)]a+[(\beta^2\chi_-)]a'
	$$
	for arbitrary $a,a'\in A$. Further recall that $\chi_0$ is coinvariant (see equation \eqref{eq:HcoactSL2}) and $\mathscr{L}^A_{\chi_0}(B)=0$, which implies $\chi_0\in\ker\psi$. Since $\mathfrak{X}_A$ is a free $A$-module it follows that the kernel of $\psi$ is precisely $A\cdot\chi_0$. Thus, the Atiyah sequence
	$$
	0\to A\otimes\mathfrak{g}_H\xrightarrow{\phi}\mathfrak{X}_A\xrightarrow{\psi}\mathfrak{X}_BA\to 0
	$$
	is exact and we obtain a QPVC as in Definition \ref{qpvc-def}.
\end{example}

\subsection{Crossed product FOVC}\label{sec:crossed}

As a class of examples of QPVCi in accordance with Definition \ref{qpvc-def} we construct FOVCi on crossed product algebras or, equivalently, on cleft extensions. This approach is dual to the one in \cite{SciWeb}.

Let $H$ be a Hopf algebra (with invertible antipode) and $\rhd\colon H\otimes B\to B$ a \textit{measure}, i.e., a linear map such that
$$
h\rhd(bb')=(h_1\rhd b)(h_2\rhd b'),\qquad h\rhd 1_B=\varepsilon(h)1_B
$$
for all $h\in H$ and $b,b'\in B$. We further assume that $\rhd$ is a weak action, i.e., that there exists a convolution invertible map $\sigma\colon H\otimes H\to B$ such that
$$
\sigma(1\otimes h)=\varepsilon(h)1=\sigma(h\otimes 1),\qquad
(h_1\rhd\sigma(h'_1\otimes h''_1))\sigma(h_2\otimes h'_2h''_2)
=\sigma(h_1\otimes h'_2)\sigma(h_2h'_2\otimes h'')
$$
and
$$
h\rhd(h'\rhd b)=\sigma(h_1\otimes h'_1)(h_2h'_2\rhd b)\sigma^{-1}(h_3\otimes h'_3)
$$
for all $h,h',h''\in H$ and $b\in B$. Then, we define an associative unital algebra $B\#_\sigma H$, the \textit{crossed product algebra}, to equal $B\otimes H$ as a vector space and being endowed with the multiplication
$$
(b\#_\sigma h)(b'\#_\sigma h')
=b(h_1\rhd b')\sigma(h_2\otimes h'_1)\#_\sigma h_3h'_2
$$
and unit $1\otimes 1$ (see e.g. \cite[Lemma 7.1.2]{mont}). Moreover, $B\#_\sigma H$ is a right $H$-comodule algebra with coaction $\Delta_{\#_\sigma}:=\mathrm{id}\otimes\Delta\colon B\#_\sigma H\to (B\#_\sigma H)\otimes H$. Then $B\cong B\otimes 1\subseteq B\#_\sigma H$ is the subalgebra of coinvariants under this coaction and $B\subseteq B\#_\sigma H$ is a cleft extension with cleaving map $j\colon H\to B\#_\sigma H$, $h\mapsto 1\#_\sigma h$. Conversely, for every cleft extension $B\subseteq A$ there exists a measure and a $2$-cocylce $\sigma$ such that $B\#_\sigma H\cong A$ as right $H$-comodule algebras (see e.g. \cite[Theorem 7.2.2]{mont}).

Fix a crossed product algebra $B\#_\sigma H$ and assume that there is a bicovariant FOVC $\mathfrak{X}_H$ on $H$ and a FOVC $\mathfrak{X}_B$ on $B$. We further assume that there is a linear map $\rhd\colon H\otimes\mathfrak{X}_B\to\mathfrak{X}_B$ such that
\begin{equation}\label{eq:crossed1}
	h\rhd(b\cdot X\cdot b')=(h_1\rhd b)\cdot(h_2\rhd X)\cdot(h_3\rhd b'),\qquad
	1\rhd X=X,    
\end{equation}
\begin{equation}\label{eq:crossed2}
	h\rhd(h'\rhd X)=\sigma(h_1\otimes h'_1)\cdot(h_2h'_2\rhd X)\cdot\sigma(h_3\otimes h'_3),    
\end{equation}
as well as
\begin{equation}\label{eq:crossed3}
	h\rhd(\mathscr{L}_X(b))=\mathscr{L}_{h_1\rhd X}(h_2\rhd b),\qquad
	\mathscr{L}_X\circ\sigma=0
\end{equation}
for all $h,h'\in H$, $b,b'\in B$ and $X\in\mathfrak{X}_B$.
Then $(\mathfrak{X}_{\#_\sigma},\mathscr{L}^{\#_\sigma})$ is a right $H$-covariant FOVC on the crossed product algebra $B\#_\sigma H$, where
\begin{equation}\label{eq:crossed4}
	\mathfrak{X}_{\#_\sigma}:=\mathfrak{X}_B\otimes H\oplus B\otimes\mathfrak{X}_H
\end{equation}
is an $B\#_\sigma H$-bimodule via
\begin{align*}
	(b\#_\sigma h)\cdot(X^B\otimes h'+b'\otimes X^H)
	&:=b\cdot(h_1\rhd X^B)\cdot\sigma(h_2\otimes h'_1)\otimes h_3h'_2
	+b(h_1\rhd b')\sigma(h_2\otimes X^H_{-1})\otimes h_3\cdot X^H_0,\\
	(X^B\otimes h+b\otimes X^H)\cdot(b'\#_\sigma h')
	&:=X^B\cdot(h_1\rhd b')\sigma(h_2\otimes h'_1)\otimes h_3h'_2
	+b(X^H_{-2}\rhd b')\sigma(X^H_{-1}\otimes h'_1)\otimes X^H_0\cdot h'_2
\end{align*}
and
\begin{equation}\label{eq:crossed5}
	\mathscr{L}^{\#_\sigma}_{X^B\otimes h+b\otimes X^H}(b'\#_\sigma h')
	:=\mathscr{L}_{X^B}(h_1\rhd b')\sigma(h_2\otimes h'_1)\#_\sigma h_3h'_2
	+b(X^H_{-2}\rhd b')\sigma(X^H_{-1}\otimes h'_1)\#_\sigma\mathscr{L}_{X^H_0}(h'_2).
\end{equation}
The right $H$-coaction on $\mathfrak{X}_{\#_\sigma}$ is given by $X^B\otimes h+b\otimes X^H\mapsto X^B\otimes h_1\otimes h_2+b\otimes X^H_0\otimes X^H_1$.

We show that the sequence
\begin{equation}\label{eq:crossed6}
	0\to\underbrace{B\otimes\mathfrak{X}_H}_{\cong(B\#_\sigma H)\otimes\mathfrak{g}_H}\to\mathfrak{X}_{\#_\sigma}\to\underbrace{\mathfrak{X}_B\otimes H}_{=(\mathfrak{X}_B\otimes 1)\cdot(B\#_\sigma H)}\to 0
\end{equation}
is exact and simply reflects the direct sum decomposition of $\mathfrak{X}_{\#_\sigma}$. For the first two arrows there is nothing to prove, since $\mathfrak{X}_H\cong H\otimes\mathfrak{g}_H$ by the fundamental theorem of Hopf modules. For the surjective arrow, note that for every $X^B\otimes h$ we have that 
$$
\psi(X^B\otimes h)
=[(X^B\otimes h_1)\cdot (h_2)^{\langle 1\rangle}](h_2)^{\langle 2\rangle}
=[X^B\otimes 1]\cdot(1\#_\sigma h).
$$
Since $\mathscr{L}^B$ is injective we can identify $[X^B\otimes 1]$ with $X^B\otimes 1$ (while all $b\otimes X^H$ are clearly in the kernel of the quotient, since $\mathscr{L}^{\#_\sigma}_{b\otimes X^H}(b'\otimes 1)=0$ for all $b'\in B$). Thus $\psi$ is surjective and $B\otimes\mathfrak{X}_H$ is precisely the kernel of $\psi$. We summarize the previous discussion.
\begin{proposition}
	Let $B\#_\sigma H$ be a crossed product algebra, $\mathfrak{X}_H$ a bicovariant FOVC on $H$ and $\mathfrak{X}_B$ a FOVC on $B$ such that \eqref{eq:crossed1}, \eqref{eq:crossed2} and \eqref{eq:crossed3} hold. Then $(\mathfrak{X}_{\#_\sigma},\mathscr{L}^{\#_\sigma})$ defined in \eqref{eq:crossed4} and \eqref{eq:crossed5} is a right $H$-covariant FOVC on $A=B\#_\sigma H$. It further defines a QPVC with corresponding short exact sequence \eqref{eq:crossed6}.
\end{proposition}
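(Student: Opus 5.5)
The proof splits into three parts. First, $\mathfrak{X}_{\#_\sigma}$ is a right $H$-covariant $B\#_\sigma H$-bimodule. Second, $\mathscr{L}^{\#_\sigma}$ satisfies the FOVC axioms. Third, the exactness of \eqref{eq:crossed6}, which is essentially the discussion preceding the statement.

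\textbf{Bimodule structure.} I would handle the two summands $\mathfrak{X}_B\otimes H$ and $B\otimes\mathfrak{X}_H$ separately, since the actions preserve the decomposition. On $\mathfrak{X}_B\otimes H$ the formulas mimic the crossed product multiplication with $X^B$ in place of an element of $B$. Associativity of the left action then reduces to the crossed product associativity computation in \cite[Lemma 7.1.2]{mont}, with \eqref{eq:crossed1} replacing the measure property and \eqref{eq:crossed2} replacing the twisted module condition. Commutation of the left and right actions uses the cocycle condition exactly as in associativity of $B\#_\sigma H$.

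On $B\otimes\mathfrak{X}_H$ I would use $\mathfrak{X}_H\cong H\otimes\mathfrak{g}_H$ (Theorem \ref{thm:leftcov=qts}). This identifies $B\otimes\mathfrak{X}_H$ with $(B\#_\sigma H)\otimes\mathfrak{g}_H$, whose actions are those of Proposition \ref{prop:hopf} for the cleft, hence Hopf--Galois, extension $B\subseteq B\#_\sigma H$. This gives the bimodule axioms there for free. Covariance of the bimodule follows because $\Delta_{\#_\sigma}=\mathrm{id}\otimes\Delta$ and $\mathfrak{X}_H$ is bicovariant; $\sigma$ and $\rhd$ produce only coinvariant $B$-factors.

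\textbf{Lie derivative.} On the vertical summand, $\mathscr{L}^{\#_\sigma}$ agrees under the above identification with the vertical Lie derivative $a\otimes\chi\mapsto aa'_0\chi(a'_1)$. Its Leibniz rule, injectivity and right covariance are therefore those of Proposition \ref{prop:hopf}. On the horizontal summand, the Leibniz rule for products $(b'\#_\sigma h')(b''\#_\sigma h'')$ follows from three facts:
\begin{itemize}
\item the Leibniz rule of $\mathscr{L}_{X^B}$ on $B$;
\item the equivariance $h\rhd\mathscr{L}_X(b)=\mathscr{L}_{h_1\rhd X}(h_2\rhd b)$ from \eqref{eq:crossed3};
\item $\mathscr{L}_X\circ\sigma=0$, which kills the derivative hitting the cocycle factor.
\end{itemize}
Right covariance on this summand is immediate, since $\mathscr{L}^{\#_\sigma}$ carries the $H$-leg $h_3h'_2$ through unchanged.

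\textbf{Injectivity.} This is the step I expect to be the main obstacle, because a mixed element $Z=\sum X^B_k\otimes h_k+b\otimes X^H$ must be separated. My approach is as follows.
\begin{enumerate}
\item Evaluate $\mathscr{L}^{\#_\sigma}_Z$ on $B\otimes 1$. The vertical part vanishes there, and the horizontal part gives $\sum\mathscr{L}_{X^B_k}(h_{k,1}\rhd b')\sigma(h_{k,2}\otimes 1)\#h_{k,3}$.
\item Choose the $h_k$ linearly independent and apply $\mathrm{id}\otimes\varepsilon$-type projections after the right action by $j^{-1}$; equivalently, use $\psi$ and the computation $\psi(X^B\otimes h)=[X^B\otimes 1](1\#_\sigma h)$.
\item From this, recover each $\mathscr{L}_{X^B_k}=0$, so $X^B_k=0$ by injectivity on $B$.
\item The remaining vertical part vanishes by Proposition \ref{prop:hopf}.
\end{enumerate}

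\textbf{Exactness.} With the FOVC established, the maps $\phi$ and $\psi$ of Lemma \ref{lem:sequence} are the inclusion and projection of the direct sum. The identification of $\mathfrak{X}_B$ with the base vector fields uses injectivity of $\mathscr{L}^B$ together with the fact that vertical fields vanish on $B\otimes 1$. Exactness of \eqref{eq:crossed6} then follows as sketched before the statement.
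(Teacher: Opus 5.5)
\paragraph{Comparison with the paper.} You do more than the paper. The paper only asserts that $(\mathfrak{X}_{\#_\sigma},\mathscr{L}^{\#_\sigma})$ is a right $H$-covariant FOVC, and then argues the exactness of \eqref{eq:crossed6}; your last step reproduces that argument. Your reduction of $B\otimes\mathfrak{X}_H$ to Proposition \ref{prop:hopf} is correct. Under $b\otimes(h\otimes\chi_i)\mapsto(b\#_\sigma h)\otimes\chi_i$, the given actions and \eqref{eq:crossed5} become exactly those of the vertical calculus. On $\mathfrak{X}_B\otimes H$, the bimodule and covariance checks do go through as crossed-product associativity computations.

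\paragraph{The gap: injectivity, step 2.} After evaluating on $B\otimes 1$ you have $\sum_k\mathscr{L}_{X_k}(h_{k,1}\rhd b')\#_\sigma h_{k,2}=0$ for all $b'$. Here the argument of $\mathscr{L}_{X_k}$ is entangled with the $H$-leg.
\begin{itemize}
\item Projecting the $H$-factor (with $\varepsilon$, or onto a basis containing the $h_k$) only gives mixed relations such as $\sum_k\mathscr{L}_{X_k}(h_k\rhd b')=0$.
\item Substituting $S(h_{k,2})\rhd b'$ does not undo $h_{k,1}\rhd$, because $\rhd$ is only a twisted action: in $h_1\rhd(S(h_2)\rhd b)$ the coproduct legs do not contract.
\item ``Use $\psi$'' does not help as stated. $\psi(Z)=[Z_0\cdot j^{-1}(Z_1)]\,j(Z_2)$ depends on $Z$ through its coaction, not only through $\mathscr{L}_Z$, so you would first need $\psi(\ker\mathscr{L}^{\#_\sigma})=0$.
\end{itemize}
The missing idea is to use right covariance, which you establish before injectivity, so there is no circularity.
\begin{enumerate}
\item Colinearity of $\widetilde{\mathscr{L}}$ gives $\mathscr{L}_{Z_0}(a)\otimes Z_1=\mathscr{L}_Z(a_0)_0\otimes\mathscr{L}_Z(a_0)_1S(a_1)$. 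Hence $\mathscr{L}_Z=0$ implies $\mathscr{L}_{Z_0}\otimes Z_1=0$ in $\mathrm{End}_\Bbbk(A)\otimes H$.
\item By the Leibniz rule, $\mathscr{L}_{Z_0\cdot j^{-1}(Z_1)}\otimes Z_2=0$.
\item Since $X\otimes h=(X\otimes 1)\cdot j(h)$, the horizontal part of $Z_0\cdot j^{-1}(Z_1)\otimes Z_2$ is $\sum_k(X_k\otimes 1)\otimes h_k$. Its vertical part still vanishes on $B\otimes 1$.
\item Evaluating on $B\otimes 1$ gives $\sum_k\mathscr{L}_{X_k}\otimes h_k=0$. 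Injectivity of $\mathscr{L}^B$ then gives $\sum_kX_k\otimes h_k=0$, and Proposition \ref{prop:hopf} kills the remaining vertical part.
\end{enumerate}

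\paragraph{The same gap in the exactness step.} Injectivity of $\mathscr{L}^B$ only shows that $X\mapsto[X\otimes 1]$ is injective. It does not show that every $Z$ with $\mathscr{L}^{\#_\sigma}_Z(B)\subseteq B$ has horizontal part in $\mathfrak{X}_B\otimes 1$, which you need for the base vector fields to really be $\mathfrak{X}_B$ and for $\psi$ to be onto. The same trick settles it. For such a horizontal $Z$ one has $\mathscr{L}_Z(b\#_\sigma g)=\mathscr{L}_Z(b)\#_\sigma g$. Hence $\mathscr{L}_{Z_0}\otimes Z_1=\mathscr{L}_Z\otimes 1$, so $\mathscr{L}_{Z_0\cdot j^{-1}(Z_1)}\otimes Z_2=\mathscr{L}_Z\otimes 1$. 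Evaluating this on $B$ forces $Z\in\mathfrak{X}_B\otimes 1$. The paper glosses over this point too.

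\paragraph{Leibniz rule and left linearity.} In the Leibniz rule on $\mathfrak{X}_B\otimes H$ only the right action enters, so the equivariance in \eqref{eq:crossed3} plays no role there. What you actually need is:
\begin{itemize}
\item the twisted module condition and the cocycle identity, to rewrite $h\rhd(h'\rhd b'')$ and $h\rhd\sigma(h'\otimes h'')$ as products of values of $\sigma^{\pm1}$ and $hh'\rhd b''$;
\item $\mathscr{L}_Y\circ\sigma^{\pm1}=0$ for every $Y\in\mathfrak{X}_B$. The $\sigma^{-1}$ case follows from the Leibniz rule applied to $\sigma^{-1}(h_1\otimes k_1)\sigma(h_2\otimes k_2)=\varepsilon(h)\varepsilon(k)1$.
\end{itemize}
The equivariance is what you need for left $A$-linearity, $\mathscr{L}^{\#_\sigma}_{a\cdot Z}=a\,\mathscr{L}^{\#_\sigma}_Z$, on this summand. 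That is a FOVC axiom your plan never checks.
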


\section{The sheaf approach to first order vector field calculi}\label{sec:sheaf}

Generalizing the affine treatment we saw in the previous sections,
in this section we introduce a sheaf-theoretic approach to FOVCi (Section \ref{sec:sheafFOVC}), vertical vector fields (Section \ref{sec:sheafVertical}) and then the analogues of base vector fields and of
the Atiyah sequence in this setting (Section \ref{sec:sheafAtiyah}). These are accompanied by examples built on the ringed space $\mathbb{P}^1(\mathbb{C})$ and the quantum ringed spaces obtained by localizations of $\mathcal{O}_q(\mathrm{SL}_2(\mathbb{C}))$ and $\mathcal{O}_q(\mathrm{GL}_2(\mathbb{C}))$, respectively. 

We work over a {\it quantum ringed space} $(M,\cO_M)$, that is a topological space $M$ with a sheaf $\cO_M$ of noncommutative algebras on it. Restriction maps are algebra morphisms denoted by $r_{V,U}\colon\mathcal{O}_M(U)\to\mathcal{O}_M(V)$ for open sets $V\subseteq U\subseteq M$.
This follows the philosophy expressed in the classical setting in \cite[3, Section 16 (page 28)]{egaIV}. 

Recall that, given a quantum ringed space $(M,\mathcal{O}_M)$ and a sheaf $\mathfrak{X}$ of $\Bbbk$-vector spaces, one constructs the sheaf $\mathrm{Hom}(\mathfrak{X},\mathcal{O}_M)$ with 
$$
\mathrm{Hom}(\mathfrak{X},\mathcal{O}_M)(U)=\mathrm{Hom}(\mathfrak{X}|_U,\mathcal{O}_M|_U).
$$ 
Note that $\phi_U\in\mathrm{Hom}(\mathfrak{X}|_U,\mathcal{O}_M|_U)$ denotes a family of $\Bbbk$-linear maps $\phi_V\colon\mathfrak{X}(V)\to\mathcal{O}_M(V)$ for all $V\subseteq U$ such that $r_{V,W}\circ\phi_W=\phi_V\circ r'_{V,W}$ for all $V\subseteq W\subseteq U$.
The restriction maps of $\mathrm{Hom}(\mathfrak{X},\mathcal{O}_M)$ for $V\subseteq U$, $R_{V,U}\colon\mathrm{Hom}(\mathfrak{X},\mathcal{O}_M)(U)\to\mathrm{Hom}(\mathfrak{X},\mathcal{O}_M)(V)$, are given by
$$
R_{V,U}(\phi_U):=\phi_V\in\mathrm{Hom}(\mathfrak{X}|_V,\mathcal{O}_M|_V).
$$
If $\mathfrak{X}$ is even a sheaf of $\mathcal{O}_M$-bimodules, then $\mathrm{Hom}_{\mathcal{O}_M}(\mathfrak{X},\mathcal{O}_M)$ is the sheaf with
$$
\mathrm{Hom}_{\mathcal{O}_M}(\mathfrak{X},\mathcal{O}_M)(U)=\{\phi_U\in\mathrm{Hom}(\mathfrak{X},\mathcal{O}_M)(U)~|~\phi_V\colon\mathfrak{X}(V)\to\mathcal{O}_M(V)\text{ is right $\mathcal{O}_M(V)$-linear }\forall V\subseteq U\}
$$
and with restriction morphisms induced from $\mathrm{Hom}(\mathfrak{X},\mathcal{O}_M)(U)$.

Given any sheaf $\mathcal{F}$ we denote its stalk at $p\in M$ by $\mathcal{F}_p$ and given any sheaf morphism $\varphi\colon\mathcal{F}\to\mathcal{G}$ we denote the induced morphism on stalks by $\varphi_p\colon\mathcal{F}_p\to\mathcal{G}_p$.

In \cite{aflw} the first order differential calculus approach is adapted to the sheaf-theoretic setup. Let us briefly recall from \cite[Definition 4.5]{aflw} that a \textit{first order differential calculus} (FODC) on a quantum ringed space $(M,\mathcal{O}_M)$ is an $\mathcal{O}_M$-bimodule sheaf $\Upsilon$, together with a morphism of sheaves $\mathrm{d}\colon\mathcal{O}_M\to\Upsilon$ such that the Leibniz rule
$$
\mathrm{d}_p(fg)=\mathrm{d}_p(f)g+f\mathrm{d}_p(g)
$$
and surjectivity condition $\Gamma_p=\mathcal{O}_{M,p}\mathrm{d}_p\mathcal{O}_{M,p}$ are satisfied on stalks $f,g\in\mathcal{O}_{M,p}$ for all $p\in M$. If $\mathcal{O}_M$ is even a sheaf of right $H$-comodule algebras for a Hopf algebra $H$, we call a FODC $(\Upsilon,\mathrm{d})$ on $\mathcal{O}_M$ \textit{right $H$-covariant} if $\Upsilon$ is a sheaf of right $H$-covariant $\mathcal{O}_M$-bimodules and $\mathrm{d}$ is a morphism of sheaves of right $H$-comodules.

It is the goal of the following sections to develop the corresponding notions of (covariant) first order vector field calculi.

\subsection{The sheaf of FOVC on quantum ringed spaces}\label{sec:sheafFOVC}

Fix a quantum ringed space $(M,\mathcal{O}_M)$. In the following we give the sheaf-theoretic analogue of a first order vector field calculus, as in Definition \ref{def:FOVC}. We adopt the same notation as in the affine setting.

\begin{definition}\label{sheaf-vf}
	We say that $(\Chi,\mathscr{L})$ is a 
	\textit{first order vector field calculus} (FOVC)
	on  a quantum ringed space $(M,\cO_M)$,
	if
	\begin{enumerate}
		\item[i.)] $\Chi$ is an $\cO_M$-bimodule sheaf on $M$.
		\item[ii.)] $\mathscr{L}:\Chi \lra \mathrm{Hom}(\cO_M,\cO_M)$
		is a morphism of sheaves 
		$$
		\mathscr{L}^U:\Chi(U) \lra \mathrm{Hom}(\cO_M|_U,\cO_M|_U), \qquad
		\mathscr{L}^U_X:=\mathscr{L}^U(X)
		$$
		such that the Leibniz rule
		\begin{equation}
			\mathscr{L}^p_X(fg)=\mathscr{L}^p_X(f)g+\mathscr{L}^p_{X\cdot f}(g),
		\end{equation}
		holds on stalks $f,g\in \cO_{M,p}$, $X \in \Chi_p$ for all $p\in M$.
		\item[iii.)] $\mathscr{L}$ is injective and left $\mathcal{O}_M$-linear, i.e., $\mathscr{L}^U_{f\cdot X}(g)=f\mathscr{L}^U_X(g)$ for all $f,g\in\mathcal{O}_M(U)$ and $X\in\mathfrak{X}(U)$ for all opens $U\subseteq M$.
	\end{enumerate}
\end{definition}
We now focus on a special, but important case (see \cite{afl, aflw}).
We say that an open covering $\{U_i\}$ of $M$ is \textit{affine} if $\cO_M(U_i)$
is a deformation of the coordinate ring of an affine variety $U_i \subset \C^n$.
Assume we have a {\sl finite} affine
open covering $\{U_i\}$ of $M$ and consider the topology on $M$
with base $\left\{\cap_{i \in I} U_i\right\}$, where 
$I=(i_1, \dots, i_r)$ are multi-indices \cite{aflw}.
From now on we assume to be in this setting.

The duality between FOVCi and FODCi, as expressed in the affine setting
in Proposition \ref{prop:duality}, can be generalized to the sheaf-theoretic setting
in analogy with \cite[3, Section 16, Proposition 16.5.3]{egaIV}.

\begin{proposition}
Let $(M,\cO_M)$ be a quantum ringed space 
with topology induced by an affine finite open cover as above.
\begin{enumerate}
	\item[i.)] Given a FODC $(\Upsilon,\mathrm{d})$ on $\mathcal{O}_M$ there is a FOVC $(\mathfrak{X},\mathscr{L})$
	on $\cO_M$ defined by $\mathfrak{X}:=\mathrm{Hom}_{\cO_M}(\Upsilon,\cO_M)$
	with Lie derivative $\mathscr{L}\colon\mathfrak{X} \lra \mathrm{Hom}(\cO_M,\cO_M)$,
	given by $\mathscr{L}_X= X \circ\mathrm{d}$.
	
	\item[ii.)] Given a FOVC $(\mathfrak{X},\mathscr{L})$ on $\mathcal{O}_M$ there is a FODC
	$(\Upsilon,\mathrm{d})$ on $\mathcal{O}_M$ defined by
	$\Upsilon:={}_{\cO_M}\mathrm{Hom}(\mathfrak{X},\cO_M)$ with differential $\mathrm{d}\colon \cO_M\to\Upsilon$
	determined by $X \circ \mathrm{d}=\mathscr{L}_X$ for all $X\in\mathfrak{X}$.
\end{enumerate}
\end{proposition}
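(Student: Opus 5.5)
The plan is to reduce everything to the affine statement, Proposition \ref{prop:duality}, applied on the basic open sets $U_I=\cap_{i\in I}U_i$. We then glue using the sheaf property and check the Leibniz and surjectivity/injectivity conditions on stalks. Since the topology has the finite base $\{U_I\}$, a sheaf (and a morphism of sheaves) is determined by its values on the $U_I$ together with compatible restriction maps. Our first step is therefore to describe $\mathfrak{X}=\mathrm{Hom}_{\cO_M}(\Upsilon,\cO_M)$ and $\Upsilon={}_{\cO_M}\mathrm{Hom}(\mathfrak{X},\cO_M)$ as $\cO_M$-bimodule sheaves. The bimodule structure is defined sectionwise by \eqref{A-bimod} and its left-linear analogue; restriction commutes with these actions because the $r_{V,U}$ are algebra maps and the components $\phi_V$ are compatible. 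The evaluation pairing is thus a morphism of sheaves, and it is $\cO_M$-bilinear on every open set and on every stalk.

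For i.) we set $\mathscr{L}^U_X:=X\circ\mathrm{d}|_U$, i.e.\ the family $(X_V\circ\mathrm{d}_V)_{V\subseteq U}$. This family is compatible with restrictions since both $X$ and $\mathrm{d}$ are sheaf morphisms, so $\mathscr{L}$ is a morphism of sheaves into $\mathrm{Hom}(\cO_M,\cO_M)$. Left $\cO_M$-linearity holds by definition of the left action. The Leibniz rule on stalks is the computation in Proposition \ref{prop:duality} i.), carried out in $\Upsilon_p$ using the stalk Leibniz rule of $\mathrm{d}_p$ and the bilinearity of the stalk pairing. For injectivity, we take $X\in\mathfrak{X}(U)$ with $\mathscr{L}^U_X=0$, so that $X_V\circ\mathrm{d}_V=0$ for all $V\subseteq U$. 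Passing to stalks, every $\omega\in\Upsilon_p$ can be written as $\sum a^i\mathrm{d}_p b^i$, and hence $X_p(\omega)=\mathscr{L}_X(a^ib^i)-\mathscr{L}_X(a^i)b^i=0$. Thus $X$ vanishes on all germs. A section of $\Upsilon$ over $V$ then has image in $\cO_M(V)$ whose germs all vanish, so by the sheaf (identity) axiom for $\cO_M$ the image is zero, and $X=0$.

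For ii.) we define $\mathrm{d}$ on each open $U$ by letting $\mathrm{d}_U(f)\in{}_{\cO_M}\mathrm{Hom}(\mathfrak{X},\cO_M)(U)$ be the family $X\mapsto\mathscr{L}^V_X(r_{V,U}f)$ for $X\in\mathfrak{X}(V)$, $V\subseteq U$. Left linearity in $X$ is axiom iii.), and compatibility with restrictions follows because $\mathscr{L}$ is a sheaf morphism. The Leibniz rule on stalks follows from the stalk Leibniz rule of $\mathscr{L}$ exactly as in Proposition \ref{prop:duality} ii.), using non-degeneracy of the pairing at the level of germs.

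The main obstacle is the surjectivity condition $\Upsilon_p=\cO_{M,p}\mathrm{d}_p\cO_{M,p}$. In the affine proof one uses a separation argument: an $\omega\notin\mathrm{d}(A)A$ is detected by some $X$ annihilating $\mathrm{d}(A)A$. That argument quietly needs $\Gamma$ to be the full dual and some reflexivity, and on stalks we additionally need $({}_{\cO_M}\mathrm{Hom}(\mathfrak{X},\cO_M))_p\cong{}_{\cO_{M,p}}\mathrm{Hom}(\mathfrak{X}_p,\cO_{M,p})$. We will obtain this stalk identification from the finiteness of the base: $p$ has a smallest basic neighbourhood $U_I$, namely the intersection of all $U_i$ containing $p$. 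Stalks are therefore just sections over $U_I$, so both the stalk of $\Upsilon$ and the stalk of the Hom sheaf are computed on the affine piece $U_I$. On that piece $(\mathfrak{X}(U_I),\mathscr{L}^{U_I})$ is an affine FOVC, and the affine Proposition \ref{prop:duality} ii.) gives surjectivity there verbatim. This is the same device that makes the stalk arguments in i.) cleanly reducible to the affine case.
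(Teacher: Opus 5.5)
Your strategy matches the paper's: reduce to Proposition \ref{prop:duality} on the smallest basic open $U_I\ni p$, where stalks are sections. Your part i.) is sound, and more careful than the paper's, since injectivity (via stalk maps and the identity axiom) and the stalk Leibniz rule never need a description of $\mathfrak{X}_p$.

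The gap is the surjectivity step in ii.). Finiteness of the base only gives $\Upsilon_p=\Upsilon(U_I)={}_{\cO_M}\mathrm{Hom}(\mathfrak{X}|_{U_I},\cO_M|_{U_I})$, whose elements are compatible families $(\omega_V)_{V\subseteq U_I}$. It does not identify this with ${}_{\cO_M(U_I)}\mathrm{Hom}(\mathfrak{X}(U_I),\cO_M(U_I))$. The map $\omega\mapsto\omega_{U_I}$ is in general neither injective nor surjective: components over smaller basic opens $U_J\subsetneq U_I$ are neither determined by, nor extendable from, the top one unless sections over $U_J$ are suitable (Ore) localizations of those over $U_I$. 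Hence an affine identity $\omega_{U_I}=\sum_i a_i\langle\cdot,\mathrm{d}b_i\rangle$ does not yield $\omega=\sum_i a_i\,\mathrm{d}_{U_I}(b_i)$ in $\Upsilon(U_I)$.

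Likewise, treating $(\mathfrak{X}(U_I),\mathscr{L}^{U_I})$ as an affine FOVC needs injectivity of $X\mapsto(\mathscr{L}^{U_I}_X)_{U_I}\in\mathrm{End}_\Bbbk(\cO_M(U_I))$. Injectivity of the sheaf morphism $\mathscr{L}$ does not give this; the paper assumes it tacitly too. Your ``non-degeneracy at the level of germs'' in the Leibniz step of ii.) has the same defect, but there it is harmless: check the identity componentwise on every $V\subseteq U_I$ using the stalk Leibniz rule and the identity axiom.

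The paper sidesteps the identification of the dual by defining it as the sheafification of $U\mapsto{}_{\cO_M(U)}\mathrm{Hom}(\mathfrak{X}(U),\cO_M(U))$. Its stalk at $p$ is the affine dual over $U_I$ by construction, although that presheaf also only has restriction maps under a localization property. You need this device or an explicit hypothesis of that kind.

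Your suspicion about the affine separation argument is also justified, and passing to $U_I$ cannot cure it: Proposition \ref{prop:duality} ii.) fails without further hypotheses.
\begin{itemize}
\item Let $A=\Bbbk[x]$ and $\mathfrak{X}=x\Bbbk[x]\partial_x\subseteq\mathrm{Der}(A)$. This is a sub-bimodule of the FOVC of Example \ref{ex:FOVC} i.), hence a FOVC, and it is free as a left module on $e=x\partial_x$.
\item Then ${}_A\mathrm{Hom}(\mathfrak{X},A)\cong A$ via $\omega\mapsto\omega(e)$, and the bimodule structure becomes multiplication.
\item Under this isomorphism $\mathrm{d}g\mapsto\mathscr{L}_e(g)=xg'$, so $A\,\mathrm{d}A$ corresponds to the proper ideal $(x)$ and surjectivity fails.
\end{itemize}
Taking $M$ to be a point, this is also a counterexample to ii.) of the statement. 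So citing the affine result ``verbatim'' is not enough, and your argument shares this defect with the paper's proof. Surjectivity in ii.) needs an extra assumption. One sufficient choice: locally $(\mathfrak{X},\mathscr{L})$ is the dual of a FODC $\Gamma$ that is finitely generated projective as a right module, so that reflexivity identifies ${}^*\mathfrak{X}$ with $\Gamma$. The example shows that $\mathfrak{X}$ itself being finitely generated projective does not suffice.
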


\begin{proof}
For $i.)$, fix a point $p\in M$ and consider the maximal intersection $U_I\ni p$ with corresponding multi-index $I=(i_1,\ldots,i_r)$. As argued in \cite[Section 4.1]{aflw}, it follows that the stalk $\mathcal{F}_p$ of a sheaf $\mathcal{F}$ at $p$ coincides with $\mathcal{F}(U_I)$ since the considered topology is finite and the intersection $U_I$ is maximal.
Let $(\Upsilon,\mathrm{d})$ be a FODC on $\mathcal{O}_M$. 
Define $\mathfrak{X}$ as the sheafification of the presheaf 
$\mathfrak{X}(U):=\mathrm{Hom}_{\mathcal{O}_M(U)}(\Upsilon(U),\mathcal{O}_M(U))$ with $\mathscr{L}^{U}\colon\mathfrak{X}(U)\to\mathrm{End}_\Bbbk(\mathcal{O}_M(U))$ defined by $\mathscr{L}^U_X(f):=\langle X,\mathrm{d}_U(f)\rangle$.
Then by Proposition \ref{prop:duality}, this is a FOVC on the algebra $\mathcal{O}_M(U)$ as
one can readily check the Leibniz rule on the stalks $U_I$, inherited by $\mathrm{d}_{U_I}$.

The proof of $ii.)$ follows similarly.
\end{proof}

We now give an example of FOVC in the sheaf-theoretic setting for the case of the quantum ringed space $(\bP^1, \cO_{\bP^1})$.

\begin{example}
Let us consider the quantum ringed space $\bP^1= (\bP^1(\C),\cO_{\bP^1})$
($\bP^1(\C)$ denoting the topological space,
for the notation see also \cite[Section 2]{afl}).
Consider the usual affine open cover
$\{U_1,U_2\}$ of $\bP^1(\C)$  
with algebras of functions
$$
\cO_{\bP^1}(U_1):= 
\C[u], \qquad \cO_{\bP^1}(U_2):= 
\C[v]~, \qquad \cO_{\bP^1}(U_1 \cap U_2):=\C[u,u^{-1}]
$$
and restriction maps:
$$
\cO_{\bP^1}(U_1) \lra \cO_{\bP^1}(U_1 \cap U_2), \quad u \mapsto u, \qquad
\cO_{\bP^1}(U_2) \lra \cO_{\bP^1}(U_1 \cap U_2), \quad v\mapsto  u^{-1}. \qquad
$$
We now define the $\cO_{\bP^1}$-bimodule sheaf of vector fields $\Chi_{\bP^1}$ on $\bP^1$
as follows:
\beq
\begin{array}{rl}
	\Chi_{\bP^1}(U_1)&=\mathrm{span}_\C\{p(u)\partial_u, \, p(u) \in \cO_{\bP^1}(U_1)\}, \\ \\
	\quad \Chi_{\bP^1}(U_2)&=\mathrm{span}_\C\{p(v)\partial_v, \, p(v) \in \cO_{\bP^1}(U_2)\}, \\ \\
	\quad \Chi_{\bP^1}(U_1 \cap U_2)&=\mathrm{span}_\C\{p(u,u^{-1})\partial_u, \, p(u,u^{-1}) \in \cO_{\bP^1}(U_1 \cap U_2)\}
\end{array}
\eeq
with restriction maps:
$$
\begin{array}{ccc}
	\Chi_{\bP^1}(U_1) & \lra &\Chi_{\bP^1}(U_1 \cap U_2)\\
	p(u)\partial_u & \mapsto &\quad p(u)\partial_u\\
\end{array},
\qquad
\begin{array}{ccc}
	\Chi_{\bP^1}(U_2)  & \lra  & \Chi_{\bP^1} (U_1 \cap U_2), \\
	p(v)\partial_v  & \mapsto  &- u^2p(u^{-1})\partial_u.
\end{array}
$$
As for its bimodule structure, we have the left multiplication by $\cO_{\bP^1}(U_i)$ ($i=1,2$)
elements defined as usual:
$$
u \cdot \partial_u = u\partial_u, \qquad v \cdot \partial_v = v\partial_v.
$$
For the right multiplication, 
we define for a complex number $q\neq 0$ and not a root of unity
$$
\partial_u \cdot u = q^{-2} u \partial_u, \qquad \partial_v \cdot v = q^{2} v \partial_v.
$$
Note that this FOVC is dual to the FODC in \cite[Proposition 4.19]{aflw}.

As one can readily check all properties are verified. We notice that
the sheaf of quantum vector fields on $\bP^1(\C)$ is a quantum version of $\cO(2)$
(Serre's twisting sheaf \cite[Chapter 2]{ha}).
\end{example}

\subsection{FOVC on comodule algebras}\label{sec:sheafVertical}

We now define FOVCi on sheaves of comodule algebras. This is done by generalizing the affine model of Definition \ref{def:covFOVC} .

\begin{definition}
Let $H$ be a Hopf algebra and consider a sheaf of right $H$-comodule algebras $\mathcal{F}$ on $M$. 
We call a FOVC $(\mathfrak{X},\mathscr{L})$ on the quantum ringed space of $(M,\mathcal{F})$ \textit{right $H$-covariant} if $\mathfrak{X}$ is a sheaf of right $H$-covariant $\mathcal{F}$-bimodules and the induced sheaf morphism
\begin{equation}
	\begin{split}
		\widetilde{\mathscr{L}}\colon\mathfrak{X}\otimes\mathcal{F}\to\mathcal{F},\qquad\qquad\widetilde{\mathscr{L}}(U)\colon(\mathfrak{X}\otimes\mathcal{F})(U)&\to\mathcal{F}(U)\\
		X\otimes f&\mapsto\mathscr{L}^U_X(f)
	\end{split}
\end{equation}
is a morphism of sheaves of right $H$-comodules.
\end{definition}

The first class of examples of right $H$-covariant FOVCi we consider is constructed as vertical vector fields with respect to a fixed bicovariant quantum tangent space on $H$ and a sheaf $\mathcal{F}$ of right $H$-comodule algebras which are principal comodule algebras (faithfully flat Hopf--Galois extensions) on an open cover of the topology. The latter is also called \textit{quantum principal bundle} in \cite{afl} and we briefly recap the corresponding definition.
\begin{definition}\label{hg-shf} 
Let $(M,\cO_M)$ be a quantum ringed space and $H$ a Hopf algebra. 
We say that $\cF$ 
is a \textit{quantum principal bundle} over $(M,\cO_M)$
if:
\begin{itemize}
	\item $\cF$ is a sheaf of $H$ comodule algebras;
	\item There exists an open covering 
	$\{U_i\}$ of $M$ such that:
	\begin{enumerate}
		\item $\cF(U_i)^{\mathrm{co} H}=\cO_M(U_i)$,
		\item 
		$\cF (U_i )$ is a Hopf-Galois 
		extension of 
		$\cF(U_i)^{\mathrm{co} H}$.
	\end{enumerate}
\end{itemize}
\end{definition}

Let $\mathcal{F}$ be a sheaf of right $H$-comodule algebras with respect to the topology induced by a finite open covering $\{U_i\}$. As usual, we denote by $U_I=\cap_{i\in I}U_i$ the intersection for a multi-index $I=(i_1,\ldots,i_r)$. For a given bicovariant quantum tangent space $\mathfrak{g}\subseteq (H^\circ)^+$ (recall Definition \ref{def:bicovQTS}) we define a sheaf of right $H$-covariant $\mathcal{F}$-bimodules $\mathfrak{X}$ and a morphism of sheaves $\mathscr{L}\colon\mathfrak{X}\to\mathrm{Hom}(\mathcal{F},\mathcal{F})$ by
\begin{equation}\label{eq:sheafverticalVF}
\Chi(U_I):=\cF(U_I) \otimes \fg, \qquad \mathscr{L}^{U_I}:\Chi(U_I) \to  \mathrm{End}_\Bbbk(\cF(U_I)),
\quad \mathscr{L}^{U_I}_{f \otimes X}(g):=fg_0X(g_1), 
\end{equation}
where $f,g \in \cF(U_I)$ and $X \in \fg$.
\begin{proposition}\label{prop:sheafqpb}
Given a Hopf algebra $H$, a bicovariant quantum tangent space $\mathfrak{g}\subseteq (H^\circ)^+$ and
a quantum principal bundle $\mathcal{F}$ over a quantum ringed space $(M,\cO_M)$
with respect to the topology induced from a finite open cover as above, the tuple $(\mathfrak{X},\mathscr{L})$ determined by \eqref{eq:sheafverticalVF} is a right $H$-covariant FOVC on $\mathcal{F}$. Explicitly, the right $H$-coaction and $\mathcal{F}$-bimodule structure are defined locally by
\begin{equation}
	\begin{split}
		\Delta_{\mathfrak{X}(U_I)}\colon\mathfrak{X}(U_I)&\to\mathfrak{X}(U_I)\otimes H,\\
		f\otimes X&\mapsto f_0\otimes X_0\otimes f_1X_1,
	\end{split}\qquad\qquad f\cdot(g\otimes X)\cdot k:=fgk_0\otimes X\leftharpoonup k_1.
\end{equation}
for all $f,g,k\in\mathcal{F}(U_I)$ and $X\in\mathfrak{g}$.
\end{proposition}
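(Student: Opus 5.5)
The strategy is to reduce everything to the affine statement, Proposition \ref{prop:hopf}, applied on each basic open $U_I$. Afterwards we check that the local data glue to sheaf data. In the finite topology generated by the $U_I$, the stalk at $p$ coincides with the sections over the maximal intersection $U_I\ni p$ (as recalled from \cite[Section 4.1]{aflw}). Hence the Leibniz rule, injectivity and left linearity on stalks are the corresponding statements for the algebras $\mathcal{F}(U_I)$.

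First, I would verify that $U_I\mapsto\mathcal{F}(U_I)\otimes\fg$, with restriction maps $r_{U_J,U_I}\otimes\mathrm{id}_\fg$, defines a sheaf of right $H$-covariant $\mathcal{F}$-bimodules. Since $\fg$ is finite-dimensional, $\mathfrak{X}\cong\mathcal{F}^{\oplus\dim\fg}$ as sheaves of vector spaces, so the sheaf axioms are inherited from $\mathcal{F}$. The bimodule structure is $f\cdot(g\otimes\chi_i)\cdot k=fgk_0f_i^j(k_1)\otimes\chi_j$, and the coaction is $g\otimes\chi_i\mapsto g_0\otimes\chi_j\otimes g_1N_i^j$. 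Both are compatible with restrictions, because the $r_{V,U}$ are morphisms of $H$-comodule algebras. The module axioms and the covariance are exactly the local computations already carried out in Proposition \ref{prop:hopf} and Theorem \ref{thm:bicovVF}; they use only $\Delta(f_i^j)=f_i^k\otimes f_k^j$ and the Yetter--Drinfel'd condition \eqref{eq:N3}. Similarly, $\mathscr{L}^{U_I}$ commutes with restrictions, since $r(g_0)X(g_1)=r(g)_0X(r(g)_1)$. Therefore $\mathscr{L}$ is a morphism of sheaves into $\mathrm{Hom}(\mathcal{F},\mathcal{F})$, and $\widetilde{\mathscr{L}}$ is $H$-colinear, exactly as in the affine case.

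Second, the Leibniz rule and left linearity hold on every $\mathcal{F}(U_I)$ by the first part of Proposition \ref{prop:hopf}, which needs no Galois hypothesis. Hence they hold on stalks.

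The main obstacle is injectivity of $\mathscr{L}$. Proposition \ref{prop:hopf} gives it only when $\mathcal{F}(U_I)^{\mathrm{co}H}\subseteq\mathcal{F}(U_I)$ is Hopf--Galois, whereas Definition \ref{hg-shf} assumes this only on the covering sets $U_i$, not on the intersections. I would first try to transfer the Galois property to $U_I$ by localization: $\mathcal{F}(U_I)$ should be a localization of $\mathcal{F}(U_i)$ at coinvariant (Ore) elements, and then the translation map of $\mathcal{F}(U_i)$ pushed forward along $r_{U_I,U_i}$ still inverts the canonical map. If this is unavailable, the fallback is to inspect the injectivity proof of Proposition \ref{prop:hopf}: it uses only an element $\tau(h)\in A\otimes_BA$ satisfying \eqref{transl1} and \eqref{transl4}. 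Pushing $\tau_{U_i}(h)$ forward via $r\otimes r$ into $\mathcal{F}(U_I)\otimes_{\mathcal{F}(U_I)^{\mathrm{co}H}}\mathcal{F}(U_I)$ preserves both identities. The same computation then yields $a^j=0$, which gives injectivity on every $U_I$, hence on stalks and on the sheaf.
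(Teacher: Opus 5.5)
Your proposal is correct and follows the paper's route. The paper's proof simply applies Proposition~\ref{prop:hopf} on each basic open $U_I$ and then passes to all opens by sheafification, which is what your first two steps spell out in more detail. Your handling of injectivity is more careful than the paper's one-line proof: Definition~\ref{hg-shf} gives the Hopf--Galois property only on the covering sets $U_i$, and the paper silently uses it on the intersections $U_I$. Pushing $\tau_{U_i}(h)$ forward along the comodule-algebra restriction $r_{U_I,U_i}$ preserves \eqref{transl1} and \eqref{transl4}, and these are all the injectivity argument of Proposition~\ref{prop:hopf} needs, so your fallback closes this point.
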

\begin{proof}
The proof goes for the base $U_I$ for the open sets as in Proposition \ref{prop:hopf}.
By sheafification
we get the natural transformations defined for all open sets, thus proving the proposition.
\end{proof}

We call the pair $(\Chi,\mathscr{L})$ as in Proposition \ref{prop:sheafqpb} a \textit{vertical FOVC} on the quantum principal bundle $\cF$.
We now give an example extending Example \ref{ex-loc-p}.

\begin{example}\label{ex:vert-sheaf}
Consider the algebra $A=\cO_q(\rSL_2)$ with respect to $H=\cO_q(P)$, as in Examples \ref{ex:SL2:FOVC}, \ref{ex-loc-p}. 
The ideal $(\gamma)\subset A$ generated by $\gamma$ is a Hopf ideal and thus the quotient map 
is a Hopf algebra surjection. 
Moreover, both $\alpha, \gamma \in A$ are Ore elements such that the Ore localizations $A[\alpha^{-1}]$, $A[\gamma^{-1}]$
become right $H$-comodule algebras with respect to the right $H$-coaction
\begin{align*}
	\begin{pmatrix}
		\alpha & \beta\\
		\gamma & \delta
	\end{pmatrix}&\mapsto\begin{pmatrix}
		\alpha & \beta\\
		\gamma & \delta
	\end{pmatrix}\otimes\begin{pmatrix}
		t & p\\
		0 & t^{-1}
	\end{pmatrix}
	\\
	\alpha^{-1}&\mapsto\alpha^{-1}\otimes t^{-1}\\
	\gamma^{-1}&\mapsto\gamma^{-1}\otimes t^{-1}
\end{align*}
This allows us to define a sheaf of right $H$-comodule algebras:
$$
\cF(V_1):=A[\alpha^{-1}], \qquad \cF(V_2):=A[\gamma^{-1}], \qquad \cF(V_1 \cap V_2):=A[\alpha^{-1}, \gamma^{-1}],
$$
where 
$$
V_1:=\left\{ \begin{pmatrix}
	a & b\\
	c & d
\end{pmatrix} \in M_2(\Bbbk) \, \bigg| \, a \neq 0\right\}, \qquad V_2:=\left\{ \begin{pmatrix}
	a & b\\
	c & d
\end{pmatrix}\in M_2(\Bbbk)\,\bigg| \, c \neq 0\right\}, 
$$
and $M_2(\Bbbk)$ denotes the $2 \times 2$ matrices with coefficients in $\Bbbk$.
Furthermore, $\mathcal{F}$ is a quantum principal bundle, as in Definition \ref{hg-shf}, since $\mathcal{F}(V_1),\mathcal{F}(V_2),\mathcal{F}(V_1\cap V_2)$ are faithfully flat Hopf--Galois extensions (even trivial extensions) according to \cite[Section 4.5.1]{aflw}. 
Define
$\mathfrak{p}:=\mathfrak{g}_H=\mathrm{span}\{\underline{\chi_1},\underline{\chi_3},\underline{\chi_4}\}$,
where $\chi_i$ were defined in (\ref{chis}) and $\underline{\chi_i}$ denotes the induced element on the quotient, as in Proposition \ref{prop:quotientFOVC}. 
Then, by Proposition \ref{prop:sheafqpb},  we have that
$$ 
\Chi(V_I):=\cF(V_I) \otimes \fg, \qquad \mathscr{L}^{V_I}:\Chi(V_I) \to  \mathrm{End}_\Bbbk(\cF(V_I)),
\quad \mathscr{L}^{V_I}_{f \otimes X}(g):=fg_0X(g_1), \qquad V_I \in \{V_1, V_2, V_1 \cap V_2\}
$$
defines a FOVC on the quantum ringed space $(\rSL_2(\Bbbk), \cF)$ (see \ref{eq:sheafverticalVF}).
\end{example}

\subsection{Base FOVC and the Atiyah sequence}\label{sec:sheafAtiyah}

{

We continue by introducing principal first order vector field calculi on sheaves, extending the affine picture that was previously developed in Section \ref{base-vf-sec} and Section \ref{sec:Atiyah}. 
Let $M$ be a topological space.

\begin{definition}
	Let $\mathcal{F}$ be a sheaf of right $H$-comodule algebras on $M$ and $(\mathfrak{X},\mathscr{L})$ a right $H$-covariant FOVC on $\mathcal{F}$. We define the \textit{base vector fields} as the quotient sheaf 
	\begin{equation}\label{eq:basevf}
		\mathfrak{X}_M
		:=\frac{\{X\in\mathfrak{X}~|~\mathscr{L}_X\in\mathrm{Hom}(\mathcal{F}^{\mathrm{co}H},\mathcal{F}^{\mathrm{co}H})\}}{\{X\in\mathfrak{X}~|~\mathscr{L}_X(\mathcal{F}^{\mathrm{co}H})=0\}}.
	\end{equation}
\end{definition}
\begin{lemma}
	The sheaf $\mathfrak{X}_M$ defined in \eqref{eq:basevf} is a FOVC on $\mathcal{F}^{\mathrm{co}H}$ with Lie derivative $\mathscr{L}^M\colon\mathfrak{X}_M\to\mathrm{Hom}(\mathcal{F}^{\mathrm{co}H},\mathcal{F}^{\mathrm{co}H})$ induced from $\mathscr{L}\colon\mathfrak{X}\to\mathrm{Hom}(\mathcal{F},\mathcal{F})$.
\end{lemma}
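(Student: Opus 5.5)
The argument is the sheaf version of Proposition \ref{prop:base}: check everything on the base opens $U_I$, where sections coincide with stalks, and then sheafify.

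\textbf{Step 1: subsheaves and the quotient.} For each open $U$ put
\begin{align*}
\mathfrak{Y}(U)&:=\{X\in\mathfrak{X}(U)~|~\mathscr{L}^U_X\in\mathrm{Hom}(\mathcal{F}^{\mathrm{co}H}|_U,\mathcal{F}^{\mathrm{co}H}|_U)\},\\
\mathfrak{K}(U)&:=\{X\in\mathfrak{X}(U)~|~\mathscr{L}^U_X(\mathcal{F}^{\mathrm{co}H}|_U)=0\}.
\end{align*}
\begin{itemize}
\item $\mathcal{F}^{\mathrm{co}H}$ is a sheaf, since coinvariance is a local condition and the restrictions are comodule maps.
\item Because $\mathscr{L}$ is a morphism of sheaves, both conditions are stable under restriction and can be glued. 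Hence $\mathfrak{Y}$ and $\mathfrak{K}\subseteq\mathfrak{Y}$ are subsheaves of $\mathfrak{X}$.
\item $\mathfrak{X}_M$ is defined as the sheafification of the presheaf quotient $U\mapsto\mathfrak{Y}(U)/\mathfrak{K}(U)$.
\item In the finite topology generated by $\{U_I\}$, the stalk at $p$ equals the section over the maximal $U_I\ni p$, as already used above. Therefore $(\mathfrak{X}_M)_p=\mathfrak{Y}(U_I)/\mathfrak{K}(U_I)$, and all axioms may be checked on these algebras.
\end{itemize}

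\textbf{Step 2: local structure.} Fix $U_I$ and write $A=\mathcal{F}(U_I)$ and $B=A^{\mathrm{co}H}=\mathcal{F}^{\mathrm{co}H}(U_I)$. By Definition \ref{sheaf-vf}, $(\mathfrak{X}(U_I),\mathscr{L}^{U_I})$ satisfies the affine axioms of Definition \ref{def:FOVC} on $A$: the Leibniz rule holds on stalks, and injectivity and left linearity hold sectionwise. Proposition \ref{prop:base} then gives directly that $\mathfrak{Y}(U_I)/\mathfrak{K}(U_I)=\mathfrak{X}_B$ is a FOVC on $B$. Concretely:
\begin{itemize}
\item the $B$-bimodule structure is $b\cdot[X]\cdot b'=[b\cdot X\cdot b']$;
\item the Lie derivative is $\mathscr{L}^M_{[X]}(b)=\mathscr{L}_X(b)$, which is well defined because $\mathfrak{K}$ kills $B$;
\item the Leibniz rule is inherited from $\mathscr{L}$;
\item injectivity holds by construction of the quotient.
\end{itemize}

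\textbf{Step 3: compatibility with restrictions.} The $\mathcal{F}^{\mathrm{co}H}$-bimodule actions and $\mathscr{L}^M$ commute with the restriction maps, because $\mathscr{L}$ does and the restrictions are algebra maps. So they define morphisms of presheaves, and these pass to the sheafification.

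\textbf{Main obstacle: injectivity after sheafification.} On the base opens the sheafification does not change sections, so injectivity there holds by Step 2. For an arbitrary open $U$, a section of $\mathfrak{X}_M$ is a compatible family over the $U_I\subseteq U$. If its Lie derivative vanishes, each local piece vanishes by the stalkwise injectivity, and therefore the section is zero.

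Left $\mathcal{F}^{\mathrm{co}H}$-linearity is checked in the same way, locally on the $U_I$.
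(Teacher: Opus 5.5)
Your proposal is correct and takes the same route as the paper. The paper's proof notes only that the quotient sheaf is well defined and then applies Proposition \ref{prop:base} on stalks; you do exactly this, identifying each stalk with the sections over the maximal $U_I$ in the finite topology, and you spell out the subsheaf, sheafification and injectivity checks that the paper leaves implicit.
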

\begin{proof}
	This sheaf is well-defined as quotient sheaf and we obtain the result by applying Proposition \ref{prop:base} on its stalk values.
\end{proof}

We are ready to define principal FOVC on quantum principal bundles, i.e., covariant FOVC which make the Atiyah sequence exact.

\begin{definition}
	If $H$ is a Hopf algebra and $\mathcal{F}$ is a quantum principal bundle over a quantum ringed space $(\mathcal{O}_M,M)$, we call a right $H$-covariant FOVC $(\mathfrak{X},\mathscr{L})$ on $\mathcal{F}$, together with a bicovariant quantum tangent space $\mathfrak{g}$ on $H$, a \textit{principal first order vector field calculus} (PFOVC for short), if for all $p\in M$ the sequence of stalks
	\begin{equation}
		0\to\mathcal{F}_p\otimes\mathfrak{g}\xrightarrow{\phi_p}\mathfrak{X}_p\xrightarrow{\psi_p}(\mathfrak{X}_M\mathcal{F})_p\to 0
	\end{equation}
	is well-defined and exact.
\end{definition}

In more detail, $\phi_p$ and $\psi_p$ are the maps on stalks as defined in Section \ref{sec:Atiyah}.

We give a quantum version of the sheaf of vector fields on the principal bundle
$\mathrm{GL}_2(\C) \lra \bP^1(\C)$. This example is particularly enlightening for the sheaf-theoretic approach we consider in this paper since it obeys all axioms of a principal first order vector field calculus.

\begin{example} {\bf Principal vector field calculus on $\mathrm{GL}_q(2)$.}

	Given $q\in\mathbb{C}$ not zero and not a root of unity, let us consider $A=\mathcal{O}_q(\mathrm{GL}_2(\mathbb{C}))$, namely the free $\mathbb{C}$-algebra generated by $\alpha,\beta,\gamma,\delta,r$ modulo the following relations
	\begin{align*}
		\beta\alpha
		=q\alpha\beta,\qquad
		\gamma\alpha
		=q\alpha\gamma,\qquad
		\delta\beta
		=q\beta\delta,\qquad
		\delta\gamma
		=q\gamma\delta
	\end{align*}
	\begin{align*}
		\gamma\beta
		=\beta\gamma,\qquad
		\delta\alpha-\alpha\delta
		=(q-q^{-1})\beta\gamma,
	\end{align*}
	where $r$ is defined to be a central element such that 
	$$
	r\mathrm{det}_q=1,\qquad\qquad
	\text{for}\quad\mathrm{det}_q:=\alpha\delta-q^{-1}\beta\gamma\,.
	$$
	The algebra $A$ is a Hopf algebra with the usual matrix comultiplication and counit, with $r$ being group-like, and with antipode
	$$
	S\begin{pmatrix}
		\alpha & \beta\\
		\gamma & \delta
	\end{pmatrix}=r\begin{pmatrix}
		\delta & -q\beta\\
		-q^{-1}\gamma & \alpha
	\end{pmatrix},\qquad
	S(r)=\mathrm{det}_q.
	$$
	The parabolic $H=\mathcal{O}_q(P_\mathrm{GL})=A/\langle\gamma\rangle$ is obtained as the Hopf algebra quotient on generators defined above
	$$
	\pi\colon\begin{pmatrix}
		\alpha & \beta\\
		\gamma & \delta
	\end{pmatrix}\to\begin{pmatrix}
		t & p\\
		0 & s
	\end{pmatrix}.
	$$		
	Given the usual affine open cover
	$\{U_1,U_2\}$ of $\bP^1(\C)$ we consider the rough topology  $(\bP^1(\C),\emptyset, U_1,U_2,U_{12}:=U_1\cap U_2)$. We construct then the sheaf of $H$-comodule algebras $\cF$, defined on the base for the topology $(U_1,U_2,U_{12})$ by
	$$
	\cF(U_1):=A_1=A[\alpha^{-1}]\,,\,\,\,\cF(U_2):=A_2=A[\gamma^{-1}]\,,\,\,\,\cF(U_{12}):=A_{12}:=A[\alpha^{-1},\gamma^{-1}].
	$$
	The sheaf $\mathcal{F}$ is a quantum principal bundle, i.e., faithfully flat Hopf--Galois on the open cover of its topology (see e.g. \cite[Section 4.5.3]{aflw}).
	On the algebra $A$ we consider the quantum tangent space $\mathfrak{g}_A$ given by $\mathrm{span}_\mathbb{C}\{\chi_1,\chi_+,\chi_-,\chi_2\}$ with the right $A$-module structure determined on generators by the following relations: 
	\begin{equation*}
		\begin{split}
			\chi_1\leftharpoonup \alpha
			&=q^{-2} \alpha \chi_1 +(q^{-1}-q) \beta\chi_-+(q^{-2}-1)^2\alpha 
			\chi_2\\
			\chi_1\leftharpoonup \beta
			&=\beta\chi_1+q^{-2}(q^{-1}-q)\alpha\chi+\\
			\chi_1\leftharpoonup \gamma
			&=q^{2} \gamma \chi_1 +(q^{-1}-q) \delta\chi_-+(q^{-2}-1)^2\gamma 
			\chi_2\\
			\chi_1\leftharpoonup \delta&=\delta \chi_1+q^{-2}(q^{-1}-q)\gamma\chi_+\\
			\chi_-\leftharpoonup \alpha
			&=q^{-1} \alpha \chi_-\\
			\chi_-\leftharpoonup \beta
			&=q^{-1}\beta\chi_{-}+q^{-3}(q^{-1}-q)\alpha\chi_2\\
			\chi_-\leftharpoonup \gamma
			&=q^{-1} \gamma \chi_- \\
			\chi_-\leftharpoonup \delta&=q^{-1}\delta \chi_-+q^{-3}(q^{-1}-q)\gamma\chi_2
		\end{split}\qquad
		\begin{split}
			\chi_+\leftharpoonup\alpha
			&=q^{-1}\alpha\chi_++(q^{-2}-1)\beta\chi_2\\
			\chi_+\leftharpoonup \beta
			&=q^{-1}\beta \chi_+   \\
			\chi_+\leftharpoonup \gamma
			&=q^{-1}\gamma\chi_++(q^{-1}-q)\delta\chi_-\\
			\chi_+\leftharpoonup \delta
			&=q^{-1}\delta\chi_+\\
			\chi_2\leftharpoonup \alpha
			&=\alpha \chi_2\\
			\chi_2\leftharpoonup \beta
			&=q^2\beta\chi_2\\
			\chi_2\leftharpoonup \gamma
			&=\gamma \chi_2\\
			\chi_2\leftharpoonup \delta&=q^{-2}\delta \chi_2
		\end{split}
	\end{equation*}

	The corresponding Lie derivatives on the generators of $A$ are given by
	\begin{equation*}
		\begin{split}
			\mathscr{L}^A_{\chi_1}\alpha&=-q\alpha\\
			\mathscr{L}^A_{\chi_1}\beta&=(q^{-1}-q)\beta\\
			\mathscr{L}^A_{\chi_1}\gamma&=-q\gamma\\
			\mathscr{L}^A_{\chi_1}\delta&=(q^{-1}-q)\delta\\
		\end{split}\qquad
		\begin{split}
			\mathscr{L}^A_{\chi_+}\alpha&=-\beta\\
			\mathscr{L}^A_{\chi_-}\beta&=-\alpha\\
			\mathscr{L}^A_{\chi_+}\gamma&=-\delta\\
			\mathscr{L}^A_{\chi_-}\delta&=-\gamma\\
		\end{split}\qquad
		\begin{split}
			\mathscr{L}^A_{\chi_2}\alpha&=0\\
			\mathscr{L}^A_{\chi_2}\beta&=-q\beta\\
			\mathscr{L}^A_{\chi_2}\gamma&=0\\
			\mathscr{L}^A_{\chi_2}\delta&=-q\delta\\
		\end{split}
	\end{equation*}
	Note that the right module actions and the  Lie derivatives are extended to all of $A$ by requiring them to satisfy \eqref{eq:fij} and the Leibniz rule, respectively. Observe that $\mathfrak{g}_A$ as defined is dual, in the sense of \ref{prop:duality'}, to the bicovariant FODC constructed in Example 3.9 of \cite{aflw} thus is a bicovariant quantum tangent space. This can also be verified directly. Indeed, the functionals $f_i^{j}$ can be extracted from the right module action via \eqref{eq:leftharpoon}, and the corresponding quantum brackets can then be computed explicitly. Since this calculation closely parallels the one carried out in Example \ref{ex-pair1}, we omit the details and leave the verification to the reader.
	
	In order to extend this FOVC to the localized algebra $A_1$ we use the Leibniz rule and the right module action described above and we get that the only non-vanishing new relations are
	$$
	\mathscr{L}^{A_1}_{\chi_1}\alpha^{-1}=q^3\alpha^{-1}\,\,\,\,\,\,\,\,\,\,\,\, \mathscr{L}^{A_1}_{\chi_+}\alpha^{-1}=q\alpha^{-2}\beta,
	$$
	while the right module action is consistently extended by 
	\begin{equation*}
		\begin{split}
			\chi_1\leftharpoonup \alpha^{-1}&=q^2\alpha^{-1}\chi_1-q^2(q^{-1}-q)\alpha^{-2}\beta\chi_--(q^{-1}-q)^2\alpha^{-1}\chi_2,\\
			\chi_+\leftharpoonup\alpha^{-1}&=q\alpha^{-1}\chi_+-q^{-1}(q^{-1}-q)\alpha^{-2}\beta\chi_2,
		\end{split}\qquad
		\begin{split}
			\chi_-\leftharpoonup \alpha^{-1}&=q\alpha^{-1}\chi_-,\\
			\chi_2\leftharpoonup\alpha^{-1}&=\alpha^{-1}\chi_2\,.
		\end{split}
	\end{equation*}
	Similar relations hold for the algebra $A_{2}$. Note that the Lie derivative $\mathscr{L}^{A_1}\colon\mathfrak{X}_{A_{1}}:=A_1\otimes \mathfrak{g}_A\to\mathrm{End}_\Bbbk(A_1)$ is injective since $A$ is an integral domain, as well as its Ore localization $A_1$\footnote{See for example  \cite{domainLanois} for the quantized coordinate rings of matrix groups and \cite{domainRobson} for the  Ore localization.}.   This construction  induces a sheaf of FOVC by setting $\Chi_{\cF}=\cF\otimes \mathfrak{g}_A$.
	
	The sheaf of coinvariants $\cF_B$ is given by (see \cite{afl} for more details)
	$$
	\cF_B(U_1):=B_1=\C_q[\alpha^{-1}\gamma]\,,\,\,\,\cF_B(U_2):=B_2=\C_q[\gamma^{-1}\alpha]\,,\,\,\,\cF_B(U_{12}):=B_{12}:=\C_q[\alpha^{-1}\gamma,\gamma^{-1}\alpha]
	$$
	We focus now on the base calculus on $U_1$ that is $\{X\in A_1\otimes\mathfrak{g}_A~|~\mathscr{L}^A_X(B_1)\subseteq B_1\}\bigg/\{X\in\ A_1\otimes\mathfrak{g}_A~|~\mathscr{L}^A_X(B_1)=0\}$. A simple computation shows that it is indeed generated by $ r^{-1} \alpha^2 [\chi_+]$; by using similar arguments one gets then that the base calculus on $U_2$ is generated by $ r^{-1} \gamma^2 [\chi_+]$. We then construct the sheaf of base calculus $\Chi_B$ by defining it on the base for the topology chosen by the following
	$$
	\Chi_B(U_1):=B_1\otimes r^{-1} \alpha^2 [\chi_+]\,,\,\,\,\Chi_B(U_{12}):=B_{12}\otimes r^{-1} \alpha^2 [\chi_+]\,\\,\,\,\Chi_B(U_2):=B_2\otimes r^{-1} \gamma^2 [\chi_+]
	$$
	with the restriction maps simply given by
	$$
	\begin{array}{ccc}
		\Chi_{B}(U_1) & \lra &\Chi_{B}(U_1 \cap U_2)\\
		p(\alpha^{-1}\gamma)\otimes r^{-1} \alpha^2 [\chi_+] & \mapsto &\quad p(\alpha^{-1}\gamma)\otimes r^{-1} \alpha^2 [\chi_+]\\
	\end{array}
	$$
	and
	$$
	\begin{array}{ccc}
		\,\,\,\,\,\,\,\,\,\,\Chi_{B}(U_2)  & \lra  & \Chi_{B} (U_1 \cap U_2), \\
		\,\,\,\,\,\,\,\,\,p(\gamma^{-1}\alpha)\otimes r^{-1} \gamma^2 [\chi_+] & \mapsto  & p(\gamma^{-1}\alpha)\gamma^2\alpha^{-2}\otimes r^{-1} \alpha^2 [\chi_+]
	\end{array}
	$$
	where $p(\gamma^{-1}\alpha)$ and $p(\alpha^{-1}\gamma)$ are elements of $B_1$ or $B_2$ respectively.

	We focus our attention now on the the Hopf algebra $H$; by using Proposition \ref{prop:quotientFOVC} we define a bicovariant quantum tangent space $\mathfrak{g}_H=\mathrm{span}_\mathbb{C}\{\underline{\chi}_1,\underline{\chi}_-,\underline{\chi}_2\}$. 
	The Lie derivative are then easily constructed from the values on the functional on generators, and the non-vanishing ones are listed below:
	$$
	\begin{array}{rclrclrcl}
		\mathscr{L}_{\underline{\chi}_1}t&=&-qt & \mathscr{L}_{\underline{\chi}_1}s&=&\lambda s& \mathscr{L}_{\underline{\chi}_1}p&=&\lambda p\\
		\mathscr{L}_{\underline{\chi}_-}p&=&\lambda t & \mathscr{L}_{\underline{\chi}_2}s&=&-q s & \mathscr{L}_{\underline{\chi}_2}p&=&-q p
	\end{array}
	$$
	and the right $H$-module structure is
	\begin{equation*}
		\begin{split}
			\underline{\chi}_1\leftharpoonup t
			&=q^{-2}t\underline{\chi}_1+\lambda p\underline{\chi}_-,\\
			\underline{\chi}_1\leftharpoonup p
			&=p\underline{\chi}_1,\\
			\underline{\chi}_1\leftharpoonup s
			&=qs\underline{\chi}_1+q^{-2}\lambda^2s\underline{\chi}_2,
		\end{split}\qquad
		\begin{split}
			\underline{\chi}_-\leftharpoonup t
			&=q^{-1}t\underline{\chi}_-,\\
			\underline{\chi}_-\leftharpoonup p
			&=q^{-1}p\underline{\chi}_-+q^{-3}\lambda t\underline{\chi}_2,\\
			\underline{\chi}_-\leftharpoonup s
			&=q^{-1}s\underline{\chi}_-,
		\end{split}\qquad
		\begin{split}
			\underline{\chi}_2\leftharpoonup t
			&=q^2t\underline{\chi}_2,\\
			\underline{\chi}_2\leftharpoonup p
			&=p\underline{\chi}_2,\\
			\underline{\chi}_2\leftharpoonup s
			&=q^{-2}s\underline{\chi}_2,
		\end{split}
	\end{equation*}
	where $\lambda:=q^{-1}-q$.  
	
	As discussed above, in this case it is indeed possible to extend the Lie derivative on $H$ to the whole algebra $A$ by the  construction the vertical vector field calculus as follows
	$$
	\mathscr{L}:A\otimes \mathfrak{g}_H \mapsto  \mathrm{End}_\Bbbk(A),
	\quad \mathscr{L}_{a' \otimes \underline{\chi}_i}(a):=a'a_0\underline{\chi}_i(a_1)
	$$
	and a similar formula holds for the localized $H$-comodule algebras $A_1$, $A_2$ and $A_{12}$.
	
	We are ready now to analyze the Atiyah sequence. We start defining the map $\phi(1\otimes \underline{\chi}_i)=1\otimes \chi_i$ for $i=1,-,2$; it is well defined and it makes the diagram 
	\begin{equation}
		\begin{tikzcd}
			A\otimes\mathfrak{g}_H\arrow{dr}[swap]{\mathscr{L}} \arrow{r}{\phi}
			& A\otimes \mathfrak{g}_A \arrow{d}{\mathscr{L}^A}\\
			& \mathrm{End}_\Bbbk(A)
		\end{tikzcd}
	\end{equation}
	commutative. Moreover, in the spirit of the sheaf approach it naturally extends to the localizations. Setting for simplicity $U_I=(U_1,U_2,U_{12})$ we get 
	\begin{equation}
		\begin{tikzcd}
			\cF(U_I)\otimes\mathfrak{g}_H\arrow{dr}[swap]{\mathscr{L}} \arrow{r}{\phi}
			& \cF(U_I)\otimes \mathfrak{g}_A \arrow{d}{\mathscr{L}^{\cF(U_I)}}=\Chi_\cF \\
			& \mathrm{End}_\Bbbk(\cF(U_I))
		\end{tikzcd}
	\end{equation}
	We now analyze the map 
	$$\begin{array}{rcl}
		\psi\colon\mathfrak{X}_{\cF(U_I)}&\to& \mathfrak{X}_B(U_I) \cF(U_I)\\[3mm]
		X&\mapsto& [X_0\cdot(X_1)^{\langle 1\rangle}](X_1)^{\langle 2\rangle}.
	\end{array}$$
	as defined in (\ref{chidef}). Let us start with the open $U_1$. In this case one gets that
	$$
	\psi(1\otimes \chi_1)=\psi(1\otimes \chi_2)=\psi(1\otimes \chi_-)=[0]
	$$
	as expected, while
	$$
	\psi(1\otimes \chi_+)=q^{-2}[r^{-1}\alpha^2 \chi_+]  r\alpha^{-2}
	$$

	Since $r\alpha^{-2}$ is invertible, the map is obviously surjective. A similar result holds in $U_{12}$ and in $U_2$ with $\alpha$ replaced by $\gamma$. This means that the Atiyah sequence is exact on all maximal intersections of opens of the topology and thus the Atiyah sequence is exact on all stalks.
\end{example}

}
\end{document}